\documentclass{amsart}

\title[Some vanishing results for the rational completed cohomology]{Some vanishing results for the rational completed cohomology of Shimura varieties}

\author{Kai-Wen Lan and Lue Pan}

\address{University of Minnesota, 127 Vincent Hall, 206 Church Street SE, Minneapolis, MN 55455, USA}
\email{kwlan@umn.edu}

\address{University of Michigan, 2074 East Hall, 530 Church Street, Ann Arbor, MI 48109, USA}
\email{luepan@umich.edu}

\subjclass[2020]{Primary 11F77, 11G18; Secondary 11F33, 14F17, 14F30}

\usepackage{amsfonts, latexsym, amssymb, mathrsfs}

\usepackage[driverfallback=hypertex, unicode]{hyperref}
\usepackage{color}

\usepackage[neveradjust]{paralist}

\usepackage{upref}
\usepackage[shortlabels]{enumitem}
\usepackage{verbatim}
\usepackage{xspace}

\usepackage[cmtip, all]{xy}
\usepackage{tikz}
\usepackage{tikz-cd}

\begingroup\expandafter\expandafter\expandafter\endgroup
\expandafter\ifx\csname IncludeInRelease\endcsname\relax
\usepackage{fixltx2e}
\fi

\newtheorem{theorem}[equation]{{Theorem}}
\newtheorem{corollary}[equation]{{Corollary}}
\newtheorem{lemma}[equation]{{Lemma}}
\newtheorem{proposition}[equation]{{Proposition}}
\newtheorem{example}[equation]{{Example}}
\newtheorem{construction}[equation]{{Construction}}
\newtheorem{method}[equation]{{Method}}

\theoremstyle{definition}
\newtheorem{definition}[equation]{{Definition}}
\newtheorem{condition}[equation]{{Condition}}

\theoremstyle{remark}
\newtheorem{remark}[equation]{{Remark}}

\newcommand{\BBdRp}{\bB_\dR^+}
\newcommand{\BBdR}{\bB_\dR}

\newcommand{\OBdlp}{\cO\bB_{\dR, \log}^+}
\newcommand{\OBdl}{\cO\bB_{\dR, \log}}

\newcommand{\Ddlalg}{D_{\dR, \log}^\alg}

\newcommand{\ho}{\widehat{\otimes}}

\newcommand{\GrSh}[1]{\underline{#1}}

\newcommand{\bA}{\mathbb{A}}
\newcommand{\bB}{\mathbb{B}}
\newcommand{\bC}{\mathbb{C}}

\newcommand{\bF}{\mathbb{F}}
\newcommand{\bG}{\mathbb{G}}
\newcommand{\bH}{\mathbb{H}}

\newcommand{\bL}{\mathbb{L}}
\newcommand{\bM}{\mathbb{M}}

\newcommand{\bP}{\mathbb{P}}
\newcommand{\bQ}{\mathbb{Q}}
\newcommand{\bR}{\mathbb{R}}

\newcommand{\bZ}{\mathbb{Z}}

\newcommand{\cD}{\mathcal{D}}

\newcommand{\cF}{\mathcal{F}}
\newcommand{\cG}{\mathcal{G}}
\newcommand{\cH}{\mathcal{H}}
\newcommand{\cI}{\mathcal{I}}
\newcommand{\cJ}{\mathcal{J}}

\newcommand{\cL}{\mathcal{L}}

\newcommand{\cO}{\mathcal{O}}

\newcommand{\cU}{\mathcal{U}}
\newcommand{\cV}{\mathcal{V}}
\newcommand{\cW}{\mathcal{W}}
\newcommand{\cX}{\mathcal{X}}
\newcommand{\cY}{\mathcal{Y}}
\newcommand{\cZ}{\mathcal{Z}}

\newcommand{\OP}[1]{\operatorname{#1}}

\newcommand{\Ext}{\OP{Ext}}
\newcommand{\Spec}{\OP{Spec}}
\newcommand{\Spa}{\OP{Spa}}
\newcommand{\Proj}{\OP{Proj}}

\newcommand{\Em}{\hookrightarrow}                   % embedding
\newcommand{\Surj}{\twoheadrightarrow}              % surjection
\newcommand{\Mi}{\stackrel{\sim}{\to}}              % mapping isomorphically
\newcommand{\Mapn}[1]{\xrightarrow{#1}}             % mapping with name

\newcommand{\bAi}{{\bA^\infty}}                     % finite adeles
\newcommand{\bAip}{{\bA^{\infty, p}}}               % finite adeles away from p

\newcommand{\Grp}[1]{\mathrm{#1}}                   % algebraic group we consider
\newcommand{\homom}{\varrho}                        % group homomorphism
\newcommand{\Lie}{\OP{Lie}}                         % Lie algebra
\newcommand{\Flalg}{\mathrm{Fl}}                    % flag variety (algebraic)
\newcommand{\Fl}{{\mathcal{F}\ell}}                 % flag variety (p-adic analytic)

\newcommand{\ad}{{\Utext{ad}}}                      % adjoint quotient
\newcommand{\der}{{\Utext{der}}}                    % derived group
\newcommand{\scc}{{\Utext{sc}}}                     % simply-connected cover

\newcommand{\Shdom}{\mathsf{X}}                     % Hermitian symmetric domain
\newcommand{\Dom}{\mathsf{Y}}                       % Hermitian symmetric domain
\newcommand{\Dompt}{y}                              % point
\newcommand{\MaxCpt}{K_\infty}                      % maximal compact (mod center)
\newcommand{\Lquot}{\backslash}                     % left quotient
\newcommand{\LSV}{\mathrm{S}}                       % locally symmetric varieties
\newcommand{\Bd}{\mathsf{F}}                        % boundary component

\newcommand{\hd}{h}                                 % h
\newcommand{\hc}{\mu}                               % Hodge cocharacter

\newcommand{\ReFl}{E}                               % reflex field
\newcommand{\Sh}{\mathrm{Sh}}                       % Shimura variety
\newcommand{\aShEv}{\mathcal{S}\textit{h}}          % adic Shimura variety over E_v
\newcommand{\aSh}{\mathcal{S}}                      % adic Shimura variety over C
\newcommand{\levcp}{K}                              % level defined by open compact
\newcommand{\level}{\varGamma}                      % level in general
\newcommand{\Min}{{\Utext{min}}}                    % minimal compactification
\newcommand{\Tor}{{\Utext{tor}}}                    % toroidal compactification
\newcommand{\TorMap}{\textstyle\oint}               % map from toroidal to minimal

\newcommand{\GL}{\mathrm{GL}}                       % general linear group
\newcommand{\Grpmu}{\boldsymbol{\mu}}               % group scheme symbol for \mu
\newcommand{\Mt}{\mathrm{M}}                        % matrix algebra

\newcommand{\im}{\OP{im}}                           % image
\newcommand{\sgn}{\OP{sgn}}                         % sign

\newcommand{\Id}{\OP{Id}}                           % identity

\newcommand{\alg}{\Utext{alg}}                      % algebraization
\newcommand{\an}{\Utext{an}}                        % analytification
\newcommand{\cons}{\Utext{cons}}                    % constructible
\newcommand{\la}{\Utext{la}}                        % locally analytic
\newcommand{\red}{\Utext{red}}                      % reduced
\newcommand{\std}{\Utext{std}}                      % standard

\newcommand{\Ind}{\OP{Ind}}                         % induced representation
\newcommand{\Res}{\OP{Res}}                         % restriction of scalars
\newcommand{\Sym}{\OP{Sym}}			                % symmetric algebra

\newcommand{\sC}{\mathscr{C}}			            % space of functions

\newcommand{\dualsign}{{\vee}}                      % notation for dual
\newcommand{\dual}[1]{{#1}^\dualsign}               % dual
\newcommand{\Ex}{\wedge}                            % exterior power

\newcommand{\Rep}{\OP{Rep}}                         % representations
\newcommand{\rV}{V}                                 % representation V of G
\newcommand{\rW}{W}                                 % representation W of M

\newcommand{\canext}{\Utext{can}}                   % canonical extension

\newcommand{\pl}{\upsilon}                          % archimedean places
\newcommand{\Pl}{\Upsilon}                          % set of all archimedean places

\newcommand{\SG}{S}                                 % permutation group
\newcommand{\WG}{\mathrm{W}}                        % Weyl group
\newcommand{\RT}{\Phi}                              % set of roots
\newcommand{\wt}{\lambda}                           % weight
\newcommand{\wtalt}{\mu}                            % weight (alternative)
\newcommand{\rt}{\alpha}                            % root
\newcommand{\hsum}{\rho}                            % half-sum of positive roots
\newcommand{\HC}{\gamma}                            % Harish-Chandra isomorphism

\newcommand{\typeA}{\mathrm{A}}                     % type A
\newcommand{\typeB}{\mathrm{B}}                     % type B
\newcommand{\typeC}{\mathrm{C}}                     % type C
\newcommand{\typeD}{\mathrm{D}}                     % type D
\newcommand{\typeE}{\mathrm{E}}                     % type E

\newcommand{\dR}{\Utext{dR}}                        % de Rham
\newcommand{\HT}{\Utext{HT}}                        % Hodge--Tate

\newcommand{\Fil}{\Utext{Fil}}                      % filtration
\newcommand{\gr}{\OP{gr}}

\newcommand{\et}{\Utext{\'et}}                      % etale
\newcommand{\ket}{\Utext{k\'et}}                    % Kummer etale
\newcommand{\proet}{\Utext{pro\'et}}                % pro-etale
\newcommand{\proket}{\Utext{prok\'et}}              % pro-Kummer etale

\newcommand{\etSh}[1]{{}_{\et}\GrSh{#1}}            % etale local system
\newcommand{\ketSh}[1]{{}_{\ket}\GrSh{#1}}          % Kummer etale local system
\newcommand{\proetSh}[1]{{}_{\proet}\GrSh{#1}}      % pro-etale local system
\newcommand{\proketSh}[1]{{}_{\proket}\GrSh{#1}}    % pro-Kummer etale local system

\newcommand{\dRSh}[1]{{}_{\dR}\GrSh{#1}}            % de Rham local system

\newcommand{\adic}[1]{\breve{#1}}                   % decoration for morphisms of adic spaces

\newcommand{\AC}[1]{\overline{#1}}                  % algebraic closure or geometric point

\newcommand{\Utext}[1]{\text{\rm #1}}               % upright math text

\newcommand{\Refenum}[1]{\Pth{\textrm{#1}}}
\newcommand{\Refeq}[1]{\Pth{#1}}

\newcommand{\Pth}[1]{{\rm (}#1{\rm )}}              % parenthesis ( )
\newcommand{\Qtn}[1]{``#1''}                        % quotation `` ''

\newcommand{\apage}{p.\@\xspace}                    % page
\newcommand{\aCh}{Ch.\@\xspace}                     % chapter
\newcommand{\aSec}{Sec.\@\xspace}                   % section
\newcommand{\aDef}{Def.\@\xspace}                   % definition
\newcommand{\aDefs}{Def.\@\xspace}                  % definitions
\newcommand{\aLem}{Lem.\@\xspace}                   % lemma
\newcommand{\aProp}{Prop.\@\xspace}                 % proposition
\newcommand{\aThm}{Thm.\@\xspace}                   % theorem
\newcommand{\aThms}{Thm.\@\xspace}                  % theorems
\newcommand{\aCor}{Cor.\@\xspace}                   % corollary
\newcommand{\aRem}{Rem.\@\xspace}                   % remark
\newcommand{\aEx}{Ex.\@\xspace}                     % example
\newcommand{\resp}{resp.\@\xspace}                  % resp.
\newcommand{\ie}{i.e.\@\xspace}                     % i.e.
\newcommand{\eg}{e.g.\@\xspace}                     % e.g.

\newcommand{\Refcf}{cf.\@\xspace}                   % 'compare' for the purpose of reference

\begin{document}

\begin{abstract}
    Based on an almost Kodaira-type vanishing result in mixed characteristics of Bhatt, we show that, in the locally analytic completed cohomology of a general Shimura variety, sufficiently regular infinitesimal weights can only show up in the middle degree.
\end{abstract}

\maketitle

\tableofcontents

\numberwithin{equation}{subsection}

\section{Introduction}\label{sec-intro}

\subsection{Main result}

Let $p$ be a prime number.  Emerton introduced the \Pth{$p$-adically} completed cohomology of locally symmetric spaces, and described it as \Qtn{a suitable surrogate for a space of $p$-adic automorphic forms} \cite{Emerton:2014-ccplg}.  This paper studies the completed cohomology of Shimura varieties below the middle degrees.  Let us briefly recall Emerton's construction.  Let $(\Grp{G}, \Shdom)$ be a Shimura datum.  For a neat open compact subgroup $\levcp \subset \Grp{G}(\bAi)$ with $\bAi$ denoting the ring of finite adeles, let
\[
    \Sh_{\levcp, \bC}^\an = \Grp{G}(\bQ) \big\Lquot \bigl(\Shdom \times \Grp{G}(\bAi)\bigr) \big/ \levcp
\]
be the \Pth{complex analytic} Shimura variety of level $\levcp$.  Fix an open compact subgroup $\levcp^p$ of $\Grp{G}(\bAip)$, where $\bAip$ is the ring of finite adeles away from $p$.  For each $i \geq 0$, the $i$-th completed cohomology is defined as
\[
    \tilde{H}^i := \varprojlim_n \varinjlim_{\levcp_p} H^i(\Sh_{\levcp^p \levcp_p, \bC}^\an, \bZ / p^n).
\]
It is $p$-adically complete and carries a natural continuous action of $\Grp{G}(\bQ_p)$.  Let $d$ be the dimension of $\Shdom$ as a complex manifold.  Similar to the cohomology of usual automorphic local systems, it is believed \Pth{see \cite{Calegari/Emerton:2012-ccs}} that $i = d$ is the most interesting degree.  Our main result supports this belief in terms of the infinitesimal character of the $\Grp{G}(\bQ_p)$-action.  To state it, we need to introduce some notation.  Let $C$ be the $p$-adic completion of an algebraic closure of $\bQ_p$.  Let $\mathfrak{g} = \Lie \Grp{G}(C)$, and let $Z(U(\mathfrak{g}))$ denote the center of the universal enveloping algebra of $\mathfrak{g}$.  Fix a Cartan subalgebra $\mathfrak{h}$ of $\mathfrak{g}$, denote by $\WG_{\mathfrak{g}}$ the Weyl group of $\mathfrak{g}$ with respect to $\mathfrak{h}$, and identify each character of $Z(U(\mathfrak{g}))$ with a $\WG_{\mathfrak{g}}$-orbit in the weight space $\mathfrak{h}^*$ via the Harish-Chandra isomorphism.  \Pth{Our convention is that the $\WG_{\mathfrak{g}}$-action is centered at zero.}  We say that a character $Z(U(\mathfrak{g})) \to C$ or its kernel \Pth{which defines a $C$-point of $\Spec Z(U(\mathfrak{g}))$} is \emph{sufficiently regular} if, for every weight $\wt'$ in the corresponding $\WG_{\mathfrak{g}}$-orbit $[\wt]$ and every root $\rt$ of $\mathfrak{g}$ with respect to $\mathfrak{h}$, we have
\begin{equation}\label{eq-cond-suff-reg-intro}
    (\wt', \dual{\rt}) \not\in
    \begin{cases}
        \{ 1, 2 \}, & \text{if $\rt$ is a shorter root, as in Definition \ref{def-shorter-rt}}; \\
        \{ 1 \}, & \text{otherwise}.
    \end{cases}
\end{equation}
Note that the shorter root can only appear in $C$-simple factors of $\mathfrak{g}$ of types $\typeB$ and $\typeC$.  When all $C$-simple factors of $\mathfrak{g}$ are of types $\typeA$, $\typeD$, and $\typeE$, an irreducible representation of $\mathfrak{g}$ of highest weight $\lambda$ has sufficiently regular infinitesimal character if and only if $\lambda$ is \emph{regular} in the usual sense; \ie, not fixed by any nontrivial element of $\WG_{\mathfrak{g}}$.  The set of non-sufficiently regular characters underlies a \Pth{reduced} closed subvariety of $\Spec Z(U(\mathfrak{g}))$ and defines an ideal $\cI \subset Z(U(\mathfrak{g}))$.

Let $\tilde{H}_C^i := \tilde{H}^i \ho_{\bQ_p} C$, and denote by $\tilde{H}_C^{i, \la}$ its subspace of $\Grp{G}(\bQ_p)$-locally analytic vectors.  There is a natural Lie algebra action of $\mathfrak{g}$ on $\tilde{H}_C^{i, \la}$ via derivation.

\begin{theorem}\label{thm-main-intro}
    Let $\cI$ be as above.  Then $\cI^n$ annihilates \Pth{\ie, acts by zero on} $\tilde{H}_C^{< d, \la}$, for some $n > 0$.  In particular, if $[\wt]: Z(U(\mathfrak{g})) \to C$ is sufficiently regular, then the $[\wt]$-isotypic part $\tilde{H}_{C, [\wt]}^{< d, \la}$ is zero.
\end{theorem}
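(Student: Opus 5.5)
The plan is to realize $\tilde{H}_C^{i}$ geometrically on the infinite-level perfectoid Shimura variety and then feed in Bhatt's almost vanishing theorem. By Scholze's primitive comparison theorem, applied to the minimal compactification, completed cohomology with compact support $\tilde{H}^i_{c,C}$ is almost isomorphic, after inverting $p$, to $H^i(\aSh^\Min_{\levcp^p}, \cI^{+}_{\partial})[1/p]$. Here $\aSh^\Min_{\levcp^p}$ is the perfectoid space at infinite level $\varprojlim_{\levcp_p}\aSh^\Min_{\levcp^p\levcp_p}$, and $\cI^+_\partial$ is the ideal sheaf of the boundary in $\cO^+$.

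Since $\tilde{H}^i$ and $\tilde{H}^i_c$ differ only by boundary contributions, I would first reduce to the compactly supported or interior statement. This means an induction over boundary strata, where the boundary contributions are expressed through completed cohomology of smaller Shimura data, parabolically induced. That requires tracking how parabolic induction changes infinitesimal characters. I expect sufficient regularity of $\mathfrak{g}$-characters to be preserved in a form that forces the boundary pieces to live in degrees where the inductive hypothesis applies. Alternatively, I would dualize via Poincaré duality and prove vanishing of $\tilde{H}_c$ in degrees $>d$, where Bhatt's result directly applies.

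The core step is Bhatt's theorem. For an ample line bundle $\omega$ on the minimal compactification, $H^i(\aSh^\Min_{\levcp^p\levcp_p}, \omega^{k})$ with coefficients in $\cO^+/p$ becomes almost zero for $i>0$ after passing up the tower. Combined with the Hodge–Tate period map $\pi_{\HT}\colon \aSh_{\levcp^p}\to\Fl$, this gives that $R\pi_{\HT,*}\cO^+/p$ (with boundary twist) is concentrated in a range of degrees governed by the fibers. The ampleness of $\omega$, which is pulled back from $\Fl$ along $\pi_{\HT}$, supplies the "$i>0$" part.

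Next I would pass to locally analytic vectors. Following Pan's method, the $\mathfrak{g}$-action on $\cO^{\la}$ factors through $\mathfrak{n}^0$-horizontal directions, and $Z(U(\mathfrak{g}))$ acts on $R\pi_{\HT,*}\cO^{\la}$ through the Harish-Chandra map of the Levi $\mathfrak{m}$ of the Hodge parabolic. One then computes $\tilde{H}^{i,\la}_C = H^i(\Fl, R\pi_{\HT,*}\cO^{\la})$ by a BGG/Čech-type resolution indexed by Kostant representatives $w\in \WG^M\backslash\WG$. The terms have infinitesimal weights $w\cdot\lambda$, and they are cohomology of automorphic line or vector bundles on $\Fl$ (coherent cohomology of the Shimura variety). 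Kodaira-type vanishing from Bhatt kills the terms in degrees $<d$ unless some $(w\cdot\lambda,\dual{\rt})$ lands in the "small" set. This is exactly where the condition \eqref{eq-cond-suff-reg-intro} enters. The values $1$, and $2$ for shorter roots, arise because Bhatt's vanishing needs $\omega^k$ with $k\geq1$ after twisting by $\Omega^d\cong\omega^{?}$, and the canonical bundle ratio differs by the factor $2$ for short roots in types B/C.

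Finally, the power $\cI^n$ rather than $\cI$ arises because the spectral sequences have finitely many steps, each killed by a generalized eigenspace argument.

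The main obstacle will be the translation step. The difficulty is showing that the cohomology of each graded piece in degrees $<d$ is annihilated by the ideal of characters where Bhatt's vanishing fails, with the precise numerical thresholds $\{1\}$ and $\{1,2\}$. Equivalently, one must compute which twists $\omega^k$ appear, in terms of roots, and check that sufficiently regular characters always yield $k\ge1$.
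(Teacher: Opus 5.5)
Your proposal has genuine gaps, starting with how it uses Bhatt's theorem.

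\textbf{Bhatt's theorem is misused.} Bhatt's result is of Kodaira type: for ample $L$ and perverse-coconnective coefficients $F$, $R\Gamma(\cX_\et,\eta^*F\otimes\nu^*\widehat{L}^{-1})$ is almost concentrated in degrees $\geq d$. It concerns the \emph{inverse} of an ample bundle and degrees \emph{below} $d$. The Serre-type statement you attribute to it (positive powers $\omega^k$, almost vanishing for $i>0$) is a different assertion, and it says nothing about $\tilde H^{<d}$.

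What the paper actually extracts is the following. It first extends Bhatt's theorem via cyclic (Bloch--Gieseker) covers to a merely semiample line bundle and to big coefficient systems on a toroidal compactification. The coefficients are $F=Rj_*$ of the local systems attached to Banach pieces of $\sC^\la(\widetilde{K_p},\bQ_p)$, which is allowed because $Rj_*$ is perverse left $t$-exact. Combining this with Rodr{\'\i}guez Camargo's comparison between $\sC^\la\ho\widehat{\cO}$ and $\cO^\la_{\aSh}$ gives $H^{<d}(\Fl,\cO^\la\otimes\cL_\Fl^{-1})=0$ for the automorphic line bundle $\cL$ of positive parallel weight.

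This handles $\tilde H^i$ directly, not $\tilde H^i_c$. So your detour through compact supports, boundary strata and parabolic induction is unnecessary, and as sketched it is unsupported. The Poincar\'e-duality alternative cannot work at all: $\tilde H^{>d}_c=0$ is unconditional, and through duality it only yields size bounds on $\tilde H^{<d}$. Since $\tilde H^{<d}$ is nonzero in general (already $\tilde H^0$), no statement sensitive to infinitesimal characters can come out of it.

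\textbf{The untwisting step is missing.} You correctly flag this as the obstacle but propose a mechanism that is not available. You have no resolution of $\cO^\la$ by automorphic bundles on which Bhatt's theorem can be applied termwise: $\cO^\la$ is far from coherent. Moreover, Bhatt's theorem only gives vanishing after twisting by the inverse of a (semi)ample line bundle, i.e.\ for positive parallel weights, not for arbitrary weights $w\cdot\lambda$. So ``Bhatt kills each term unless $(w\cdot\lambda,\alpha^\vee)$ is small'' has no basis.

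The missing idea is the Beilinson--Bernstein translation-functor argument:
\begin{enumerate}
\item Take the character $W_0$ of $M$ with $W_0^{\otimes h^\vee}\cong\det\mathfrak{n}^{\std}$. Its existence needs $G^\der$ simply connected and $G=G^c$; the paper arranges this by a separate reduction through neutral components and an auxiliary Shimura datum with derived group $G^{\scc}$.
\item Let $V_0$ be the irreducible $G$-representation of extremal weight $\lambda_0$, and use the $P$-equivariant sequence $0\to W'\to V_0\to W_0\to 0$.
\item Because $\cO_\Fl\otimes_C V_0$ is $\mathfrak{g}$-equivariantly trivial, $H^{<d}(\Fl,\cO^\la\otimes\cL_\Fl^{-1}\otimes_C V_0)=H^{<d}(\Fl,\cO^\la\otimes\cL_\Fl^{-1})\otimes_C V_0=0$. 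Hence $H^i(\Fl,\cO^\la)\hookrightarrow H^{i+1}(\Fl,\cO^\la\otimes\cL_\Fl^{-1}\otimes\cW'_\Fl)$ for $i<d$.
\item Kostant's theorem, in Bernstein--Gelfand's form, controls the $Z(U(\mathfrak{m}))\otimes Z(U(\mathfrak{g}))$-action on the graded pieces of the right-hand side.
\item A combinatorial proposition, checked case by case in classical types and by computer for $\mathrm{E}_6$ and $\mathrm{E}_7$, shows that $(\Id-w)(\lambda-\tfrac12\lambda_{\det\mathfrak{n}})=\lambda_0-\lambda_0'$ has no solution on the sufficiently regular locus.
\end{enumerate}
This gives annihilation by $(\cI^\iota_{\mathfrak{m}})^n$, hence by $(\cI^\iota)^n\supseteq\cI^n$.

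The thresholds $\{1\}$ and $\{1,2\}$ come from $(\lambda_0,\alpha^\vee)=-1$, resp.\ $-2$ for shorter roots $\alpha$ in $\mathfrak{n}$. They are not explained by canonical-bundle ratios or by ``which $\omega^k$ appear''.
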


\begin{remark}
    In fact, Theorem \ref{thm-main-intro} will follow from a similar vanishing result in Corollary \ref{cor-thm-main} under a slightly weaker regularity condition on the infinitesimal characters of $\Lie \Grp{G}$ \Pth{rather than $\mathfrak{g} = \Lie \Grp{G}(C)$}, called \Qtn{$\hc_\hd$-sufficient regularity}---See Definition \ref{def-suff-reg-wt}\Refenum{\ref{def-suff-reg-wt-g}} for this notion, and see Definition \ref{def-suff-reg-wt-id} for the corresponding improvement of $\cI \subset Z(U(\mathfrak{g}))$.  As the name suggests, this regularity condition will depend on the choice of a Hodge cocharacter $\hc_\hd$.  As we shall see in Example \ref{ex-suff-reg-type-A-1}, the $\hc_\hd$-sufficient regularity condition for a Shimura curve will be weaker than the one for the Hilbert modular variety associated with the same totally real field.
\end{remark}

This result allows us to reprove the vanishing result for cohomology of automorphic local systems obtained by the first-named author in \cite{Lan:2016-vtcac}.  See Remarks \ref{rem-suff-reg-wt-old-vs-new} and \ref{rem-LS-van-reprove} below.  As noted in \cite{Lan:2016-vtcac}, by using automorphic methods, Li and Schwermer \cite{Li/Schwermer:2004-ecag} were able to prove the vanishing result only assuming that the highest weight of the local system is regular.  It is not clear to us whether our method can reprove the same result when $\mathfrak{g}$ has $C$-simple factors of types $\typeB$ and $\typeC$.  On the other hand, our method can also handle \Qtn{big} local systems which are not necessarily finite-dimensional.  See Corollary \ref{cor-van-la-sys}.

We shall explain in Section \ref{sec-hor-act} that the action of $Z(U(\mathfrak{g}))$ on $\tilde{H}_C^{i, \la}$ can be extended to an action of $Z(U(\mathfrak{m}))$, where $\mathfrak{m}$ denotes a Levi subalgebra determined by the Hodge cocharacter, after fixing an isomorphism $\iota: \bC \Mi C$ of fields.  The extra symmetry comes from the \Pth{partial} Sen operators on $\tilde{H}_C^{i, \la}$.  Our actual main result, Theorem \ref{thm-main}, has a more refined version regarding this enlarged symmetry. Very roughly speaking, if we fix an infinitesimal character $[\wt]$, it determines a list of possible Hodge--Tate--Sen weights corresponding to $\iota$, and the \Qtn{sufficiently regular ones} among them will not show up in $\tilde{H}_{C, [\wt]}^{< d, \la}$.

In a follow-up paper \cite{Pan:2025-cchmd}, the second-named author shows that an extension of our method in the case of Hilbert modular varieties allows one to produce ideals of $U(\mathfrak{g})$ \Pth{not just ideals of the center} annihilating $\tilde{H}_C^{< d, \la}$. We believe similar results should hold for more general Shimura varieties.

\begin{remark}
    For each $i \leq d$, there should be a notion of $i$-sufficiently regularity for infinitesimal characters such that $\tilde{H}_{C, [\wt]}^{< i, \la}=0$ when $[\wt]$ is $i$-sufficiently regular.  Our method can give results in this direction, but we shall not purse them in this paper.  Similar to how we work out the sufficiently regular condition in Proposition \ref{prop-key-obs} below, they should be reduced to combinatorial problems in Lie theory.
\end{remark}

\begin{remark}
    Since the pioneering work of Caraiani--Scholze \cite{Caraiani/Scholze:2017-gccus, Caraiani/Scholze:2024-gcnus}, there has been a lot of powerful vanishing results \cite{Koshikawa:2021-gclgs, Hamann/Lee:2023-tvssv, Yang/Zhu:2025-gcsva} concerning the cohomology of Shimura varieties below degree.  There are two major differences between our work and these works.  Firstly, these works allow torsion and hence integral coefficients, while our work is completely rational.  Secondly, the genericity conditions in these works arise from the study of $p$-adic \Pth{or $\bmod~p$} representations of $\Grp{G}(\bQ_l)$ at a place $l \neq p$, while our work studies $p$-adic representations of $\Grp{G}(\bQ_p)$.
\end{remark}

\begin{remark}
    $\tilde{H}^{> d} = 0$.  This is part of the Calegari--Emerton conjecture in \cite{Calegari/Emerton:2012-ccs}, and the first major breakthrough is made by Scholze in \cite{Scholze:2013-pss, Scholze:2015-tclsv}.  Recently, the rational analogue $\tilde{H}_{\bQ_p}^{> d} = 0$ for general Shimura varieties is proved in \cite{RodriguezCamargo:2022-lacc}, and the original $\tilde{H}^{> d} = 0$ for general Shimura varieties is proved in \cite{He:2025-psasv}.  See the references in these paper for important partial results in special cases.
\end{remark}

\subsection{Overview of the proof}

For simplicity of exposition, let us assume that the Shimura variety is compact.  \Pth{Otherwise, we need to introduce toroidal compactifications.}  Let us also assume that the derived group $\Grp{G}^\der$ is simply-connected, and that the center $Z(\Grp{G})$ has the same rank over $\bQ$ and over $\bR$.  Fix a level $\levcp_p$ at $p$, and let $\levcp = \levcp^p \levcp_p$.  After choosing any isomorphism $\bC \Mi C$, consider the associated adic space $\aSh_\levcp$ of the Shimura variety over $C$. The tower of Shimura varieties $\{ \aSh_{\levcp^p \levcp_p'} \}_{\levcp_p' \subset \levcp_p}$ forms a pro-\'etale $\levcp_p$-torsor over $\aSh_\levcp$.  Denote by $\sC^\la(\levcp_p, \bQ_p)$ the space of $\bQ_p$-valued locally analytic functions on $\levcp_p$, viewed as a $\levcp_p$-representation via left translation.  It gives rise to an \Pth{infinite-dimensional} pro-\'etale local system $\proetSh{\sC^\la(\levcp_p, \bQ_p)}$ over $\aSh_{\levcp, \proet}$.  By using the primitive comparison theorem, we obtain a natural isomorphism
\[
    \tilde{H}_C^{i, \la} \cong H_\proet^i(\aSh_\levcp, \proetSh{\sC^\la(\levcp_p, \bQ_p)} \ho_{\bQ_p} \hat{\cO}_{\aSh_{\levcp, \proet}}).
\]

Here comes the first main ingredient of the proof.  Let $\cL$ be an ample line bundle on $\aSh_\levcp$.  We apply a Kodaira-type vanishing result in mixed characteristics of Bhatt's \cite[\aSec 11.2]{Bhatt:2025-apaht} and deduce that
\[
    H_\proet^{< d}(\aSh_\levcp, \proetSh{\sC^\la(\levcp_p, \bQ_p)} \ho_{\bQ_p} \hat{\cO}_{\aSh_{\levcp, \proet}} \otimes_{\cO_{\aSh_\levcp}} \cL^{-1}) = 0.
\]

Now the question is how to untwist this $\cL^{-1}$.  For simplicity, let us temporarily write $\cF = \proetSh{\sC^\la(\levcp_p, \bQ_p)} \ho_{\bQ_p} \hat{\cO}_{\aSh_{\levcp, \proet}}$.  We use an argument essentially due to Beilinson--Bernstein in their work on localization \cite{Beilinson/Bernstein:1981-ldgm}.  Choose a finite-dimensional representation $\rV$ of $\Grp{G}$, and consider
\[
    H_\proet^{< d}(\aSh_\levcp, \cF \otimes_{\cO_{\aSh_\levcp}} \cL^{-1} \otimes \rV) \cong H_\proet^{< d}(\aSh_\levcp, \cF \otimes_{\cO_{\aSh_\levcp}} \cL^{-1}) \otimes \rV = 0,
\]
where the last isomorphism uses $\sC^\la(\levcp_p, \bQ_p) \otimes \rV \cong \sC^\la(\levcp_p, \bQ_p) \otimes \rV$, with $\levcp_p$ acting on every term except for the last $\rV$.  By making suitable choices of $\cL$ and $\rV$ under our simplifying assumptions, we can arrange that $\cF$ appears as a \emph{direct summand} of $\cF \otimes_{\cO_{\aSh_\levcp}} \cL^{-1} \otimes \rV$ on the sufficiently regular locus of $\Spec Z(U(\mathfrak{g}))$.  A more careful study of the $Z(U(\mathfrak{g}))$-action gives our main theorem.

Such an argument is probably unsurprising for people who are familiar with the Beilinson--Bernstein localization: there is a close relationship between twisting of a line bundle and the translation functor \Pth{defined as a direct summand of the functor of tensoring with a finite dimensional representation \cite{Bernstein/Gelfand:1980-tprsl}} in the localization \cite{Beilinson/Ginzrburg:1999-wcfdm}.  We remark that it was first observed in \cite{Pan:2022-laccm} and further generalized in \cite{RodriguezCamargo:2022-lacc} that the sheaf $\cF$ is annihilated by certain first-order differential operators coming from geometric Sen theory, and hence can be regarded after pushing forward along the Hodge--Tate period map as some sort of localization of $\tilde{H}_C^{i, \la}$ on the partial flag variety defined by the Hodge cocharacter.

The paper is organized as follows.  In Section \ref{sec-akv}, we discuss Bhatt's Kodaira-type vanishing theorem, and make a slight generalization using some standard cyclic cover technique. In Section \ref{sec-lsv}, we recollect various standard facts about locally symmetric varieties and their compactifications, and about automorphic vector bundles and their canonical extensions.  In Section \ref{sec-geom-Sen-Sh}, we summarize some results of \cite{RodriguezCamargo:2022-lacc} on locally analytic functions over Shimura varieties at infinite levels.  In Section \ref{sec-trans}, under the simplifying assumptions that the derived group $\Grp{G}^\der$ is simply-connected, and that the center $Z(\Grp{G})$ has the same rank over $\bQ$ and over $\bR$, we carry out the translation functor argument sketched above, and explain where the sufficiently regular condition comes from.  Finally, in Section \ref{sec-van-res}, we explain how to remove the simplifying assumptions by working with towers of closely related locally symmetric varieties.

\subsection*{Notation and conventions}

We shall fix a prime number $p$.  Let $C$ be $p$-adic completion of an algebraic closure of $\bQ_p$, with ring of integers $\cO_C$.  We shall denote by $D(\cO_C)^a$ the almost derived category of quasi-coherent sheaves on $\cO_C$ with almost structure with respect to the maximal ideal of $\cO_C$, as in \cite[\aSec 3.3]{Bhatt:2025-apaht}.

For any algebraic group $\Grp{H}$ over a base field $k$ and any $k$-algebra $l$, we shall denote by $\Rep_l(\Grp{H})$ the category of algebraic representations of $\Grp{H}$ \Pth{or rather its base extension $\Grp{H} \otimes_k l$} over $l$.

For simplicity, we will often omit \Qtn{$\otimes$} when denoting \Pth{possibly negative} tensor powers of line bundles.

\section{An almost Kodaira vanishing theorem of Bhatt}\label{sec-akv}

\subsection{Bhatt's mixed characteristic Kodaira vanishing result}\label{sec-Bhatt}

Recently, Bhargav Bhatt proved an almost Kodaira vanishing result over a mixed characteristic base.  See \cite[\aSec 11.2]{Bhatt:2025-apaht}.  We will state \Pth{a special case of} his result below.  Let us begin by introducing some notation.

Let $X$ be a finitely presented flat $\cO_C$-scheme.  We shall denote by $X_C$ its generic fiber; by $D_\cons^b(X_C, \bZ / p^n)$ the usual constructible derived category of \'etale $\bZ / p^n$-sheaves on $X_C$; by $\widehat{X}$ the $p$-adic completion of $X$, viewed as a $p$-adic formal scheme; and by $\cX$ its adic generic fiber.  We have a natural \Qtn{support} map $\eta': \cX \to X_C$ induced by taking the supports of valuations, and a nearby cycles map $\nu': \cX \to \widehat{X}$ defined by taking the centers of valuations.  We shall denote by $\eta: \cX_\et \to X_{C, \et}$ the morphism of \'etale sites induced by $\eta'$, and by $\nu: \cX_\et \to \widehat{X}$ the composition of $\nu'$ with the projection $\cX_\et \to \cX$.

\begin{theorem}[Bhatt]\label{thm-akv}
    Let $X$ be a flat projective $\cO_C$-scheme of relative dimension $d$, and $L$ an ample line bundle on $X$.  For any $F \in {^p}D_\cons^{\geq 0}(X_C, \bZ / p^n)$ \Pth{\ie, coconnective with respect to the perverse $t$-structure for middle perversity, as in \cite[\S 4]{Beilinson/Bernstein/Deligne/Gabber:2018-FP-2}}, we have
    \[
        R\Gamma\bigl(\cX_\et, (\eta^* F) \otimes_{\bZ_p} \nu^* \widehat{L}^{-1} \bigr) \in D^{\geq d}(\cO_C)^a,
    \]
	where the pullback $\nu^*$ is understood as the pullback of a line bundle with respect to the integral structure sheaf $\cO_{\cX_\et}^+$ on $\cX_\et$; and where $\widehat{L}$ denotes the $p$-adic formal completion of $L$, which is a line bundle on $\widehat{X}$.
\end{theorem}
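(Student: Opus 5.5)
This statement is (a special case of) Bhatt's theorem in \cite[\aSec 11.2]{Bhatt:2025-apaht}, and the plan is to reduce to that result rather than reprove it from scratch. Throughout, ``$D^{\geq d}(\cO_C)^a$'' means almost coconnectivity: the cohomology in degrees $< d$ is killed by the maximal ideal of $\cO_C$.

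\textbf{Step 1: reduction to the case $L$ very ample.} First I would reduce to the case that $L$ is very ample, with $X$ embedded in some $\bP^N_{\cO_C}$. Replacing $L$ by $L^m$ for large $m$ does not formally imply the statement for $L$. So instead I would want the statement for $L^{m}$ for all $m\geq 1$, and then deduce the case $m=1$ by the standard trick. For this, take the $\mu_{m}$-cyclic cover $\pi: Y\to X$ extracting an $m$-th root of a section of $L^{m}$, chosen so that it lifts $L$ to a power. Then $\eta^*F$ and $\nu^*\widehat{L}^{-1}$ appear as a direct summand of pushforwards from $Y$, since $m$ may be chosen prime to $p$ so that averaging is allowed. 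Perversity of $F$ is preserved under the finite pullback $\pi^*$, and pushforward along a finite map is $t$-exact.

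\textbf{Step 2: the core input, almost vanishing via $p$-adic Riemann--Hilbert.} By the primitive comparison theorem and Bhatt's $p$-adic Riemann--Hilbert functor, $R\Gamma(\cX_\et,\eta^*F\otimes\cO^+_{\cX}\otimes\nu^*\widehat L^{-1})$ is almost computed as coherent cohomology on a perfectoid, or ``absolute integral closure'', cover $X^+$ of $X$. Explicitly, this is $R\Gamma(X^+, \mathrm{RH}(F)\otimes L^{-1})$, where $\mathrm{RH}(F)$ is the attached almost-coherent complex. Perverse coconnectivity of $F$ translates, by Bhatt's theorem, into a Cohen--Macaulay-type depth condition on $\mathrm{RH}(F)$. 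Kodaira vanishing on $X^+$ then comes from the fact that Frobenius-like $p$-power maps $L^{-1}\to L^{-p^k}$ become almost isomorphisms after passing to the cover, combined with Serre vanishing for $L^{-p^k}$ in low degrees via duality. Concretely: every class in $H^{<d}$ is killed after pulling back along a finite cover trivializing its obstruction, and taking the colimit over covers gives almost zero.

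\textbf{Main obstacle.} The hard step is Step 2: making the $\eta^*F$-coefficient version work, not just the constant-coefficient version. I would first treat $F=j_!$ or $j_*$ of a local system on a smooth open subset, for which perverse coconnectivity is clear. I would then argue by dévissage along the perverse filtration, using that the target condition ``$D^{\geq d}$ almost'' is stable under extensions and under the relevant shifts. Since the paper states this result as Bhatt's, in practice I would cite \cite[\aSec 11.2]{Bhatt:2025-apaht} for this step and only verify that our hypotheses (flat projective over $\cO_C$, $L$ ample, $F$ perverse-coconnective and $p^n$-torsion) match his.
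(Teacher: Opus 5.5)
\textbf{There is a genuine gap: the passage from $\bF_p$ to $\bZ/p^n$ coefficients is missing.} Like the paper, your proposal ultimately rests on citing Bhatt, and replacing your Step~2 heuristic by that citation is fine. But it omits the one step the paper actually supplies. The paper's proof is: Bhatt's reformulation of almost Kodaira vanishing via $\cO_X^+$ (his Section~11.2) is exactly the case $n=1$, and the general case follows by induction on $n$. You propose to check that Bhatt's hypotheses, including ``$p^n$-torsion'', match ours; for $n\geq 2$ they do not. The d\'evissages you mention (along perverse cohomology, or down to $j_!$, $j_*$ of local systems) do not touch the coefficients.

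What is needed is a $p$-adic d\'evissage compatible with ${}^pD^{\geq 0}$. Set $\Phi(G):=R\Gamma(\cX_\et,\eta^*G\otimes_{\bZ_p}\nu^*\widehat{L}^{-1})$. It is exact, only sees the underlying $\bZ_p$-module structure of $G$, and maps $D^{\geq M}$ to $D^{\geq M}$; almost coconnectivity is closed under extensions. Applying $R\mathcal{H}om_{\bZ/p^n}(-,F)$ to $0\to\bF_p\to\bZ/p^n\to\bZ/p^{n-1}\to 0$ gives a triangle
\[
    R\mathcal{H}om_{\bZ/p^n}(\bZ/p^{n-1},F)\to F\to R\mathcal{H}om_{\bZ/p^n}(\bF_p,F)\Mapn{+1}.
\]
Its outer terms are naturally complexes of $\bZ/p^{n-1}$- and $\bF_p$-sheaves with constructible cohomology. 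They remain perverse coconnective, since $R\mathcal{H}om$ out of a constant sheaf commutes with $i^!$ and is left $t$-exact. They are unbounded above, but replacing them by $\tau^{\leq M}$ with $M\geq 0$ keeps them in ${}^pD^{\geq 0}$ and changes $\Phi$ only by a cone in $D^{>M}$. Induction on $n$ then concludes.

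\textbf{Step~1 should be dropped.} The statement, like Bhatt's, is for ample $L$, so there is nothing to reduce, and as sketched it would not work anyway.
A single cyclic cover extracting an $m$-th root of one section of $L^m$ leaves $\pi^*L$ merely ample, not very ample. The device that works is the Bloch--Gieseker-type fibre product over a basis of sections followed by Stein factorization, which the paper uses for a different purpose in Lemma~\ref{lem-BG-cov} and the proof of Theorem~\ref{thm-kv}.
Finite pullback does not preserve ${}^pD^{\geq 0}$ in general: for quasi-finite $\pi$ it is $\pi^!$, not $\pi^*$, that is left $t$-exact, and closed immersions are finite.
The direct-summand claim is not mere averaging. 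It needs the primitive comparison theorem to identify $(R\adic{\pi}_{\et,*}(\cO^+_\cY/p))^{\Grpmu_m}$ with $\cO^+_\cX/p$ up to almost isomorphism, as in Proposition~\ref{prop-cyc-cov-coh-inj}.

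\textbf{Check the degree normalization when matching with Bhatt.} With the BBD convention you cite ($\bF_p[d]$ perverse on a smooth $X_C$), the conclusion for $F\in{}^pD^{\geq 0}$ should read $D^{\geq 0}(\cO_C)^a$. That is exactly what Corollary~\ref{cor-akv-lc} uses when it applies the theorem to $Rj_{\et,*}F[d]$. Read literally with $D^{\geq d}$, take $X=\bP^1_{\cO_C}$, $L=\cO(2)$, $F=\bF_p[1]$. Then $H^1(\cX_\et,\cO^+_\cX/p\otimes\nu^*\widehat{L}^{-1})$ would be almost zero, hence $H^1(\cX_\proet,\widehat{\cO}_{\cX_\proet}\otimes\cO(-2))=0$. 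This is false, since that group contains $H^1(\bP^1_C,\cO(-2))\neq 0$.
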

\begin{proof}
    When $n = 1$, this is Bhatt's \Qtn{reformulation of almost Kodaira vanishing via $\cO_X^+$} in \cite[\aSec 11.2]{Bhatt:2025-apaht}.  The general case then follows by induction on $n \geq 1$.
\end{proof}

\begin{corollary}\label{cor-akv-lc}
    Let $X$ and $L$ be as in Theorem \ref{thm-akv}.  Let $j: U \Em X_C$ be an open immersion from a smooth subvariety.  For every \'etale finite $\bZ / p^n$-locally constant sheaf $F$ on $U$, we have
	\[
        R\Gamma\bigl(\cX_\et, (\eta^* R j_{\et, *} F) \otimes_{\bZ_p} \nu^* \widehat{L}^{-1} \bigr) \in D^{\geq d}(\cO_C)^a.
    \]
\end{corollary}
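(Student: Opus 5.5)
The plan is to reduce directly to Theorem \ref{thm-akv} by showing that $F' := R j_{\et, *} F$ lies in ${^p}D_\cons^{\geq 0}(X_C, \bZ / p^n)$, up to a shift that works in our favor.

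\textbf{Step 1: constructibility.} Since $U$ is smooth of dimension $d$, we may assume $U$ is equidimensional of dimension $d$; a component of smaller dimension would only improve the estimates below, or can be treated separately. The sheaf $F$ is locally constant with finite $\bZ / p^n$-coefficients. Then $F[d]$ is perverse on $U$, because on a smooth variety a locally constant sheaf placed in degree $-d$ is perverse for the middle perversity. Since we work in characteristic $0$, $R j_{\et, *}$ preserves constructibility, so $R j_{\et, *} F$ lies in $D_\cons^b(X_C, \bZ / p^n)$.

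\textbf{Step 2: perverse coconnectivity.} The immersion $j$ is quasi-affine but need not be affine, so I will not appeal to t-exactness of $Rj_*$ for affine $j$. Instead I use the general fact that $R j_*$ is left t-exact for the perverse t-structure, since it is right adjoint to the t-exact functor $j^*$ (\cite[\S 4]{Beilinson/Bernstein/Deligne/Gabber:2018-FP-2}). This gives
\[
    R j_{\et, *}(F[d]) \in {^p}D_\cons^{\geq 0}(X_C, \bZ / p^n).
\]
This step works with $\bZ / p^n$-coefficients, which is where the perverse t-structure of loc.\ cit.\ is defined.

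\textbf{Step 3: apply Theorem \ref{thm-akv}.} Take $F'' = (R j_{\et, *} F)[d]$ in Theorem \ref{thm-akv}. It gives
\[
    R\Gamma\bigl(\cX_\et, (\eta^* R j_{\et, *} F) \otimes_{\bZ_p} \nu^* \widehat{L}^{-1}\bigr)[d] \in D^{\geq d}(\cO_C)^a,
\]
so the complex itself lies in $D^{\geq 2d}(\cO_C)^a$. This is contained in $D^{\geq d}(\cO_C)^a$, which is the claimed bound and in fact a stronger one.

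The stronger bound is suspicious and should be checked before relying on it. With my normalization, the condition $F \in {^p}D^{\geq 0}$ for the constant sheaf on a smooth $d$-dimensional variety $X_C$ would read $F[d] \in {^p}D^{\geq 0}$. Applying the theorem to $\bZ / p$ itself, which sits in ${^p}D^{\geq d}$, yields the unshifted statement $D^{\geq d}$. That unshifted statement is the familiar Kodaira vanishing $H^{<d}(L^{-1}) = 0$. So the normalization is consistent with Kodaira vanishing, but the shift by $d$ must be handled carefully. Concretely, the Kodaira check only supports applying the theorem to $F$ itself, since $F \in {^p}D^{\geq d} \subset {^p}D^{\geq 0}$; applying it to $F[d]$ as in Step 3 would give a bound stronger than Kodaira vanishing, and I will not claim it.

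\textbf{Revised argument.} Since $R j_* F$ lies in ${^p}D^{\geq d} \subset {^p}D^{\geq 0}$, Theorem \ref{thm-akv} applied to $R j_* F$ directly gives the claim.

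\textbf{Main obstacle.} The only real input is the left t-exactness of $R j_*$ for non-affine $j$ with torsion coefficients, together with getting the shift conventions right, which is the point I expect to need care.
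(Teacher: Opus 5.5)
Your final argument is correct and is the paper's own: $F[d]$ is perverse on the smooth $U$, and $R j_{\et,*}$ is perverse left $t$-exact, so $R j_{\et,*}F[d]\in{}^pD^{\geq 0}_\cons(X_C,\bZ/p^n)$, i.e.\ $R j_{\et,*}F\in{}^pD^{\geq d}_\cons$; one then invokes Bhatt's theorem. Your suspicion about the shift is justified. Read literally with the BBDG normalization, Theorem~\ref{thm-akv} applied to $\bZ/p[d]$ on a smooth $X_C$ would give $D^{\geq 2d}$, which is too strong; the consistent reading, which the paper implicitly uses when it feeds in $R j_{\et,*}F[d]$, is ``${}^pD^{\geq 0}$ in, $D^{\geq 0}$ out'', and your observation $R j_{\et,*}F\in{}^pD^{\geq d}$ then gives exactly $D^{\geq d}$. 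One aside is backwards: a lower-dimensional component of $U$ would weaken the bound rather than improve it, but this is moot because $X_C$ has pure dimension $d$.
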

\begin{proof}
    Since $j: U \to X_C$ is an open immersion, by \cite[\aProp 2.1.6]{Beilinson/Bernstein/Deligne/Gabber:2018-FP-2}, $R j_{\et, *}$ is perverse $t$-left exact.  Therefore, $R j_{\et, *} F[d] \in {^p}D_\cons^{\geq 0}(X_C, \bZ / p^n)$, and we can apply Bhatt's theorem.
\end{proof}

We will need a version of this corollary \Qtn{completely on the generic fiber}.  Let us explain the arguments as follows.

\begin{construction}\label{constr-proet-ls}
    Let $X_C$ be an algebraic variety over $C$, and $j: U \Em X_C$ an open immersion.  Suppose $\levcp$ is a profinite group, and $\tilde{U}$ is a pro-\'etale $K$-torsor over $U$. Concretely, this means that, for every open normal subgroup $H$ of $K$, we have an \'etale $K / H$-torsor $U_H$ over $U$; and for open normal subgroups $H_1$ and $H_2$ of $K$ such that $H_1 \subset H_2$, we have a $K$-equivariant map $U_{H_1} \to U_{H_2}$ which satisfies usual compatibilities when having a tower of open normal subgroups.

    Let $\rV$ be a unitary $p$-adic Banach space representation of $K$.  Then the unit ball $\rV^\circ$ is $p$-adically complete and $K$-stable.  For $n \geq 1$, every vector of $\rV_n := \rV^\circ / p^n \rV^\circ$ is fixed by an open subgroup of $K$.  Hence, we can write
    \[
        \rV_n = \cup_{M \subset \rV_n} \, M,
    \]
    where $M$ runs over all finite $K$-stable subgroups of $\rV_n$, and such $M$'s form a direct system.  For each $M$, the action of $K$ on it factors through a finite quotient $K / H$.  By descending the constant \'etale sheaf associated with $M$ along the $K / H$-torsor $U_H \to U$, we obtain a locally constant \'etale sheaf $\etSh{M}$ on $U$, which is independent of the choice of $H$.  We define an \'etale sheaf
    \[
        \etSh{\rV_n} := \varinjlim_{M \subset \rV_n} \etSh{M},
    \]
    which we view as the \'etale local system associated with $\rV_n$.  Note that the open immersion $j: U \to X_C$ is quasi-compact.  By abuse of notation, let us define $\etSh{M}_{X_C} := R j_{\et, *} \, \etSh{M}$ \Pth{even though this is not a sheaf in general}, and define
    \[
        \etSh{\rV_n}_{X_C} := R j_{\et, *} \, \etSh{\rV_n} \cong \varinjlim_{M \subset \rV_n} R j_{\et, *} \, \etSh{M}.
    \]
    Then $\{ \etSh{\rV_n}_{X_C} \}_n$ form a projective system as $\{ \etSh{\rV_n} \}_n$ does.

    Let $\cX_\proet$ denote its pro-\'etale site, introduced in \cite{Scholze:2013-phtra}.  Let
    \[
        \eta_\proet: \cX_\proet \to \cX_\et \to X_{C, \et}
    \]
    denote the composition of natural projections of sites.  Still by abuse of notation, consider
    \begin{equation}\label{eq-constr-proet-ls-tor}
        \proetSh{\rV_n}_\cX := \eta_\proet^* \, \etSh{\rV_n}_{X_C},
    \end{equation}
    and consider the following derived limit
    \begin{equation}\label{eq-constr-proet-ls}
        \proetSh{\rV^\circ}_\cX := R \lim_n \proetSh{\rV_n}_\cX = R \lim_n \eta_\proet^* R j_{\et, *} \, \etSh{\rV_n}.
    \end{equation}

    Since $\rV^\circ$ is $p$-adically complete and torsionfree, it follows from the construction that there are exact sequences
    \begin{equation}\label{eq-constr-proet-ls-seq}
        0 \to \etSh{\rV_m} \Mapn{\cdot p^n} \etSh{\rV_{n + m}} \to \etSh{\rV_n} \to 0.
    \end{equation}
    By applying $\eta_\proet^* R j_{\et, *}$ to this, and by taking derived limit over $m$, we obtain
    \[
        \proetSh{\rV^\circ}_\cX / p^n \cong \proetSh{\rV_n}_\cX,
    \]
    for each $n \geq 1$, where $/ p^n$ is understood in the derived sense; \ie, $\otimes_{\bZ_p}^\bL (\bZ / p^n)$.
\end{construction}

\begin{remark}
    We emphasize that we are pushing forward along the algebraic open immersion $j$, which is quasi-compact, instead of the induced open immersion $\adic{j}$ between adic spaces, which is not quasi-compact in general.  Let $\eta: \cX_\et \to X_{C, \et}$ and $\eta_U: \cU_\et \to U_\et$ denote the canonical morphisms.  Since $\etSh{M}$ is constructible for each finite $M$, by Huber's comparison theorem \cite[\aThm 3.8.1]{Huber:1996-ERA}, we have
    \[
        \etSh{\rV_n}_\cX := \eta^* \etSh{\rV_n}_{X_C} = \eta^* R j_{\et, *} \, \etSh{\rV_n} \cong \varinjlim_{M \subset \rV_n} \eta^* R j_{\et, *} \, \etSh{M} = \varinjlim_{M \subset \rV_n} R \adic{j}_{\et, *} \eta_U^* \, \etSh{M}.
    \]
    Since $\adic{j}$ might not be quasi-compact and $\etSh{\rV_n}$ might not be constructible either, we do not know whether $\etSh{\rV_n}_\cX$ is equal to $R \adic{j}_{\et, *} \eta_U^* \, \etSh{\rV_n} \cong R \adic{j}_{\et, *} \varinjlim_{M \subset \rV_n} \eta_U^* \, \etSh{M}$.
\end{remark}

\begin{theorem}\label{thm-kv}
    Let $X_C$ be a normal projective variety over $C$ of dimension $d$, and $L_C$ a line bundle on $X_C$.  Suppose that $X_C$ and $L_C$ are defined over a finite extension of $\bQ_p$.  Let $U \subset X_C$ be a smooth open subvariety, $K$ a profinite group, and $\tilde{U}$ a pro-\'etale $K$-torsor of $U$ \Pth{as in Construction \ref{constr-proet-ls}}.  Assume that:
    \begin{equation}\label{eq-thm-kv-cond}
        \parbox{0.85\textwidth}{$L_C^m$ is globally generated, for some $m \in \bZ_{\geq 1}$ \Pth{\ie, $L_C$ is semiample}, whose global sections define a morphism $\phi_m: X_C \to \bP H^0(X_C, L_C^m)$; and the restriction $\phi_m|_U: U \to \bP H^0(X_C, L_C^m)$ is an immersion.}
    \end{equation}
    Then, for every unitary $p$-adic Banach space representation $\rV$ of $K$, we have
    \[
        R\Gamma(\cX_\proet, \proetSh{\rV^\circ}_\cX \ho_{\bZ_p} \eta_\cX^* L_C^{-1}) \in D^{\geq d}(C),
    \]
    where $\eta_\cX: (\cX_\proet, \widehat{\cO}_{\cX_\proet}) \to (X_C, \cO_{X_C})$ is the composition of the natural projection of ringed sites $(\cX_\proet, \widehat{\cO}_{\cX_\proet}) \to (\cX_\et, \cO_{\cX_\et})$ and $\eta$, and where $\ho_{\bZ_p}$ denotes the $p$-adically completed tensor product.  Then we can write:
    \[
        \proetSh{\rV^\circ}_\cX \ho_{\bZ_p} \eta_\cX^* L_C^{-1} \cong \bigl(R\lim_n(\proetSh{\rV^\circ}_\cX \otimes_{\bZ_p} (\widehat{\cO}_{\cX_\proet}^+ / p^n))\bigr) \otimes_{\widehat{\cO}_{\cX_\proet}^+} \eta_\cX^* L_C^{-1}.
    \]
\end{theorem}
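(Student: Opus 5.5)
We will reduce Theorem \ref{thm-kv} to Corollary \ref{cor-akv-lc} in three steps: pass to an ample line bundle via a cyclic cover, pass to an integral model, and then take limits over $M$ and $n$ before inverting $p$.

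\emph{Step 1: reduce to $L_C$ ample with $U \to X_C$ an open immersion.} Since $L_C$ is only semiample, we use $\phi_m$. Let $Y_C$ be the normalization of the image $\phi_m(X_C)$, so that $L_C^m = \phi_m^* \cO(1)$ descends to an ample line bundle $L_Y$ on $Y_C$. By \eqref{eq-thm-kv-cond}, $\phi_m|_U$ is an immersion, so (after possibly shrinking the target) $U$ embeds as a smooth open subvariety of $Y_C$, and $R\phi_{m,*} Rj_{\et,*} = Rj'_{\et,*}$ for $j': U \Em Y_C$. To handle $L_C$ rather than $L_C^m$, we use the standard cyclic cover. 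Take a general section $s$ of $L_C^m$ and form $X'_C \to X_C$, the normalized $m$-th root cover, on which the pullback of $L_C$ acquires a section whose $m$-th power is $s$. The pushforward of $\cO_{X'}$ then contains $L_C^{-1}$ as a direct summand (the $\Grpmu_m$-eigenspace), and this exhibits $R\Gamma(\ldots \otimes L_C^{-1})$ as a direct summand of a cohomology with $L_C^m$-type ampleness on the cover. Alternatively, since $\phi_m$ is finite over $\phi_m(U)$, we can use the Kummer étale trick of taking $L_C^{1/m}$. All choices are made over a finite extension of $\bQ_p$, as the hypothesis allows.

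\emph{Step 2: integral model and application of Bhatt.} Spread $Y_C$ and the ample bundle to a flat projective $\cO_C$-scheme $Y$ with ample $L$; this is possible by taking the closure in projective space over $\cO_C$. For each finite $M$, Corollary \ref{cor-akv-lc} gives $R\Gamma(\cX_\et, \eta^* Rj_* \etSh{M} \otimes \nu^*\widehat{L}^{-1}) \in D^{\geq d}(\cO_C)^a$. The étale cohomology on $\cX_\et$ commutes with filtered colimits because $\cX$ is qcqs. Taking $\varinjlim_M$, we obtain the same statement for $\etSh{\rV_n}_\cX$. Next we pass to the pro-étale site. By the primitive comparison theorem, the pullback of $\cO^+_{\cX_\et}/p$ and of $\nu^*\widehat{L}^{-1}$ to $\widehat{\cO}^+_{\cX_\proet}/p^n$ is almost compatible, so we get
\[
R\Gamma(\cX_\proet, \proetSh{\rV_n}_\cX \otimes \widehat{\cO}^+/p^n \otimes \eta_\cX^* L_C^{-1}) \in D^{\geq d}(\cO_C)^a.
\]

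\emph{Step 3: limit and inversion of $p$.} Apply $R\lim_n$, using the displayed identification in the statement. The complexes $R\Gamma$ commute with $R\lim$, and $R\lim$ of objects in $D^{\geq d}$ lies in $D^{\geq d}$ because $R^1\lim$ only raises degree. The resulting complex therefore lies in $D^{\geq d}(\cO_C)^a$. Inverting $p$ kills almost-zero objects, which gives $D^{\geq d}(C)$.

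\emph{Main obstacle.} The delicate step is the cyclic cover in Step 1. We must check that $U$ pulls back to a smooth open subvariety of the cover on which the local systems remain of the form $Rj_*$ of locally constant sheaves. Branching along $\{s=0\}$ destroys smoothness and the local-constancy there. The first thing we would try is to avoid the branch locus by choosing $s$ generically, and to absorb the branch divisor into the boundary: replace $U$ by $U \smallsetminus \{s=0\}$ and use that $Rj_*$ composes. The perverse left-exactness argument in Corollary \ref{cor-akv-lc} should then still apply.
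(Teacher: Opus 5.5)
There is a genuine gap in Step 1, at the point where the real content of the theorem lies: getting from the ample bundle to which $L_C^m$ descends back to $L_C^{-1}$ itself. The application in Theorem~\ref{thm-sc-van-m} needs the first power of a bundle of minimal weight, so a power is not enough.

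A single cyclic cover $\pi\colon X'\to X_C$ attached to one section $s$ of $L_C^m$ does not do this. Write $\mathcal{F}$ for your completed local system. The decomposition $\pi_*\cO_{X'}\cong\bigoplus_{i=0}^{m-1}L_C^{-i}$ exhibits $R\Gamma(X_C,\mathcal{F}\otimes L_C^{-1})$ as an eigen-summand of the \emph{untwisted} cohomology of $\pi^*\mathcal{F}$ on $X'$, which has no reason to vanish below degree $d$. The useful relation goes the other way. By Proposition~\ref{prop-cyc-cov-coh-inj}, $R\Gamma(X_C,\mathcal{F}\otimes L_C^{-1})$ is the $\Grpmu_m$-invariant part of $R\Gamma(X',\pi^*\mathcal{F}\otimes\pi^*L_C^{-1})$. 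That only helps if $\pi^*L_C$ is better positioned than $L_C$, and one new section $s'$ does not achieve this: still only the $m$-th power is known to be globally generated with the immersion property.

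The missing idea is the Bloch--Gieseker cover of Lemma~\ref{lem-BG-cov}.
\begin{enumerate}
\item Take the fibre product $Y$ of the cyclic covers attached to an entire basis $s_0,\dots,s_N$ of $H^0(X_C,L_C^m)$. Choose the basis by Bertini so that $Y$ is reduced and $\pi^{-1}(U)$ is smooth.
\item The roots $s_0',\dots,s_N'$ then generate $\pi^*L_C$. The resulting $\phi'\colon Y\to\bP^N$ satisfies $\nu_m\circ\phi'=\phi_m\circ\pi$, hence is quasi-finite on $\pi^{-1}(U)$.
\item The Stein factorization $Y\to Z\to\bP^N$ gives an open immersion $\pi^{-1}(U)\hookrightarrow Z$, and $\pi^*L_C$ descends to $g^*\cO(1)$ for a finite $g\colon Z\to\bP^N$.
\item Apply Bhatt's theorem on $Z$ and transfer it to $Y$ via the primitive comparison theorem.
\item Descend to $X_C$ through the $N+1$ successive cyclic covers using Proposition~\ref{prop-cyc-cov-coh-inj}.
\end{enumerate}

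Your diagnosis of the obstacle is also off. Branching does not destroy local constancy: the pullback of a locally constant sheaf to $\pi^{-1}(U)$ is locally constant. The descent step only needs $(R\pi^{\prime}_{*}\pi^{\prime *}F)^{\Grpmu_m}\cong F$, which holds because $\Grpmu_m$ acts transitively on the fibres. Smoothness of $\pi^{-1}(U)$ is secured by transversality of the chosen sections. Your proposed fix of replacing $U$ by $U\smallsetminus\{s=0\}$ changes $Rj_{\et,*}$ along $\{s=0\}$, so you would be proving vanishing for a different complex than the one in the theorem.

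A smaller issue is in Step 2. Taking the closure in projective space only extends a very ample power of the line bundle, which reintroduces the same power problem. The paper instead uses that the bundle is $g^*\cO(1)$ for a finite $g$ defined over a finite extension $E$ of $\bQ_p$. It takes the normalization of $\bP^N_{\cO_E}$ in $Z_E$, which is finite because $\bP^N_{\cO_E}$ is Nagata, so $\cO(1)$ pulls back to an ample extension of the bundle itself.
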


The assumption \Refeq{\ref{eq-thm-kv-cond}} in Theorem \ref{thm-kv} holds if $L_C$ is ample.  The essential improvement here is that $L_C$ is no longer assumed to be extendable to a \Pth{semiample} line bundle on some integral model of $X_C$ \Pth{over $\cO_C$}.  The rest of this subsection will be devoted to the proof of this theorem.  Unsurprisingly, we will use some technique of cyclic coverings to reduce it to Bhatt's result.

\begin{construction}[{\Refcf{} \cite[\aProp 4.1.6]{Lazarsfeld:2004-PAG-1}}]\label{constr-cyc-cov}
    Let $X$ be a variety over a field of characteristic zero, and $L$ a line bundle on $X$.  Suppose $m \in \bZ_{\geq 1}$, and $s \in \Gamma(X, L^m)$ is a nonzero section, which defines a divisor $D \subset X$.  Then there exists a finite flat covering $\pi: Y \to X$ of degree $m$, where $Y$ is a scheme such that $L' := \pi^* L$ admits a section $s' \in \Gamma(Y, L')$ satisfying $(s')^m = \pi^*(s)$.  Explicitly, $Y$ is the relative spectrum of the quotient of the quasi-coherent $\cO_X$-algebra
    \[
        \Sym_{\cO_X}(L^{-1}) := \oplus_{i \in \bZ_{\geq 0}} L^{-i} = \cO_X \oplus L^{-1} \oplus L^{-2} \oplus \cdots
    \]
    by the $\cO_X$-ideal generated by $s(x) - x$, for all $x \in L^{-m}$, where $s$ is viewed as a morphism $L^{-m} \to \cO_X$.  By construction, there is a natural isomorphism
    \[
        \pi_* \cO_Y \cong \cO_X \oplus L^{-1} \oplus L^{-2} \oplus \cdots \oplus L^{-m + 1}
    \]
    of $\cO_X$-modules.  Moreover, there is a natural action of $\Grpmu_m$ on $Y$, defined by
    \[
        u \cdot a = u^i a,
    \]
    for all $u \in \Grpmu_m$ and $a \in L^i$, for all $i = 0, -1, \ldots, -m + 1$.  Clearly, $\Grpmu_m$ acts transitively on each fiber of $\pi$.  If $X$ and $D$ are nonsingular, then so is $Y$.
\end{construction}

The following proposition roughly says that it suffices to prove Theorem \ref{thm-kv} after passing to a cyclic cover.

\begin{proposition}\label{prop-cyc-cov-coh-inj}
    Same setup as in Theorem \ref{thm-kv}.  Let $m \in \bZ_{\geq 1}$ and $0 \neq s \in \Gamma(X_C, L_C^m)$, so that we have a cyclic covering $\pi: Y \to X_C$ as in Construction \ref{constr-cyc-cov}.  Let $\cY$ denote the adic space associate with $Y$, and let $\adic{\pi}: \cY \to \cX$ denote the induced morphism.  Let $U' := \pi^{-1}(U)$, and let $j': U' \to Y$ denote the canonical open immersion.  \Pth{We shall use superscripts $'$ to denote objects on $Y$.}  By forming fiber products, the pro-\'etale $K$-torsor $\tilde{U}$ over $U$ pulls back to a pro-\'etale $K$-torsor $\tilde{U}'$ over $U'$.  Let $\etSh{\rV_n}$ over $U_\et$ and $\proetSh{\rV^\circ}_\cX \cong R\lim_n \proetSh{\rV_n}_\cX$ over $\cX_\proet$ be as in Construction \ref{constr-proet-ls}.  By applying Construction \ref{constr-proet-ls} to $j'$, $\tilde{U}'$, and $\rV' = \rV$ instead, we obtain $\etSh{\rV_n'}$ over $U_\et'$ and $\proetSh{\rV^{\prime \circ}}_\cY = R\lim_n \proetSh{\rV_n'}_\cY$ over $\cY_\proet$.  Then $\Grpmu_m$ naturally acts on $\proetSh{\rV^{\prime \circ}}_\cY$ and $\pi^* L_C^{-1}$; and there is a natural isomorphism
    \[
        H^i(\cX_\proet, \proetSh{\rV^\circ}_\cX \ho_{\bZ_p} \eta_\cX^* L_C^{-1}) \cong H^i(\cY_\proet, \proetSh{\rV^{\prime \circ}}_\cY \ho_{\bZ_p} \adic{\pi}_\proet^* \eta_\cX^* L_C^{-1})^{\Grpmu_m},
    \]
    for each $i \geq 0$.  In particular, if $H^i(\cY_\proet, \proetSh{\rV^{\prime \circ}}_\cY \ho_{\bZ_p} \adic{\pi}_\proet^* \eta_\cX^* L_C^{-1}) = 0$, then $H^i(\cX_\proet, \proetSh{\rV^\circ}_\cX \ho_{\bZ_p} \eta_\cX^* L_C^{-1}) = 0$.
\end{proposition}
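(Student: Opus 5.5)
The plan is to realize the left-hand side as the $\Grpmu_m$-invariants of the cohomology upstairs, by a projection-formula argument along the finite map $\adic{\pi}$. The key identity I aim to establish is
\[
    R\adic{\pi}_{\proet, *}\bigl(\proetSh{\rV^{\prime \circ}}_\cY \ho_{\bZ_p} \adic{\pi}_\proet^* \eta_\cX^* L_C^{-1}\bigr) \cong \bigl(R\adic{\pi}_{\proet, *} \proetSh{\rV^{\prime \circ}}_\cY\bigr) \ho_{\bZ_p} \eta_\cX^* L_C^{-1},
\]
together with the identification of $\proetSh{\rV^\circ}_\cX$ with a $\Grpmu_m$-equivariant direct summand of $R\adic{\pi}_{\proet, *} \proetSh{\rV^{\prime \circ}}_\cY$ (after inverting $m$, which is harmless since cohomology is taken in $D(C)$). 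Since $\Grpmu_m$ is a finite group of order invertible in $C$, taking invariants is exact on $C$-vector spaces. The final statement then follows from $H^i(\cX,\cdot) \cong H^i(\cY,\cdot)^{\Grpmu_m}$.

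First I define the $\Grpmu_m$-actions. The group $\Grpmu_m$ acts on $Y$ over $X_C$ (Construction \ref{constr-cyc-cov}), and preserves $U'$. It also acts on $\tilde{U}' = \tilde{U} \times_U U'$ through the second factor, commuting with $K$. Hence it acts on each $\etSh{\rV_n'} = \pi|_{U'}^* \etSh{\rV_n}$, on $R j'_{\et, *} \etSh{\rV_n'}$, and so on $\proetSh{\rV^{\prime \circ}}_\cY$. It acts on $\pi^* L_C^{-1}$ tautologically, because this is a pullback. Here I would take $\Grpmu_m \subset C^\times$ as a constant finite group, which is legitimate since $C$ is algebraically closed.

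Next, at finite level $n$, I compare. Since $\pi$ is finite, $\pi_{\et, *}$ is exact, and proper base change gives
\[
    \pi_{\et, *} R j'_{\et, *} \pi|_{U'}^* \etSh{\rV_n} = R j_{\et, *} (\pi|_{U'})_* \pi|_{U'}^* \etSh{\rV_n}.
\]
The trace map $(\pi|_{U'})_* \pi|_{U'}^* \to \Id$ composed with the adjunction $\Id \to (\pi|_{U'})_* \pi|_{U'}^*$ is multiplication by $m$, once we pass to $\bZ[1/m]$ or work on the dense open set where $\pi$ is étale. To avoid dividing by $m$ integrally, I would instead use that the $\Grpmu_m$-invariants of $(\pi|_{U'})_* \pi|_{U'}^* \etSh{M}$ recover $\etSh{M}$ only up to $m$-torsion, and rely on the fact that these discrepancies are killed by $m$ and hence vanish in the final rational statement. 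On the adic side, $\adic{\pi}$ is finite, so $\adic{\pi}_{\proet, *}$ commutes with $\eta_\proet^*$ (again by finiteness and base change). The projection formula holds because $L_C^{-1}$ is locally free, and it passes to $R\lim_n$ and to the completed tensor product with $\widehat{\cO}^+$ termwise.

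The main obstacle is controlling the integral torsion and the derived limits so as to conclude an isomorphism after inverting $p$. When $p \mid m$ the trace argument only yields $\proetSh{\rV^\circ}_\cX$ as a summand up to bounded $m$-torsion. I would handle this by working in the isogeny category: compositions $m\cdot\Id$ factor through the invariant part, and this suffices for an isomorphism of $C$-vector spaces $H^i(\cX,\cdot)[1/p] \cong H^i(\cY,\cdot)[1/p]^{\Grpmu_m}$. Here $H^i$ is understood rationally, as in Theorem \ref{thm-kv}, and invariants commute with cohomology because the averaging idempotent $\frac{1}{m}\sum_u u$ exists over $C$. The last sentence of the proposition is then immediate.
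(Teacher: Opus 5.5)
Your outline (projection formula along the finite map $\adic{\pi}$, identifying the downstairs coefficients with the $\Grpmu_m$-invariants of the pushforward, exactness of invariants over $C$) matches the paper's. However, the step you call the key identity does not follow from the projection formula. For the locally free $L_C^{-1}$, the projection formula only gives
\[
    R\adic{\pi}_{\proet, *}\bigl(\proetSh{\rV^{\prime \circ}}_\cY \ho_{\bZ_p} \adic{\pi}_\proet^* \eta_\cX^* L_C^{-1}\bigr) \cong R\adic{\pi}_{\proet, *}\bigl(\proetSh{\rV^{\prime \circ}}_\cY \ho_{\bZ_p} \widehat{\cO}_{\cY_\proet}\bigr) \otimes_{\widehat{\cO}_{\cX_\proet}} \eta_\cX^* L_C^{-1}.
\]
To pass from this to $(R\adic{\pi}_{\proet, *} \proetSh{\rV^{\prime \circ}}_\cY) \ho_{\bZ_p} \eta_\cX^* L_C^{-1}$, you need $R\adic{\pi}_*$ to commute with tensoring by the completed structure sheaves. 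Concretely, $(R\adic{\pi}_{\et, *} \mathbb{F}) \otimes_{\bF_p} (\cO_{\cX_\et}^+ / p) \to R\adic{\pi}_{\et, *}(\mathbb{F} \otimes_{\bF_p} (\cO_{\cY_\et}^+ / p))$ must be an (almost) isomorphism for the relevant constructible $\bF_p$-sheaves $\mathbb{F}$ on $\cY_\et$. This is not formal: $\cO_{\cY_\et}^+/p$ is not by definition the abelian-sheaf inverse image $\adic{\pi}^{-1}(\cO_{\cX_\et}^+/p)$, so the \'etale projection formula alone does not give it. This is exactly where the paper invokes the primitive comparison theorem for the finite morphism $\adic{\pi}$ (Scholze's almost version, or Zavyalov's exact one). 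Before that, it reduces via $R\lim_n$ and filtered colimits to finite $\bF_p$-coefficients, and via Scholze's Cor.~3.17 from the pro-\'etale to the \'etale site. Your phrase that the identity ``passes to $R\lim_n$ and to the completed tensor product with $\widehat{\cO}^+$ termwise'' hides this main input. For finite $\adic{\pi}$ one could also check it by hand (exactness of $\adic{\pi}_{\et, *}$, the \'etale projection formula, and a stalkwise comparison of $\adic{\pi}^{-1}(\cO_{\cX}^+/p)$ with $\cO_{\cY}^+/p$), but some such argument must be given.

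The identification with invariants is also loose. For the finite locally free $\pi' := \pi|_{U'}$, the trace $\mathrm{tr}: \pi'_{\et, *} \pi_\et^{\prime *} \to \Id$ exists integrally with $\mathrm{tr} \circ \mathrm{adj} = m$. So neither passing to $\bZ[1/m]$ nor restricting to the \'etale locus is needed, and the latter would be illegitimate anyway before applying $R j_{\et, *}$. But $\mathrm{tr} \circ \mathrm{adj} = m$ only shows, after inverting $p$, that the adjunction map is split injective. To see that its image is exactly the $\Grpmu_m$-invariant part, you also need $\mathrm{adj} \circ \mathrm{tr} = \sum_{u \in \Grpmu_m} u$, checked stalkwise, including at the branch points where the fibre is a single point. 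The paper bypasses the trace. Since $\pi'_{\et, *} \pi_\et^{\prime *} \etSh{\rV_1} \cong (\pi'_{\et, *} \bF_p) \otimes_{\bF_p} \etSh{\rV_1}$, it suffices that $\bF_p \to (\pi'_{\et, *} \bF_p)^{\Grpmu_m}$ is an isomorphism, which holds because $\Grpmu_m$ acts transitively on the fibres of $\pi$. Once the comparison in the first paragraph is established, either route finishes after inverting $p$.
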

\begin{proof}
    By the projection formula, we have
    \[
    \begin{split}
        & H^i(\cY_\proet, \proetSh{\rV^{\prime \circ}}_\cY \ho_{\bZ_p} \adic{\pi}_\proet^* \eta_\cX^* L_C^{-1}) \\
        & \cong H^i(\cX_\proet, R \adic{\pi}_{\proet, *}(\proetSh{\rV^{\prime \circ}}_\cY \ho_{\bZ_p} \adic{\pi}_\proet^* \eta_\cX^* L_C^{-1})) \\
        & \cong H^i(\cX_\proet, R \adic{\pi}_{\proet, *}(\proetSh{\rV^{\prime \circ}}_\cY \ho_{\bZ_p} \widehat{\cO}_{\cY_\proet}) \otimes_{\widehat{\cO}_{\cX_\proet}} \eta_\cX^* L_C^{-1}).
    \end{split}
    \]
    Since taking $\Grpmu_m$-invariants is exact over characteristic zero base fields, and it suffices to show that the canonical morphism
    \[
         \proetSh{\rV^\circ}_\cX \ho_{\bZ_p} \widehat{\cO}_{\cX_\proet} \to \bigl(R \adic{\pi}_{\proet, *}(\proetSh{\rV^{\prime \circ}}_\cY \ho_{\bZ_p} \widehat{\cO}_{\cY_\proet})\bigr)^{\Grpmu_m}
    \]
    is an isomorphism.  Let us also abusively denote by $(\,\cdot\,)^{\Grpmu_m}$ the operation of taking derived invariants, even when the coefficients are not over characteristic zero fields.  Since $\pi$ is proper, so is the associated morphism $\adic{\pi}$ of adic spaces, and $R \adic{\pi}_{\proet, *}$ commutes with inverting $p$.  Therefore, it suffices to prove that the integral version
    \[
        \proetSh{\rV^\circ}_\cX \ho_{\bZ_p} \widehat{\cO}_{\cX_\proet}^+ \to \bigl(R \adic{\pi}_{\proet, *}(\proetSh{\rV^{\prime \circ}}_\cY \ho_{\bZ_p} \widehat{\cO}_{\cY_\proet}^+)\bigr)^{\Grpmu_m}
    \]
    is an \emph{almost isomorphism}, which then implies the above assertion by inverting $p$.

    Since $R \adic{\pi}_{\proet, *}$ commutes with derived limits by \cite[\href{https://stacks.math.columbia.edu/tag/0A07}{Tag 0A07}]{Stacks-Project}, since $\proetSh{\rV^\circ}_\cX = R\lim_n \proetSh{\rV_n}_\cX$ and $\proetSh{\rV^{\prime \circ}}_\cY = R\lim_n \proetSh{\rV_n'}_\cY$ by construction, and since $\proetSh{\rV_n}_\cX$ and $\proetSh{\rV_n'}_\cY$ are filtered by copies of $\proetSh{\rV_1}_\cX$ and $\proetSh{\rV_1'}_\cY$, respectively, it suffices to show that the canonical morphism
    \[
        \proetSh{\rV_1}_\cX \otimes_{\bF_p} (\widehat{\cO}_{\cX_\proet}^+ / p) \to \bigl(R \adic{\pi}_{\proet, *}(\proetSh{\rV_1'}_\cY \otimes_{\bF_p} (\widehat{\cO}_{\cY_\proet}^+ / p))\bigr)^{\Grpmu_m}
    \]
    is an almost isomorphism.  Since both $R \adic{\pi}_{\proet, *}$ and tensor products commute with filtered colimits because $\pi$ is proper, by writing $\rV_1' = \rV_1$ as a union of $K$-stable finite-dimensional $\bF_p$-subspaces, we may assume from now on that $\rV_1' = \rV_1$ is finite.

    Let $\lambda: \cX_\proet \to \cX_\et$ denote the canonical projection of sites, and let $\cO_{\cX_\et}^+ / p$ denote the mod $p$ structure sheaf on $\cX_\et$.  Then we have $\proetSh{\rV_1}_\cX \cong \lambda^* \, \etSh{\rV_1}_\cX$ and $\widehat{\cO}_{\cX_\proet}^+ / p \cong \lambda^*(\cO_{\cX_\et}^+ / p)$, by \cite[\aLem 4.2(iii)]{Scholze:2013-phtra}.  Similar statements hold over $\cY$.  By \cite[\aCor 3.17]{Scholze:2013-phtra}, it suffices to show that the \'etale version
    \[
        \etSh{\rV_1}_\cX \otimes_{\bF_p} (\cO_{\cX_\et}^+ / p) \to \bigl(R \pi_{\et, *}(\etSh{\rV_1'}_\cY \otimes_{\bF_p} (\cO_{\cY_\et}^+ / p))\bigr)^{\Grpmu_m}
    \]
    is an almost isomorphism.  Let $\eta': \cY_\et \to Y_\et$ denote the canonical projection of sites.  By construction, $\etSh{\rV_1'}_\cY \cong \eta^{\prime *} \, \etSh{\rV_1'}_Y$, where $\etSh{\rV_1'}_Y := R j_{\et, *}' \, \etSh{\rV_1'}$ is a constructible $\bF_p$-sheaf on $Y_\et$ by our assumption that $\rV_1' = \rV_1$ is finite.  Since $\pi$ is finite, by the primitive comparison theorem \cite[\aThm 3.13]{Scholze:2013-pss} \Pth{although the almost version is sufficient for our purpose, see also \cite[\aCor 7.2.9]{Zavyalov:2025-ACS} for a non-almost version in this case}, the canonical morphism
    \[
        (R \adic{\pi}_{\et, *} \, \etSh{\rV_1'}_\cY) \otimes_{\bF_p} (\cO_{\cX_\et}^+ / p) \to R \adic{\pi}_{\et, *}(\etSh{\rV_1'}_\cY \otimes_{\bF_p} (\cO_{\cY_\et}^+ / p))
    \]
    is an almost isomorphism.  Thus, it remains to show that the canonical morphism
    \[
        \etSh{\rV_1}_\cX \to (R \adic{\pi}_{\et, *} \etSh{\rV_1'}_\cY)^{\Grpmu_m}
    \]
    is an isomorphism.  Write $\pi' = \pi|_{U'}: U' \to U$, so that $\pi \circ j' = j \circ \pi': U' \to X_C$.  By Huber's comparison theorem \Pth{see \cite[\aProp 2.1.4 and \aThm 3.8.1]{Huber:1996-ERA}},
    \[
        R \adic{\pi}_{\et, *} \etSh{\rV_1'}_\cY = R \adic{\pi}_{\et, *} \eta^{\prime *} R j_{\et, *}' \, \etSh{\rV_1'} \cong \eta^* R \pi_{\et, *} R j_{\et, *}' \, \etSh{\rV_1'} \cong \eta^* R j_{\et, *} R \pi_{\et, *}' \, \etSh{\rV_1'}.
    \]
    Since $\etSh{\rV_1}_\cX = \eta^* R j_{\et, *} \, \etSh{\rV_1}$, it suffices to show that the canonical morphism
    \[
        \etSh{\rV_1} \to (R \pi_*' \, \etSh{\rV_1'})^{\Grpmu_m}
    \]
    over $U_\et$ is an isomorphism.  By construction, $\rV' = \rV$ and $\etSh{\rV_1'} \cong \pi_\et^{\prime *} \, \etSh{\rV_1}$.  Let $\etSh{\bF_p}_U$ \Pth{\resp $\etSh{\bF_p}_{U'}$} denote the constant sheaves associated with $\bF_p$ over $U$ \Pth{\resp $U'$}.  Since $\pi'$ is finite, by \cite[\href{https://stacks.math.columbia.edu/tag/0A4K}{Tag 0A4K}]{Stacks-Project} and the projection formula \cite[\href{https://stacks.math.columbia.edu/tag/0GL5}{Tag 0GL5}]{Stacks-Project}, $(\pi_{\et, *}' \, \etSh{\bF_p}_{U'}) \otimes_{\bF_p} \etSh{\rV_1} \cong \pi_{\et, *}' \pi_\et^{\prime *} \, \etSh{\rV_1} \cong R \pi_{\et, *}' \pi_\et^{\prime *} \, \etSh{\rV_1}$.  Thus, to complete the proof, it suffices to note that the canonical morphism $\etSh{\bF_p}_U \to \pi_{\et, *}' \, \etSh{\bF_p}_{U'}$ is an isomorphism, because $\Grpmu_m$ acts transitively on each fiber of $\pi$, by construction.
\end{proof}

\begin{proof}[Proof of Theorem \ref{thm-kv}]
    First assume the following:
    \begin{equation}\label{eq-thm-kv-very-amp}
        \parbox{0.85\textwidth}{There is a finite morphism $\phi: X_C \to \bP_C^N$ such that $\phi^* \cO_{\bP_C^N}(1) \cong L_C$.}
    \end{equation}
    We claim that there exists a finitely presented flat $\cO_C$-integral model $X$ of $X_C$, and $L_C$ can be extended to an ample line bundle $L$ on $X$.  To see this, since $X_C$ and $L_C$ are defined over a finite extension $E$ of $\bQ_p$, we may assume that $X_C \cong X_E \otimes_E C$ for some variety $X_E$ over $E$, and that $L_C$ comes from an ample line bundle $L_E$ over $X_E$.  It suffices to produce integral models of $X_E$ and $L_E$ \Pth{over $\cO_E$}.

    It follows from our assumption that there is finite morphism $\phi_E: X_E \to \bP_E^N$.  Consider the standard integral model of $\bP^N_{\cO_E}$ of $\bP^N_E$, and take its normalization $X_{\cO_E}$ in $X_E$, as in \cite[\href{https://stacks.math.columbia.edu/tag/035H}{Tag 035H}]{Stacks-Project}.  Since $X_E$ is reduced and $\bP^N_{\cO_E}$ is Nagata \Pth{by \cite[\href{https://stacks.math.columbia.edu/tag/0335}{Tag 0335} and \href{https://stacks.math.columbia.edu/tag/033S}{Tag 033S}]{Stacks-Project}}, the natural morphism $\bar{\phi}: X_{\cO_E} \to \bP_{\cO_E}^N$ is finite \Pth{by \cite[\href{https://stacks.math.columbia.edu/tag/03GH}{Tag 03GH}]{Stacks-Project}}.  In particular, the pullback $L_{\cO_E} := \bar{\phi}^* \cO_{\bP^N_{\cO_E}}(1)$ is ample on $X_{\cO_E}$ and extends $L_E$.  Then we can take $X := X_{\cO_E} \otimes_{\cO_E} C$ and $L := L_{\cO_E} \otimes_{\cO_E} C$.

    Recall that, in the beginning of the section, there are the morphism of ringed sites $\eta': (\cX, \cO_\cX) \to (X_C, \cO_{X_C})$ defined by taking the support of a valuation, and the nearby cycles map $\nu': (\cX, \cO_\cX^+) \to (\widehat{X}, \cO_{\widehat{X}})$ defined by taking the center of a valuation.  Then there is a natural isomorphism
    \begin{equation}\label{eq-isom-inv-sh-cmpl}
        \nu^{\prime *} \widehat{L} \otimes_{\cO_{\cX}^+} \cO_{\cX} \cong \eta^{\prime *} L_C
    \end{equation}
    of invertible $\cO_\cX$-modules on $\cX$.  Indeed, the case of $X = \bP_{\cO_C}^N$ and $L = \cO_X(1)$ can be checked directly.  The general case follows by pullback along $\bar{\phi}$.

    As before, write $\rV_n = \cup_{M \subset \rV_n} \, M$ as a union of finite $K$-stable subgroups.  By Corollary \ref{cor-akv-lc}, for each such $M$, we have
    \[
        R\Gamma(\cX_\et, (\eta^* R j_{\et, *} \, \etSh{M}) \otimes_{\bZ_p} \nu^* \widehat{L}^{-1}) \in D^{\geq d}(\cO_C)^a.
    \]
    By taking the directed limit over all such $M$ and noting that both functors $\eta^*$ and $R j_{\et, *}$ commute with filtered colimits, we obtain
    \[
        R\Gamma(\cX_\et, (\eta^* R j_{\et, *} \, \etSh{\rV_n}) \otimes_{\bZ_p} \nu^* \widehat{L}^{-1}) \in D^{\geq d}(\cO_C)^a.
    \]
    Recall that $\proetSh{\rV^\circ}_\cX = R\lim_n \proetSh{\rV_n}_\cX$, and each $\proetSh{\rV_n}_\cX$ is the pullback of $\eta^* R j_{\et, *} \, \etSh{\rV_n}$ via $\lambda: \cX_\proet \to \cX_\et$ \Pth{see \Refeq{\ref{eq-constr-proet-ls-tor}} and \Refeq{\ref{eq-constr-proet-ls}}}.  By \cite[\aCor 3.17]{Scholze:2013-phtra} and the above, and by taking derived limit over $n$, we obtain
    \[
        R\Gamma(\cX_\proet, \proetSh{\rV^\circ}_\cX \ho_{\bZ_p} \nu_\proet^* \widehat{L}^{-1}) \in D^{\geq d}(\cO_C)^a,
    \]
    where $\nu_\proet := \lambda \circ \nu: (\cX_\proet, \widehat{\cO}_{\cX_\proet}^+) \to (\widehat{X}, \cO_{\widehat{X}})$.  The pullback of the isomorphism \Refeq{\ref{eq-isom-inv-sh-cmpl}} to the pro-\'etale site gives $\nu_\proet^* \widehat{L}^{-1}[\frac{1}{p}] \cong \eta_\cX^* L_C^{-1}$.  Since $\cX$ is quasi-compact,
    \[
        R\Gamma(\cX_\proet, \proetSh{\rV^\circ}_\cX \ho_{\bZ_p} \eta_\cX^* L_C^{-1}) \cong R\Gamma(\cX_\proet, \proetSh{\rV^\circ}_\cX \ho_{\bZ_p} \nu_\proet^* \widehat{L}^{-1})[\tfrac{1}{p}]
    \]
    belongs to $D^{\geq d}(C)$.

    In general, we will use the following lemma to put ourselves in the situation where $\Gamma(X_C, L_C)$ defines a finite morphism of $X_C$ to $\bP_C^N$, for some $N \geq 0$.

    \begin{lemma}\label{lem-BG-cov}
        Let $X$ be a projective variety over a field $E$ of characteristic zero, and $U \subset X$ a smooth open subvariety.  Let $L$ be a line bundle on $X$ such that $L^m$ is globally generated and $\phi_m|_U: U \to \bP H^0(X, L^m)$ is an immersion, for some $m > 0$.  Then there exists a basis $s_0, \cdots, s_N$ of $\Gamma(X, L^m)$ such that, if we denote by $\pi_i: Y_i \to X$ the cyclic cover constructed from $s_i$, as in Construction \ref{constr-cyc-cov}, for $i = 0, \cdots, N$, and by $\pi: Y := Y_0 \times_X Y_1 \times_X \cdots \times_X Y_N \to X$ the fiber product of all $\pi_i$, then $\pi^{-1}(U)$ is smooth and $Y$ is reduced \Pth{\ie, $Y$ is a variety}.
    \end{lemma}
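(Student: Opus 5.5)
The plan is to choose the basis $s_0, \ldots, s_N$ generically, so that the divisors $D_i = \{s_i = 0\}$ restrict to a simple normal crossings configuration on $U$. Smoothness of $\pi^{-1}(U)$ then reduces to a local computation. Reducedness of $Y$ follows from flatness of $\pi$ together with the fact that $Y$ is generically étale over $X$.

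\emph{Step 1: choice of basis.} The linear system $|L^m|$ is base point free and $\phi_m|_U : U \to \bP H^0(X, L^m) = \bP^N$ is an immersion, so the divisors of sections of $L^m$ on $U$ are exactly the pullbacks of hyperplanes under an immersion of the smooth variety $U$. Bertini's theorem in characteristic zero, applied to the smooth quasi-projective variety $\phi_m(U)$ and iterated over subsets, gives the following. For a general $(N+1)$-tuple of hyperplanes $H_0, \ldots, H_N$, and for every subset $I \subset \{0, \ldots, N\}$, the intersection $U \cap \bigcap_{i \in I} \phi_m^{-1}(H_i)$ is smooth of codimension $|I|$ (empty if $|I| > \dim U$). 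Equivalently, the $D_i|_U$ form a reduced SNC divisor. The tuples satisfying this form a dense open condition, as does linear independence. So we may pick $s_0, \ldots, s_N$ with both properties; in particular each $s_i \neq 0$. The only point to check is that Bertini applies to an immersion rather than a closed embedding. This is harmless, since we work on the locally closed smooth image.

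\emph{Step 2: smoothness over $U$.} Let $x \in U$, trivialize $L$ near $x$, and write $s_i = f_i$. Locally, $Y = \Spec \cO_X[t_0, \ldots, t_N]/(t_i^m - f_i)_i$. Let $I = \{ i : f_i(x) = 0 \}$. For $i \notin I$, $f_i$ is a unit, so in characteristic zero adjoining $t_i$ is étale. For $i \in I$, Step 1 says the $f_i$ are part of a regular system of parameters of $\cO_{U,x}$. Adjoining $m$-th roots of part of a coordinate system gives a smooth scheme: after an étale change of coordinates, it is locally $\bA^n$ with coordinates $f_i$ replaced by $t_i$. Hence $\pi^{-1}(U)$ is smooth.

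\emph{Step 3: reducedness.} Each $\pi_i$ is finite flat, so $\pi$ is finite flat. For a flat morphism, every associated point of $Y$ maps to an associated point of $X$. Since $X$ is a variety, its only associated point is the generic point $\xi$, so every associated point of $Y$ lies in the generic fiber $\pi^{-1}(\xi)$. Over $\xi$ all $s_i$ are nonzero, so $Y_\xi$ is a product of Kummer extensions in characteristic zero. It is therefore étale over $\kappa(\xi)$, hence reduced. A Noetherian scheme that is reduced at its associated points and has no embedded points is reduced, so $Y$ is reduced. I expect Step 1, the careful Bertini argument for all subsets simultaneously, to be the main technical point; Steps 2 and 3 are routine.
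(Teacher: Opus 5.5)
Your proposal is correct. Step 1 is the same iterated-Bertini choice that the paper makes: its second bullet is exactly your SNC condition on $U$, and choosing the $s_i$ one at a time, as the paper does, is the cleanest way to justify your ``dense open'' claim for tuples. Step 2 is the same local computation as the paper's. The paper checks directly that the images of $t_1, \ldots, t_l$ and $x_i$ ($i \in I$) form a basis of $\mathfrak{m}/\mathfrak{m}^2$; you phrase the same fact through étale coordinates.

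The genuine difference is Step 3. The paper proves reducedness by hand. After adjoining $m$-th roots of unity, it decomposes $B = A[x_0, \ldots, x_N]/(x_0^m - s_0, \ldots, x_N^m - s_N)$ into $\Grpmu_m^{N+1}$-eigenspaces $A \cdot x_0^{i_0} \cdots x_N^{i_N}$. A nilpotent eigenvector $a x_0^{i_0} \cdots x_N^{i_N}$ then has $m$-th power $a^m s_0^{i_0} \cdots s_N^{i_N}$, which is nilpotent in $A$, and this forces $a = 0$. You instead use that $\pi$ is finite flat, so every associated point of $Y$ lies over an associated point of $X$, where the fiber is a tensor product of étale Kummer algebras. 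This is the general principle that a finite flat, generically étale cover of a reduced scheme is reduced. It avoids the base field extension and the group action, and it does not depend on the explicit grading of $B$. The paper's argument is more elementary and self-contained.

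Both arguments need the same input: each $s_i$ must be a nonzerodivisor, i.e., nonvanishing at every generic point of $X$. For integral $X$ this is your ``$s_i \neq 0$''. However, the lemma's parenthetical equating ``reduced'' with ``variety'' suggests varieties may be reducible, and the application is to possibly disconnected toroidal compactifications. In that case $X$ has several associated points, and your claim that $\xi$ is the only one fails. Add to Step 1 the condition that no $s_i$ vanishes identically on any irreducible component of $X$; the paper's first Bertini bullet plays this role. Global generation of $L^m$ makes each such condition the complement of a proper hyperplane in $H^0(X, L^m)$, so the condition is harmless. With it, Step 3 goes through verbatim with the generic points of $X$ in place of $\xi$.
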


    Assuming this lemma, let us finish the proof of Theorem \ref{thm-kv}.  By assumption, $X_C$ and $L_C$ are defined over a finite extension $E$ of $\bQ_p$.  Let $m$ be as in the theorem, so that we have global sections $s_0, \cdots, s_N$ of $L_C^m$ defined over $E$, and we can construct $Y_0, \cdots, Y_N$ and $\pi: Y \to X_C$ as in the lemma.  Let $L_C' = \pi^* L_C$. By Construction \ref{constr-cyc-cov}, $L_C'$ carries global sections $s_0', \ldots, s_N'$ defined over $E$ such that $(s_i')^m = \pi^* s_i$, for $i= 0, \ldots, N$.  In particular, $s_0', \ldots, s_N'$ induces a morphism $\phi': Y \to \bP_C^N$ which fits into the commutative diagram
    \[
        \xymatrix{ {Y} \ar[r]^-{\phi'} \ar[d]_-\pi & {\bP_C^N} \ar[d]^-{\nu_m} \\
        {X_C} \ar[r]^-{\phi_m} & {\bP_C^N} }
    \]
    in which $\bP_C^N$ is identified with $\bP H^0(X_C, L_C^m)$ using $s_0, \cdots, s_N$ and $\nu_m$ is defined by raising the homogeneous coordinates associated with $s_0, \cdots, s_N$ to their $m$-th powers.  Consider the open immersion $j': U' := \pi^{-1}(U) \to Y$.  Since $\phi_m|_U$ is an immersion and $\nu_m$ is finite, $\phi'|_{U'}$ is quasi-finite.  Consider the Stein factorization of $\phi'$ given by $Y \Mapn{\phi''} Z \Mapn{g} \bP^N$, where $g$ is finite and $\phi''$ has connected fibers.  By \cite[\href{https://stacks.math.columbia.edu/tag/03H0}{Tag 03H0} and \href{https://stacks.math.columbia.edu/tag/03GW}{Tag 03GW}]{Stacks-Project}, $j'' := \phi''|_{U'}: U' \to Z$ is an open immersion.  Moreover, since $L_C' \cong \phi^{\prime *} \cO_{\bP_C^N}(1)$, it descends to the ample line bundle $L_C'' := g^* \cO_{\bP_C^N}(1)$ on $Z$.  Note that $Y$ and $\phi'$ are still defined over a finite extension of $\bQ_p$.  Let $\cZ$ denote the adic space associated with $Z$; define $\eta_\cZ: (\cZ_\proet, \widehat{\cO}_{\cZ_\proet}) \to (Z, \cO_Z)$ as in Theorem \ref{thm-kv}; and define $\proetSh{\rV^{\prime\prime \circ}}_\cZ$ by applying Construction \ref{constr-proet-ls} to $j''$, the pullback $\tilde{U}''$ of $\tilde{U}$ to $Z$, and $\rV'' = \rV$.  By what we have proved under the assumption \Refeq{\ref{eq-thm-kv-very-amp}},
    \begin{equation}\label{eq-thm-kv-Stein}
        R\Gamma(\cZ_\proet, \proetSh{\rV^{\prime\prime \circ}}_\cZ \ho_{\bZ_p} \eta_\cZ^* (L_C'')^{-1}) \in D^{\geq d}(C),
    \end{equation}
    On the other hand, since $\phi'': Y \to Z$ is proper \Pth{and $\rV'' = \rV' = \rV$}, by the same arguments based on Scholze's primitive comparison theorem \cite[\aThm 3.13]{Scholze:2013-pss} in the proof of Proposition \ref{prop-cyc-cov-coh-inj}, there is a natural isomorphism
    \[
        \proetSh{\rV^{\prime\prime \circ}}_\cZ \ho_{\bZ_p} \eta_\cZ^* (L_C'')^{-1} \cong R\phi_{\proet, *}''(\proetSh{\rV^{\prime \circ}}_\cY \ho_{\bZ_p} \eta_\cY^* (L_C')^{-1}).
    \]
    over $\cZ_\proet$.  Thus, it follows from \Refeq{\ref{eq-thm-kv-Stein}} that
    \[
        R\Gamma(\cY_\proet, \proetSh{\rV^{\prime \circ}}_\cY \ho_{\bZ_p} \eta_\cY^* (L_C')^{-1}) \in D^{\geq d}(C).
    \]
    Since $\pi: Y \to X$ is the composition $Y = Y_0 \times_X Y_1 \times_X \cdots \times_X Y_N \to Y_0 \times_X Y_1 \times_X \cdots \times_X Y_{N - 1} \to \cdots \to Y_0 \to X$ of cyclic covering maps pulled back from the $\pi_i$'s, we can finish the proof of Theorem \ref{thm-kv} by repeatedly applying Proposition \ref{prop-cyc-cov-coh-inj}.
\end{proof}

\begin{proof}[Proof of Lemma \ref{lem-BG-cov}]
    One can argue as in the proof of \cite[\aThm 4.1.10]{Lazarsfeld:2004-PAG-1} \Pth{the so-called \emph{Bloch--Gieseker coverings}}.  For the convenience of the reader, let us give a more direct proof in our case.  For any nonzero $s \in \Gamma(\bP_E^N, \cO_{\bP_E^N}(1))$, we shall denote by $H_s$ the hyperplane it defines in $\bP_E^N$.  By repeatedly applying Bertini's theorem, there exist nonzero $s_0, \cdots, s_N \in \Gamma(\bP_E^N, \cO_{\bP_E^N}(1))$ such that, for $i = 0, \cdots, N$:
    \begin{itemize}
        \item $H_{s_i}$ does not contain any irreducible component of $\phi_m(X)$; and

        \item $H_{s_i}$ intersects $\phi_m(U) \cap \big(\cap_{j \in I} H_{s_j}\bigr)$ transversally, for any $I \subset \{ 0, \cdots, i - 1 \}$.
    \end{itemize}

    Construct the finite cover $\pi: Y \to X$ as in the lemma.  We first prove that $Y$ is reduced. Since this is a local property, we may assume that $X = \Spec(A)$ is affine and that $L$ is trivial, and identify $s_0, \cdots, s_N$ with elements in $A$.  Then $Y \cong \Spec(B)$, where $B = A[x_0, \ldots, x_N] / (x_0^m - s_0, \ldots, x_N^m - s_N)$.  We may enlarge $E$ so that $x^m - 1$ splits completely in $E$.  Note that $G = \Grpmu_m \times \cdots \times \Grpmu_m$ \Pth{$(N + 1)$-copies of $\Grpmu_m$} acts on $B$ via $(a_0, \cdots, a_N) \cdot x_i = a_i x_i$.  Let $f \in B$ be a nilpotent element which is also an eigenvector of $G$.  Then $f = a x_0^{i_0} \cdots x_N^{i_N}$ for some $a \in A$ and $i_0, \cdots, i_N \in \{ 0, \cdots, m - 1 \}$.  Since $f^m = a^m s_0^{i_0} \cdots s_N^{i_N} \in A$ is nilpotent, but $A$ is reduced and $s_0, \cdots, s_N$ are nonzero, we must have $a = 0$ and hence $f = 0$.

    To see the smoothness of $\pi^{-1}(U)$, we may similarly assume that $U = \Spec(A)$ and $\pi^{-1}(U) \cong \Spec(B)$, where $B = A[x_0, \cdots, x_N] / (x_0^m - s_0, \ldots, x_N^m - s_N)$.  Let $\mathfrak{m}$ be a maximal ideal of $B$, and $\mathfrak{m}_A := \mathfrak{m} \cap A$.  Let $I$ be the set of $i$'s such that $s_i \in \mathfrak{m}_A$.  By our transversality assumption, we can find elements $t_1, \cdots, t_l \in \mathfrak{m}_A$, with $l = \dim(A) - |I|$, such that the images of $\{ t_1, \cdots, t_l \}$ and $\{ s_i \}_{i \in I}$ form a basis of $\mathfrak{m}_A / \mathfrak{m}_A^2$.  Since $x_i^m - s_i$ in $B$, for each $i \in I$, and since $m \in E^\times$, the images of $\{ t_1, \cdots, t_l \}$ and $\{ x_i \}_{i \in I}$ also form a basis of $\mathfrak{m} / \mathfrak{m}^2$.  Thus, $B$ is smooth at $\mathfrak{m}$.
\end{proof}

\subsection{Reformulation via Kummer \'etale site}\label{sec-akv-ket}

Instead of working with the complexes $\etSh{\rV^\circ}_{X_C}$ and $\proetSh{\rV^\circ}_\cX$ introduced in Construction \ref{constr-proet-ls}, we shall make use of the Kummer \'etale and pro-Kummer \'etale sites introduced in \cite{Diao/Lan/Liu/Zhu:2023-lasfr} and work instead with sheaves \Pth{\ie, complexes concentrated in single degrees}.  Our setup is as follows.  Let $X_C$ denote a smooth variety over $C$, and $D \subset X_C$ an effective divisor with  \emph{normal crossings}, with complementary open immersion $j: U \to X$.  Let $\cX$ and $\cD$ denote the rigid analytic varieties over $C$ associated with $X_C$.  Then $\cD$ is also a normal crossings divisor on $\cX$, with complementary open immersion $\adic{j}: \cU \to \cX$, which defines an fs log structure on $\cX$ whose restriction to $\cU$ is the trivial one, as explained in \cite[\aEx 2.3.16]{Diao/Lan/Liu/Zhu:2023-lasfr}.  Let $\cX_\ket$ denote its Kummer \'etale site with integral structure sheaf $\cO_{\cX_\ket}^+$, as in \cite[\aDefs 4.1.16 and 4.3.1]{Diao/Lan/Liu/Zhu:2023-lasfr}, and let $g: \cX_\ket \to \cX_\et$ the canonical morphism induced by forgetting the log structure.  The restriction $g_\cU: \cU_\ket \to \cU_\et$ is trivially an isomorphism, and the canonical morphisms of sites $\adic{j}_\et: \cU_\et \to \cX_\et$ and $\adic{j}_\ket: \cU_\et \to \cX_\ket$ satisfy $g \circ \adic{j}_\ket = \adic{j}_\et$.

\begin{proposition}\label{prop-ket}
    Let $F$ be an \'etale $\bZ / p^n$-local system of finite rank over $U$.
    \begin{enumerate}
        \item\label{prop-ket-1} Purity: $R j_{\ket, *} F = j_{\ket, *} F$ is a torsion Kummer \'etale local system.

        \item\label{prop-ket-2} There is a natural isomorphism
            \[
                (R j_{\et, *} F) \otimes_{\bZ_p} \cO_{\cX_\et}^+ \cong (R g_* j_{\ket, *} F) \otimes_{\bZ_p} \cO_{\cX_\et}^+ \cong R g_*(j_{\ket, *} F \otimes_{\bZ_p} \cO_{\cX_\ket}^+).
            \]
    \end{enumerate}
\end{proposition}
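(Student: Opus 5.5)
The plan is to deduce both parts from the log purity and log primitive comparison results of \cite{Diao/Lan/Liu/Zhu:2023-lasfr}, after reducing to a local statement near the normal crossings divisor $\cD$.

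For \Refenum{\ref{prop-ket-1}}, first reduce to the case where $F$ is tamely ramified along $D$. Since $F$ is a $\bZ / p^n$-local system of finite rank on $U$, it is trivialized by a finite \'etale cover of $U$. In characteristic zero every such cover is tamely ramified along the normal crossings divisor $D$. By Abhyankar's lemma it is therefore, \'etale locally on $X_C$, dominated by a Kummer cover $z_i \mapsto z_i^{1/N}$ in the local coordinates defining $D$. Passing to the analytification, this says $F$ extends to a Kummer \'etale local system $\bar{F}$ on $\cX_\ket$ with $\bar{F}|_\cU \cong F$. I then invoke the purity theorem of \cite{Diao/Lan/Liu/Zhu:2023-lasfr}, which states that for such $\bar{F}$ the natural map $\bar{F} \to R \adic{j}_{\ket, *}(\bar{F}|_\cU)$ is an isomorphism. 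This gives $R j_{\ket, *} F = j_{\ket, *} F \cong \bar{F}$. I expect this purity input to be the main obstacle. Its proof is local: one reduces to a toric chart $\cX = \mathbb{D}^r \times \mathbb{D}^s$ with $\cD = \{ z_1 \cdots z_r = 0 \}$, and compares Kummer \'etale cohomology of the punctured polydisc with the group cohomology of $\widehat{\bZ}(1)^r$. I will cite this rather than reprove it.

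For the first isomorphism in \Refenum{\ref{prop-ket-2}}, I would show that $R j_{\et, *} F$, pulled back to $\cX_\et$ via $\eta$, agrees with $R g_* j_{\ket, *} F$. By Huber's comparison theorem (as in the remark after Construction \ref{constr-proet-ls}), $\eta^* R j_{\et, *} F \cong R \adic{j}_{\et, *} \eta_U^* F$; this uses that $F$ is constructible and $j$ is quasi-compact. Since $g \circ \adic{j}_\ket = \adic{j}_\et$, we get
\[
    R \adic{j}_{\et, *} F \cong R g_* R \adic{j}_{\ket, *} F \cong R g_* j_{\ket, *} F,
\]
where the last step uses part \Refenum{\ref{prop-ket-1}}. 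Tensoring with $\cO_{\cX_\et}^+$ then gives the first isomorphism.

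For the second isomorphism, I need the projection formula $(R g_* \bar{F}) \otimes_{\bZ_p} \cO_{\cX_\et}^+ \cong R g_*(\bar{F} \otimes_{\bZ_p} \cO_{\cX_\ket}^+)$ for the torsion Kummer \'etale local system $\bar{F}$. The map exists by adjunction, and the question is local on $\cX_\et$. On a toric chart with a Kummer cover $\cX' \to \cX$ trivializing $\bar{F}$, both sides are computed by continuous group cohomology of the finite Galois group $\Gamma$ of the cover. The right-hand side is $R\Gamma(\Gamma, \bar{F} \otimes \cO_{\cX'}^+)$ and the left-hand side is $R\Gamma(\Gamma, \bar{F}) \otimes \cO_{\cX}^+$. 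I would check compatibility by a d\'evissage to $\bar{F} = \bZ / p$. In that case the statement becomes $R g_* \cO_{\cX_\ket}^+ / p \cong \cO_{\cX_\et}^+ / p$ up to the $\bZ/p$-cohomology of $\Gamma$, which follows from the known computation of $R g_* \cO_{\cX_\ket}^+$ in \cite{Diao/Lan/Liu/Zhu:2023-lasfr}. If only an almost isomorphism is available at this step, that is sufficient for the later application, where everything is taken up to almost zero.
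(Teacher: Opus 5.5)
Your treatment of part (1) and of the first isomorphism in part (2) is fine. It agrees with the paper, which proves the whole proposition by citing Diao--Lan--Liu--Zhu (Lem.~4.5.8 and Thm.~4.6.1). The only thing the paper adds is that $\otimes_{\bZ_p}\cO^+_{\cX_\et}$ may be replaced by $\otimes_{\bZ/p^n}(\cO^+_{\cX_\et}/p^n)$, since $\cO^+_{\cX_\et}$ is $\bZ_p$-flat.

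The gap is in your argument for the second isomorphism $(Rg_*\bar{F})\otimes_{\bZ_p}\cO^+_{\cX_\et}\cong Rg_*(\bar{F}\otimes_{\bZ_p}\cO^+_{\cX_\ket})$. Your local model computes both sides as group cohomology of the finite Galois group $\Gamma$ of one Kummer cover trivializing $\bar{F}$. That is wrong, because $Rg_*$ sees the full profinite local Kummer monodromy $\widehat{\bZ}(1)^r$ along $\cD$. Take $\bar{F}=\bZ/p$, so $\Gamma=1$: your model gives $\cO^+_{\cX}/p$ in degree $0$ on both sides. The true left-hand side is $(R\adic{j}_{\et,*}\bZ/p)\otimes\cO^+_{\cX_\et}$, which has nonzero $H^1$ supported on $\cD$ (locally $\cO^+/p(-1)$ along a single boundary component). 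For the same reason your base case ``$Rg_*(\cO^+_{\cX_\ket}/p)\cong\cO^+_{\cX_\et}/p$'' is false. The correct base case, $Rg_*(\cO^+_{\cX_\ket}/p)\cong(R\adic{j}_{\et,*}\bZ/p)\otimes\cO^+_{\cX_\et}/p$, is the whole content of the claim, and you give no argument for it. The easy computation is the rational one, $Rg_*\cO_{\cX_\ket}\cong\cO_{\cX_\et}$, where Kummer group cohomology dies because the group orders are invertible in $\cO$; it says nothing about $\cO^+/p$.

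Using the correct profinite group does not rescue a chart-level computation; it does not even give an almost isomorphism. Take $\Spa(C\langle z\rangle)$ with log structure at $z=0$, and compute $\cO^+/p$-cohomology through the perfectoid Kummer tower $C\langle z^{1/p^\infty}\rangle$. Besides the trivial-character part, this contains summands $\cO_C\langle z\rangle/(p,\zeta_{p^k}^m-1)\cdot z^{m/p^k}$ for $p\nmid m$, where $\zeta_{p^k}$ is a primitive $p^k$-th root of unity. These summands are bounded torsion but not almost zero. They vanish only in stalks: restricting to $\{|z|\le|\epsilon|\}$ rescales $z^{m/p^k}$ by $\epsilon^{m/p^k}$, which eventually lies in $(\zeta_{p^k}^m-1)$.

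Your reduction to constant coefficients is also not formal. It needs descent along a ramified Kummer cover whose degree may be divisible by $p$. That amounts to a projection formula for a finite ramified map of adic spaces, which again requires analyzing stalks of $\cO^+/p$ along $\cD$.

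So for this step you should either cite the Diao--Lan--Liu--Zhu comparison, as the paper does, or carry out a genuine stalkwise computation using log affinoid perfectoid covers.
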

\begin{proof}
    See \cite[\aLem 4.5.8 and \aThm 4.6.1]{Diao/Lan/Liu/Zhu:2023-lasfr}.  Note that $(R j_{\et, *} F) \otimes_{\bZ_p} \cO_{\cX_\et}^+ \cong (R j_{\et, *} F) \otimes_{\bZ / p^n} (\cO_{\cX_\et}^+ / p^n)$, because $\cO_{\cX_\et}^+$ is flat over $\bZ_p$.
\end{proof}

\begin{construction}\label{constr-proket-ls}
    Suppose $K$ is a profinite group, and $\tilde{U}$ is a pro-\'etale $K$-torsor over $U$, as in Construction \ref{constr-proet-ls}.  Let $\rV$ be a unitary $p$-adic Banach space representation of $K$.  As in Construction \ref{constr-proet-ls}, $\rV_n = \rV^\circ / p^n = \cup_{M \subset \rV_n} \, M$, where $M$ runs over all finite $K$-stable subgroups of $\rV_n$; and for each $M$, we constructed an \'etale sheaf $\etSh{M}$ over $U$.  Let $\eta_U: \cU_\et \to U_\et$ denote the canonical morphism of sites.  Over $\cX_\ket$, we define $\ketSh{M}_\cX := R \adic{j}_{\ket, *} \eta_U^* \, \etSh{M}$, for each $M$ as above; and define $\ketSh{\rV_n}_\cX := \varinjlim_{M \subset \rV_n} \ketSh{M}_\cX$, which can be thought as the associated Kummer \'etale local systems.  \Pth{These are sheaves, not just complexes.}  Let $\cX_\proket$ denote the pro-Kummer-\'etale site introduced in \cite[\aDef 5.1.2]{Diao/Lan/Liu/Zhu:2023-lasfr}, and let $\lambda_\cX: \cX_\proket \to \cX_\ket$ denote the projection of sites.  We define pro-Kummer \'etale sheaves $\proketSh{\rV_n}_\cX := \lambda_\cX^* \, \ketSh{\rV_n}_\cX$, $\proketSh{\rV^\circ}_\cX := \varprojlim_n \proketSh{\rV_n}_\cX$, and $\proketSh{\rV}_\cX := \proketSh{\rV^\circ}_\cX \otimes_\bZ \bQ$, which are the pro-Kummer \'etale local systems associated with $\rV_n$, $\rV^\circ$, and $\rV$, respectively.
\end{construction}

The projective limit in Construction \ref{constr-proket-ls} agrees with the derived limit, by the following lemma.

\begin{lemma}\label{lem-proket-ls}
    In Construction \ref{constr-proket-ls}, assume moreover that everything is defined over a finite extension of $\bQ_p$ \Pth{as in \cite[\aSec 5.3]{Diao/Lan/Liu/Zhu:2023-lasfr}}.  Then:
    \begin{enumerate}
        \item\label{lem-proket-ls-1} $R^i\lim_n \proketSh{\rV_n}_\cX = 0$, for $i>0$.
        \item\label{lem-proket-ls-2} $\proketSh{\rV^\circ}_\cX / p^n \cong \proketSh{\rV_n}_\cX$, for each $n \geq 1$.
    \end{enumerate}
\end{lemma}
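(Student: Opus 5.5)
The plan is to run the argument of \cite[\aLem 3.18]{Scholze:2013-phtra} on $\cX_\proket$. There are three steps: first, short exact sequences of Kummer \'etale sheaves; second, a basis of $\cX_\proket$ on which the $\proketSh{\rV_n}_\cX$ have vanishing higher cohomology and surjective transition maps; third, the formal consequences \Refenum{\ref{lem-proket-ls-1}} and \Refenum{\ref{lem-proket-ls-2}}.

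\emph{Step 1: exact sequences.} Consider the sequences \Refeq{\ref{eq-constr-proet-ls-seq}} $0 \to \etSh{\rV_m} \Mapn{\cdot p^n} \etSh{\rV_{n+m}} \to \etSh{\rV_n} \to 0$ over $U$. Each is a filtered colimit of short exact sequences of finite locally constant sheaves $\etSh{M}$. By the purity statement Proposition \ref{prop-ket}\Refenum{\ref{prop-ket-1}}, $R\adic{j}_{\ket,*}\eta_U^* = \adic{j}_{\ket,*}\eta_U^*$ on such sheaves, so applying it preserves exactness. Filtered colimits are exact, and so is $\lambda_\cX^*$. Hence we get short exact sequences
\[
    0 \to \proketSh{\rV_m}_\cX \Mapn{\cdot p^n} \proketSh{\rV_{n+m}}_\cX \to \proketSh{\rV_n}_\cX \to 0
\]
on $\cX_\proket$, and in particular all transition maps are surjective as maps of sheaves.

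\emph{Step 2: a good basis.} Because everything is defined over a finite extension of $\bQ_p$, the results of \cite[\aSec 5.3]{Diao/Lan/Liu/Zhu:2023-lasfr} apply. By Abhyankar's lemma, the tower $\tilde{U}$ extends to a pro-Kummer \'etale $K$-torsor $\tilde{\cX} \to \cX$, over which every $\ketSh{M}_\cX$, and hence every $\proketSh{\rV_n}_\cX$, becomes constant. I would take as basis the log affinoid perfectoid objects $W$ of $\cX_\proket$ lying over $\tilde{\cX}$, and choose them fine enough that constant sheaves with discrete torsion coefficients have no higher cohomology on $W$. On such $W$ the sections of $\proketSh{\rV_n}_\cX$ are the locally constant maps $|W| \to \rV_n$. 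The maps $\rV_{n+1} \to \rV_n$ are surjective and $|W|$ is profinite, so the maps on sections are surjective.

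\emph{Step 3: conclusion.} With this basis, the criterion of \cite[\aLem 3.18]{Scholze:2013-phtra} (or \cite[\href{https://stacks.math.columbia.edu/tag/0BKS}{Tag 0BKS}]{Stacks-Project}) gives $R\Gamma(W, R\lim_n \proketSh{\rV_n}_\cX) = \lim_n \Gamma(W, \proketSh{\rV_n}_\cX)$ in degree $0$. Hence $R^i\lim_n = 0$ for $i > 0$, which is \Refenum{\ref{lem-proket-ls-1}}. For \Refenum{\ref{lem-proket-ls-2}}, take $\lim_m$ of the exact sequences in Step 1. By \Refenum{\ref{lem-proket-ls-1}} the limit stays exact, which gives $0 \to \proketSh{\rV^\circ}_\cX \Mapn{\cdot p^n} \proketSh{\rV^\circ}_\cX \to \proketSh{\rV_n}_\cX \to 0$. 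So $\proketSh{\rV^\circ}_\cX$ is $p$-torsionfree and $\proketSh{\rV^\circ}_\cX/p^n \cong \proketSh{\rV_n}_\cX$, also in the derived sense.

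The main obstacle is Step 2, namely producing the basis with vanishing higher cohomology. The first thing to try is to use the structure of log affinoid perfectoid objects in \cite[\aSec 5.3]{Diao/Lan/Liu/Zhu:2023-lasfr}, which is exactly where the finite-extension hypothesis enters. If full vanishing is not available there, the fallback is to verify only $H^1(W, \proketSh{\rV_m}_\cX) = 0$ on the basis. Via the long exact sequence from Step 1, this still yields surjectivity on sections, and the Mittag-Leffler argument only needs that.
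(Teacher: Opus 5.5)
Your architecture is the same as the paper's. Purity (Proposition \ref{prop-ket}\Refenum{\ref{prop-ket-1}}) together with writing $\rV_{n+m}$ as a union of finite $K$-stable subgroups gives the short exact sequences. A basis on which the $\proketSh{\rV_n}_\cX$ are acyclic, with surjective maps on sections, then feeds into \cite[\aLem 3.18]{Scholze:2013-phtra}. Finally \Refenum{\ref{lem-proket-ls-1}} gives \Refenum{\ref{lem-proket-ls-2}} by taking $\lim_m$.

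The gap is that the one non-formal input, the existence of such a basis, is assumed rather than supplied. As you phrase it, it is not automatic: log affinoid perfectoid objects are in general \emph{not} acyclic for constant torsion coefficients. Take an affinoid perfectoid object $W$ with trivial log structure. Artin--Schreier on the tilt gives $H^1(W, \bF_p) \cong \cO(W^\flat)/(\varphi - 1)\cO(W^\flat)$. This is nonzero in general: for a perfectoid disc or torus, $x^p - x = T^\flat$ has no solution.

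What \Qtn{fine enough} must mean is: pass to a \emph{universal cover}, i.e.\ the limit over all finite Kummer \'etale covers, of a log affinoid perfectoid object. This is exactly what the paper does.
\begin{itemize}
\item Such universal covers $\widetilde{\cX}$ still form a basis, by \cite[\aProp 5.3.12]{Diao/Lan/Liu/Zhu:2023-lasfr}.
\item \cite[\aProp 5.3.13]{Diao/Lan/Liu/Zhu:2023-lasfr} gives $H^i(\widetilde{\cX}, \lambda_\cX^* \ketSh{M}_\cX) = 0$ for $i > 0$ and each finite $M$. This is where the finite-extension hypothesis is used.
\item Quasi-compactness of $\widetilde{\cX}$ lets cohomology commute with the filtered colimit $\proketSh{\rV_n}_\cX = \varinjlim_M \lambda_\cX^* \ketSh{M}_\cX$ \cite[Tag 0739]{Stacks-Project}. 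You need this step too, since $\rV_n$ is infinite, and you did not state it.
\item Surjectivity on sections then follows from $H^1 = 0$ and your Step 1.
\end{itemize}
Every finite Kummer \'etale local system is already trivial on such universal covers. So your Abhyankar step, extending $\tilde{U}$ to $\tilde{\cX}$, is unnecessary.

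Your fallback does not rescue \Refenum{\ref{lem-proket-ls-1}}. Vanishing of $H^1$ on a basis, together with surjectivity on sections, does kill $R^1\lim_n \proketSh{\rV_n}_\cX$, and that suffices for \Refenum{\ref{lem-proket-ls-2}}. But the criterion of \cite[\aLem 3.18]{Scholze:2013-phtra} or \cite[Tag 0BKS]{Stacks-Project} needs vanishing of \emph{all} higher $H^i(W, -)$ on the basis. Without $H^2$-vanishing, $R^2\lim_n \proketSh{\rV_n}_\cX$ is the sheaf associated with $W \mapsto \lim_n H^2(W, \proketSh{\rV_n}_\cX)$. A compatible family of classes, each locally trivial, need not die on a single cover uniformly in $n$. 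So \Qtn{the Mittag-Leffler argument only needs that} is false for $i \geq 2$.

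A minor point: $|W|$ is spectral, not profinite. This does no harm, because locally constant functions can be lifted along $\rV_{n+1} \to \rV_n$ on any topological space.
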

\begin{proof}
    Starting with the short exact sequence \Refeq{\ref{eq-constr-proet-ls-seq}}, by Proposition \ref{prop-ket}\Refenum{\ref{prop-ket-1}} and by expressing $\rV_{n + m}$ as the direct limit of its finite stable $K$-subgroups, we obtain a short exact sequence
    \[
        0 \to \ketSh{\rV_m}_\cX \Mapn{\cdot p^n} \ketSh{\rV_{n + m}}_\cX \to \ketSh{\rV_n}_\cX \to 0.
    \]
    The same holds for its pullback by $\lambda_\cX$.  Taking the inverse limit over $m$, we see that \Refenum{\ref{lem-proket-ls-1}} implies \Refenum{\ref{lem-proket-ls-2}}.  It remains to prove \Refenum{\ref{lem-proket-ls-1}}.  Let $\widetilde{\cX}$ be a universal cover of a log affinoid perfectoid object of $\cX_\proket$, as in the proof of \cite[\aProp 5.3.13]{Diao/Lan/Liu/Zhu:2023-lasfr}.  Since $\widetilde{\cX}$ is quasi-compact, for all $i \geq 0$, by using \cite[\href{https://stacks.math.columbia.edu/tag/0739}{Tag 0739}]{Stacks-Project}, we have
    \begin{equation}\label{eq-lem-proket-ls-H-i}
    \begin{split}
        H^i({\cX_\proket}_{/\widetilde{\cX}}, \proketSh{\rV_n}_\cX) & \cong H^i({\cX_\proket}_{/\widetilde{\cX}}, \varinjlim_{M \subset \rV_n} \lambda_\cX^* \adic{j}_{\ket, *} \eta_U^* \, \etSh{M}) \\
        & \cong \varinjlim_{M \subset \rV_n} H^i({\cX_\proket}_{/\widetilde{\cX}}, \lambda_\cX^* \adic{j}_{\ket, *} \eta_U^* \, \etSh{M}),
    \end{split}
    \end{equation}
    where $M$ runs through all finite $K$-stable subgroups of $\rV_n$.  By \cite[\aProp 5.3.13]{Diao/Lan/Liu/Zhu:2023-lasfr}, \Refeq{\ref{eq-lem-proket-ls-H-i}} is zero when $i > 0$.  Therefore, for all $n \geq 1$, the natural map
    \[
        H^0({\cX_\proket}_{/\widetilde{\cX}}, \proketSh{\rV_{n + 1}}_\cX) \to H^0({\cX_\proket}_{/\widetilde{\cX}}, \proketSh{\rV_n}_\cX)
    \]
    is surjective; and $R^1\lim_n H^0({\cX_\proket}_{/\widetilde{\cX}}, \proketSh{\rV_n}_\cX) = 0$.  Since such $\widetilde{\cX}$ form a basis of $\cX_\proket$, by \cite[\aProp 5.3.12]{Diao/Lan/Liu/Zhu:2023-lasfr}, \Refenum{\ref{lem-proket-ls-1}} follows, by \cite[\aLem 3.18]{Scholze:2013-phtra}.
\end{proof}

The following is the Kummer version of Theorem \ref{thm-kv}.
\begin{corollary}\label{cor-kv-proket}
    Let $X_C$ be a smooth equidimensional projective variety over $C$ of dimension $d$, and $L_C$ a line bundle on $X_C$.  Let $U \subset X_C$ be a smooth open subvariety whose complement $D$ is an effective divisor of normal crossings, so that we have the associated fs log adic spaces $\cX$ etc as in the beginning of this Section \ref{sec-akv-ket}.  Let $K$ be a profinite group, and $\tilde{U}$ a pro-\'etale $K$-torsor over $U$.  Assume that the same condition \Refeq{\ref{eq-thm-kv-cond}} as in Theorem \ref{thm-kv} holds.  Assume moreover that $X_C$, $U$, and $\tilde{U}$ are defined over a finite extension of $\bQ_p$.  Then, for every unitary $p$-adic Banach space representation $\rV$ of $K$, to which Construction \ref{constr-proket-ls} applies, we have
    \[
    \begin{split}
        & R\Gamma(\cX_\proket, \proketSh{\rV}_\cX \ho_{\bQ_p} \nu_\proket^* L_C^{-1} ) \\
        & \cong R\Gamma(\cX_\proket, \proketSh{\rV^\circ}_\cX \ho_{\bZ_p} \nu_\proket^* L_C^{-1}) \in D^{\geq d}(C),
    \end{split}
    \]
    where $\nu_\proket: (\cX_\proket, \widehat{\cO}_{\cX_\proket}) \to (X_C, \cO_{X_C})$ is the natural projection of ringed sites, where $\widehat{\cO}_{\cX_\proket}$ denotes the completed structure sheaf \Pth{as in \cite[\aDef 5.4.1]{Diao/Lan/Liu/Zhu:2023-lasfr}}, and where $\ho_{\bZ_p}$ denotes $p$-adically completed tensor product.
\end{corollary}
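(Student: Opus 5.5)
The plan is to reduce Corollary \ref{cor-kv-proket} to Theorem \ref{thm-kv} by comparing the pro-Kummer \'etale complex with the pro-\'etale complex $\proetSh{\rV^\circ}_\cX \ho_{\bZ_p} \eta_\cX^* L_C^{-1}$. Both sides are built from their mod $p^n$ reductions. On the Kummer side this uses Lemma \ref{lem-proket-ls}: the inverse limit defining $\proketSh{\rV^\circ}_\cX$ is a derived limit with reductions $\proketSh{\rV_n}_\cX$. It therefore suffices to compare, for each $n$, the complexes $R\Gamma(\cX_\proket, \proketSh{\rV_n}_\cX \otimes_{\bZ/p^n} (\widehat{\cO}^+_{\cX_\proket}/p^n) \otimes \nu^*\widehat{L}^{-1})$ and their pro-\'etale analogues. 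Here $\widehat L$ is an integral model of $L_C$ as in the proof of Theorem \ref{thm-kv}; alternatively, one can twist by $L_C^{-1}$ after inverting $p$, since $L_C$ is locally trivial on a finite cover and the twist commutes with all the functors involved. Once this comparison is in place, take $R\lim_n$ and invert $p$; quasi-compactness of $\cX$ lets inversion of $p$ commute with $R\Gamma$. The first isomorphism in the statement, between the $\rV$ and $\rV^\circ$ versions, then follows the same way, because $\proketSh{\rV}_\cX = \proketSh{\rV^\circ}_\cX[1/p]$ and $\ho$ is defined by completing integrally and then inverting $p$.

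For the comparison at finite level, reduce first to finite $K$-stable $M \subset \rV_n$ by filtered colimits. Both $R\Gamma$ on the quasi-compact $\cX$ and the relevant tensor products commute with these colimits, by the Tag 0739 argument used in Lemma \ref{lem-proket-ls}. For such $M$, Proposition \ref{prop-ket}(\ref{prop-ket-2}) gives $(Rj_{\et,*}\etSh{M}) \otimes \cO^+_{\cX_\et} \cong Rg_*(\ketSh{M}_\cX \otimes \cO^+_{\cX_\ket})$, using that $\eta^* R j_{\et,*} \cong R\adic{j}_{\et,*}\eta_U^*$ on constructible sheaves by Huber's comparison. We then pass from the Kummer \'etale site to the pro-Kummer \'etale site, and from the \'etale site to the pro-\'etale site, via the respective analogues of \cite[\aCor 3.17]{Scholze:2013-phtra}. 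On the Kummer side we use that $\widehat{\cO}^+_{\cX_\proket}/p^n \cong \lambda_\cX^*(\cO^+_{\cX_\ket}/p^n)$ almost, and that $R\lambda_{\cX,*}\lambda_\cX^*$ is almost the identity for such tensor products, as in \cite{Diao/Lan/Liu/Zhu:2023-lasfr}. The two cohomologies are then almost isomorphic. Theorem \ref{thm-kv} applies because a smooth projective $X_C$ is normal and the definability hypotheses are assumed; the comparison therefore transfers its conclusion $D^{\geq d}(C)$.

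The main obstacle is getting the pro-Kummer \'etale cohomology to agree (almost) with Kummer \'etale cohomology after tensoring with $\cO^+/p^n$, and then threading the line bundle twist and the $R\lim$ through this compatibly. I would handle it by working entirely integrally with the almost category and inverting $p$ only at the end. The needed almost purity and primitive comparison statements on the log site are exactly the ones supplied by \cite[\aSec 5]{Diao/Lan/Liu/Zhu:2023-lasfr}, invoked in the same way as in the proof of Proposition \ref{prop-cyc-cov-coh-inj}.
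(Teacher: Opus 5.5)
Your proposal is correct and follows essentially the paper's route. Both reduce modulo $p^n$ and to finite $K$-stable $M \subset \rV_n$, apply Proposition~\ref{prop-ket}\Refenum{\ref{prop-ket-2}} together with Huber's comparison theorem, and pass between the (Kummer) \'etale and pro-(Kummer) \'etale sites to land in the situation of Theorem~\ref{thm-kv}.

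The difference is in how the twist is handled. The paper works at the sheaf level: it proves $R g_{\proket, *}(\proketSh{\rV^\circ}_\cX \ho_{\bZ_p} \nu_\proket^* L_C^{-1}) \cong \proetSh{\rV^\circ}_\cX \ho_{\bZ_p} \eta_\cX^* L_C^{-1}$ for the projection $g_\proket \colon \cX_\proket \to \cX_\proet$, and removes $L_C^{-1}$ by the projection formula. That is your second option, and it is the one to use. Your first option does not work as stated, because the proof of Theorem~\ref{thm-kv} constructs an integral model $\widehat{L}$ only under the extra hypothesis \Refeq{\ref{eq-thm-kv-very-amp}}.
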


\begin{remark}
    The condition \Refeq{\ref{eq-thm-kv-cond}} in Theorem \ref{thm-kv} and Corollary \ref{cor-kv-proket} holds if there is a morphism $\pi: X_C \to Y_C$ of projective varieties such that $\pi|_U: U \to Y_C$ is an open immersion and such that $L_C^m$ descends to an ample line bundle on $Y_C$.  In applications, $X_C$ will be some projective smooth toroidal compactification of a Shimura variety, and $Y_C$ will be the associated minimal compactification.
\end{remark}

\begin{proof}[Proof of Corollary \ref{cor-kv-proket}]
    In view of Theorem \ref{thm-kv}, it suffices to show that
    \[
        R g_{\proket, *}(\proketSh{\rV^\circ}_\cX \ho_{\bZ_p} \nu_\proket^* L_C^{-1}) \cong  \proetSh{\rV^\circ}_\cX \ho_{\bZ_p} \eta_\cX^* L_C^{-1},
    \]
    where $g_\proket$ denotes the canonical projection $\cX_\proket \to \cX_\proet$.  Since $R g_{\proket, *}$ commutes with derived limit, by the projection formula, it suffices to show that
    \begin{equation}\label{eq-cor-kv-proket-red}
        R g_{\proket, *}\bigl((\lambda_\cX^* \, \ketSh{\rV_n}_\cX) \otimes_{\bZ / p^n} (\widehat{\cO}_{\cX_\proket}^+ / p^n)\bigr) \cong \proetSh{\rV_n}_\cX \otimes_{\bZ / p^n} (\widehat{\cO}_{\cX_\proet}^+ / p^n).
    \end{equation}
    By definition \Pth{see \cite[\aDef 5.4.1]{Diao/Lan/Liu/Zhu:2023-lasfr}}, we have $(\lambda_\cX^* \, \ketSh{\rV_n}_\cX) \otimes_{\bZ / p^n} (\widehat{\cO}_{\cX_\proket}^+ / p^n) \cong \lambda_\cX^*\bigl(\ketSh{\rV_n}_\cX \otimes_{\bZ_p} (\cO_{\cX_\ket}^+ / p^n)\bigr) \cong \lambda_\cX^*(\ketSh{\rV_n}_\cX \otimes_{\bZ_p} \cO_{\cX_\ket}^+)$.  By \cite[\aProp 5.2.1]{Diao/Lan/Liu/Zhu:2023-lasfr}, the left-hand side of \Refeq{\ref{eq-cor-kv-proket-red}} is the pullback of $R g_*(\ketSh{\rV_n}_\cX \otimes_{\bZ_p} \cO_{\cX_\ket}^+)$ via the natural projection $\lambda: \cX_\proet \to \cX_\et$.  On the other hand, the right-hand side of \Refeq{\ref{eq-cor-kv-proket-red}} is the pullback of $(\eta^* R j_{\et, *} \, \etSh{\rV_n}) \otimes_{\bZ_p} \cO_{\cX_\et}^+$ via $\lambda$.  Hence, it suffices to prove that
    \[
        R g_*(\ketSh{\rV_n}_\cX \otimes_{\bZ_p} \cO_{\cX_\ket}^+) \cong (\eta^* R j_{\et, *} \, \etSh{\rV_n}) \otimes_{\bZ_p} \cO_{\cX_\et}^+.
    \]
    When $\rV_n$ is finite, this follows from Proposition \ref{prop-ket} and Huber's comparison theorem \cite[\aThm 3.8.1]{Huber:1996-ERA}.  In general, we have $R j_{\et, *} \, \etSh{\rV_n} = \varinjlim_{M \subset \rV_n} R j_{\et, *} \, \etSh{M}$, with $M$ running through all finite $K$-stable subgroups of $\rV_n$, as usual; and it remains to observe that $R g_*$ commutes with filtered colimits, as $g: \cX_\ket \to \cX_\et$ is qcqs, and that tensor product commutes with colimits \cite[\href{https://stacks.math.columbia.edu/tag/03EP}{Tag 03EP}]{Stacks-Project}.
\end{proof}

\section{Locally symmetric varieties and automorphic vector bundles}\label{sec-lsv}

In this section, we study \Pth{disjoint unions of} locally symmetric varieties and their minimal and toroidal compactifications, and explain materials that will be useful for our study of completed cohomology of Shimura varieties in later sections.

\subsection{General setup}\label{sec-lsv-setup}

We shall consider pairs $(\Grp{H}, \Dom)$ of the following type.
\begin{itemize}
    \item $\Grp{H}$ is a connected reductive algebraic group over $\bQ$.  We shall denote its center by $Z(\Grp{H})$, and consider the associated adjoint group $\Grp{H}^\ad := \Grp{H} / Z(\Grp{H})$.

    \item $\Dom$ is a finite disjoint union of Hermitian symmetric domains such that $\Grp{H}(\bR)$ acts transitively on $\Dom$ via its image under $\Grp{H}(\bR) \to \Grp{H}^\ad(\bR)$, and such that the \emph{neutral component} \Pth{or \emph{identity component}; \ie, the connected component containing the identity element} $\Grp{H}^\ad(\bR)^+$ of $\Grp{H}^\ad(\bR)$ acts with maximal compact stabilizer subgroups.  \Pth{Since $\Grp{H}^\ad(\bR)^+$ is a connected semisimple Lie group, this is consistent with the classification of their associated Hermitian symmetric domains, which we allow to be trivial when the Lie group is compact, as in \cite{Helgason:2001-DLS}.  See also \cite[\aSec 1.2]{Deligne:1979-vsimc} and \cite[\aSec 1]{Milne:2005-isv}.}
\end{itemize}
Let $\Dompt_0 \in \Dom$ be any fixed choice of a point of $\Dom$, and let $\Dom^+$ be the connected component of $\Dom$ containing $\Dompt_0$.  For any group $A$ acting continuous on $\Dom$, we shall denote by $A_+$ the stabilizer $\Dom^+$ in $A$; \ie, the subgroup of $A$ consisting of elements of $A$ stabilizing $\Dom^+$.  In particular, we have $\Grp{H}(\bR)_+$, the stabilizer of $\Dom^+$ in $\Grp{H}(\bR)$, so that $\Dom^+ \cong \Grp{H}(\bR)_+ / \MaxCpt \cong \Grp{H}^\ad(\bR)^+ / \MaxCpt^\ad$, where $\MaxCpt$ \Pth{\resp $\MaxCpt^\ad$, by abuse of notation} is the stabilizer of $\Dompt_0$ in $\Grp{H}(\bR)_+$ \Pth{\resp $\Grp{H}^\ad(\bR)^+$}.  By our assumption above, $\MaxCpt^\ad$ is a maximal compact subgroup of $\Grp{H}^\ad(\bR)^+$.  We shall extend this notation system by adding subscripts \Qtn{$+$} to stabilizers of $\Dom^+$ in other groups acting on $\Dom$.

Since the underlying Riemannian symmetric space of $\Dom^+$ parameterizes Cartan decompositions, the point $\Dompt_0 \in \Dom^+$ induces a decomposition
\begin{equation}\label{eq-Cartan-decomp-C}
    \Lie \Grp{H}_\bC \Mi (T_{\Dompt_0} \Dom^+)_\bC^- \oplus (\Lie \MaxCpt)_\bC \oplus (T_{\Dompt_0} \Dom^+)_\bC^+,
\end{equation}
where $(T_{\Dompt_0} \Dom^+)_\bC^+$ \Pth{\resp $(T_{\Dompt_0} \Dom^+)_\bC^-$} is the holomorphic \Pth{\resp anti-holomorphic} tangent space of $\Dom^+$ at $\Dompt_0$; and the complex structure of $\Dom^+$ at the point $\Dompt_0$ fixed by $\MaxCpt^\ad$ defines a homomorphism $u: \Grp{U}_1 := \{ z \in \bC^\times : |z|=1 \} \to \MaxCpt^\ad \subset \Grp{H}^\ad(\bR)^+$ \Pth{as in \cite[\aThm 1.21 and \aCor 1.22]{Milne:2005-isv}}, whose complexification gives a homomorphism
\begin{equation}\label{eq-hc-ad}
    \hc^\ad: \bG_{m, \bC} \to \Grp{H}_\bC^\ad.
\end{equation}
By construction, \Refeq{\ref{eq-Cartan-decomp-C}} is a decomposition into spaces of weights $-1$, $0$, and $1$ for the representation of $\bC$ defined by the composition of $\Lie \hc^\ad: \bC \to \Lie \Grp{H}_\bC^\ad$ with the canonical splitting $\Lie \Grp{H}_\bC^\ad \cong [\Lie \Grp{H}_\bC, \Lie \Grp{H}_\bC] \Em \Lie \Grp{H}_\bC$; and there are no other weights.  This implies, in particular, that both $(T_{\Dompt_0} \Dom^+)_\bC^+$ and $(T_{\Dompt_0} \Dom^+)_\bC^-$ are abelian unipotent Lie subalgebras of $\Lie \Grp{H}_\bC$ normalized by the adjoint actions of $(\Lie \MaxCpt)_\bC$.

Let $\Grp{M}$, $\Grp{N}$, and $\Grp{N}^\std$ be connected subgroups of $\Grp{H}_\bC$ such that \Refeq{\ref{eq-Cartan-decomp-C}} induce $\Lie \Grp{M} \Mi (\Lie \MaxCpt)_\bC$, $\Lie \Grp{N} \Mi (T_{\Dompt_0} \Dom^+)_\bC^+$, and $\Lie \Grp{N}^\std \Mi (T_{\Dompt_0} \Dom^+)_\bC^-$.  Then
\begin{equation}\label{eq-def-P}
    \Grp{P} := \Grp{N} \Grp{M} \cong \Grp{N} \rtimes \Grp{M} \qquad \text{and} \qquad \Grp{P}^\std := \Grp{M} \Grp{N}^\std \cong \Grp{M} \ltimes \Grp{N}^\std
\end{equation}
are two opposite parabolic subgroups with unipotent radicals $\Grp{N}$ and $\Grp{N}^\std$, respectively; and $\Grp{M}$ is a common Levi subgroup of $\Grp{P}$ and $\Grp{P}^\std$.  In $\Grp{H}(\bC)$, we have $\MaxCpt = \Grp{H}(\bR)_+ \cap \Grp{P}^\std(\bC) = \Grp{H}(\bR)_+ \cap \Grp{P}(\bC)$.  Then we also have the following:
\begin{remark}\label{rem-fil-by-hc-ad}
    Given any $\rW$ in $\Rep_\bC(\Grp{P})$ \Pth{\resp $\Rep_\bC(\Grp{P}^\std)$}, the action of the composition of $\Lie \hc^\ad$ with $(\Lie \MaxCpt^\ad)_\bC \subset (\Lie \MaxCpt)_\bC \cong \Lie \Grp{M}$ defines an increasing \Pth{\resp decreasing} filtration on $\rW$ by subrepresentations of $\Grp{P}$ \Pth{\resp $\Grp{P}^\std$}.
\end{remark}

Over $\bC$, we have two \emph{partial flag varieties}
\begin{equation}\label{eq-def-fl}
    \Flalg := \Grp{P} \Lquot \Grp{H}_\bC \qquad \text{and} \qquad \Flalg^\std := \Grp{H}_\bC / \Grp{P}^\std,
\end{equation}
with natural transitive right and left actions of $\Grp{G}_\bC$, respectively.  \Pth{It is a matter of conventions that we choose to have right and left actions as above.  We can easily flip our choices by taking inverses.}  By definition, the assignments of the decomposition \Refeq{\ref{eq-Cartan-decomp-C}} and of parabolic subgroups to $\Dompt_0 \in \Dom^+$ are $\Grp{H}(\bR)_+$-equivariant.  Accordingly, we have the $\Grp{H}(\bR)_+$-equivariant \emph{Borel embedding}
\begin{equation}\label{eq-Borel-emb}
    \Dom^+ \Em \Grp{H}(\bC) / \Grp{P}^\std(\bC) \cong (\Flalg^\std)^\an
\end{equation}
from a Hermitian symmetric domain to its compact dual \Pth{\Refcf{} \cite[\aProp I.5.24]{Borel/Ji:2006-CSL}}.  Note that it is the complex analytic $(\Flalg^\std)^\an$ that serves as targets of Borel embeddings, while the $p$-adic analytification \Pth{\ie, associated adic space} of $\Flalg_C$ will serve as targets of Hodge--Tate period morphisms \Pth{to be explained in Section \ref{sec-HT-mor}}.

Let $\Grp{H}^c$ be the quotient of $\Grp{H}$ by the minimal subtorus $Z_s(\Grp{H})$ of $Z(\Grp{H})$ such that the torus $Z(\Grp{H})^\circ / Z_s(\Grp{H})$ has the same split ranks over $\bQ$ and $\bR$.  For any closed algebraic subgroup $\Grp{A}$ of $\Grp{H}$, we denote its closed \Pth{schematic} image in $\Grp{H}^c$ by $\Grp{A}^c$.  In particular, we have algebraic subgroups $\Grp{M}^c$, $\Grp{P}^c$, and $\Grp{P}^{\std, c}$ of $\Grp{H}^c$.  For $? = \emptyset$ and $\std$, since $\Grp{N}^?$ intersects $\ker(\Grp{H} \to \Grp{H}^c) \subset Z(\Grp{H})$ trivially, the canonical short exact sequence $1 \to \Grp{N}^? \to \Grp{P}^? \to \Grp{M} \to 1$ induces a canonical short exact sequence $1 \to \Grp{N}^? \to \Grp{P}^{?, c} \to \Grp{M}^c \to 1$.  Similarly, for any subgroup $A$ of $\Grp{H}(R)$, where $R$ is any $\bQ$-algebra, we denote its image in $\Grp{H}^c(R)$ by $A^c$.  In particular, the image of any open compact subgroup $\levcp \subset \Grp{H}(\bAi)$ in $\Grp{H}^c(\bAi)$ is denoted by $\levcp^c$.

\subsection{Arithmetic quotients and double coset spaces}\label{sec-dcs-an}

Let $(\Grp{H}, \Dom)$ be any pair as in Section \ref{sec-lsv-setup}, together with $\Grp{H}^\ad$, $\Dom^+$, etc introduced there.

Let $\Gamma$ be any \emph{neat} arithmetic subgroup of $\Grp{H}(\bQ)_+$ \Pth{see, \eg, \cite[7.11, 17.1, and 17.3]{Borel:2019-IAG}}, whose action on $\Dom^+$ factors through its image $\Gamma^\ad$ in $\Grp{H}^\ad(\bQ)$.  By \cite[\aThm 8.9, \aRem 8.11, and 17.3]{Borel:2019-IAG}, $\Gamma^\ad$ is again a neat arithmetic subgroup.  Note that $\Gamma$ acts on $\Dom^+$ via $\Gamma^\ad$, and the neatness of $\Gamma^\ad$ implies that the finite discrete group $\Gamma^\ad \cap \MaxCpt^\ad$ is trivial, in which case $\Gamma^\ad$ acts freely on $\Dom^+$.  Hence, the quotient
\begin{equation}\label{eq-def-lsv-an-Gamma}
    \LSV_\Gamma^\an := \Gamma \Lquot \Dom^+ \cong \LSV^\an_{\Gamma^\ad} := \Gamma^\ad \Lquot \Dom^+
\end{equation}
is a complex manifold \Pth{which we shall view as a smooth complex analytic space} as $\Dom^+$ is.  Since $\Grp{H}(\bR)$ acts on $\Dom$ via its image in $\Grp{H}^\ad(\bR)$, the whole center $Z(\Grp{H})(\bR)$ acts trivially and hence lies in $\Grp{H}(\bR)_+$.  Therefore, we have $\Gamma^\ad \cong \Gamma / \bigl(\Gamma \cap (Z(\Grp{H})(\bQ))\bigr)$.

For any open compact subgroup $\levcp \subset \Grp{H}(\bAi)$, consider the double coset space
\begin{equation}\label{eq-def-dcs-an}
    \LSV_\levcp^\an := \LSV_\levcp^\an(\Grp{H}, \Dom) := \Grp{H}(\bQ) \big\Lquot \bigl(\Dom \times \Grp{H}(\bAi)\bigr) \big/ \levcp,
\end{equation}
where $\Grp{H}(\bQ)$ acts diagonally on $\Dom \times \Grp{H}(\bAi)$ from the left-hand side, and where $\levcp$ acts only on $\Grp{H}(\bAi)$ from the right-hand side.  Since $\Grp{H}$ is connected as an algebraic group over $\bQ$, by a special case of weak approximation \Pth{see \cite[\aSec 7.3, \aThm 7.7]{Platonov/Rapinchuk:1994-AGN}}, $\Grp{H}(\bQ)$ is dense in $\Grp{H}(\bR)$ in the real topology.  Consider $\Grp{H}(\bQ)_+ := \Grp{H}(\bQ) \cap \Grp{H}(\bR)_+$, which is the stabilizer of $\Dom^+$ in $\Grp{H}(\bQ)$.  Then we can rewrite \Refeq{\ref{eq-def-dcs-an}} as
\begin{equation}\label{eq-dcs-an-plus}
    \LSV_\levcp^\an = \Grp{H}(\bQ)_+ \big\Lquot \bigl(\Dom^+ \times \Grp{H}(\bAi)\bigr) \big/ \levcp.
\end{equation}

For each $h \in \Grp{H}(\bAi)$, let us introduce the subspace
\begin{equation}\label{eq-def-dcs-an-K-h}
    \LSV_{\levcp, h}^\an := \Grp{H}(\bQ)_+ \big\Lquot \bigl(\Dom^+ \times (\Grp{H}(\bQ)_+ h \levcp)\bigr) \big/ \levcp
\end{equation}
of $\LSV_\levcp^\an$.  Since $\gamma h \levcp = h \levcp$ for $\gamma \in \Grp{H}(\bQ)_+$ exactly when $\gamma \in h \levcp h^{-1}$, we can write
\begin{equation}\label{eq-def-dcs-an-K-h-equiv}
    \LSV_{\levcp, h}^\an = \LSV_{\Gamma_{\levcp, h}}^\an = \Gamma_{\levcp, h} \Lquot \Dom^+
\end{equation}
instead of \Refeq{\ref{eq-def-dcs-an-K-h}}, which is a connected topological space as $\Dom^+$ is, where
\begin{equation}\label{eq-def-Gamma-K-h}
    \Gamma_{\levcp, h} := \Grp{H}(\bQ)_+ \cap (h \levcp h^{-1})
\end{equation}
is an \emph{arithmetic subgroup} of $\Grp{H}(\bQ)$, whose action on $\Dom^+$ factors through its image
\begin{equation}\label{eq-Gamma-K-h-ad}
\begin{split}
    \Gamma_{\levcp, h}^\ad & \cong \Gamma_{\levcp, h} \big/ \bigl(\Gamma_{\levcp, h} \cap (Z(\Grp{H})(\bQ))\bigr) \\
    & \cong \bigl(\Grp{H}(\bQ)_+ \cap (h \levcp h^{-1})\bigr) \big/ \bigl((Z(\Grp{H})(\bQ)) \cap (h \levcp h^{-1})\bigr)
\end{split}
\end{equation}
in $\Grp{H}^\ad(\bQ)$, which is an arithmetic subgroup of $\Grp{H}^\ad(\bQ)$.  For simplicity, when $h = 1$, we shall suppress $1$ from the notation, and simply write $\Gamma_\levcp = \Gamma_{\levcp, 1}$ and $\Gamma_\levcp^\ad = \Gamma_{\levcp, 1}^\ad$.

By \cite[\aThm 5.1]{Borel:1963-fpagn}, $\#(\Grp{H}(\bQ)_+ \Lquot \Grp{H}(\bAi) / \levcp) < \infty$; \ie, there exists a finite set of representatives $\{ h_i \}_{i \in I}$ such that
\begin{equation}\label{eq-dc-fin}
    \Grp{H}(\bAi) = \coprod_{i \in I} (\Grp{H}(\bQ) h_i \levcp).
\end{equation}
By substituting this into \Refeq{\ref{eq-def-dcs-an}}, and by using \Refeq{\ref{eq-def-dcs-an-K-h-equiv}}, we obtain disjoint unions
\begin{equation}\label{eq-dcs-decomp-an}
    \LSV_\levcp^\an = \coprod_{i \in I} \LSV_{\levcp, h_i}^\an = \coprod_{i \in I} \LSV_{\Gamma_{\levcp, h_i}}^\an,
\end{equation}
which show that $\{ \LSV_{\levcp, h_i}^\an \}_{i \in I}$ are the \Pth{finitely many} connected components of $\LSV_\levcp^\an$.  The component labeled by $h = 1$ is called the \emph{neutral component}
\begin{equation}\label{eq-dsc-circ-an}
    \LSV_\levcp^{\circ, \an} := \LSV_{\levcp, 1}^\an = \Gamma_\levcp \Lquot \Dom^+ = \Gamma_\levcp^\ad \Lquot \Dom^+.
\end{equation}

When $\levcp \subset \Grp{H}(\bAi)$ is \emph{neat} as in \cite[0.6]{Pink:1989-Ph-D-Thesis}, $\Gamma_{\levcp, h_i} \subset \Grp{H}(\bQ)$ and their images $\Gamma_{\levcp, h_i}^\ad \subset \Grp{H}^\ad(\bQ)$ are neat as in \cite[17.1 and 17.3]{Borel:2019-IAG}.  So $\LSV_{\Gamma_{\levcp, h_i}}^\an \cong \LSV_{\Gamma_{\levcp, h_i}^\ad}^\an$ and $\LSV_\levcp^\an$ are \emph{complex manifolds} \Pth{viewed as smooth complex analytic spaces} as $\Dom^+$ is.  For simplicity, we will only consider neat levels $\levcp$'s, unless otherwise stated.

\subsection{Locally symmetric varieties and their compactifications}\label{sec-lsv-cpt}

In this subsection, we shall simultaneously work with complex manifolds of the forms $\LSV_\Gamma^\an$, $\LSV_\levcp^\an$, and $\LSV_{\levcp, h}^\an$ introduced in Section \ref{sec-dcs-an}.  To simplify the exposition, when the statements apply to all three kinds of manifolds above, we shall write $\LSV_\level^\an$, where $\level$ can stand for either $\Gamma$, or $\levcp$, or a pair of $\levcp$ and $h$.  Later, when referring to results stated this way, we shall freely replace $\level$ with $\Gamma$ etc depending on the context.

By \cite{Baily/Borel:1966-caqbs} \Pth{applied to all connected components}, the complex analytic space $\LSV_\level^\an$ is Zariski open dense in its so-called \emph{Satake--Baily--Borel} or \emph{minimal compactification} $\LSV_\level^{\Min, \an}$, which is the analytification of a normal projective variety $\LSV_\level^\Min$ \Pth{over $\bC$}.  Hence, $\LSV_\level^\an$ is the analytification of a \Pth{necessarily smooth} quasi-projective variety $\LSV_\level$.  By \cite[\aThm 3.7 and its proof]{Borel:1972-maset} and GAGA \cite{Serre:1955-1956-gaga}, any holomorphic map from the analytification of an algebraic variety over $\bC$ to $\LSV_\level^\an$ or $\LSV_\level^{\Min, \an}$ uniquely algebraizes, and it follows that the algebraic structures of $\LSV_\level$ and $\LSV_\level^\Min$ are unique.  Varieties over $\bC$ of the form $\LSV_\level$ are called \emph{locally symmetric varieties} \Pth{although the notion is often reserved for the special case where $\level = \Gamma$}.

When $h = 1$, we have the neutral components $\LSV_\levcp^\circ := \LSV_{\levcp, 1}$, $\LSV_\levcp^{\circ, \Min, \an} := \LSV_{\levcp, 1}^{\Min, \an}$, and $\LSV_\levcp^{\circ, \Min} := \LSV_{\levcp, 1}^\Min$.  Therefore, the disjoint union \Refeq{\ref{eq-dcs-decomp-an}} is the analytification of
\begin{equation}\label{eq-dcs-decomp}
    \LSV_\levcp = \LSV_\levcp(\Grp{H}, \Dom) = \coprod_{i \in I} \LSV_{\levcp, h_i} = \coprod_{i \in I} \LSV_{\Gamma_{\levcp, h_i}}
\end{equation}
\Pth{over $\bC$}, and its minimal compactification
\begin{equation}\label{eq-dcs-decomp-min-an}
    \LSV_\levcp^{\Min, \an} := \LSV_\levcp^{\Min, \an}(\Grp{H}, \Dom) = \coprod_{i \in I} \LSV_{\levcp, h_i}^{\Min, \an}
\end{equation}
is the analytification of the canonical normal projective variety
\begin{equation}\label{eq-dcs-decomp-min}
    \LSV_\levcp^\Min := \LSV_\levcp^\Min(\Grp{H}, \Dom) = \coprod_{i \in I} \LSV_{\levcp, h_i}^\Min
\end{equation}

\begin{remark}\label{rem-lsv-sc}
    When $\Grp{H}$ is simply-connected \Pth{and semisimple}, $\Grp{H}(\bR)$ is connected in the real topology, by \cite[\aSec 7.2, \aProp 7.6]{Platonov/Rapinchuk:1994-AGN}, in which case $\Dom = \Dom^+$ is connected, $\Grp{H}(\bR)_+ = \Grp{H}(\bR)$, and $\Grp{H}(\bQ)_+ = \Grp{H}(\bQ)$.  Moreover, by strong approximation \Pth{see \cite[\aSec 7.4, \aThm 7.12]{Platonov/Rapinchuk:1994-AGN}}, $\Grp{H}(\bAi) = \Grp{H}(\bQ) \levcp$, for any open compact subgroup $\levcp$ of $\Grp{H}(\bAi)$.  Thus, in \Refeq{\ref{eq-dc-fin}}, the index set $I$ is a singleton and can be chosen to be $\{ 1 \}$.  Therefore, $\LSV_\levcp^\an = \LSV_\levcp^{\circ, \an} = \Gamma_\levcp \Lquot \Dom$, where $\Gamma_\levcp = \Gamma_{\levcp, 1}$, is connected as a complex manifold.  This forces $\LSV_\levcp$ to be connected as an algebraic variety \Pth{over $\bC$}.  In this case, we have $\LSV_\levcp = \LSV_{\levcp, h} = \LSV_{\levcp, 1} = \LSV_\levcp^\circ$, for all $h \in \Grp{H}(\bAi)$.
\end{remark}

By \cite[\aCh III, \aThm 5.2, and \aCh IV, \aThm 2.2]{Ash/Mumford/Rapoport/Tai:2010-SCL-2} \Pth{again, applied to all connected components}, there is a collection $\{ \LSV_{\level, \Sigma}^\Tor \}_\Sigma$ of normal projective varieties called \emph{toroidal compactifications}---indexed by certain combinatorial data $\Sigma$ called \Pth{collections of} \emph{cone decompositions}, and we shall assume that all $\Sigma$'s are \emph{projective} as in \cite[\aCh IV, \aDef 2.1]{Ash/Mumford/Rapoport/Tai:2010-SCL-2}---such that the canonical open immersion $\LSV_\level \Em \LSV_\level^\Min$ with dense image factors through $\LSV_\level \Em \LSV_{\level, \Sigma}^\Tor \to \LSV_\level^\Min$, where the first morphism is a canonical open immersion with dense image, whose \Pth{reduced closed} complement $\partial \LSV_{\level, \Sigma}^\Tor := (\LSV_{\level, \Sigma}^\Tor - \LSV_\level)_\red$ is an effective divisor; and where the second morphism is proper \Pth{and necessarily surjective, by density of $\LSV_\level$ in both its source and target}.  The analytifications of all of these are denoted similarly, with additional superscripts \Qtn{$\an$}.  When $\level$ is an open compact subgroup $\levcp \subset \Grp{H}(\bAi)$, any $\Sigma$ as above is a collection $\Sigma = \{ \Sigma_h \}_{h \in \Grp{H}(\bAi)}$, where each $\Sigma_h$ is a cone decomposition for the component $\LSV_{\levcp, h}$.  Without explaining the meaning of cone decompositions, let us still summarize the following qualitative facts:
\begin{proposition}\label{prop-lsv-tor-facts}
    \begin{enumerate}
        \item\label{prop-lsv-tor-facts-ext} For each $\Sigma$, the above $\LSV_\level^\an \Em \LSV_{\level, \Sigma}^{\Tor, \an}$ and hence its algebraization $\LSV_\level \Em \LSV_{\level, \Sigma}^\Tor$ are uniquely characterized by the extension property defined by $\Sigma$ as in \cite[\aCh III, \aThm 7.5]{Ash/Mumford/Rapoport/Tai:2010-SCL-2}.

        \item\label{prop-lsv-tor-facts-min} Let $\TorMap_\Sigma: \LSV_{\level, \Sigma}^\Tor \to \LSV_\level^\Min$ be the canonical morphism, which is an isomorphism over the open dense $\LSV_\level$.  Hence, it induces $\cO_{\LSV_\level^\Min} \Mi \TorMap_{\Sigma, *}(\cO_{\LSV_{\level, \Sigma}^\Tor})$, by the normality of $\LSV_\level^\Min$ and Zariski's main theorem.

        \item\label{prop-lsv-tor-facts-loc} $\LSV_\level^\an \Em \LSV_{\level, \Sigma}^{\Tor, \an}$ is locally isomorphic to analytifications of affine toroidal embeddings as in \cite[\aCh I]{Kempf/Knudsen/Mumford/Saint-Donat:1973-TE-1}, and $\Sigma$ tells us which affine toroidal embeddings are needed in such local descriptions.  Concretely, when we work componentwise with some $\LSV_\Gamma^\an = \Gamma \Lquot \Dom^+ \cong \Gamma^\ad \Lquot \Dom^+ \Em \LSV_{\Gamma, \Sigma}^{\Tor, \an}$ \Pth{where, by abuse of notation, $\Sigma$ means the cone decomposition for this particular component}, based on \cite[\aCh III]{Ash/Mumford/Rapoport/Tai:2010-SCL-2}, we have the following:
            \begin{enumerate}
                \item\label{prop-lsv-tor-facts-loc-parab} Let $\Bd$ be a rational boundary component of $\Dom^+$, which is a subset of $\Flalg^\std(\bC) \cong \Grp{H}(\bC) / \Grp{P}^\std(\bC)$ \Pth{see \Refeq{\ref{eq-Borel-emb}}} stabilized by a \emph{rational parabolic subgroup} of $\Grp{H}^\ad(\bR)^+$, by \cite[\aCh III, \aProp 3.6]{Ash/Mumford/Rapoport/Tai:2010-SCL-2}.  By \cite[\aCh III, \aDef 3.12]{Ash/Mumford/Rapoport/Tai:2010-SCL-2}, this \Pth{real Lie} subgroup is of the form $\Grp{P}_\Bd^\ad(\bR) \cap \Grp{H}^\ad(\bR)^+$, for a uniquely associated parabolic subgroup $\Grp{P}_\Bd^\ad$ of the algebraic group $\Grp{H}^\ad$ over $\bQ$.  Let $\Grp{W}_\Bd^\ad$ \Pth{\resp $\Grp{U}_\Bd^\ad$, \resp $\Grp{V}_\Bd^\ad$} denote the unipotent radical of $\Grp{P}_\Bd^\ad$ \Pth{\resp center of $\Grp{W}_\Bd^\ad$, \resp{} $\Grp{W}_\Bd^\ad / \Grp{U}_\Bd^\ad$}.

                \item\label{prop-lsv-tor-facts-loc-unip} Consider $\Dom^+ \Em \Dom^+(\Bd) := \Grp{U}_\Bd^\ad(\bC) \cdot \Dom^+$ in $\Flalg^\std(\bC)$, and $\Dom^+(\Bd)' := \Dom^+(\Bd) / \Grp{U}_\Bd^\ad(\bC)$.  Then $\Dom^+(\Bd) \to \Dom^+(\Bd)'$ is canonically a $\Grp{U}_\Bd^\ad(\bC)$-torsor, and $\Dom^+(\Bd)' \to \Bd$ is canonically a $\Grp{V}_\Bd^\ad(\bR)$-torsor.  \Pth{See the summary near the end of \cite[\aCh III, \aSec 4.3]{Ash/Mumford/Rapoport/Tai:2010-SCL-2}.}

                \item\label{prop-lsv-tor-facts-loc-arith} Consider $\Gamma_{\Grp{P}_\Bd}^\ad := \Gamma^\ad \cap \Grp{P}_\Bd^\ad(\bQ)$ and $\Gamma_{\Grp{U}_\Bd}^\ad := \Gamma^\ad \cap \Grp{U}_\Bd^\ad(\bQ)$.

                \item\label{prop-lsv-tor-facts-loc-tor-quot} Consider $\Grp{T}_\Bd^\an := \Grp{U}_\Bd^\ad(\bC) / \Gamma_{\Grp{U}_\Bd}^\ad$, which is the analytification of an algebraic torus $\Grp{T}_\Bd$ over $\bC$ with cocharacter group $\Gamma_{\Grp{U}_\Bd}^\ad$.  Then $\Gamma_{\Grp{U}_\Bd}^\ad \Lquot \Dom^+(\Bd) \to \Dom^+(\Bd)'$ is canonically a $\Grp{T}_\Bd^\an$-torsor.  We shall denote $\Grp{T}_\Bd$ by ${}^{\Gamma^\ad} \Grp{T}_\Bd$ when we want to emphasize its dependence on $\Gamma^\ad$.

                \item\label{prop-lsv-tor-facts-loc-tor-emb} Consider the algebraic toroidal embedding $\Grp{T}_\Bd \Em \Grp{T}_{\Bd, {\{\sigma_\alpha\}}}$, where $\{\sigma_\alpha\}$ is given by $\Sigma$ and admits an action of $\Gamma_{\Grp{P}_\Bd}^\ad$ \Pth{with a finite number of orbits}.  Then the pushout of the $\Grp{T}_\Bd^\an$-torsor $\Gamma_{\Grp{U}_\Bd}^\ad \Lquot \Dom^+(\Bd) \to \Dom^+(\Bd)'$ via the analytification $\Grp{T}_\Bd^\an \Em \Grp{T}_{\Bd, \{\sigma_\alpha\}}^\an$ gives an analytic toroidal embedding $\bigl(\Gamma_{\Grp{U}_\Bd}^\ad \Lquot \Dom^+(\Bd)\bigr) \Em \bigl(\Gamma_{\Grp{U}_\Bd}^\ad \Lquot \Dom^+(\Bd)\bigr) \times^{\Grp{T}_\Bd^\an} \Grp{T}_{\Bd, \{\sigma_\alpha\}}^\an$ over $\Dom^+(\Bd)'$.

                \item\label{prop-lsv-tor-facts-loc-bd} Finally, form the quotient of $\bigl(\Gamma_{\Grp{U}_\Bd}^\ad \Lquot \Dom^+(\Bd)\bigr) \times^{\Grp{T}_\Bd^\an} \Grp{T}_{\Bd, \{\sigma_\alpha\}}^\an \to \Dom^+(\Bd)' \to \Bd$ by $\Gamma_{\Grp{P}_\Bd}^\ad / \Gamma_{\Grp{U}_\Bd}^\ad$.  Then $\Gamma_{\Grp{P}_\Bd}^\ad \Lquot \Bd$ gives a boundary stratum of $\LSV_\Gamma^{\Min, \an}$, and an open subspace of $\bigl(\Gamma_{\Grp{P}_\Bd}^\ad / \Gamma_{\Grp{U}_\Bd}^\ad\bigr) \Lquot \bigl(\bigl(\Gamma_{\Grp{U}_\Bd}^\ad \Lquot \Dom^+(\Bd)\bigr) \times^{\Grp{T}_\Bd^\an} \Grp{T}_{\Bd, \{\sigma_\alpha\}}^\an\bigr)$ is what forms the an \emph{open neighborhood} in $\LSV_{\Gamma, \Sigma}^{\Tor, \an}$ of the preimage of $\Gamma_{\Grp{P}_\Bd}^\ad \Lquot \Bd$ via the canonical map $\TorMap_\Sigma^\an: \LSV_{\Gamma, \Sigma}^{\Tor, \an} \to \LSV_\Gamma^{\Min, \an}$.
            \end{enumerate}
            By passing to formal completions of both $\LSV_{\Gamma, \Sigma}^\Tor$ and $\partial \LSV_{\Gamma, \Sigma}^\Tor$, which we can compare with their analytic analogues, and by Artin's approximation \Pth{see \cite[\aThm 1.12, and the proof of the corollaries in \aSec 2]{Artin:1969-aascl}}, we see that $\LSV_\level \Em \LSV_{\level, \Sigma}^\Tor$ is \'etale locally products of toroidal embeddings of the form $\Grp{T}_\Bd \Em \Grp{T}_{\Bd, \{\sigma_\alpha\}}$ with identity morphisms of smooth schemes.

        \item\label{prop-lsv-tor-facts-sm} There is a condition on $\Sigma$, depending only on the subgroups $\Gamma_{\Grp{U}_\Bd}^\ad$ of $\Grp{U}_\Bd^\ad(\bQ)$ as in \Refenum{\ref{prop-lsv-tor-facts-loc-arith}}, that ensures that all relevant toroidal embeddings above have smooth targets, in which case the underlying scheme of $\LSV_{\level, \Sigma}^\Tor$ is also smooth; and that $\partial \LSV_{\level, \Sigma}^\Tor$ is a \emph{normal crossings divisor}.

        \item\label{prop-lsv-tor-facts-ref} The collection $\Sigma$ is partially ordered by \emph{refinements}.  For any finite number of given collections of cone decompositions $\Sigma_i$'s, we can find a $\Sigma$ refining all of the $\Sigma_i$'s and satisfying any subset of the above conditions.

        \item\label{prop-lsv-tor-facts-log-sm} By \Refenum{\ref{prop-lsv-tor-facts-loc}} and \cite[7.3, (b)--(d)]{Illusie:2002-fknle}, if we equip $\LSV_{\level, \Sigma}^\Tor$ with the fs log structure induced by the divisor $\partial \LSV_{\level, \Sigma}^\Tor$, or equivalently the direct image of the trivial log structure on the open subvariety $\LSV_\level$, then $\LSV_{\level, \Sigma}^\Tor$ is \emph{log smooth} over $\bC$.  Then the sheaf of log differentials $\Omega_{\LSV_{\level, \Sigma}^\Tor}^{\log, 1}$ \Pth{with the base field $\bC$ omitted from notation}, which is denoted by $\omega_{\LSV_{\level, \Sigma}^\Tor / \Spec(\bC)}^1$ in \cite[(1.7)]{Kato:1989-lsfi}, and its exterior powers $\Omega_{\LSV_{\level, \Sigma}^\Tor}^{\log, \bullet} = \Ex^\bullet \Omega_{\LSV_{\level, \Sigma}^\Tor}^{\log, 1}$, are defined.  In the top degree $d := \dim_\bC(\Dom^+)$, we have the \emph{log canonical bundle} $\Omega_{\LSV_{\level, \Sigma}^\Tor}^{\log, d}$.  When $\Sigma$ is smooth and $\partial \LSV_{\level, \Sigma}^\Tor$ is a normal crossings divisor, we have $\Omega_{\LSV_{\level, \Sigma}^\Tor}^{\log, 1} \cong \Omega_{\LSV_{\level, \Sigma}^\Tor}^1(\log \partial \LSV_{\level, \Sigma}^\Tor)$, the sheaf of differentials with log poles along $\partial \LSV_{\level, \Sigma}^\Tor$.  In this case, $\Omega_{\LSV_{\level, \Sigma}^\Tor}^{\log, d}$ is the usual log canonical bundle.  \Pth{This justifies the above terminologies.}

        \item\label{prop-lsv-tor-facts-min-can} Suppose $\Sigma$ is smooth.  By \cite[\aProp 3.4]{Mumford:1977-hptnc} \Pth{and GAGA \cite{Serre:1955-1956-gaga}}, $\Omega_{\LSV_{\level, \Sigma}^\Tor}^{\log, \bullet}$ are the \emph{canonical extensions} \Pth{constructed for more general automorphic vector bundles in \cite{Mumford:1977-hptnc}} of $\Omega_{\LSV_\level}^\bullet$, and $\Omega_{\LSV_{\level, \Sigma}^\Tor}^{\log, d}$ descends to an ample line bundle on $\LSV_\level^\Min$, which we shall abusively denote by $\Omega_{\LSV_\level^\Min}^d$ and call the \emph{canonical bundle} on $\LSV_\level^\Min$.  Thus, $\Omega_{\LSV_{\level, \Sigma}^\Tor}^{\log, d} \cong \TorMap_\Sigma^* \Omega_{\LSV_\level^\Min}^d$, and so $(\Omega_{\LSV_\level^\Min}^d)^{\otimes m} \cong \TorMap_{\Sigma, *} \TorMap_\Sigma^* \bigl((\Omega_{\LSV_\level^\Min}^d)^{\otimes m}\bigr) \cong \TorMap_{\Sigma, *} \bigl((\Omega_{\LSV_{\level, \Sigma}^\Tor}^{\log, d})^{\otimes m}\bigr)$, by \Refenum{\ref{prop-lsv-tor-facts-min}} and the projection formula, for all $m \in \bZ$.  Since $\Omega_{\LSV_\level^\Min}^d$ is ample over the proper scheme $\LSV_\level^\Min$ over $\Spec(\bC)$, we obtain $\LSV_\level^\Min \cong \Proj\bigl(\oplus_{m \geq 0} H^0(\LSV_\level^\Min, (\Omega_{\LSV_\level^\Min}^d)^{\otimes m})\bigr) \cong \Proj\bigl(\oplus_{m \geq 0} H^0(\LSV_{\level, \Sigma}^\Tor, (\Omega_{\LSV_{\level, \Sigma}^\Tor}^{\log, d})^{\otimes m})\bigr)$.

        \item\label{prop-lsv-tor-facts-prod} The formation of minimal and toroidal compactifications is naturally compatible with direct products of the data involved.
    \end{enumerate}
\end{proposition}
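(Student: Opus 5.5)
The plan is to treat Proposition \ref{prop-lsv-tor-facts} as a collection of standard facts. First I will reduce every item to a single connected component, then I will quote the relevant results of \cite{Ash/Mumford/Rapoport/Tai:2010-SCL-2}, \cite{Mumford:1977-hptnc}, and \cite{Illusie:2002-fknle}, and only check that the hypotheses match our setup.

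\emph{Reduction to one component.} By \Refeq{\ref{eq-dcs-decomp}}, \Refeq{\ref{eq-dcs-decomp-min}}, and the fact that $\Sigma = \{ \Sigma_h \}$ is chosen componentwise, every statement about $\LSV_\levcp$ follows from the same statement for each $\LSV_{\Gamma_{\levcp, h_i}}$. Each $\Gamma_{\levcp, h_i}$ is neat and acts on $\Dom^+$ through the neat arithmetic subgroup $\Gamma_{\levcp, h_i}^\ad$ of $\Grp{H}^\ad(\bQ)$. Hence it suffices to work with $\Gamma^\ad \Lquot \Dom^+$ for a neat arithmetic $\Gamma^\ad \subset \Grp{H}^\ad(\bQ)$, which is exactly the setting of \cite{Ash/Mumford/Rapoport/Tai:2010-SCL-2}.

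\emph{Items \Refenum{\ref{prop-lsv-tor-facts-ext}}--\Refenum{\ref{prop-lsv-tor-facts-ref}}.}
\begin{itemize}
\item Item \Refenum{\ref{prop-lsv-tor-facts-ext}} is \cite[\aCh III, \aThm 7.5]{Ash/Mumford/Rapoport/Tai:2010-SCL-2}. Uniqueness of the algebraization comes from Borel's extension theorem and GAGA, as already recalled before the statement.
\item For item \Refenum{\ref{prop-lsv-tor-facts-min}}, the morphism $\TorMap_\Sigma$ is proper, birational, and an isomorphism over $\LSV_\level$, and $\LSV_\level^\Min$ is normal. Zariski's main theorem then gives $\cO_{\LSV_\level^\Min} \Mi \TorMap_{\Sigma, *} \cO$.
\item Item \Refenum{\ref{prop-lsv-tor-facts-loc}} transcribes the construction in \cite[\aCh III, \aSec\aSec 3--5]{Ash/Mumford/Rapoport/Tai:2010-SCL-2}: rational boundary components and their parabolics, the partial quotient by $\Gamma_{\Grp{U}_\Bd}^\ad$ as a torsor under $\Grp{T}_\Bd^\an$, the toroidal embedding determined by $\{\sigma_\alpha\}$, and the quotient by $\Gamma_{\Grp{P}_\Bd}^\ad / \Gamma_{\Grp{U}_\Bd}^\ad$. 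Neatness makes this last quotient free near the boundary. The étale-local algebraic statement follows from the analytic local product structure by comparing formal completions along the boundary and applying Artin approximation \cite{Artin:1969-aascl}.
\item Item \Refenum{\ref{prop-lsv-tor-facts-sm}} holds because each local model is a toroidal embedding $\Grp{T}_\Bd \Em \Grp{T}_{\Bd, \{\sigma_\alpha\}}$ times a smooth factor. Smoothness of the cones relative to the lattice $\Gamma_{\Grp{U}_\Bd}^\ad$ gives smooth toric targets with normal crossings boundary \cite[\aCh I]{Kempf/Knudsen/Mumford/Saint-Donat:1973-TE-1}.
\item Item \Refenum{\ref{prop-lsv-tor-facts-ref}} is the existence of common projective smooth refinements, \cite[\aCh II and \aCh IV, \aSec 2]{Ash/Mumford/Rapoport/Tai:2010-SCL-2}.
\end{itemize}

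\emph{Items \Refenum{\ref{prop-lsv-tor-facts-log-sm}}--\Refenum{\ref{prop-lsv-tor-facts-prod}}.} Log smoothness in item \Refenum{\ref{prop-lsv-tor-facts-log-sm}} is étale local. By item \Refenum{\ref{prop-lsv-tor-facts-loc}}, we are étale locally a product of a toric variety with its toric log structure and a smooth scheme, and \cite[7.3]{Illusie:2002-fknle} applies. In the smooth normal crossings case, the identification of log differentials with $\Omega^1(\log \partial)$ is standard.

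For item \Refenum{\ref{prop-lsv-tor-facts-min-can}}, I invoke \cite[\aProp 3.4]{Mumford:1977-hptnc}: the log forms are the canonical extensions, and the log canonical bundle is the pullback of an ample line bundle on $\LSV_\level^\Min$, since the Baily--Borel embedding is defined by automorphic forms of weight the canonical bundle. The Proj description then follows from item \Refenum{\ref{prop-lsv-tor-facts-min}} and the projection formula, as written in the statement.

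Item \Refenum{\ref{prop-lsv-tor-facts-prod}} follows from the uniqueness in item \Refenum{\ref{prop-lsv-tor-facts-ext}} together with the product structure of boundary components, parabolics, and cone decompositions.

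\emph{Main obstacle.} No genuinely new argument is needed. The only delicate point is the passage in item \Refenum{\ref{prop-lsv-tor-facts-loc}} from the analytic local description to an étale local \emph{algebraic} one. Here I would compare the formal completions of $\LSV_{\Gamma, \Sigma}^\Tor$ along boundary strata with those of the analytic local models, which are themselves algebraic torus embeddings over smooth bases, and then apply Artin's approximation theorem to obtain common étale neighborhoods.
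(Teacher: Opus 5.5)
Your proposal is correct and takes the same approach as the paper. The paper gives no separate proof: the proposition is a summary of standard facts, justified by exactly the inline references you invoke, applied component by component:
\begin{itemize}
\item Ash--Mumford--Rapoport--Tai, Ch.~III, for the extension property and the local structure;
\item normality of $\LSV_\level^\Min$ together with Zariski's main theorem for the pushforward of the structure sheaf;
\item comparison of formal completions together with Artin approximation for the \'etale-local algebraic description;
\item Kempf--Knudsen--Mumford--Saint-Donat and Illusie for smoothness and log smoothness;
\item Mumford's Prop.~3.4, with GAGA, for the log canonical bundle and the Proj description.
\end{itemize}
One small precision: Artin approximation applies to complete local rings at boundary points, not to completions along boundary strata, and you should keep track of the boundary ideal as well. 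This is what the paper means by passing to formal completions of both the toroidal compactification and its boundary.
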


\subsection{Compatibility with central isogenies}\label{sec-lsv-funct}

\begin{lemma}\label{lem-comp-ad-isog}
    Let $\homom: \Grp{H}_1 \to \Grp{H}_2$ be any central isogeny of connected algebraic groups over $\bQ$.  Then it induces an isomorphism $\homom^\ad: \Grp{H}_1^\ad \Mi \Grp{H}_2^\ad$.  Equivalently, $Z(\Grp{H}_1)$ is the schematic preimage of $Z(\Grp{H}_2)$; \ie, $Z(\Grp{H}_1) \cong Z(\Grp{H}_2) \times_{\Grp{H}_2} \Grp{H}_1$.
\end{lemma}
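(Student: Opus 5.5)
The plan is to work with the schematic preimage $Z' := Z(\Grp{H}_2) \times_{\Grp{H}_2} \Grp{H}_1$ and prove directly that $Z' = Z(\Grp{H}_1)$. Once this is known, the statement about adjoint groups follows formally. Indeed, $\homom$ is faithfully flat with central kernel $\ker \homom \subset Z'$, so $\Grp{H}_1 / Z' \cong \Grp{H}_2 / Z(\Grp{H}_2) = \Grp{H}_2^\ad$. Hence $\homom^\ad$ is an isomorphism exactly when $Z(\Grp{H}_1) = Z'$, which gives the claimed equivalence of the two formulations.

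The first inclusion is $Z(\Grp{H}_1) \subset Z'$. Since $\homom$ is surjective as a map of fppf sheaves, for any point $z$ of $Z(\Grp{H}_1)$ and any point $h_2 = \homom(h_1)$ (fppf locally) we get $\homom(z) h_2 = \homom(z h_1) = \homom(h_1 z) = h_2 \homom(z)$. Thus $\homom(z)$ is central.

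The reverse inclusion $Z' \subset Z(\Grp{H}_1)$ is the substantive step. Let $z$ be an $R$-point of $Z'$ for a $\bQ$-algebra $R$, and consider the commutator morphism $c_z: \Grp{H}_{1,R} \to \Grp{H}_{1,R}$ given by $h \mapsto z h z^{-1} h^{-1}$. Since $\homom(z)$ is central, $c_z$ lands in $\ker(\homom)_R$, which is a finite central subgroup scheme. Because $\Grp{H}_1$ is connected and smooth over $\bQ$, and hence geometrically connected with $\Grp{H}_{1,R}$ having connected fibers over $\Spec R$, the morphism $c_z$ into the finite scheme $\ker(\homom)_R$ must factor through the identity section, since $c_z(1) = 1$.

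The main obstacle is justifying this last factorization over an arbitrary base $R$, not just over a field. My first approach is the following. Since $\ker\homom$ is finite étale over $\bQ$ (we are in characteristic zero), the identity section of $\ker(\homom)_R$ is open and closed. Its preimage under $c_z$ is then an open and closed subscheme of $\Grp{H}_{1,R}$ containing the unit section. Because $\Grp{H}_{1,R} \to \Spec R$ has connected fibers, this preimage contains every fiber and is therefore all of $\Grp{H}_{1,R}$.

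It follows that $z$ is central, so $Z' = Z(\Grp{H}_1)$, and the lemma follows by the first paragraph.
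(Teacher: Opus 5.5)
Your proposal is correct and follows the same route as the paper. In both, the forward inclusion comes from surjectivity of $\homom$, and the reverse inclusion comes from the fact that the commutator morphism $h \mapsto z h z^{-1} h^{-1}$ lands in the finite central kernel and is therefore trivial by connectedness. The only difference is technical: the paper uses that $\homom$ is finite \'etale in characteristic zero to reduce to comparing $\AC{\bQ}$-points, whereas you carry out the same step functorially for $R$-points, using that the identity section of $\ker(\homom)_R$ is open and closed.
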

\begin{proof}
    This is well known, but let us still include some explanation, for the convenience of the readers.  Since the base field $\bQ$ is of characteristic zero, $\homom$ is finite \'etale.  Therefore, we may verify the lemma by base change from $\bQ$ to any algebraic closure $\AC{\bQ}$ of $\bQ$, and compare the points of $Z(\Grp{H}_1)$ and $Z(\Grp{H}_2) \times_{\Grp{H}_2} \Grp{H}_1$ over $\AC{\bQ}$.  Since $\homom$ is a central isogeny between connected algebraic groups, it is surjective and induces a surjection homomorphism $\homom(\AC{\bQ}): \Grp{H}_1(\AC{\bQ}) \to \Grp{H}_2(\AC{\bQ})$, which maps $Z(\Grp{H}_1)(\AC{\bQ})$ to $Z(\Grp{H}_2)(\AC{\bQ})$.  Conversely, suppose $h_0 \in (Z(\Grp{H}_2) \times_{\Grp{H}_2} \Grp{H}_1)(\AC{\bQ})$, or equivalently a point of $\Grp{H}_1(\AC{\bQ})$ mapped to $Z(\Grp{H}_2)(\AC{\bQ})$ in $\Grp{H}_2(\AC{\bQ})$.  Consider the commutator morphism $\Grp{H}_{1, \AC{\bQ}} \to \Grp{H}_{1, \AC{\bQ}}: h \mapsto h_0 h h_0^{-1} h^{-1}$.  Since $h_0$ is mapped to $Z(\Grp{H}_2)(\AC{\bQ})$ in $\Grp{H}_2(\AC{\bQ})$, the morphism factors through the discrete $\ker(\homom({\AC{\bQ}}))$, or rather the identity, by the connectedness of $\Grp{H}_1$ \Pth{and hence of $\Grp{H}_{1, \AC{\bQ}}$}.  Thus, $h_0 \in \Grp{H}_1(\AC{\bQ})$, as desired.
\end{proof}

Let $\Grp{H}^\der$ denote the derived group of $\Grp{H}$ \Pth{see \cite[\aDef A.1.14]{Conrad/Gabber/Prasad:2010-PRG} and the references there}.  Then $\Grp{H}^\der$ is connected and semisimple.  By \cite[\aCor A.4.11]{Conrad/Gabber/Prasad:2010-PRG}, there is a central isogeny $\Grp{H}^\scc \to \Grp{H}^\der$ from a simply-connected connected semisimple algebraic group $\Grp{H}^\scc$, which we shall call the \emph{simply-connected cover} of $\Grp{H}^\der$, whose formation is compatible with base change from $\bQ$ to any extension field.  Let $\Grp{H}^{\scc, \ad} := \Grp{H}^\scc / Z(\Grp{H}^\scc)$ and $\Grp{H}^{\der, \ad} := \Grp{H}^\der / Z(\Grp{H}^\der)$, as in the case of $\Grp{H}^\ad = \Grp{H} / Z(\Grp{H})$.

\begin{lemma}\label{lem-comp-ad}
    \begin{enumerate}
        \item\label{lem-comp-ad-sc-vs-der} The canonical homomorphism $\Grp{H}^{\scc, \ad} \to \Grp{H}^{\der, \ad}$ induced by $\Grp{H}^\scc \to \Grp{H}^\der$ is an isomorphism.

        \item\label{lem-comp-ad-der-vs-ad} The canonical homomorphism $\Grp{H}^{\der, \ad} \to \Grp{H}^\ad$ induced by the composition of $\Grp{H}^\der \to \Grp{H} \to \Grp{H}^\ad$ is an isomorphism.  Equivalently, $Z(\Grp{H}^\der) = Z(\Grp{H}) \cap \Grp{H}^\der$ in $\Grp{H}$; \ie, $Z(\Grp{H}^\der) \cong Z(\Grp{H}) \times_{\Grp{H}} \Grp{H}^\der$.
    \end{enumerate}
\end{lemma}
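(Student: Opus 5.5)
Both parts follow from Lemma \ref{lem-comp-ad-isog}, applied to suitable central isogenies, plus the standard structure theory of connected reductive groups.

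For \Refenum{\ref{lem-comp-ad-sc-vs-der}}: the simply-connected cover $\Grp{H}^\scc \to \Grp{H}^\der$ is a central isogeny of connected algebraic groups over $\bQ$ (by \cite[\aCor A.4.11]{Conrad/Gabber/Prasad:2010-PRG}). Lemma \ref{lem-comp-ad-isog} then applies directly. It says that $Z(\Grp{H}^\scc)$ is the full schematic preimage of $Z(\Grp{H}^\der)$. Hence the induced map $\Grp{H}^\scc / Z(\Grp{H}^\scc) \to \Grp{H}^\der / Z(\Grp{H}^\der)$ is injective as a map of fppf sheaves. It is also surjective, because $\Grp{H}^\scc \to \Grp{H}^\der$ is. So it is an isomorphism.

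For \Refenum{\ref{lem-comp-ad-der-vs-ad}}, I will use the standard fact that $\Grp{H} = Z(\Grp{H})^\circ \cdot \Grp{H}^\der$. More precisely, multiplication $\homom: Z(\Grp{H})^\circ \times \Grp{H}^\der \to \Grp{H}$ is a central isogeny, with finite kernel $Z(\Grp{H})^\circ \cap \Grp{H}^\der$ embedded antidiagonally. Applying Lemma \ref{lem-comp-ad-isog} to $\homom$ gives
\[
    (Z(\Grp{H})^\circ \times \Grp{H}^\der)^\ad \Mi \Grp{H}^\ad.
\]
The left-hand side equals $\Grp{H}^\der / Z(\Grp{H}^\der)$, since the center of a product is the product of the centers and the torus factor is absorbed. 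One checks that the resulting isomorphism $\Grp{H}^{\der,\ad} \to \Grp{H}^\ad$ is exactly the map induced by $\Grp{H}^\der \to \Grp{H} \to \Grp{H}^\ad$.

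It remains to show this is equivalent to $Z(\Grp{H}^\der) = Z(\Grp{H}) \cap \Grp{H}^\der$. The kernel of $\Grp{H}^\der \to \Grp{H}^\ad$ is $Z(\Grp{H}) \cap \Grp{H}^\der$. So $\Grp{H}^{\der,\ad} \to \Grp{H}^\ad$ is injective exactly when this kernel equals $Z(\Grp{H}^\der)$; it is always surjective by the decomposition above. One inclusion, $Z(\Grp{H}) \cap \Grp{H}^\der \subset Z(\Grp{H}^\der)$, is trivial. The other follows from the displayed isomorphism, or directly: an element $z$ of $Z(\Grp{H}^\der)$ commutes with $\Grp{H}^\der$, and also with the central torus $Z(\Grp{H})^\circ$. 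Since these generate $\Grp{H}$, $z$ is central in $\Grp{H}$.

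As in the proof of Lemma \ref{lem-comp-ad-isog}, every verification can be done on $\AC{\bQ}$-points after base change, because in characteristic zero all the groups involved are smooth. The only real input is $\Grp{H} = Z(\Grp{H})^\circ \Grp{H}^\der$ with finite intersection, which I would cite from \cite{Conrad/Gabber/Prasad:2010-PRG} or any standard reference on reductive groups.
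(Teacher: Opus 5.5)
Your proposal is correct and matches the paper's proof. Part (1) is exactly the special case of Lemma~\ref{lem-comp-ad-isog}. For part (2) the paper uses precisely your direct argument: $\Grp{H} = Z(\Grp{H}) \cdot \Grp{H}^\der$ (surjectivity of $Z(\Grp{H}) \times \Grp{H}^\der \to \Grp{H}$, by comparing Lie algebras) forces $Z(\Grp{H}^\der) \subset Z(\Grp{H}) \cap \Grp{H}^\der$, and the reverse inclusion is trivial. Your extra route through the central isogeny $Z(\Grp{H})^\circ \times \Grp{H}^\der \to \Grp{H}$ and Lemma~\ref{lem-comp-ad-isog} is a valid repackaging of the same input and adds nothing essential.
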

\begin{proof}
    Again, these are well known, but let us still include some explanations, for the convenience of the readers.  As for \Refenum{\ref{lem-comp-ad-sc-vs-der}}, it is a special case of Lemma \ref{lem-comp-ad-isog}.  As for \Refenum{\ref{lem-comp-ad-der-vs-ad}}, since $H^\der$ is a subgroup of $\Grp{H}$, we have $Z(\Grp{H}) \cap \Grp{H}^\der \subset Z(\Grp{H}^\der)$, essentially by definition.  Conversely, since $\Grp{H}$ is reductive, the homomorphism $Z(\Grp{H}) \times \Grp{H}^\der \to \Grp{H}$ of algebraic groups over $\bQ$ is surjective, by comparing Lie algebras.  Therefore, $\Grp{H} = Z(\Grp{H}) \cdot \Grp{H}^\der$, and we obtain $Z(\Grp{H}^\der) \subset Z(\Grp{H}) \cap \Grp{H}^\der$, as desired.
\end{proof}

In the following constructions, let us assume that the following holds:
\begin{condition}\label{cond-lsv-funct}
    Suppose we have pairs $(\Grp{H}_1, \Dom_1)$ and $(\Grp{H}_2, \Dom_2)$ as in Section \ref{sec-lsv-setup}, together with a group homomorphism $\homom: \Grp{H}_1 \to \Grp{H}_2$ inducing a central isogeny $\homom^\der: \Grp{H}_1^\der \to \Grp{H}_2^\der$ and, by Lemma \ref{lem-comp-ad-isog}, also an isomorphism $\homom^\ad: \Grp{H}_1^\ad \Mi \Grp{H}_2^\ad$.  Suppose we also have a holomorphic map $\homom_\Dom: \Dom_1 \to \Dom_2$ equivariant with $\homom^\ad(\bR): \Grp{H}_1^\ad(\bR) \Mi \Grp{H}_2^\ad(\bR)$, which necessarily induces an isomorphism from any fixed choice of a connected component $\Dom_1^+$ of $\Dom_1$ to a connected component $\Dom_2^+$ of $\Dom_2$.
\end{condition}

\begin{construction}\label{constr-lsv-funct}
    Let $\Gamma_1 \subset \Grp{H}_1(\bQ)_+$ and $\Gamma_2 \subset \Grp{H}_2(\bQ)_+$ be neat arithmetic subgroups of $\Grp{H}_1(\bQ)$ and $\Grp{H}_2(\bQ)$, respectively, such that $\homom^\ad(\bQ)(\Gamma_1^\ad) \subset \Gamma_2^\ad$ in $\Grp{H}_2^\ad(\bQ)$.  Since $\Dom_1^+ \Mi \Dom_2^+$, the canonical morphism $[1]_{\Gamma_1, \Gamma_2}^\an: \LSV_{\Gamma_1}^\an = \Gamma_1 \Lquot \Dom_1^+ = \Gamma_1^\ad \Lquot \Dom_1^+ \to \LSV_{\Gamma_2}^\an = \Gamma_1 \Lquot \Dom_2^+ = \Gamma_2^\ad \Lquot \Dom_2^+$ is finite \'etale surjective, which uniquely algebraizes to a finite \'etale surjective morphism $[1]_{\Gamma_1, \Gamma_2}: \LSV_{\Gamma_1} \to \LSV_{\Gamma_2}$, by \cite{Borel:1972-maset} and GAGA \cite{Serre:1955-1956-gaga}.
\end{construction}

\begin{construction}\label{constr-lsv-tor-funct}
    In Construction \ref{constr-lsv-funct}, suppose moreover that $\Sigma_1$ and $\Sigma_2$ are projective cone decompositions for $\LSV_{\Gamma_1}$ and $\LSV_{\Gamma_2}$, respectively, such that $\Sigma_1$ refines the pullback of $\Sigma_2$.  By comparing the extension properties of $\LSV_{\Gamma_1, \Sigma_1}^{\Tor, \an}$ and $\LSV_{\Gamma_2, \Sigma_2}^{\Tor, \an}$ as in \cite[\aCh III, \aThm 7.5]{Ash/Mumford/Rapoport/Tai:2010-SCL-2} and by GAGA \cite{Serre:1955-1956-gaga}, for $? = \emptyset$ and $\an$, the morphism $[1]_{\Gamma_1, \Gamma_2}^?$ \Pth{uniquely} extends to a proper surjective morphism $[1]_{(\Gamma_1, \Sigma_1), (\Gamma_2 \Sigma_2)}^{\Tor, ?}: \LSV_{\Gamma_1, \Sigma_1}^{\Tor, ?} \to \LSV_{\Gamma_2, \Sigma_2}^{\Tor, ?}$ such that $\LSV_{\Gamma_1}^?$ is the open preimage of $\LSV_{\Gamma_2}^?$.
\end{construction}

\begin{construction}\label{constr-dcs-funct}
    Let $g \in \Grp{H}_2(\bAi)$.  Let $\levcp_1 \subset \Grp{H}_1(\bAi)$ and $\levcp_2 \subset \Grp{H}_2(\bAi)$ be neat open compact subgroups such that $\homom(\bAi)(\levcp_1) \subset g \levcp_2 g^{-1}$.  Then we have a canonical morphism $[g]_{\levcp_1, \levcp_2}^\an: \LSV_{\levcp_1}^\an = \LSV_{\levcp_1}^\an(\Grp{H}_1, \Dom_1) \to \LSV_{\levcp_2}^\an = \LSV_{\levcp_2}^\an(\Grp{H}_2, \Dom_2)$ defined by right multiplication by $g$, which uniquely algebraize to a canonical morphism $[g]_{\levcp_1, \levcp_2}: \LSV_{\levcp_1} = \LSV_{\levcp_1}(\Grp{H}_1, \Dom_1) \to \LSV_{\levcp_2} = \LSV_{\levcp_2}(\Grp{H}_2, \Dom_2)$, by \cite{Borel:1972-maset} and GAGA \cite{Serre:1955-1956-gaga}.  If $\homom$ is an isomorphism, then $[g]_{\levcp_1, \levcp_2}$ and $[g]_{\levcp_1, \levcp_2}^\an$ are surjective.
\end{construction}

\begin{construction}\label{constr-dcs-comp-funct}
    In Construction \ref{constr-dcs-funct}, suppose $\homom(\bAi)$ maps $h_1 \in \Grp{H}_1(\bAi)$ to $h_2 \in \Grp{H}_2(\bAi)$.  For $? = \emptyset$ and $\an$, the morphism $[g]_{\levcp_1, \levcp_2}^?$ there induces a finite \'etale surjective morphism $[g]_{(\levcp_1, h_1), (\levcp_2, h_2 g)}^?: \LSV_{\levcp_1, h_1}^? \to \LSV_{\levcp_2, h_2 g}^?$, which can be identified with the morphism $[1]_{\Gamma_{\levcp_1, h_1}, \Gamma_{\levcp_2, h_2 g}}^?: \LSV_{\Gamma_{\levcp_1, h_1}}^? \to \LSV_{\Gamma_{\levcp_2, h_2 g}}^?$ in Construction \ref{constr-lsv-funct}.  Consequently, for $? = \emptyset$ and $\an$, the morphism $[g]_{\levcp_1, \levcp_2}^?$ in Construction \ref{constr-dcs-funct} is also finite \'etale, because it is a disjoint union of morphisms $[g]_{(\levcp_1, h_1), (\levcp_2, h_2 g)}^?$ here.
\end{construction}

\begin{construction}\label{constr-dcs-comp-tor-funct}
    In Construction \ref{constr-dcs-comp-funct}, suppose moreover that $\Sigma_1$ and $\Sigma_2$ are cone decompositions for $\LSV_{\levcp_1, h_1}$ and $\LSV_{\levcp_2, h_2 g}$, respectively, such that $\Sigma_1$ refines the pullback of $\Sigma_2$ via $[g]_{(\levcp_1, h_1), (\levcp_2, h_2 g)}$.  By Construction \ref{constr-lsv-tor-funct}, with $\Gamma_1 = \Gamma_{\levcp_1, h_1}$ and $\Gamma_2 = \Gamma_{\levcp_2, h_2 g}$, we see that, for $? = \emptyset$ and $\an$, the morphism $[g]_{(\levcp_1, h_1), (\levcp_2, h_2 g)}^?$ extends to a proper surjective morphism $[g]_{(\levcp_1, h_1, \Sigma_1), (\levcp_2, h_2 g, \Sigma_2)}^?: \LSV_{\levcp_1, h_1, \Sigma_1}^{\Tor, ?} \to \LSV_{\levcp_2, h_2 g, \Sigma_2}^{\Tor, ?}$ such that $\LSV_{\levcp_1, h_1}^?$ is the open preimage of $\LSV_{\levcp_2, h_2 g}^?$.
\end{construction}

\begin{construction}\label{constr-dcs-tor-funct}
    In Construction \ref{constr-dcs-funct}, suppose moreover that $\Sigma_1$ and $\Sigma_2$ are cone decompositions for $\LSV_{\levcp_1}$ and $\LSV_{\levcp_2}$, respectively, such that $\levcp_1$ refines the pullback of $\levcp_2$ in the sense that $\Sigma_{1, h_1}$ refines the pullback of $\Sigma_{2, h_2 g}$ as in Construction \ref{constr-dcs-comp-tor-funct}, for each $h_1 \in \Grp{H}_1(\bAi)$ mapped to $h_2 \in \Grp{H}_2(\bAi)$ via $\homom(\bAi)$.  By applying Construction \ref{constr-dcs-comp-tor-funct} to all $h_1 \in \Grp{H}_1(\bAi)$, we see that, for $? = \emptyset$ and $\an$, the finite morphism $[g]_{\levcp_1, \levcp_2}^?$ in Construction \ref{constr-dcs-funct} extends to a proper morphism $[g]_{(\levcp_1, \Sigma_1), (\levcp_2, \Sigma_2)}^?: \LSV_{\levcp_1, \Sigma_1}^{\Tor, ?} \to \LSV_{\levcp_2, \Sigma_2}^{\Tor, ?}$ such that $\LSV_{\levcp_1}^?$ is the open preimage of $\LSV_{\levcp_2}^?$.  For $? = \emptyset$ and $\an$, when $[g]_{\levcp_1, \levcp_2}^?$ is surjective, so is $[g]_{(\levcp_1, \Sigma_1), (\levcp_2, \Sigma_2)}^?$.
\end{construction}

\begin{construction}\label{constr-dcs-tower}
    Let $(\Grp{H}, \Dom)$ be any pair as in Section \ref{sec-lsv-setup}.  Consider the tower $\{ \LSV_\levcp^\an \}_\levcp$, indexed by neat open compact subgroups $\levcp$ of $\Grp{H}(\bAi)$.  For $g \in \Grp{H}(\bAi)$ and open compact subgroups $\levcp_1$ and $\levcp_2$ of $\Grp{H}(\bAi)$ such that $\levcp_1 \subset g \levcp_2 g^{-1}$, right multiplication by $g$ on the second factor of $\Dom \times \Grp{H}(\bAi)$ induces $[g]_{\levcp_1, \levcp_2}^\an: \LSV_{\levcp_1}^\an \to \LSV_{\levcp_2}^\an$, which uniquely algebraizes to $[g]_{\levcp_1, \levcp_2}: \LSV_{\levcp_1} \to \LSV_{\levcp_2}$, as in Construction \ref{constr-dcs-funct} \Pth{with $(\Grp{H}_1, \Dom_1) = (\Grp{H}_2, \Dom_2) = (\Grp{H}, \Dom)$ and $\homom = \Id_{\Grp{H}}$ in Condition \ref{cond-lsv-funct}}.  Since $\homom_\Dom = \Id_\Dom$, all morphisms $[g]_{\levcp_1, \levcp_2}^\an$ and hence $[g]_{\levcp_1, \levcp_2}$ as above are finite \'etale surjective, which compatibly define canonical right $\Grp{H}(\bAi)$-actions on $\{ \LSV_\levcp \}_\levcp$, for $? = \emptyset$ and $\an$.
\end{construction}

\begin{construction}\label{constr-dcs-normal}
    In Construction \ref{constr-dcs-tower}, for each $g \in \Grp{H}(\bAi)$ normalizing \Pth{\resp contained in} a particular $\levcp$, we have $h \levcp g = hg \levcp$ \Pth{\resp $h \levcp g = h \levcp$}, and hence right multiplication on $\Grp{H}(\bAi)$ defines an action \Pth{\resp trivial action} of $\levcp$ on the double coset space $\LSV_\levcp^\an = \Grp{H}(\bQ)_+ \big\Lquot \bigl(\Dom^+ \times \Grp{H}(\bAi)\bigr) \big/ \levcp$ which permutes \Pth{\resp fixes} its connected components $\LSV_{\levcp, h}^\an$.  Therefore, when $\levcp'$ is a normal subgroup of $\levcp$, for $? = \emptyset$ and $\an$, the action of $\levcp$ on $[1]_{\levcp', \levcp}^?: \LSV_{\levcp'}^? \to \LSV_\levcp^?$ factors through $\levcp / \levcp'$, fixes the target $\LSV_\levcp^?$, and induces actions of $\levcp / \levcp'$ on the fibers.
\end{construction}

\begin{proposition}\label{prop-KZ-mod-K'Z}
    In the setting of Construction \ref{constr-dcs-normal}, let us denote closures of subgroups of $\Grp{H}(\bAi)$ by overlines.  Note that $\levcp \cdot (Z(\Grp{H})(\bQ)) = \levcp \cdot (\overline{Z(\Grp{H})(\bQ)})$ and $\overline{\levcp \cap (Z(\Grp{H})(\bQ))} = \levcp \cap \overline{(Z(\Grp{H})(\bQ))}$, for any open compact subgroup $\levcp$ of $\Grp{H}(\bQ)$.  Suppose $\levcp' \subset \levcp$ are neat open compact subgroups of $\Grp{H}(\bAi)$.  Then, for $? = \emptyset$ and $\an$, the actions of $\levcp / \levcp'$ in Construction \ref{constr-dcs-normal} factor through
    \begin{equation}\label{eq-prop-KZ-mod-K'Z}
        \levcp / \bigl(\levcp' \cdot (\levcp \cap \overline{(Z(\Grp{H})(\bQ))})\bigr) \cong \bigl(\levcp \cdot (Z(\Grp{H})(\bQ))\bigr) \big/ \bigl(\levcp' \cdot (Z(\Grp{H})(\bQ))\bigr)
    \end{equation}
    and makes $[1]_{\levcp', \levcp}^?: \LSV_{\levcp'}^? \to \LSV_\levcp^?$ an \'etale torsor under this finite group \Refeq{\ref{eq-prop-KZ-mod-K'Z}}.
\end{proposition}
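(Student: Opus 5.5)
The plan is to work componentwise using the decomposition \Refeq{\ref{eq-dcs-decomp-an}}. First I will justify the two closure identities noted in the statement. The group $\levcp$ is open in $\Grp{H}(\bAi)$, so $\levcp \cdot Z(\Grp{H})(\bQ)$ is open, hence closed, and therefore contains $\overline{Z(\Grp{H})(\bQ)}$. The second identity follows because $\levcp$ is open and compact, so intersecting with it commutes with taking closures. With these in hand, the isomorphism \Refeq{\ref{eq-prop-KZ-mod-K'Z}} is the standard second isomorphism theorem: $\levcp \cdot Z(\Grp{H})(\bQ) / \levcp' \cdot Z(\Grp{H})(\bQ) \cong \levcp / \bigl(\levcp \cap \levcp' Z(\Grp{H})(\bQ)\bigr)$, and $\levcp \cap \levcp' Z(\Grp{H})(\bQ) = \levcp'\cdot(\levcp \cap Z(\Grp{H})(\bQ))$, whose closure in the compact group is $\levcp' \cdot (\levcp \cap \overline{Z(\Grp{H})(\bQ)})$. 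A subtlety here is that the action of $\levcp/\levcp'$ is only well defined when $\levcp'$ is normal in $\levcp$. I will assume this, as in Construction \ref{constr-dcs-normal}, or else interpret the statement for normal $\levcp'$.

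Next I will show that elements $z \in \levcp \cap Z(\Grp{H})(\bQ)$ act trivially. For $(y, h) \in \Dom \times \Grp{H}(\bAi)$, central $z$ satisfies $(y, hz) = z\cdot(z^{-1}y, h)$. Since $Z(\Grp{H})(\bR)$ acts trivially on $\Dom$, this equals $z \cdot (y,h)$, which is the same point in the quotient by $\Grp{H}(\bQ)$. The stabilizers of points form a closed subgroup, because the action is continuous and factors through a finite quotient. Hence triviality also holds on the closure, and the action factors through \Refeq{\ref{eq-prop-KZ-mod-K'Z}}.

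It remains to prove freeness and transitivity on fibers. Transitivity is immediate: two points of $\LSV_{\levcp'}^\an$ over the same point of $\LSV_\levcp^\an$ have representatives $(y,h)$ and $(\gamma y, \gamma h k)$ with $\gamma \in \Grp{H}(\bQ)$ and $k \in \levcp$, so they differ by the action of $k$. Freeness is the main point. Suppose $k \in \levcp$ fixes the class of $(y,h)$, so that $(y, hk) = (\gamma y, \gamma h k')$ with $k' \in \levcp'$. Then $\gamma \in \Grp{H}(\bQ) \cap h\levcp h^{-1}$ fixes $y$. By neatness of $\levcp$, the image of $\gamma$ in $\Gamma^\ad$ lies in $\Gamma^\ad \cap \MaxCpt^\ad$, which is trivial, as in Section \ref{sec-dcs-an}. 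So $\gamma \in Z(\Grp{H})(\bQ)$, and therefore $k = h^{-1}\gamma h k' = \gamma k' \in \levcp' \cdot (\levcp \cap Z(\Grp{H})(\bQ))$. This shows that the stabilizer is exactly the kernel identified above.

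Finally, a finite group acting freely on a complex manifold, with quotient map the finite \'etale $[1]_{\levcp',\levcp}^\an$, gives an \'etale torsor. The algebraic statement then follows by GAGA together with the uniqueness of algebraization (\cite{Borel:1972-maset}): the group actions algebraize, and the isomorphism $\LSV_{\levcp'} \times G \Mi \LSV_{\levcp'} \times_{\LSV_\levcp} \LSV_{\levcp'}$ can be checked analytically. I expect the main obstacle to be the freeness step, in particular tracking neatness through the conjugated groups $h\levcp h^{-1}$ and the passage from $\Gamma$ to $\Gamma^\ad$.
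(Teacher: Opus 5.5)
Your proposal is correct and follows essentially the same route as the paper. Both reduce to the analytic case, read off transitivity on fibers from the double coset description, observe that $Z(\Grp{H})(\bQ)$ acts trivially because $Z(\Grp{H})(\bR)$ acts trivially on $\Dom$, and get freeness by using the neatness of $\Gamma_{\levcp, h}^\ad$ to force any $\gamma$ fixing a point to be central, hence in $\levcp \cap Z(\Grp{H})(\bQ)$. Your extra justifications of the closure identities and of the isomorphism \Refeq{\ref{eq-prop-KZ-mod-K'Z}}, which the paper takes for granted, are fine, and the one implicit step (that $\gamma \in \Grp{H}(\bQ)_+$ because it fixes $y$ and so stabilizes its component) is harmless.
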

\begin{proof}
    Since $[1]_{\levcp', \levcp}$ is finite \'etale, it suffices to prove this proposition in the analytic case.  Let us study the fiber of $[1]_{\levcp', \levcp}^\an: \LSV_{\levcp'}^\an \to \LSV_{\levcp}^\an$ over a double coset $\Grp{H}(\bQ)_+ (\Dompt, h) \levcp$ in $\LSV_\levcp^\an$, where $\Dompt \in \Dom^+$ and $h \in \Grp{H}(\bAi)$.  By definition, any member of this fiber is a double coset $\Grp{H}(\bQ)_+ (\Dompt', h') \levcp'$ in $\LSV_{\levcp'}^\an$, where $\Dompt' \in \Dom^+$ and $h' \in \Grp{H}(\bAi)$, such that $\Dompt' = \gamma \Dompt$ and $h' = \gamma h k$ for some $\gamma \in \Grp{H}(\bQ)_+$ and $k \in \levcp$.  Up to replacing $(\Dompt', h')$ with $(\gamma^{-1} \Dompt', \gamma^{-1} h')$, we may assume that $\Dompt' = \Dompt$ and $h' \in h \levcp$, without changing $\Grp{H}(\bQ)_+ (\Dompt', h') \levcp'$.  In particular, $\levcp / \levcp'$ acts transitively on the fiber.  Since every $\gamma' \in Z(\Grp{H})(\bQ)$ satisfies $(\Dompt, h' \gamma') = (\gamma' \Dompt, \gamma' h') = \gamma' (\Dompt, h')$, right multiplication by $\gamma'$ acts trivially on the fiber.  Hence, the action of $\levcp / \levcp'$ factors through \Refeq{\ref{eq-prop-KZ-mod-K'Z}}.  If $k', k'' \in \levcp$ and $\Grp{H}(\bQ)_+ (\Dompt, h k') \levcp' = \Grp{H}(\bQ)_+ (\Dompt, h k'') \levcp'$, then $\Dompt = \gamma' \Dompt$ and $h k' = \gamma' h k'' k'''$, for some $\gamma' \in \Grp{H}(\bQ)_+$ and $k''' \in \levcp'$.  In this case, $\gamma' \in \Grp{H}(\bQ)_+ \cap (h \levcp h^{-1}) = \Gamma_{\levcp, h}$.  Since $\levcp$ is neat, $\Gamma_{\levcp, h}^\ad$ acts freely on $\Dom$.  Since $\gamma'$ fixes $\Dompt$, its image in $\Gamma_{\levcp, h}^\ad$ is trivial, and hence $\gamma' \in \ker(\Gamma_{\levcp, h} \to \Gamma_{\levcp, h}^\ad) \subset Z(\Grp{H})(\bQ)$.  Since $\gamma' \in h \levcp h^{-1}$, we have $\gamma' = h^{-1} \gamma' h \in \levcp$.  Thus, $k' = h^{-1} \gamma' h k'' k''' = k'' k''' \gamma' \in k'' \levcp' (\levcp \cap \overline{(Z(\Grp{H})(\bQ))})$, and \Refeq{\ref{eq-prop-KZ-mod-K'Z}} acts faithfully \Pth{and transitively} on the fiber, as desired.
\end{proof}

\begin{proposition}\label{prop-lsv-funct}
    Suppose Condition \ref{cond-lsv-funct} holds, so that we have the constructions following it.  Suppose that $(g, \level_1, \level_2)$ is one of the following:
    \begin{enumerate}[(A)]
        \item\label{prop-lsv-funct-case-arith} $(1, \Gamma_1, \Gamma_2)$ for some $\Gamma_1$ and $\Gamma_2$ as in Construction \ref{constr-lsv-funct};

        \item\label{prop-lsv-funct-case-dcs} some $(g, \levcp_1, \levcp_2)$ as in Construction \ref{constr-dcs-funct};

        \item\label{prop-lsv-funct-case-dcs-comp} some $(g, (\levcp_1, h_1), (\levcp_2, h_2 g))$ as in Construction \ref{constr-dcs-comp-funct}.
    \end{enumerate}
    We shall refer to these as Cases \ref{prop-lsv-funct-case-arith}, \ref{prop-lsv-funct-case-dcs}, and \ref{prop-lsv-funct-case-dcs-comp}, respectively.  Then:
    \begin{enumerate}
        \item\label{prop-lsv-funct-op} The canonical finite \'etale morphism $[g]_{\level_1, \level_2}: \LSV_{\level_1} \to \LSV_{\level_2}$ of smooth quasi-projective varieties is surjective in Cases \ref{prop-lsv-funct-case-arith} and \ref{prop-lsv-funct-case-dcs-comp}; and, when $\homom$ is an isomorphism, also in Case \ref{prop-lsv-funct-case-dcs}.

        \item\label{prop-lsv-funct-min} The morphism $[g]_{\level_1, \level_2}$ extends to a canonical finite morphism
            \begin{equation}\label{eq-prop-lsv-funct-min}
                [g]_{\level_1, \level_2}^\Min: \LSV_{\level_1}^\Min \to \LSV_{\level_2}^\Min
            \end{equation}
            of normal projective varieties, which is surjective when $[g]_{\level_1, \level_2}$ is.  Hence, $\LSV_{\level_1}^\Min$ is the normalization of $\LSV_{\level_2}^\Min$ via $\LSV_{\level_1} \to \LSV_{\level_2} \Em \LSV_{\level_2}^\Min$, by Zariski's main theorem.

        \item\label{prop-lsv-funct-tor} If $\Sigma_1$ and $\Sigma_2$ are as in Constructions \ref{constr-lsv-tor-funct}, \ref{constr-dcs-comp-tor-funct}, and \ref{constr-dcs-tor-funct}, then the canonical proper morphism $[g]_{(\level_1, \Sigma_1), (\level_2, \Sigma_2)}^\Tor$ is a \emph{log \'etale} morphism of fs log schemes whose underlying schemes are normal projective varieties, which is surjective when $[g]_{\level_1, \level_2}$ is.  Hence, $[g]_{(\level_1, \Sigma_1), (\level_2, \Sigma_2)}^{\Tor, *} \, \Omega_{\LSV_{\level_2, \Sigma_2}^\Tor}^{\log, \bullet} \cong \Omega_{\LSV_{\level_1, \Sigma_1}^\Tor}^{\log, \bullet}$.  By the arguments in \cite[\aCh I, \aSec 3, especially \apage 44, \aCor 2]{Kempf/Knudsen/Mumford/Saint-Donat:1973-TE-1}, by the local freeness of $\Omega_{\LSV_{\level_2, \Sigma_2}^\Tor}^{\log, \bullet}$ in all degrees, and by the projection formula, it follows that the canonical \Pth{adjunction} morphism
            \begin{equation}\label{eq-prop-lsv-funct-Omega-pushforward}
                (\Omega_{\cO_{\LSV_{\level_2, \Sigma_2}^\Tor}}^{\log, \bullet})^{\otimes m} \to R [g]_{(\level_1, \Sigma_1), (\level_2, \Sigma_2), *}^\Tor \bigl((\Omega_{\LSV_{\Gamma_1, \Sigma_1}^\Tor}^{\log, \bullet})^{\otimes m}\bigr)
            \end{equation}
            is an isomorphism \Pth{\resp the zero morphism} over the connected components of $\LSV_{\level_2, \Sigma_2}^\Tor$ that are in \Pth{\resp not in} the image of $[g]_{(\level_1, \Sigma_1), (\level_2, \Sigma_2)}$, for all $m \in \bZ$ \Pth{with negative tensor powers of $\Omega_{\cO_{\LSV_{\level_2, \Sigma_2}^\Tor}^{\log, \bullet}}$ defined by duality}.

        \item\label{prop-lsv-funct-tor-ket} If the above $\Sigma_1$ is exactly the pullback of $\Sigma_2$, then $[g]_{(\level_1, \Sigma_1), (\level_2, \Sigma_2)}^\Tor$ is \emph{finite Kummer \'etale}, and $\LSV_{\level_1, \Sigma_1}^\Tor$ is the normalization of $\LSV_{\level_2, \Sigma_2}^\Tor$ via $\LSV_{\level_1} \to \LSV_{\level_2} \Em \LSV_{\level_2, \Sigma_2}^\Tor$, by Zariski's main theorem again.  \Pth{The assumption makes sense because the pullback $\Sigma_1$ of a general projective $\Sigma_2$ is projective.  On the contrary, the pullback $\Sigma_1$ of a general smooth $\Sigma_2$ is not necessarily smooth.}

        \item\label{prop-lsv-funct-Gal} Suppose that one of the following holds:
            \begin{enumerate}
                \item[(A$^\prime$)]\label{prop-lsv-funct-Gal-case-arith} $(g, \level_1, \level_2) = (1, \Gamma_1, \Gamma_2)$ is as in Case \ref{prop-lsv-funct-case-arith}, and $\homom^\ad(\bQ)(\Gamma_1^\ad)$ is a normal subgroup of $\Gamma_2^\ad$, in which case $\overline{\level} := \Gamma_2^\ad \big/ \bigl(\homom^\ad(\bQ)(\Gamma_1^\ad)\bigr)$;

                \item[(B$^\prime$)]\label{prop-lsv-funct-Gal-case-dcs} $(g, \level_1, \level_2) = (1, \levcp_1, \levcp_2)$ is as in Case \ref{prop-lsv-funct-case-dcs}, $\homom$ is an isomorphism, and $\homom(\bAi)(\levcp_1)$ is a normal subgroup of $\levcp_2$, in which case $\overline{\level} := \levcp_2 / \bigl(\homom(\bAi)(\levcp_1) \cdot (\levcp_2 \cap \overline{(Z(\Grp{H}_2)(\bQ))})\bigr)$ \Pth{\Refcf{} \Refeq{\ref{eq-prop-KZ-mod-K'Z}}}.
            \end{enumerate}
            In both cases, $\overline{\level}$ is a finite group.  Then:
            \begin{enumerate}
                \item\label{prop-lsv-funct-Gal-op} The natural action of $\level_2$ on $\LSV_{\level_1}$ factors through $\overline{\level}$ and makes the finite \'etale surjective morphism $[1]_{\level_1, \level_2}: \LSV_{\level_1} \to \LSV_{\level_2}$ an \'etale torsor under $\overline{\level}$.  In this case, $\LSV_{\level_1} / \overline{\level} \Mi \LSV_{\level_2}$.

                \item\label{prop-lsv-funct-Gal-min} The action of $\overline{\level}$ on $[1]_{\level_1, \level_2}: \LSV_{\level_1} \to \LSV_{\level_2}$ \Pth{necessarily uniquely} extends to the finite surjective morphism $[1]_{\level_1, \level_2}^\Min: \LSV_{\level_1}^\Min \to \LSV_{\level_2}^\Min$ in \Refeq{\ref{eq-prop-lsv-funct-min}}, and induces $\LSV_{\level_1}^\Min / \overline{\level} \Mi \LSV_{\level_2}^\Min$.

                \item\label{prop-lsv-funct-Gal-tor-ket} If $\Sigma_1$ and $\Sigma_2$ are as in \Refenum{\ref{prop-lsv-funct-tor-ket}}, then the action of $\overline{\level}$ on $[1]_{\level_1, \level_2}: \LSV_{\level_1} \to \LSV_{\level_2}$ \Pth{necessarily uniquely} extends to the finite Kummer \'etale surjective morphism $[1]_{(\level_1, \Sigma_1), (\level_2, \Sigma_2)}^\Tor: \LSV_{\level_1, \Sigma_1}^\Tor \to \LSV_{\level_2, \Sigma_2}^\Tor$, and makes the latter a Kummer \'etale torsor under $\overline{\level}$.  In this case, $\LSV_{\level_1, \Sigma_1}^\Tor / \overline{\level} \Mi \LSV_{\level_2, \Sigma_2}^\Tor$.  The actions of $\overline{\level}$ on $[1]_{(\level_1, \Sigma_1), (\level_2, \Sigma_2)}^\Tor$ and $[1]_{\level_1, \level_2}^\Min$ are compatible.
            \end{enumerate}
    \end{enumerate}
\end{proposition}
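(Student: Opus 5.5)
I would prove the statement case by case, reducing everything to Case \ref{prop-lsv-funct-case-arith}. Case \ref{prop-lsv-funct-case-dcs} decomposes into connected components as in \Refeq{\ref{eq-dcs-decomp}}, and by Construction \ref{constr-dcs-comp-funct} each component morphism is an instance of Case \ref{prop-lsv-funct-case-dcs-comp}, which is identified with an instance of Case \ref{prop-lsv-funct-case-arith} with $\Gamma_1 = \Gamma_{\levcp_1, h_1}$ and $\Gamma_2 = \Gamma_{\levcp_2, h_2 g}$.

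For \Refenum{\ref{prop-lsv-funct-op}} in Case \ref{prop-lsv-funct-case-arith}, Construction \ref{constr-lsv-funct} already gives finite \'etale surjectivity, since $\Gamma_1^\ad \Lquot \Dom_1^+ \to \Gamma_2^\ad \Lquot \Dom_2^+$ is a quotient by a finite-index subgroup via $\Dom_1^+ \Mi \Dom_2^+$. When $\homom$ is an isomorphism, surjectivity in Case \ref{prop-lsv-funct-case-dcs} follows because every component $\LSV_{\levcp_2, h}$ is hit by the component $\LSV_{\levcp_1, h g^{-1}}$.

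For \Refenum{\ref{prop-lsv-funct-min}}, I would invoke the functoriality of the Baily--Borel compactification for arithmetic subgroups commensurable in $\Grp{H}^\ad(\bQ)$. The Satake topology and the rational boundary components of $\Dom_1^+ \cong \Dom_2^+$ coincide, so $\Gamma_1^\ad \Lquot \Dom^{+,*} \to \Gamma_2^\ad \Lquot \Dom^{+,*}$ is a finite continuous map of normal analytic spaces. This map is holomorphic on the minimal compactifications, and by GAGA, or Borel's extension theorem, it algebraizes. The normalization statement then follows from Zariski's main theorem, since both spaces are normal and the map is finite and birational onto components over the open part.

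For \Refenum{\ref{prop-lsv-funct-tor}} and \Refenum{\ref{prop-lsv-funct-tor-ket}}, I would work \'etale locally using Proposition \ref{prop-lsv-tor-facts}\Refenum{\ref{prop-lsv-tor-facts-loc}}. Near a boundary component $\Bd$, the morphism is induced by the map of tori ${}^{\Gamma_1^\ad}\Grp{T}_\Bd \to {}^{\Gamma_2^\ad}\Grp{T}_\Bd$, which comes from the finite-index inclusion of cocharacter lattices $\Gamma_{1,\Grp{U}_\Bd}^\ad \subset \Gamma_{2,\Grp{U}_\Bd}^\ad$. It extends to the toroidal embeddings because $\Sigma_1$ refines $\Sigma_2$, and the remaining factor is a finite \'etale map between the smooth bases $\Dom^+(\Bd)'$. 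A toric morphism induced by a finite-index lattice inclusion is log \'etale in characteristic zero, and it is Kummer precisely when the fans coincide, with refinements contributing log blowups. This gives log \'etaleness, and finite Kummer \'etaleness when $\Sigma_1$ is exactly the pullback of $\Sigma_2$. Log \'etaleness then yields the isomorphism on $\Omega^{\log,\bullet}$.

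I expect the main obstacle to be the statement \Refeq{\ref{eq-prop-lsv-funct-Omega-pushforward}}. For it I would use the projection formula to reduce to $\cO \to R[g]_*\cO$. Factoring the morphism, \'etale locally, into a toric log blowup followed by a finite Kummer map, the toric blowup has $R f_* \cO = \cO$ by Kempf et al., and the finite part contributes the invariants $\cO$. The map is zero on components outside the image.

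For \Refenum{\ref{prop-lsv-funct-Gal}}, part \Refenum{\ref{prop-lsv-funct-Gal-op}} in Case (A$^\prime$) follows from free action of $\Gamma_2^\ad$ by neatness, and in Case (B$^\prime$) from Proposition \ref{prop-KZ-mod-K'Z}. For the extensions to the compactifications, I would transport the action by the functoriality just established, applied with $g \in \levcp_2$, since $\Sigma_1$ pulled back from $\Sigma_2$ is $\overline{\level}$-stable. Uniqueness follows from density of the open part together with separatedness. The quotient identifications then come from normality, via Zariski's main theorem. The torsor property on the toroidal level follows from the local toric description: there the stabilizers are quotients of lattices $\Gamma_{2,\Grp{U}_\Bd}^\ad/\Gamma_{1,\Grp{U}_\Bd}^\ad$, acting as Kummer torsors.
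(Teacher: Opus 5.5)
The genuine gap is in your argument for \eqref{eq-prop-lsv-funct-Omega-pushforward}. Your factorization is fine. Write the toroidal morphism as a refinement $f\colon \LSV^\Tor_{\level_1,\Sigma_1}\to\LSV^\Tor_{\level_1,\Sigma_1'}$, with $\Sigma_1'$ the pullback of $\Sigma_2$, followed by the finite Kummer \'etale map $h\colon\LSV^\Tor_{\level_1,\Sigma_1'}\to\LSV^\Tor_{\level_2,\Sigma_2}$. Kempf--Knudsen--Mumford--Saint-Donat then give $Rf_*\cO=\cO$.

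The claim that ``the finite part contributes the invariants $\cO$'' is false, however. $Rh_*\cO=h_*\cO$ has rank equal to the degree of $h$ over each component, and $\cO\to h_*\cO$ is only injective. It becomes an isomorphism only after taking $\overline{\level}$-invariants in the Galois situation of (5), and no invariants are taken in \eqref{eq-prop-lsv-funct-Omega-pushforward}. So your argument really shows $R^{>0}[g]^\Tor_*\cO=0$.

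The asserted isomorphism cannot be proved in this generality, because it fails whenever $[g]$ has degree $>1$. For modular curves with neat $\Gamma_1\subsetneq\Gamma_2\subset\mathrm{SL}_2(\bZ)$, where toroidal and minimal compactifications coincide, $[1]_*\Omega^{\log,1}_{\LSV^\Tor_{\Gamma_1}}\cong\Omega^{\log,1}_{\LSV^\Tor_{\Gamma_2}}\otimes[1]_*\cO_{\LSV^\Tor_{\Gamma_1}}$ has rank $[\Gamma_2^\ad:\Gamma_1^\ad]>1$. The adjunction is an isomorphism exactly when $[g]^\Tor$ has degree one over each component of its image, e.g.\ for a refinement of cone decompositions at a fixed level.

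That degree-one case is all the paper's justification covers (the toroidal result of Kempf--Knudsen--Mumford--Saint-Donat, p.~44, Cor.~2, plus the projection formula). It is also the only case the paper uses: to check that the morphism of $\Proj$'s defining $[g]^\Min$ is independent of the choice of the $\Sigma_i$. You should restrict the claim to that case, where your argument coincides with the paper's.

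The same conflation appears in your part (2). Over the open part the map is finite \'etale, not birational. The normalization assertion means that $\LSV_{\level_1}^\Min$ is the normalization of $\LSV_{\level_2}^\Min$ in $\LSV_{\level_1}$, which follows from normality and finiteness alone.

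Apart from this, your reduction to Case (A), part (1), and the local toric analysis for (3) and (4) agree with the paper. In (4) you should also note, as the paper does, that $[g]^\Tor$ is proper with finite fibres, hence finite.

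For (2) you take a genuinely different route. The paper refines to smooth projective $\Sigma_i$ and writes $\LSV_{\level_i}^\Min$ as $\Proj$ of the log canonical ring of $\LSV^\Tor_{\level_i,\Sigma_i}$. It then uses $[g]^{\Tor,*}\Omega^{\log,d}\cong\Omega^{\log,d}$ from (3) to get a morphism of $\Proj$'s, and deduces finiteness because this morphism pulls an ample line bundle back to an ample one. That route stays algebraic but needs the independence check above. Your Satake-topology argument gives the finite map directly and canonically, without toroidal compactifications. The cost is reliance on the analytic construction of $\Gamma^\ad\backslash\Dom^{+,*}$: holomorphy at the boundary via Riemann extension on normal spaces, then GAGA.

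Finally, in (5)(b),(c) the paper extends the action by uniqueness of relative normalization, which treats (A$'$) and (B$'$) uniformly. Your functoriality argument is phrased only for $g\in\levcp_2$. In Case (A$'$) you would have to apply it with $\homom$ given by conjugation by elements of $\Gamma_2^\ad$.
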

\begin{proof}
    Firstly, \Refenum{\ref{prop-lsv-funct-op}} follows from Constructions \ref{constr-lsv-funct}, \ref{constr-dcs-funct}, and \ref{constr-dcs-comp-funct}.  In the latter two constructions, if $\homom(\bAi)(h_1) = h_2$, then $[g]_{\levcp_1, \levcp_2}^\an: \LSV_{\levcp_1}^\an \to \LSV_{\levcp_2}^\an$ maps the connected component $\LSV_{\levcp_1, h_1}^\an = \LSV_{\Gamma_{\levcp_1, h_1}}^\an \cong \LSV_{\Gamma_{\levcp_1, h_1}^\ad}^\an$ of $\LSV_{\levcp_1}^\an$ to $\LSV_{\levcp_2, h_2 g}^\an = \LSV_{\Gamma_{\levcp_2, h_2 g}}^\an \cong \LSV_{\Gamma_{\levcp_2, h_2 g}^\ad}^\an$.  Hence, it suffices to prove \Refenum{\ref{prop-lsv-funct-min}}--\Refenum{\ref{prop-lsv-funct-tor-ket}} in Case \ref{prop-lsv-funct-case-arith}.

    If $\Sigma_1$ and $\Sigma_2$ are as in Construction \ref{constr-lsv-tor-funct}, then $[1]_{(\Gamma_1, \Sigma_1), (\Gamma_2, \Sigma_2)}^{\Tor, \an}: \LSV_{\Gamma_1, \Sigma_1}^{\Tor, \an} \to \LSV_{\Gamma_2, \Sigma_2}^{\Tor, \an}$ is locally given by ${}^{\Gamma_1^\ad} \Grp{T}_{\Bd, \{\sigma_{\alpha'}'\}}^\an \to {}^{\Gamma_2^\ad} \Grp{T}_{\Bd, \{\sigma_\alpha\}}^\an$, in the notation of Proposition \ref{prop-lsv-tor-facts}\Refenum{\ref{prop-lsv-tor-facts-loc}}, where $\{\sigma_{\alpha'}'\}$ is a refinement of $\{\sigma_\alpha\}$, for the same rational boundary component $\Bd$ of $\Dom_1^+ \Mi \Dom_2^+$.  Note that ${}^{\Gamma_1^\ad} \Grp{T}_\Bd \to {}^{\Gamma_2^\ad} \Grp{T}_\Bd$ is an isogeny of tori, because the corresponding homomorphism of cocharacter groups $\Gamma_1^\ad \cap \Grp{U}_\Bd^\ad(\bQ) \to \Gamma_2^\ad \cap \Grp{U}_\Bd^\ad(\bQ)$ is between two arithmetic subgroups of $\Grp{U}_\Bd^\ad(\bQ)$.  By passing to formal completions and by Artin's approximation as in Proposition \ref{prop-lsv-tor-facts}\Refenum{\ref{prop-lsv-tor-facts-loc}}, $[1]_{(\Gamma_1, \Sigma_1), (\Gamma_2, \Sigma_2)}^\Tor$ is \'etale locally of the form ${}^{\Gamma_1^\ad} \Grp{T}_{\Bd, \{\sigma_{\alpha'}'\}} \to {}^{\Gamma_2^\ad} \Grp{T}_{\Bd, \{\sigma_\alpha\}}$, which is \'etale locally some standard log \'etale morphisms as in \cite[\aProp 3.4]{Kato:1989-lsfi}.  Hence, $[1]_{(\Gamma_1, \Sigma_1), (\Gamma_2, \Sigma_2)}^\Tor$ is log \'etale as a morphism of fs log schemes.  The remaining statements in \Refenum{\ref{prop-lsv-funct-tor}} then follow.

    Up to refining both $\Sigma_1$ and $\Sigma_2$, suppose moreover that $\Sigma_1$ and $\Sigma_2$ are not only projective but also smooth.  Since $\Dom_1^+ \Mi \Dom_2^+$, we have $d = \dim_\bC(\Dom_1^+) = \dim_\bC(\Dom_2^+)$.  By Proposition \ref{prop-lsv-tor-facts}\Refenum{\ref{prop-lsv-tor-facts-min-can}}, we obtain a canonical morphism
    \begin{equation}\label{eq-prop-lsv-funct-min-proj}
    \begin{split}
        \LSV_{\Gamma_2}^\Min & \cong \Proj\bigl(\oplus_{m \geq 0} H^0(\LSV_{\Gamma_2, \Sigma_2}^\Tor, (\Omega_{\LSV_{\Gamma_2, \Sigma_2}^\Tor}^{\log, d})^{\otimes m})\bigr) \\
        \to \LSV_{\Gamma_1}^\Min & \cong \Proj\bigl(\oplus_{m \geq 0} H^0(\LSV_{\Gamma_1, \Sigma_1}^\Tor, (\Omega_{\LSV_{\Gamma_1, \Sigma_1}^\Tor}^{\log, d})^{\otimes m})\bigr)
    \end{split}
    \end{equation}
    which is necessarily proper and pulls $\Omega_{\LSV_{\Gamma_2}^\Min}^d$ back to $\Omega_{\LSV_{\Gamma_1}^\Min}^d$.  Since the ample line bundle $\Omega_{\LSV_{\Gamma_2}^\Min}^d$ on $\LSV_{\Gamma_2}^\Min$ pulls back to the ample line bundle $\Omega_{\LSV_{\Gamma_1}^\Min}^d$ on $\LSV_{\Gamma_1}^\Min$, this proper morphism \Refeq{\ref{eq-prop-lsv-funct-min}} is quasi-affine and hence finite.  Since any finite number of cone decompositions admit a common projective smooth refinement, it follows from \Refeq{\ref{eq-prop-lsv-funct-Omega-pushforward}} that the morphism \Refeq{\ref{eq-prop-lsv-funct-min-proj}} is independent of the choices of $\Sigma_1$ and $\Sigma_2$.  Thus, \Refeq{\ref{eq-prop-lsv-funct-min-proj}} gives the desired finite morphism \Refeq{\ref{eq-prop-lsv-funct-min}}, and the remaining statements in \Refenum{\ref{prop-lsv-funct-min}} are self-explanatory.

    Again assuming that $\Sigma_1$ and $\Sigma_2$ are projective but not necessarily smooth, if $\Sigma_1$ is exactly the pullback of $\Sigma_2$, then $[1]_{(\Gamma_1, \Sigma_1), (\Gamma_2, \Sigma_2)}^\Tor$ is locally given by ${}^{\Gamma_1^\ad} \Grp{T}_{\Bd, \{\sigma_{\alpha}\}}^\an \to {}^{\Gamma_2^\ad} \Grp{T}_{\Bd, \{\sigma_\alpha\}}^\an$, as before, but with the \emph{same} $\{\sigma_\alpha\}$ on both sides.  In this case, since $[1]_{(\Gamma_1, \Sigma_1), (\Gamma_2, \Sigma_2)}^{\Tor, \an}$ has finite fibers, the proper morphism $[1]_{(\Gamma_1, \Sigma_1), (\Gamma_2, \Sigma_2)}^\Tor$ is quasi-finite and hence \emph{finite}.  By Artin's approximation for pullbacks of the finite morphism to formal completions on the target, $[1]_{(\Gamma_1, \Sigma_1), (\Gamma_2, \Sigma_2)}^\Tor$ is \'etale locally on the target of the form ${}^{\Gamma_1^\ad} \Grp{T}_{\Bd, \{\sigma_{\alpha}\}} \to {}^{\Gamma_2^\ad} \Grp{T}_{\Bd, \{\sigma_\alpha\}}$, which is \'etale locally on the target some standard log \'etale morphisms as in \cite[\aProp 3.4]{Kato:1989-lsfi} associated with Kummer homomorphisms of monoids as in \cite[1.4]{Illusie:2002-fknle}.  Thus, $[1]_{(\Gamma_1, \Sigma_1), (\Gamma_2, \Sigma_2)}^\Tor$ is finite Kummer \'etale as a morphism of fs log schemes, and the remaining statements in \Refenum{\ref{prop-lsv-funct-tor-ket}} follow.

    Finally, in the setting of \Refenum{\ref{prop-lsv-funct-Gal}}, the statements in \Refenum{\ref{prop-lsv-funct-Gal-op}} are self-explanatory in Case \ref{prop-lsv-funct-Gal-case-arith}, and follow from Proposition \ref{prop-KZ-mod-K'Z} in Case \ref{prop-lsv-funct-Gal-case-dcs}.  As for \Refenum{\ref{prop-lsv-funct-Gal-min}} and \Refenum{\ref{prop-lsv-funct-Gal-tor-ket}}, the action of $\overline{\level}$ extends to the minimal and toroidal compactifications and have the desired properties by taking normalizations and resorting to Zariski's main theorem, as in the last sentences of \Refenum{\ref{prop-lsv-funct-min}} and \Refenum{\ref{prop-lsv-funct-tor-ket}}.  The extended actions of $\overline{\level}$ to the compactifications are necessarily unique and compatible with each other, because they are determined by their restrictions to $[1]_{\level_1, \level_2}: \LSV_{\level_1} \to \LSV_{\level_2}$ by density.
\end{proof}

\begin{remark}\label{rem-dcs-funct-sc}
    A useful special case of Case \ref{prop-lsv-funct-case-dcs} of Proposition \ref{prop-lsv-funct} is when $\Grp{H}_1$ is semisimple and simply-connected, so that $\Grp{H}_1 = \Grp{H}_1^\der = \Grp{H}_1^\scc$.  Then $\Dom_1 = \Dom_1^+$, $\Grp{H}_1(\bR)_+ = \Grp{H}_1(\bR)$, $\Grp{H}_1(\bQ)_+ = \Grp{H}_1(\bQ)$, and $\LSV_{\levcp_1} = \LSV_{\levcp_1, h_1} = \LSV_{\levcp_1}^\circ$, for all $h_1 \in \Grp{H}_1(\bAi)$, as explained in Remark \ref{rem-lsv-sc}.  In this case, we can study $\LSV_{\levcp_1}$ and its compactifications as in Section \ref{sec-lsv-cpt}, and deduce results for $\LSV_{\levcp_2, h_2}$ for all $h_2 \in \Grp{H}_2(\bAi)$, including $\LSV_{\levcp_2}^\circ = \LSV_{\levcp_2, 1}$, and their compactifications.
\end{remark}

\begin{remark}\label{rem-dcs-funct-sc-der}
    In Case \ref{prop-lsv-funct-case-dcs} of Proposition \ref{prop-lsv-funct}, if we assume that $\Grp{H}_1$ is semisimple and simply-connected, as in Remark \ref{rem-dcs-funct-sc}; that the central isogeny $\homom^\der: \Grp{H}_1^\der \to \Grp{H}_2^\der$ is an \emph{isomorphism}; and that $\levcp_1 = \homom(\bAi)^{-1}(g \levcp_2 g^{-1})$.  Then we obtain an isomorphism $[g]_{\levcp_1, \levcp_2}: \LSV_{\levcp_1, h_1} \to \LSV_{\levcp_2, h_2 g}$ of connected components, which is the restriction of the open-and-closed immersion $[g]_{\levcp_1, \levcp_2}: \LSV_{\levcp_1} \to \LSV_{\levcp_2}$ from the connected $\LSV_{\levcp_1}$ into the whole \Pth{generally disconnected} $\LSV_{\levcp_2}$.
\end{remark}

By Case \ref{prop-lsv-funct-case-dcs} in Proposition \ref{prop-lsv-funct}, in the setting in Construction \ref{constr-dcs-tower}, we have:
\begin{corollary}\label{cor-dcs-act}
    The tower $\{ \LSV_\levcp \}_\levcp$ in Construction \ref{constr-dcs-tower}, with its canonical $\Grp{H}(\bAi)$-action given by finite \'etale surjective morphisms as in Proposition \ref{prop-lsv-funct}\Refenum{\ref{prop-lsv-funct-op}}, admits a tower of minimal compactifications $\{ \LSV_\levcp^\Min \}_\levcp$, which are normal projective varieties, with a canonical right $\Grp{H}(\bAi)$-action extending that on $\{ \LSV_\levcp \}_\levcp$, given by finite surjective morphisms as in Proposition \ref{prop-lsv-funct}\Refenum{\ref{prop-lsv-funct-min}}.  It also admits a double tower of toroidal compactifications $\{ \LSV_{\levcp, \Sigma}^\Tor \}_{(\levcp, \Sigma)}$, which are projective normal varieties, with a canonical right $\Grp{H}(\bAi)$-action compatible with those on $\{ \LSV_\levcp \}_\levcp$ and $\{ \LSV_\levcp^\Min \}_\levcp$, given by proper log \'etale morphisms \Pth{\resp finite Kummer \'etale} as in Proposition \ref{prop-lsv-funct}\Refenum{\ref{prop-lsv-funct-tor}} \Pth{\resp{} \Refenum{\ref{prop-lsv-funct-tor-ket}}}, when cone decompositions on the sources refine \Pth{\resp are exactly} the pullbacks of those on the targets.
\end{corollary}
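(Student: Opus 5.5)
The plan is to deduce Corollary \ref{cor-dcs-act} directly from Proposition \ref{prop-lsv-funct}, Case \ref{prop-lsv-funct-case-dcs}, applied with $(\Grp{H}_1, \Dom_1) = (\Grp{H}_2, \Dom_2) = (\Grp{H}, \Dom)$, $\homom = \Id_{\Grp{H}}$ and $\homom_\Dom = \Id_\Dom$. In this setting Condition \ref{cond-lsv-funct} holds trivially. For every $g \in \Grp{H}(\bAi)$ and every pair of neat $\levcp_1 \subset g \levcp_2 g^{-1}$, Construction \ref{constr-dcs-tower} provides the morphism $[g]_{\levcp_1, \levcp_2}$. Since $\homom$ is an isomorphism, Proposition \ref{prop-lsv-funct}\Refenum{\ref{prop-lsv-funct-op}} shows that this morphism is finite \'etale surjective. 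The usual cocycle identities $[g']_{\levcp_2, \levcp_3} \circ [g]_{\levcp_1, \levcp_2} = [gg']_{\levcp_1, \levcp_3}$ hold on the analytic side, because they come from right multiplication on $\Grp{H}(\bAi)$; by uniqueness of algebraization (Borel and GAGA) they then hold algebraically. This gives the right $\Grp{H}(\bAi)$-action on $\{ \LSV_\levcp \}_\levcp$.

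For the minimal compactifications I will invoke Proposition \ref{prop-lsv-funct}\Refenum{\ref{prop-lsv-funct-min}}. Each $[g]_{\levcp_1, \levcp_2}$ extends to a finite surjective morphism $[g]^\Min_{\levcp_1, \levcp_2}$ between normal projective varieties, and $\LSV_{\levcp_1}^\Min$ is the normalization of $\LSV_{\levcp_2}^\Min$ in $\LSV_{\levcp_1}$. Because of this normalization description, the extension is unique, since $\LSV_{\levcp_1}$ is dense in $\LSV_{\levcp_1}^\Min$ and the targets are separated. Uniqueness then forces the cocycle identities to persist, because both sides of each identity extend the same morphism on the dense open $\LSV_{\levcp_1}$. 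This yields the right action on $\{ \LSV_\levcp^\Min \}_\levcp$ extending the one on the open parts.

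For the toroidal double tower, I index by pairs $(\levcp, \Sigma)$ and allow a transition $(\levcp_1, \Sigma_1) \to (\levcp_2, \Sigma_2)$ via $g$ whenever $\Sigma_1$ refines the pullback of $\Sigma_2$ in the sense of Construction \ref{constr-dcs-tor-funct}. That construction provides the proper extension $[g]^\Tor$, and Proposition \ref{prop-lsv-funct}\Refenum{\ref{prop-lsv-funct-tor}} shows it is log \'etale and surjective. When $\Sigma_1$ is exactly the pullback, Proposition \ref{prop-lsv-funct}\Refenum{\ref{prop-lsv-funct-tor-ket}} shows it is finite Kummer \'etale. Pullbacks of projective cone decompositions are projective, and by Proposition \ref{prop-lsv-tor-facts}\Refenum{\ref{prop-lsv-tor-facts-ref}} common refinements exist, so the index system is directed.

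The remaining point is that the toroidal and minimal actions are compatible, i.e.\ $\TorMap_{\Sigma_2} \circ [g]^\Tor = [g]^\Min \circ \TorMap_{\Sigma_1}$. Both composites agree on the dense open $\LSV_{\levcp_1}$ and the target is separated, so they agree everywhere; the same density argument gives the cocycle identities for the $[g]^\Tor$. I expect the only mildly delicate step to be the bookkeeping of $\Sigma = \{ \Sigma_h \}_h$ under translation by $g$. This bookkeeping is exactly what Construction \ref{constr-dcs-tor-funct} already handles componentwise through Construction \ref{constr-dcs-comp-tor-funct}, so no new input should be needed.
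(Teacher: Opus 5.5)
Your proposal is correct and follows the paper's approach. The paper obtains this corollary directly from Case \ref{prop-lsv-funct-case-dcs} of Proposition \ref{prop-lsv-funct} with $\homom = \Id_{\Grp{H}}$, exactly as you do. Your density and separatedness arguments, for the cocycle identities and for $\TorMap_{\Sigma_2} \circ [g]^\Tor = [g]^\Min \circ \TorMap_{\Sigma_1}$, spell out what the paper leaves implicit; compare the uniqueness-by-density remark at the end of the proof of Proposition \ref{prop-lsv-funct}.
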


\begin{corollary}\label{cor-KZ-mod-KZ'-tower}
    In the setting of Corollary \ref{cor-dcs-act}, given any neat open compact subgroup $\levcp$ of $\Grp{H}(\bAi)$, and any normal subgroup $\levcp_0$ of $\levcp$, the canonical action of $\levcp$ on the tower $\{ \LSV_{\levcp'} \}_{\levcp_0 \subset \levcp' \subset \levcp}$ \Pth{\resp $\{ \LSV_{\levcp', \Sigma'}^\Tor \}_{\levcp_0 \subset \levcp' \subset \levcp}$} indexed by open subgroups $\levcp'$ of $\levcp$ containing $\levcp_0$ \Pth{where $\Sigma'$ abusively denotes the pullback of $\Sigma$ for each level $\levcp'$} factors through a faithful action of the following quotient group
    \begin{equation}\label{eq-cor-KZ-mod-KZ'-tower}
        \levcp / \bigl(\overline{\levcp_0 \cdot \bigl(\levcp \cap (Z(\Grp{H})(\bQ))\bigr)}\bigr) \cong \bigl(\levcp \cdot (Z(\Grp{H})(\bQ))\bigr) \big/ \overline{\bigl(\levcp_0 \cdot (Z(\Grp{H})(\bQ))\bigr)},
    \end{equation}
    where overlines denote closures of subgroups of $\Grp{H}(\bAi)$ as in Proposition \ref{prop-KZ-mod-K'Z}.
\end{corollary}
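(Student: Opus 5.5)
We reduce the statement to Proposition \ref{prop-KZ-mod-K'Z} applied level by level, and then pass to the limit over the tower. Fix $\levcp$ neat and $\levcp_0 \lhd \levcp$. For each open subgroup $\levcp'$ with $\levcp_0 \subset \levcp' \subset \levcp$, the action of $k \in \levcp$ on the tower sends $\LSV_{\levcp'}$ to $\LSV_{k^{-1}\levcp' k}$ via $[k]_{\levcp', k^{-1}\levcp' k}$. Each such $\levcp'$ contains the normal subgroup $\levcp_0$, so the open normal subgroups $\levcp'' \lhd \levcp$ containing $\levcp_0$ are cofinal in the tower. Since $\levcp_0 = \bigcap \levcp''$ over these, it suffices to understand the action on the cofinal subtower $\{\LSV_{\levcp''}\}$, which determines the action on every $\LSV_{\levcp'}$ by descent along finite \'etale covers.

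The first step is to show that the action factors through the quotient. For a fixed normal $\levcp''$, Proposition \ref{prop-KZ-mod-K'Z} says that $\levcp$ acts on $\LSV_{\levcp''}$ through $\levcp / (\levcp'' \cdot (\levcp \cap \overline{Z(\Grp{H})(\bQ)}))$, faithfully, and that this makes $\LSV_{\levcp''} \to \LSV_\levcp$ a torsor. Hence the kernel of the action of $\levcp$ on the whole subtower is
\[
    \textstyle\bigcap_{\levcp''} \bigl(\levcp'' \cdot (\levcp \cap \overline{Z(\Grp{H})(\bQ)})\bigr).
\]
We then identify this intersection with $\overline{\levcp_0 \cdot (\levcp \cap Z(\Grp{H})(\bQ))}$. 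The group $\levcp \cap \overline{Z(\Grp{H})(\bQ)} = \overline{\levcp \cap Z(\Grp{H})(\bQ)}$ is compact, so $\levcp_0 \cdot (\levcp \cap \overline{Z(\Grp{H})(\bQ)})$ is a closed subgroup, and it is normal because the center is central. In a profinite group, the intersection over open normal $\levcp''$ containing a closed subgroup $A\levcp_0$ of the open subgroups $\levcp'' A$ equals $\overline{\levcp_0 A}$. This gives the left-hand side of \eqref{eq-cor-KZ-mod-KZ'-tower}.

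For the toroidal tower, the action on $\LSV_{\levcp', \Sigma'}^\Tor$ is the unique extension from the dense open $\LSV_{\levcp'}$. The kernel is therefore the same, and faithfulness is inherited via Proposition \ref{prop-lsv-funct}\eqref{prop-lsv-funct-Gal-tor-ket}.

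The isomorphism with the right-hand side of \eqref{eq-cor-KZ-mod-KZ'-tower} is a routine group-theoretic check, mirroring \eqref{eq-prop-KZ-mod-K'Z}. The map $\levcp \to \levcp \cdot Z(\Grp{H})(\bQ) / \overline{\levcp_0 Z(\Grp{H})(\bQ)}$ is surjective, and its kernel is $\levcp \cap \overline{\levcp_0 Z(\Grp{H})(\bQ)}$. We check that this kernel equals $\overline{\levcp_0(\levcp \cap Z(\Grp{H})(\bQ))}$ using the facts quoted in Proposition \ref{prop-KZ-mod-K'Z}: the closure of $\levcp_0 Z(\Grp{H})(\bQ)$ is $\levcp_0 \overline{Z(\Grp{H})(\bQ)}$, since $\levcp_0$ is compact. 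Then $\levcp \cap \levcp_0 \overline{Z(\Grp{H})(\bQ)} = \levcp_0 (\levcp \cap \overline{Z(\Grp{H})(\bQ)})$.

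I expect the main obstacle to be the point-set topology. One must make sure the closures and intersections behave correctly, in particular that $\levcp_0 \cdot \overline{Z(\Grp{H})(\bQ)}$ is closed, which holds as the product of a compact set and a closed subgroup. One must also make sure that the intersection over the $\levcp''$ really yields the closure rather than something larger.
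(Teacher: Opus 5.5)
Your proposal is correct and follows essentially the same route as the paper: intersect the levelwise kernels $\levcp'\cdot(\levcp\cap\overline{Z(\Grp{H})(\bQ)})$ from Proposition~\ref{prop-KZ-mod-K'Z} (via Proposition~\ref{prop-lsv-funct}) over the tower, identify that intersection with $\overline{\levcp_0\cdot(\levcp\cap Z(\Grp{H})(\bQ))}$, and extend to the toroidal tower by density; your cofinality and profinite-group arguments supply details the paper leaves implicit. One small slip: your claims $\levcp_0=\bigcap\levcp''$ and that $\levcp_0\cdot(\levcp\cap\overline{Z(\Grp{H})(\bQ)})$ is closed both need $\levcp_0$ to be closed, but this is harmless, since replacing $\levcp_0$ by $\overline{\levcp_0}$ does not change the tower.
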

\begin{proof}
    Note that
    \[
    \begin{split}
        \overline{\levcp_0 \cdot \bigl(\levcp \cap (Z(\Grp{H})(\bQ))\bigr)} & = \cap_{\levcp_0 \subset \levcp' \subset \levcp} \overline{\levcp' \cdot \bigl(\levcp \cap (Z(\Grp{H})(\bQ))\bigr)} \\
        & = \cap_{\levcp_0 \subset \levcp' \subset \levcp} \levcp' \cdot \bigl(\levcp \cap \overline{(Z(\Grp{H})(\bQ))}\bigr)
    \end{split}
    \]
    and
    \[
        \overline{\levcp_0 \cdot (Z(\Grp{H})(\bQ))} = \cap_{\levcp_0 \subset \levcp' \subset \levcp} \bigl(\levcp' \cdot (Z(\Grp{H})(\bQ))\bigr).
    \]
    Then the assertions follow from Proposition \ref{prop-lsv-funct}\Refenum{\ref{prop-lsv-funct-Gal}} and Corollary \ref{cor-dcs-act}.
\end{proof}

\begin{example}\label{ex-KZ-mod-KZ'-tower-p-infty}
    Let $\levcp^p \subset \Grp{H}(\bAip)$ and $\levcp_p \subset \Grp{H}(\bQ_p)$ be open compact subgroups.  In Corollary \ref{cor-KZ-mod-KZ'-tower}, let $\levcp_0 = \levcp^p$ and $\levcp = \levcp^p \levcp_p$.  Then \Refeq{\ref{eq-cor-KZ-mod-KZ'-tower}} becomes
    \begin{equation}\label{eq-ex-KZ-mod-KZ'-tower-p-infty}
        (\levcp^p \levcp_p) / \bigl(\levcp^p \cdot \bigl((\levcp^p \levcp_p) \cap \overline{(Z(\Grp{H})(\bQ))}\bigr)\bigr).
    \end{equation}
    This is consistent with \cite[2.1.9]{Deligne:1979-vsimc} in the setting of Shimura varieties \Pth{which we shall consider later in Section \ref{sec-Sh-var}}.  In \cite[\aSec 4.1]{RodriguezCamargo:2022-lacc}, this is denoted by $\widetilde{\levcp}_p$, with $\Grp{H}$ and $Z(\Grp{H})$ here denoted by $\mathbf{G}$ and $\mathbf{Z}$ there, respectively.
\end{example}

\begin{remark}\label{rem-KZ-mod-KZ'-tower-cf-K-c}
    Since $Z(\Grp{H})^\circ / Z_s(\Grp{H})$ has the same split ranks over $\bQ$ and $\bR$, the closure $\overline{Z(\Grp{H})(\bQ)}$ of $Z(\Grp{H})(\bQ)$ in $Z(\Grp{H})(\bAi)$ \Pth{and hence in $\Grp{H}(\bAi)$} is contained in $Z_s(\Grp{H})(\bAi)$.  Thus, we have surjections $\levcp \Surj \levcp / (\levcp \cap (\overline{Z(\Grp{H})(\bQ)})) \Surj \levcp^c$, where $\levcp / (\levcp \cap (\overline{Z(\Grp{H})(\bQ)}))$ is as in \Refeq{\ref{eq-cor-KZ-mod-KZ'-tower}} \Pth{with $\levcp_0 = \{1\}$ in the notation there}.
\end{remark}

\subsection{Automorphic vector bundles and their canonical extensions}\label{sec-lsv-aut-bdl-can-ext}

Let $(\Grp{H}, \Dom)$ be any pair as in Section \ref{sec-lsv-setup}, and let $\level$ be as in Section \ref{sec-lsv-cpt}, so that $\LSV_\level$ and its minimal and toroidal compactifications are defined.

Recall that, by our convention, $\Rep_\bC(\Grp{P}^{\std, c})$ and $\Rep_\bC(\Grp{M}^c)$ denote the categories of finite-dimensional representations of $\Grp{P}^{\std, c}$ and $\Grp{M}^c$, respectively, over $\bC$.  Each $\rW \in \Rep_\bC(\Grp{P}^{\std, c})$ defines a $\Grp{H}_\bC$-equivariant vector bundle on $\Flalg^\std$ whose analytification pulls back via \Refeq{\ref{eq-Borel-emb}} to $\Dom^+$, naturally descends to a vector bundle on any $\LSV_\level^\an$ we consider, and canonically algebraizes to a vector bundle on $\LSV_\level$, called the \emph{automorphic vector bundle} associated with $\rW$.  Moreover, by Remark \ref{rem-fil-by-hc-ad}, $\rW$ is equipped with a canonical decreasing filtration $\Fil^\bullet \rW$ by subrepresentations of $\Grp{P}^{\std, c}$.  By abuse of notation, we shall denote by $\GrSh{\rW}$ all of these vector bundles \Pth{including their analytifications} associated with $\rW$ in $\Rep_\bC(\Grp{P}^{\std, c})$, and denote the filtrations on them by $\Fil^\bullet = \Fil^\bullet \, \GrSh{\rW}$.  The assignment $\rW \mapsto (\GrSh{\rW}, \Fil^\bullet)$ defines a tensor functor from $\Rep_\bC(\Grp{P}^{\std, c})$ to the category of filtered vector bundles over $\LSV_\level$.  For each \Pth{projective} cone decomposition $\Sigma$ for $\LSV_\levcp$, the analytic $(\GrSh{\rW}, \Fil^\bullet)$ on $\LSV_\level^\an$ admits a \emph{canonical extension} $(\GrSh{\rW}^\canext, \Fil^\bullet)$ over $\LSV_{\level, \Sigma}^{\Tor, \an}$, which algebraizes by GAGA \cite{Serre:1955-1956-gaga} to a vector bundle which we still denote by $(\GrSh{\rW}^\canext, \Fil^\bullet)$ over $\LSV_{\level, \Sigma}^\Tor$, whose restriction to $\LSV_\level$ is the above algebraic $(\GrSh{\rW}, \Fil^\bullet)$.  \Pth{See \cite[\aThms 1.4.1 and 4.2]{Harris:1989-ftcls} for the above statements in the context of Shimura varieties, which we will introduce later in Section \ref{sec-Sh-var}.  The arguments in the proofs there apply here, because they are over individual connected components and use nothing more than general properties of locally symmetric varieties.}  We shall say that the algebraic $(\GrSh{\rW}^\canext, \Fil^\bullet)$ over $\LSV_{\level, \Sigma}^\Tor$ is the canonical extension of the algebraic $(\GrSh{\rW}, \Fil^\bullet)$ over $\LSV_\level$.  When the context is clear, we will often suppress $\Fil^\bullet$ from the notation.  For $\rW \in \Rep_\bC(\Grp{M}^c)$, we shall view it as a representation of $\Grp{P}^{\std, c}$ via the canonical homomorphism $\Grp{P}^{\std, c} \to \Grp{M}^c$, and all the above still applies.

\begin{remark}\label{rem-can-ext-funct}
    It follows immediately from the characterizing property of canonical extensions as in \cite[(4.1.1)]{Harris:1989-ftcls} that the formation of canonical extensions as above is compatible with all morphisms between toroidal compactifications in Proposition \ref{prop-lsv-funct}\Refenum{\ref{prop-lsv-funct-tor}}.  See \cite[4.3.1, 4.3.2, and 4.3.3]{Harris:1989-ftcls} for some explicitly stated cases.
\end{remark}

\begin{remark}\label{rem-can-ext-log-diff}
    As in Proposition \ref{prop-lsv-tor-facts}\Refenum{\ref{prop-lsv-tor-facts-min-can}} \Pth{based on \cite[\aProp 3.4]{Mumford:1977-hptnc}}, for $\rW = \Lie \Grp{N}^\std$ in $\Rep_\bC(\Grp{M})$ \Pth{with adjoint action}, we have $\GrSh{\Lie \Grp{N}^\std} \cong \Omega_{\LSV_\level}^1$ over $\LSV_\level$, which extends to $(\GrSh{\Lie \Grp{N}^\std})^\canext \cong \Omega_{\LSV_{\level, \Sigma}^\Tor}^{\log, 1} \cong \Omega_{\LSV_{\level, \Sigma}^\Tor}^1(\log \partial \LSV_{\level, \Sigma}^\Tor)$ over $\LSV_{\level, \Sigma}^\Tor$ when $\Sigma$ is smooth.
\end{remark}

For $\rV \in \Rep_\bC(\Grp{G}^c)$, the filtered vector bundle $(\GrSh{\rV}^\canext, \Fil^\bullet)$ associated with the restriction $\rV|_{\Grp{P}^{\std, c}}$ as above admits the additional structure of a canonical integrable log connection $\nabla: \GrSh{\rV}^\canext \to \GrSh{\rV}^\canext \otimes_{\cO_{\LSV_{\level, \Sigma}^\Tor}} \Omega_{\LSV_{\level, \Sigma}}^{\log, 1}$ satisfying Griffiths transversality; \ie, $\nabla(\Fil^\bullet) \subset (\Fil^{\bullet - 1}) \otimes_{\cO_{\LSV_{\level, \Sigma}^\Tor}} \Omega_{\LSV_{\level, \Sigma}}^{\log, 1}$.  By restriction to $\LSV_\level$, we obtain a canonical integral connection $\nabla$ with regular singularity \Pth{still satisfying Griffiths transversality}.  In order to emphasize that such constructions are meant to be useful for studying the de Rham cohomology, we shall denote by $(\dRSh{\rV}, \nabla, \Fil^\bullet)$ and $(\dRSh{\rV}^\canext, \nabla, \Fil^\bullet)$ the associated filtered connections and log connections, respectively.

\begin{remark}\label{rem-can-ext-conn}
    By \cite[(4.2.2)]{Harris:1989-ftcls}, the above $(\dRSh{\rV}^\canext, \nabla)$ above is canonically isomorphic to the canonical extension of $(\dRSh{\rV}, \nabla)$ introduced by Deligne in \cite{Deligne:1970-EDR}.
\end{remark}

\subsection{Automorphic line bundles of positive parallel weights}\label{sec-lsv-pos}

Let us continue with the setting of Section \ref{sec-lsv-aut-bdl-can-ext}.  Let $\Grp{H}^\scc$ be the simply-connected cover of $\Grp{H}^\der$, as in Section \ref{sec-lsv-cpt}.  Let $\Grp{H}^{\der, c}$ abusively denote the image of $\Grp{H}^\der$ in $\Grp{H}^c$.
\begin{construction}\label{constr-sc-ad-Levi}
    By Lemma \ref{lem-comp-ad}, we have central isogenies $\Grp{H}^\scc \to \Grp{H}^\der \to \Grp{H}^{\der, c} \to \Grp{H}^\ad$.  Let us abusively and temporarily denote \Pth{in this subsection} the induced central isogenies of Levi subgroups by $\Grp{M}^\scc \to \Grp{M}^\der \to \Grp{M}^{\der, c} \to \Grp{M}^\ad$.
\end{construction}

\begin{construction}\label{constr-sc-ad-decomp}
    Since $\Grp{H}^\scc$ is connected, semisimple, and simply-connected, it decomposes into a product $\Grp{H}^\scc \cong \prod_{j \in J} \Grp{H}_j$ of its $\bQ$-simple factors.  Accordingly, $\Dom^+ \cong \prod_{j \in J} \Dom^+_j$, and each $(\Grp{H}_j, \Dom^+_j)$ is still a pair as in Section \ref{sec-lsv-setup}.  By Lemma \ref{lem-comp-ad}, we have a compatible product $\Grp{H}^\ad \cong \prod_{j \in J} \Grp{H}_j^\ad$ of adjoint quotients.
\end{construction}

\begin{construction}\label{constr-sc-ad-decomp-det-N}
    For each $j \in J$ \Pth{as above}, denote by $\Grp{M}_j$, $\Grp{N}_j^\std$, etc the respective pullbacks of the subgroups $\Grp{M}$, $\Grp{N}^\std$, etc of $\Grp{H}_\bC$ to $\Grp{H}_{j, \bC}$; and abusively denote by $\Grp{M}_j^\ad$ the pullback of the \Pth{abusively denoted} subgroup $\Grp{M}^\ad$ of $\Grp{H}^\ad$ \Pth{as in Construction \ref{constr-sc-ad-Levi}} to $\Grp{H}_j^\ad$.  The adjoint action of $\Grp{M}$ on $\Grp{N}^\std$ factors through $\Grp{M}^c$ and $\Grp{M}^\ad$ \Pth{because $\ker(\Grp{M} \to \Grp{M}^c) \subset \ker(\Grp{M} \to \Grp{M}^\ad) = Z(\Grp{H})$ by definition} and defines $\Lie \Grp{N}^\std$ in $\Rep_\bC(\Grp{M}^c)$ and $\Rep_\bC(\Grp{M}^\ad)$, which pulls back to $\Lie \Grp{N}_j^\std$ in $\Rep_\bC(\Grp{M}_j)$ and $\Rep_\bC(\Grp{M}_j^\ad)$, for each $j \in J$.  \Pth{We have similar statements with \Qtn{$\std$} omitted.}
\end{construction}

\begin{definition}\label{def-pos}
    We say an irreducible $\rW$ in $\Rep_\bC(\Grp{M}^c)$ or $\Rep_\bC(\Grp{P}^{\std, c})$ is of \emph{positive parallel weight} if, for each $j \in J$ \Pth{as above}, and for some \Pth{and equivalently every} choice of maximal torus $\Grp{T}_j$ of $\Grp{M}_j$, one weight of the pullback $\rW_j$ of $\rW$ to $\Rep_\bC(\Grp{M}_j)$ \Pth{with respect to $\Grp{T}_j$} is a positive multiple of the weight of $\det(\Lie \Grp{N}_j^\std)$.
\end{definition}

\begin{remark}\label{rem-def-pos}
    By the theory of highest weights, the condition in Definition \ref{def-pos} forces $\rW_j$, for all $j \in J$, and hence $\rW$ to be \emph{one-dimensional} representations.  In this case, there exist $a_j, b_j \in \bZ_{\geq 1}$ such that $\rW_j^{\otimes a_j} \cong \det(\Lie \Grp{N}_j^\std)^{\otimes b_j}$, for all $j \in J$.
\end{remark}

\begin{proposition}\label{prop-lsv-semi-ample}
    Let $\rW \in \Rep_\bC(\Grp{M}^c)$ be irreducible of positive parallel weight, as in Definition \ref{def-pos}, which is necessarily one-dimensional, by Remark \ref{rem-def-pos}.  Let $\TorMap_\Sigma: \LSV_{\level, \Sigma}^\Tor \to \LSV_\level^\Min$ be as in Proposition \ref{prop-lsv-tor-facts}\Refenum{\ref{prop-lsv-tor-facts-min}}.  Let $\GrSh{\rW}$ over $\LSV_\level$ and $\GrSh{\rW}^\canext$ over $\LSV_{\level, \Sigma}^\Tor$ be automorphic line bundles as in Section \ref{sec-lsv-aut-bdl-can-ext}.  \Pth{We shall omit \Qtn{$\otimes$} when denoting tensor powers of line bundles.}  Then there exists $m_0 \in \bZ_{\geq 1}$ such that $(\GrSh{\rW}^\canext)^{m_0} \cong \TorMap_\Sigma^* L$ for some \emph{ample} line bundle $L$ on $\LSV_\level^\Min$.  In this case, $\GrSh{\rW}^\canext$ is semiample, and there exists \Pth{by general theory} some $r \in \bZ_{\geq 1}$ such that $L^r$ is very ample, so that $(\GrSh{\rW}^\canext)^m$ is globally generated over $\LSV_{\level, \Sigma}^\Tor$ for all positive multiples $m$ of $r m_0$, and so that the induced morphism $\phi_m: \LSV_\level^\Tor \to \bP H^0(\LSV_{\level, \Sigma}^\Tor, (\GrSh{\rW}^\canext)^m) \cong \bP H^0(\LSV_\level^\Min, L^{\frac{m}{m_0}})$ factors through $\TorMap_\Sigma$ and the canonical closed immersion $\LSV_\level^\Min \Em \bP H^0(\LSV_\level^\Min, L^{\frac{m}{m_0}})$.  In particular, $\phi_m|_{\LSV_\level}$ is an immersion as $\TorMap_\Sigma|_{\LSV_\level}$ is, and $\GrSh{\rW}$ is ample over $\LSV_\level$.
\end{proposition}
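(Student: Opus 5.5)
We reduce to the log canonical bundle, using Proposition \ref{prop-lsv-tor-facts}\Refenum{\ref{prop-lsv-tor-facts-min-can}} on each simple factor and then combining. By Remark \ref{rem-def-pos}, $\rW$ is one-dimensional and there are $a_j, b_j \in \bZ_{\geq 1}$ with $\rW_j^{\otimes a_j} \cong \det(\Lie \Grp{N}_j^\std)^{\otimes b_j}$ for each $j \in J$.

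\textbf{Step 1: reduction to the simply-connected product.} By Proposition \ref{prop-lsv-funct}\Refenum{\ref{prop-lsv-funct-min}} and \Refenum{\ref{prop-lsv-funct-tor}}, together with Remark \ref{rem-dcs-funct-sc}, each connected component $\LSV_{\Gamma}$ receives a finite \'etale surjection from some $\LSV_{\Gamma'}$ with $\Gamma' \subset \Grp{H}^\scc(\bQ) = \prod_j \Grp{H}_j(\bQ)$. We may shrink $\Gamma'$ to a product $\prod_j \Gamma'_j$ and make it normal, so that we are in Case (A$^\prime$) of Proposition \ref{prop-lsv-funct}\Refenum{\ref{prop-lsv-funct-Gal}}. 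Canonical extensions are compatible with pullback (Remark \ref{rem-can-ext-funct}). Ampleness descends along finite surjective maps of proper schemes, and invariant sections of an ample line bundle descend after raising to the order of the finite group $\overline{\level}$. Hence it suffices to find an ample $L'$ on $\LSV_{\Gamma'}^\Min$ with $(\GrSh{\rW}^\canext)^{m_0} \cong \TorMap^* L'$, as $\overline{\level}$-equivariant bundles. The genuinely delicate point is descending the isomorphism with $\TorMap_\Sigma^* L$. For this, compute
\[
L := \bigl(\TorMap_{\Sigma, *}(\GrSh{\rW}^\canext)^{m_0}\bigr)
\]
on $\LSV_\level^\Min$ directly. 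This is a finite-group-invariant pushforward of a line bundle, and it is a line bundle because of the equivariant identification upstairs.

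\textbf{Step 2: the product case.} By Proposition \ref{prop-lsv-tor-facts}\Refenum{\ref{prop-lsv-tor-facts-prod}}, $\LSV_{\Gamma'}^\Min = \prod_j \LSV_{\Gamma'_j}^\Min$, and we may take $\Sigma$ to be a smooth product refinement. By Remark \ref{rem-can-ext-log-diff}, $(\GrSh{\det \Lie \Grp{N}_j^\std})^\canext$ is the log canonical bundle of the $j$-th factor. Proposition \ref{prop-lsv-tor-facts}\Refenum{\ref{prop-lsv-tor-facts-min-can}} then shows it is $\TorMap_j^*$ of the ample $\Omega^{d_j}_{\LSV_{\Gamma'_j}^\Min}$. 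Set $m_0 = \mathrm{lcm}_j(a_j)$. Since $\rW \cong \boxtimes_j \rW_j$ on $\prod_j \Grp{M}_j$, we get
\[
(\GrSh{\rW}^\canext)^{m_0} \cong \boxtimes_j \TorMap_j^*\bigl(\Omega^{d_j}_{\LSV_{\Gamma'_j}^\Min}\bigr)^{m_0 b_j / a_j},
\]
up to torsion coming from characters trivial on the derived part. We absorb that torsion by enlarging $m_0$, using that the image of $\Grp{M}^\scc$ times the center acts through a finite quotient on the relevant character. An exterior tensor product of ample bundles is ample, so this gives the ample $L'$. Refinements are harmless: by \Refeq{\ref{eq-prop-lsv-funct-Omega-pushforward}} pushforwards agree, so we may pass to the given possibly non-smooth $\Sigma$ by pulling back along a smooth refinement and using normality of $\LSV^\Min_\level$.

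\textbf{Step 3: the remaining assertions.} These are formal. Choose $r$ with $L^r$ very ample. Then $(\GrSh{\rW}^\canext)^m = \TorMap_\Sigma^* L^{m/m_0}$ is globally generated. By the projection formula and $\TorMap_{\Sigma,*}\cO = \cO$ we have $H^0(\LSV^\Tor_{\level,\Sigma},(\GrSh{\rW}^\canext)^m) = H^0(\LSV^\Min_\level, L^{m/m_0})$, so $\phi_m$ factors through $\TorMap_\Sigma$ and the closed embedding. Since $\TorMap_\Sigma|_{\LSV_\level}$ is an open immersion, $\phi_m|_{\LSV_\level}$ is an immersion and $\GrSh{\rW}$ is ample on $\LSV_\level$.

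\textbf{Main obstacle.} The main obstacle is Step 1's bookkeeping with the center. $\rW$ is a representation of $\Grp{M}^c$, not of $\prod_j \Grp{M}_j$, so the $j$-wise identifications only determine $\rW$ up to characters of $\Grp{M}^c$ trivial on $\Grp{M}^\der$. One must check that such characters give torsion line bundles on the compactifications. This should follow because $\Grp{H}^c$ is chosen so that the center contributes only finite-order automorphic characters on neat-level quotients.
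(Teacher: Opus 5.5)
Your argument works, but it runs in the opposite direction from the paper's. You pass up to a Galois cover $\LSV_{\Gamma'} = \prod_j \LSV_{\Gamma'_j}$ coming from $\Grp{H}^\scc$ and then descend along $\overline{\level}$.

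The paper maps down instead. Let $\Gamma_j^\ad$ be the projection of $\Gamma^\ad$ to $\Grp{H}_j^\ad(\bQ)_+$. Then $\Gamma^\ad \subset \overline{\Gamma} := \prod_j \Gamma_j^\ad$, so Proposition \ref{prop-lsv-funct} gives a finite morphism $\LSV_\Gamma^\Min \to \LSV_{\overline{\Gamma}}^\Min \cong \prod_j \LSV_{\Gamma_j^\ad}^\Min$. After a harmless refinement of $\Sigma$, this is compatible with the toroidal compactifications. The paper's $L$ is then the pullback of $\boxtimes_j (\Omega^{d_j}_{\LSV_{\Gamma_j^\ad}^\Min})^{c_j}$, which is ample because pullback along a finite morphism preserves ampleness. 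No normal subgroups, linearizations, or descent are needed.

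What the paper's route uses silently is the point you flag. $\rW^{m_0}$ and $\bigotimes_j \det(\Lie \Grp{N}_j^\std)^{c_j}$ agree only up to a character $\chi$ of $\Grp{M}^c$ that is trivial on the derived part. Such a $\chi$ is the restriction of a character of the torus $\Grp{H}^c/\Grp{H}^{\der,c}$. Its automorphic line bundle on $\Gamma\backslash\Dom^+$ is the flat bundle attached to $\chi|_\Gamma$. This bundle is trivial: the image of $\Gamma$ in that torus is arithmetic, hence finite by the rank condition defining $\Grp{H}^c$, hence trivial by neatness. Your stated justification, that the center ``acts through a finite quotient'' on $\chi$, is not right as written. $\chi$ has infinite image on the torus; only its restriction to $\Gamma$ is finite.

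In your own route this issue never arises in Step 2, because $\rW|_{\Grp{M}^\scc} \cong \boxtimes_j \rW_j$ holds exactly on $\Grp{M}^\scc = \prod_j \Grp{M}_j$. It reappears only when you compare $\overline{\level}$-linearizations, and there it takes care of itself. Two $\overline{\level}$-linearizations of the same line bundle on the connected proper $\LSV^\Tor_{\Gamma',\Sigma'}$ differ by a character $\overline{\level} \to H^0(\cO^\times) = \bC^\times$. So after raising to the power $|\overline{\level}|$, your identification becomes equivariant and all stabilizers act trivially on fibres. The descent to $\LSV_\Gamma^\Min = \LSV_{\Gamma'}^\Min/\overline{\level}$ then goes through, and your ``genuinely delicate point'' disappears.

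Two pieces of bookkeeping remain for you to supply.
\begin{enumerate}
\item You need a $\Gamma'$ that is simultaneously neat, a product, normalized by $\Gamma^\ad$, and with image inside $\Gamma^\ad$. One way: inside the preimages of $\Gamma^\ad \cap \Grp{H}_j^\ad(\bQ)$, intersect the finitely many $\Gamma^\ad$-conjugates of a neat subgroup of fixed finite index.
\item You need to pass between a smooth product refinement and the pulled-back $\Sigma'$, which is what makes the toroidal map a Kummer \'etale $\overline{\level}$-torsor. This uses $f_*\cO = \cO$ for the refinement map $f$. That is normality of the toroidal compactification, not of $\LSV^\Min_\level$ as you wrote.
\end{enumerate}
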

\begin{proof}
    This is essentially \cite[\aLem 3.2]{Lan:2016-vtcac}, up to some minor changes of notation and conventions.  But let us spell out the details, for the sake of clarity.

    By working over connected components, it suffices to prove Proposition \ref{prop-lsv-semi-ample} in the special case where $\level = \Gamma$ is a neat arithmetic subgroup of $\Grp{H}(\bQ)_+$.  By Proposition \ref{prop-lsv-funct}\Refenum{\ref{prop-lsv-funct-tor}}, we may also replace $\Sigma$ with any projective smooth refinement.

    By Remark \ref{rem-def-pos}, there exists $m_0 \in \bZ_{\geq 1}$ such that $\rW_j^{\otimes m_0} \cong \det(\Lie \Grp{N}_j^\std)^{\otimes c_j}$, for some $c_j \in \bZ_{\geq 1}$, which can be viewed as a representation in $\Rep_\bC(\Grp{M}_j^\ad)$, for each $j \in J$.  Let $\Gamma_j^\ad$ denote the image of $\Gamma^\ad$ in $\Grp{H}_j^\ad(\bQ)_+$, and let $\Sigma_j$ be any projective smooth cone decomposition for $\LSV_{\Gamma_j^\ad} = \Gamma_j^\ad \Lquot \Dom_j^+$, for each $j \in J$.  Consider $\overline{\Gamma} := \prod_{j \in J} \Gamma_j^\ad \subset \Grp{H}^\ad(\bQ)_+$ and the cone decomposition $\overline{\Sigma} := \prod_{j \in J} \Sigma_j^\ad$ for $\LSV_{\overline{\Gamma}}$.  Let $\Sigma$ be any \Pth{projective} refinement of the pullback of $\overline{\Sigma}$ to $\LSV_\Gamma$.  By Propositions \ref{prop-lsv-tor-facts} and \ref{prop-lsv-funct}, and by Remark \ref{rem-can-ext-log-diff}, we have a commutative diagram
    \[
        \xymatrix{ {\LSV_{\Gamma, \Sigma}^\Tor} \ar[rr]^-{[1]_{(\Gamma, \Sigma), (\overline{\Gamma}, \overline{\Sigma})}^\Tor} \ar[d]_-{\TorMap_\Sigma} & & {\LSV_{\overline{\Gamma}, \overline{\Sigma}}^\Tor} \ar[rr]^-\cong \ar[d]_-{\TorMap_{\overline{\Sigma}}} & & {\prod_{j \in J} \LSV_{\Gamma_j^\ad, \Sigma_j^\ad}^\Tor} \ar[d]_-{\prod_{j \in J} \TorMap_{\Sigma_j^\ad}} \\
        {\LSV_\Gamma^\Min} \ar[rr]^-{[1]_{\Gamma, \overline{\Gamma}}^\Min}_-{\Utext{finite}} & & {\LSV_{\overline{\Gamma}}^\Min} \ar[rr]^-\cong & & {\prod_{j \in J} \LSV_{\Gamma_j^\ad}^\Min,} }
    \]
    together with canonical isomorphisms of line bundles
    \[
    \begin{split}
        (\GrSh{\rW}^\canext)^{\otimes m_0} & \cong [1]_{(\Gamma, \Sigma), (\overline{\Gamma}, \overline{\Sigma})}^{\Tor, *} \bigl(\otimes_{j \in J} \bigl((\LSV_{\overline{\Gamma}, \overline{\Sigma}}^\Tor \to \LSV_{\Gamma_j^\ad, \Sigma_j^\ad}^\Tor)^* (\GrSh{\det(\Lie \Grp{N}_j^\std)}^\canext)^{\otimes c_j}\bigr)\bigr) \\
        & \cong \otimes_{j \in J} \bigl((\LSV_{\Gamma, \Sigma}^\Tor \to \LSV_{\Gamma_j^\ad, \Sigma_j^\ad}^\Tor)^* (\Omega_{\LSV_{\Gamma_j^\ad, \Sigma_j^\ad}^\Tor}^{\log, d_j})^{\otimes c_j}\bigr) \\
        & \cong \otimes_{j \in J} \bigl((\LSV_{\Gamma, \Sigma}^\Tor \to \LSV_{\Gamma_j^\ad, \Sigma_j^\ad}^\Tor)^* \TorMap_{\Sigma_j^\ad}^* (\Omega_{\LSV_{\Gamma_j}^\Min}^{d_j})^{\otimes c_j}\bigr) \cong \TorMap_\Sigma^* \, L,
    \end{split}
    \]
    where $L := (\LSV_\Gamma^\Min \to \LSV_{\overline{\Gamma}}^\Min)^* \bigl(\otimes_{j \in J} \bigl((\LSV_{\overline{\Gamma}}^\Min \to \LSV_{\Gamma_j^\ad}^\Min)^* (\Omega_{\LSV_{\Gamma_j}^\Min}^{d_j})^{\otimes c_j}\bigr)\bigr)$ is ample over $\LSV_\Gamma^\Min$ because each of $\Omega_{\LSV_{\Gamma_j}^\Min}^{d_j}$ is ample over $\LSV_{\Gamma_j^\ad}^\Min$, by Proposition \ref{prop-lsv-tor-facts}\Refenum{\ref{prop-lsv-tor-facts-min-can}}, as desired.
\end{proof}

When we apply Proposition \ref{prop-lsv-semi-ample} later in Section \ref{sec-trans}, we will want the ratios $\frac{b_j}{a_j}$ in Remark \ref{rem-def-pos} to be as small as possible.  Let us introduce the following:
\begin{condition}\label{cond-der-c-sc}
    $\Grp{H}^{\der, c}$ is simply-connected.
\end{condition}
Since the canonical homomorphism $\Grp{H}^\der \to \Grp{H}^{\der, c}$ is a central isogeny, our assumption forces $\Grp{H}^\der$ to be also simply-connected.  Then we have the following:
\begin{proposition}\label{prop-lsv-pos-smallest}
    Suppose Condition \ref{cond-der-c-sc} holds.  Then there exists an irreducible $\rW_0 \in \Rep_\bC(\Grp{M}^c)$ of positive parallel weight such that, for each $j \in J$, the pullback $\rW_{0, j} \in \Rep_\bC(\Grp{M}_j)$ of $\rW_0$ satisfies $\rW_{0, j}^{\otimes \dual{h}_j} \cong \det(\Lie \Grp{N}_j^\std)$, where $\dual{h}_j \in \bZ_{\geq 1}$ is the \emph{dual Coxeter number} for the root system associated with any $\bC$-simple factor of the complex semisimple Lie algebra $\Lie \Grp{H}_{j, \bC}$ \Pth{and any choice of Cartan subalgebra}.  \Pth{Since $\Grp{H}_j$ is $\bQ$-simple, this $\dual{h}_j$ is well defined, because the $\bC$-simple factors of $\Lie \Grp{H}_{j, \bC}$ are all abstractly isomorphic to each other.}  For each $j \in J$, this ratio $\frac{1}{\dual{h}_j}$ is the \emph{smallest} among all $\frac{b_j}{a_j}$ such that there exists some $\rW_j \in \Rep_\bC(\Grp{M}_j)$ such that $\rW_j^{\otimes a_j} \cong \det(\Lie \Grp{N}_j^\std)^{\otimes b_j}$ for some $a_j, b_j \in \bZ_{\geq 1}$ \Pth{\Refcf{} Remark \ref{rem-def-pos}}.
\end{proposition}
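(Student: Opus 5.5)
The plan is to reduce to a computation on a single $\bC$-simple factor, using the classical fact that for a cominuscule parabolic the canonical bundle of $\Flalg^\std$ is $\dual{h}$ times the generator of the Picard group. Here and below, weights and positivity are taken with respect to the fixed maximal tori of Definition \ref{def-pos}, and the quantity to compare is the ratio $\frac{b_j}{a_j}$ of Remark \ref{rem-def-pos}.

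\textbf{Reduction to simple factors.} First I will describe the characters of $\Grp{M}_j$ and of $\Grp{M}^c$. By Condition \ref{cond-der-c-sc}, $\Grp{H}^{\der, c}$ is simply-connected, so the central isogeny $\Grp{H}^\scc \to \Grp{H}^{\der, c}$ is an isomorphism, and $\Grp{H}^{\der, c} \cong \prod_{j \in J} \Grp{H}_j$. Over $\bC$, each $\Grp{H}_{j, \bC}$ decomposes further as $\prod_{k} \Grp{H}_{j,k}$ into simply-connected $\bC$-simple factors, all of the same type. The Levi $\Grp{M}_j$ decomposes accordingly as $\prod_k \Grp{M}_{j,k}$, and $\Lie \Grp{N}_j^\std = \oplus_k \Lie \Grp{N}_{j,k}^\std$. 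For a factor $k$ that is compact, $\Grp{N}_{j,k}^\std$ is trivial and we take the trivial character there. For each noncompact factor, $\Grp{P}_{j,k}^\std$ is a maximal parabolic associated with a cominuscule simple root $\rt_k$ (the weights of $\Lie \hc^\ad$ are only $-1, 0, 1$).

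\textbf{Key Lie-theoretic input.} Since $\Grp{H}_{j,k}$ is simply-connected, the character group of $\Grp{M}_{j,k}$ is $\bZ \varpi_k$, where $\varpi_k$ is the fundamental weight dual to $\rt_k$ (up to the sign convention fixed by $\Grp{P}^\std$). The weight of $\det(\Lie \Grp{N}_{j,k}^\std)$ is minus the sum of the roots in $\Lie \Grp{N}_{j,k}$ (or plus, depending on conventions). Pairing this with $\dual{\rt_k}$ gives $2\hsum_{\Grp{P}}$. The standard computation of $2\hsum_{\Grp{P}}$ for a cominuscule parabolic gives $(2\hsum_{\Grp{P}}, \dual{\rt_k}) = \dual{h}$, i.e.\ $\det(\Lie \Grp{N}_{j,k}^\std) = \dual{h}_j \varpi_k$ up to sign. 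This is the familiar statement $K_{\Flalg} = \cO(-\dual{h})$ for cominuscule flag varieties (index of $G/P$). I would prove it by noting that $2\hsum - 2\hsum_{\Grp{M}}$ is orthogonal to all simple coroots other than $\dual{\rt_k}$, and then computing $(2\hsum_{\Grp{P}}, \dual{\rt_k}) = 2 - (2\hsum_{\Grp{M}}, \dual{\rt_k})$. To show this equals $\dual{h}$, I would use that $\rt_k$ has coefficient $1$ in the highest root $\theta$, together with a short root-system argument (or a check against the classification table of cominuscule weights). I expect this identification with the dual Coxeter number to be the main obstacle.

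\textbf{Conclusion.} Set $\rW_{0, j} := \boxtimes_k (\pm\varpi_k)$ over the noncompact factors, with the sign chosen so that $\rW_{0,j}^{\otimes \dual{h}_j} \cong \det(\Lie \Grp{N}_j^\std)$. These characters of $\prod_j \Grp{M}_j = (\Grp{M}^c)^\der$-Levi assemble to a character of $\Grp{M}^c \cap \Grp{H}^{\der,c}$.

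The existence of a character of $\Grp{M}^c$ restricting to it is the step needing care. I would use that $\Grp{M}^c$ is generated by $Z(\Grp{H}^c)^\circ$ and $\Grp{M}^c \cap \Grp{H}^{\der, c}$, with finite intersection $Z \cap \Grp{H}^{\der, c}$. Extending a character across this is possible only after raising to a power, which would spoil minimality. So instead I would choose the extension by letting $Z(\Grp{H}^c)$ act through the central character determined on the finite intersection. This is possible because characters of the torus $Z(\Grp{H}^c)^\circ$ extend from finite subgroups. Then $\rW_0$ is one-dimensional with the required pullbacks, hence of positive parallel weight.

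\textbf{Minimality.} Any $\rW_j$ in $\Rep_\bC(\Grp{M}_j)$ with $\rW_j^{\otimes a_j} \cong \det(\Lie \Grp{N}_j^\std)^{\otimes b_j}$ has weight $n_k \varpi_k$ on each noncompact factor, with $a_j n_k = b_j \dual{h}_j$ and $n_k$ a positive integer. Hence $\frac{b_j}{a_j} = \frac{n_k}{\dual{h}_j} \geq \frac{1}{\dual{h}_j}$.
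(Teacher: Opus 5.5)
Your proof is correct, but it takes a different route from the paper. The paper's proof is only a citation of \cite[\aThm 3.8]{Lan:2016-vtcac}. That result is obtained type by type, and its explicit by-products (the weights $\wt_0$ listed before Proposition \ref{prop-key-obs}, and the values $(\wt_0, \dual{\rt}) \in \{-1, -2\}$) are what the paper reuses in Section \ref{sec-key-obs}.

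You argue uniformly instead. Simple-connectedness of each $\bC$-simple factor gives $X^*(\Grp{M}_{j,k}) = \bZ \varpi_k$. So existence and minimality both reduce to $\det(\Lie \Grp{N}_{j,k}^\std) = \pm \dual{h}_j \varpi_k$, with minimality becoming integrality of $n_k$. Condition \ref{cond-der-c-sc} is used only to identify $\prod_j \Grp{M}_j$ with $\Grp{M}^c \cap \Grp{H}^{\der, c}$. Your extension to $\Grp{M}^c$ is sound and costs no power: $\Grp{M}^c$ is a quotient of $Z(\Grp{H}^c)^\circ \times (\Grp{M}^c \cap \Grp{H}^{\der, c})$ by a finite group $F$, and characters of $F$ extend to the torus. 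Your route gives a classification-free argument, while the cited route yields the explicit weights needed later.

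The step you flagged as the main obstacle has a short uniform proof. Pair $2 \hsum_{\Grp{P}} = c\, \varpi_k$ with $\dual{\theta}$, where $\theta$ is the highest root, rather than with $\dual{\rt}_k$.
\begin{itemize}
\item Since $\theta$ is long and $\rt_k$ has coefficient $1$ in it, $(\varpi_k, \dual{\theta})$ is a positive integer at most $1$. Hence $(2\hsum_{\Grp{P}}, \dual{\theta}) = c$.
\item Positive roots $\beta \neq \theta$ satisfy $(\beta, \dual{\theta}) \in \{0, 1\}$. Since $(2\hsum, \dual{\theta}) = 2\dual{h} - 2$, exactly $2\dual{h} - 4$ of them have value $1$.
\item The fixed-point-free involution $\beta \mapsto \theta - \beta$ pairs these roots so that exactly one root in each pair has $\rt_k$-coefficient $1$, i.e.\ contributes to $2\hsum_{\Grp{P}}$.
\end{itemize}
Counting $\theta$ itself, which contributes $2$, gives $c = 2 + (\dual{h} - 2) = \dual{h}$.

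One caveat, shared with the statement itself: the minimality claim tacitly needs each $\Grp{H}_j$ to have at least one noncompact factor. Otherwise $\det(\Lie \Grp{N}_j^\std)$ is trivial and the ratio can be made arbitrarily small. This is automatic for Shimura data.
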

\begin{proof}
    This follows from \cite[\aThm 3.8]{Lan:2016-vtcac}.
\end{proof}

\section{Geometric Sen theory for towers of Shimura varieties}\label{sec-geom-Sen-Sh}

In this section, we collect some known results about the $p$-adic geometry of towers of Shimura varieties, notably the differential equation satisfied by the locally analytic functions at the infinite level. This is a consequence of some calculation of the canonical Higgs field for towers of Shimura varieties, which was first discovered by the second author in the case of modular curves \cite{Pan:2022-laccm} and in general done by Juan Esteban Rodr{\'\i}guez Camargo \cite{RodriguezCamargo:2022-lacc}.

\subsection{Shimura varieties and their compactifications}\label{sec-Sh-var}

Let $(\Grp{G}, \Shdom)$ be a Shimura datum.  In particular, $\Grp{G}$ is a connected reductive group over $\bQ$, and $\Shdom$ is a conjugacy class of homomorphisms
\begin{equation}\label{eq-hd}
    \hd: \Res_{\bC / \bR} \bG_{m, \bC} \to \Grp{G}_\bR
\end{equation}
satisfying a list of axioms, which imply that the pair $(\Grp{G}, \Shdom)$ satisfies the assumptions in Section \ref{sec-lsv-setup}.  \Pth{See, \eg, \cite[2.1.1]{Deligne:1979-vsimc}, \cite[\aDef 5.5 and \aCor 5.8]{Milne:2005-isv}, or \cite[\aSec 2.3]{Lan:ebisv}.  See also other parts of these references for many standard facts we review below.}  In particular, we have the associated double coset spaces, their minimal compactifications, their toroidal compactifications, and the canonical algebraizations of all of these, as in Sections \ref{sec-dcs-an} and \ref{sec-lsv-cpt}, and they enjoy the properties we have shown in Sections \ref{sec-lsv-funct}--\ref{sec-lsv-pos}.  We shall change the notation from $\LSV_\levcp$ etc to $\Sh_{\levcp, \bC}$ etc in this subsection.  By the theory of canonical models, $\Sh_{\levcp, \bC}$ admits a canonical model $\Sh_\levcp$ over a number field $\ReFl \subset \bC$, called the \emph{reflex field} of the Shimura datum $(\Grp{G}, \Shdom)$.  We shall call $\Sh_\levcp$ the canonical model of the Shimura variety associated with $(\Grp{G}, \Shdom)$ at level $\levcp$.  For simplicity, we shall also call various base changes of $\Sh_\levcp$ and their analytifications Shimura varieties.  The precise meaning of our terminologies should be clear in the context.

By \cite[12.4]{Pink:1989-Ph-D-Thesis}, for a cofinal system of choices of $\Sigma$'s for $\Sh_{\levcp, \bC}$, the corresponding toroidal compactifications $\Sh_{\levcp, \Sigma, \bC}^\Tor$ also admit canonical models $\Sh_{\levcp, \Sigma}^\Tor$ over $\ReFl$, and we shall call these the toroidal compactifications of $\Sh_\levcp$.  Without reviewing the conditions on $\Sigma$'s in \cite[12.4]{Pink:1989-Ph-D-Thesis} in detail, let us just note the following:
\begin{remark}\label{rem-Pink-cone-decomp-higher-lev}
    Given any projective $\Sigma$ for $\Sh_\levcp$, its pullback $\Sigma'$ to any higher level $\levcp' \subset \levcp$ \Pth{which is, a priori, just for $\Sh_{\levcp', \bC}$} also qualifies as a projective cone decomposition for $\Sh_{\levcp'}$, so that $\Sh_{\levcp', \Sigma'}^\Tor$ is defined.
\end{remark}

Each $\hd \in \Shdom$ as in \Refeq{\ref{eq-hd}} induces a homomorphism $\hd_\bC: \bG_{m, \bC} \times \bG_{m, \bC} \to \Grp{G}_\bC$, whose restriction to the first factor defines the so-called \emph{Hodge cocharacter}
\begin{equation}\label{eq-hc}
    \hc_\hd: \bG_{m, \bC} \to \Grp{G}_\bC
\end{equation}
\Pth{over $\bC$}.  Essentially by the definition of a Shimura datum, the composition of \Refeq{\ref{eq-hc}} with the adjoint action of $\Grp{G}_\bC$ on $\Lie \Grp{G}_\bC$ induces the same decomposition as in \Refeq{\ref{eq-Cartan-decomp-C}}, and the composition of \Refeq{\ref{eq-hc}} with the canonical homomorphism $\Grp{G}_\bC \to \Grp{G}_\bC^\ad$ recovers the $\hc^\ad$ \Pth{defined by $\Dompt_0 = \hd_0 \in \Shdom^+$} in \Refeq{\ref{eq-hc-ad}}.  Hence, we can define $\Grp{M}$, $\Grp{N}$, $\Grp{N}^\std$, $\Grp{P}$, and $\Grp{P}^\std$ as in \Refeq{\ref{eq-def-P}} and the sentences preceding it; and define $\Flalg := \Grp{P} \Lquot \Grp{G}_\bC$ and $\Flalg^\std := \Grp{H}_\bC / \Grp{P}^\std$ as in \Refeq{\ref{eq-def-fl}}.

\begin{remark}\label{rem-refl-def}
    The reflex field $\ReFl$ is the field of definition of the conjugacy class of $\hc_\hd$, and both $\Flalg$ and $\Flalg^\std$ naturally have models over $\ReFl$ because of this.
\end{remark}

\begin{remark}\label{rem-fil-by-hc}
    Given a representation $\rW$ in $\Rep_\bC(\Grp{P})$ \Pth{\resp $\Rep_\bC(\Grp{P}^\std)$}, the action of $\Lie \hc_\hd$ defines an increasing \Pth{\resp decreasing} filtration $\Fil^\bullet$ on $\rW$ by subrepresentations of $\Grp{P}$ \Pth{\resp $\Grp{P}^\std$}.  However, the numbering of this filtration might differ from one defined by $\Lie \hc^\ad$ in Remark \ref{rem-fil-by-hc-ad} by a shifting, because the composition of $\Lie \hc^\ad$ with $(\Lie \MaxCpt^\ad)_\bC \subset (\Lie \MaxCpt)_\bC \cong \Lie \Grp{M}$ might differ from $\Lie \hc_\hd$ by a nonzero homomorphism from $\bC$ to $\Lie Z(\Grp{G}_\bC) \cong \ker(\Lie \Grp{G}_\bC \to \Lie \Grp{G}_\bC^\ad)$.
\end{remark}

\begin{remark}\label{rem-Sh-var-aut-bdl-can-ext}
    Everything in Sections \ref{sec-lsv-aut-bdl-can-ext}--\ref{sec-lsv-pos}, including importantly Propositions \ref{prop-lsv-semi-ample} and \ref{prop-lsv-pos-smallest}, specializes to the setting of Shimura varieties.  We shall use the same notation $\GrSh{\rW}$, $\GrSh{\rW}^\canext$, etc, without further explanation.
\end{remark}

In the remainder of this Section \ref{sec-geom-Sen-Sh}, we fix a neat open compact subgroup $\levcp = \levcp^p \levcp_p$ of $\Grp{G}(\bAi)$, with $\levcp^p \subset \Grp{G}(\bAip)$ and $\levcp_p \subset \Grp{G}(\bQ_p)$.  Moreover, we fix a projective smooth cone decomposition $\Sigma$ for $\Sh_\levcp$, and we shall write $\Sh_\levcp^\Tor = \Sh_{\levcp, \Sigma}^\Tor$, without emphasizing the choice of $\Sigma$.  By Remark \ref{rem-Pink-cone-decomp-higher-lev}, for each open subgroup $\levcp' \subset \levcp$, the pullback $\Sigma'$ to level $\levcp'$ is projective \Pth{but not necessarily smooth}, and we shall similarly write $\Sh_{\levcp'}^\Tor = \Sh_{\levcp', \Sigma'}^\Tor$, without emphasizing $\Sigma'$.  By Corollary \ref{cor-dcs-act}, the tower $\{ \Sh_{\levcp^p \levcp_p'} \}_{\levcp_p'}$ indexed by open subgroups $\levcp_p'$ of $\levcp_p$, together with its canonical $\levcp_p$-action, extends to a tower $\{ \Sh_{\levcp^p \levcp_p'}^\Tor \}_{\levcp_p'}$; and the transition morphisms of this latter tower are finite Kummer \'etale surjective morphisms.  Let $\widetilde{\levcp_p}$ be the quotient of $\levcp_p$ given by \Refeq{\ref{eq-ex-KZ-mod-KZ'-tower-p-infty}} \Pth{with $\Grp{H}$ there replaced with $\Grp{G}$ here}.  By Corollary \ref{cor-KZ-mod-KZ'-tower}, the canonical actions of $\levcp_p$ on the towers $\{ \Sh_{\levcp^p \levcp_p'} \}_{\levcp_p'}$ and $\{ \Sh_{\levcp^p \levcp_p'}^\Tor \}_{\levcp_p'}$ factor through faithful actions of $\widetilde{\levcp_p}$.  \Pth{In other words, $\widetilde{\levcp_p}$ is the image of $\levcp_p$ in the automorphism groups of these towers.}

\subsection{Hodge--Tate period morphisms}\label{sec-HT-mor}

Now let us make the transition to $p$-adic geometry.  Let $C$ be the $p$-adic completion of an algebraic closure of $\bQ_p$.  From now on, we fix an isomorphism $\iota: \bC \Mi C$, whose restriction to $\ReFl \subset \bC$ induces an embedding $\ReFl \Em C$ and hence a $p$-adic place $v$ of $\ReFl$.  Let $\ReFl_v$ denote the corresponding extension of $\bQ_p$.  We shall denote by $\Sh_{\levcp, \ReFl_v}$ etc \Pth{\resp $\Sh_{\levcp, C}$ etc} the base changes of $\Sh_\levcp$ etc to $\ReFl_v$ \Pth{\resp $C$}.  Note that these base changes \Pth{and therefore the constructions in the remainder of section} depend on $\iota$.  We shall denote by $\aShEv_\levcp$ \Pth{\resp $\aSh_\levcp$} the adic space associated with $\Sh_{\levcp, \ReFl_v}$ \Pth{\resp $\Sh_{\levcp, C}$} over $\Spa(\ReFl_v, \cO_{\ReFl_v})$ \Pth{\resp $\Spa(C, \cO_C)$}, and similarly denote other analytifications.  Then we have compatible towers $\{ \aShEv_{\levcp^p \levcp_p'} \}_{\levcp_p'}$ and $\{ \aShEv_{\levcp^p \levcp_p'}^\Tor \}_{\levcp_p'}$ over $\Spa(\ReFl_v, \cO_{\ReFl_v})$, which base change to compatible towers $\{ \aSh_{\levcp^p \levcp_p'} \}_{\levcp_p'}$ and $\{ \aSh_{\levcp^p \levcp_p'}^\Tor \}_{\levcp_p'}$ over $\Spa(C, \cO_C)$.  Note that $\{ \Sh_{\levcp^p \levcp_p', C} \}_{\levcp_p'}$ satisfies the requirement as a $\widetilde{\levcp_p}$-torsor over $\Sh_{\levcp, C}$ in Construction \ref{constr-proet-ls}, and $\Sh_{\levcp, C} \Em \Sh_{\levcp, C}^\Tor$ satisfies the requirement for $j$ in Construction \ref{constr-proket-ls}.

By \cite[\aEx 2.3.17]{Diao/Lan/Liu/Zhu:2023-lasfr}, each $\aShEv_{\levcp^p \levcp_p'}^\Tor$ has a natural fs log structure defined by its boundary $\partial \aShEv_{\levcp^p \levcp_p'}^\Tor$ divisor.  By comparing \'etale local descriptions of log smooth and log \'etale morphisms in the algebraic setup in \cite{Kato:1989-lsfi} with those in the $p$-adic analytic setup in \cite{Diao/Lan/Liu/Zhu:2023-lasfr}, we see that the transition morphisms of the tower $\{ \aShEv_{\levcp^p \levcp_p'}^\Tor \}_{\levcp_p'}$ are finite Kummer \'etale surjective morphisms.  Hence, $\varprojlim_{\levcp_p' \subset \levcp_p} \aShEv_{\levcp^p \levcp_p'}^\Tor$ is a $\widetilde{\levcp_p}$-torsor which defines a pro-Kummer \'etale cover of $\aShEv_\levcp^\Tor$ in $\aShEv_{\levcp, \proket}^\Tor$.  By base change to $\Spa(C, \cO_C)$, we obtain a $\widetilde{\levcp_p}$-torsor $\varprojlim_{\levcp_p' \subset \levcp_p} \aSh_{\levcp^p \levcp_p'}^\Tor$, which defines a pro-Kummer \'etale cover of $\aSh_\levcp^\Tor$ in $\aSh_{\levcp, \proket}^\Tor$.

This last $\widetilde{\levcp_p}$-torsor over $\Spa(C, \cO_C)$ has an underlying topological space
\[
    \aSh_{\levcp_p}^\Tor := \varprojlim_{\levcp_p' \subset \levcp_p} |\aSh_{\levcp^p \levcp_p'}^\Tor|
\]
equipped with the inverse limit topology, where $|\aSh_{\levcp^p \levcp_p'}^\Tor|$ denotes the underlying topological space of $\aSh_{\levcp^p \levcp_p'}^\Tor$.  Concretely, the topology of $\aSh_{\levcp^p}^\Tor$ has a basis consisting of $\pi_{\levcp_p'}^{-1}(\cU) = \varprojlim_{\levcp_p'' \subset \levcp_p'} |(\aSh_{\levcp^p \levcp_p''}^\Tor \to \aSh_{\levcp^p \levcp_p'}^\Tor)^{-1}(\cU)|$, with $\levcp_p'$ an open subgroup of $\levcp_p$ and $\cU$ an affinoid open subspace of $\aSh_{\levcp^p \levcp_p'}^\Tor$, where $\pi_{\levcp_p'}: \aSh_{\levcp^p}^\Tor \to |\aSh_{\levcp^p \levcp_p'}^\Tor|$ denotes the natural projection morphism.  There is a natural continuous action of $\levcp_p$ on $\aSh_{\levcp^p}^\Tor$ which factors through the quotient $\widetilde{\levcp_p}$.

\begin{definition}\label{def-O-plus}
    We define the sheaf $\widehat{\cO}_\aSh^+ := \widehat{\cO}_{\aSh_{\levcp^p}^\Tor}^+$ on $\aSh_{\levcp^p}^\Tor$ to be the restriction of the integral completed structure sheaf $\widehat{\cO}_{\aSh_{\levcp, \proket}^\Tor}^+$ to the underlying topological space of $\varprojlim_{\levcp_p' \subset \levcp_p} \aSh_{\levcp^p \levcp_p'}^\Tor$, and define $\widehat{\cO}_\aSh := \widehat{\cO}_{\aSh_{\levcp^p}^\Tor} := \widehat{\cO}_{\aSh_{\levcp^p}^\Tor}^+[\frac{1}{p}]$.  Explicitly, $\widehat{\cO}_\aSh^+$ sends any $\varprojlim_{\levcp_p''} |\pi_{\levcp_p'', \levcp_p'}^{-1}(\cU)|$ as above to $\widehat{\cO}_{\aSh_{\levcp, \proket}^\Tor}^+(\varprojlim_{\levcp_p''} \pi_{\levcp_p'', \levcp_p'}^{-1}(\cU))$, where $\varprojlim_{\levcp_p''} \pi_{\levcp_p'', \levcp_p'}^{-1}(\cU)$ is viewed as an object in the pro-Kummer \'etale site $\aSh_{\levcp, \proket}^\Tor$.  There is a similar description of $\widehat{\cO}_\aSh$ in terms of the completed structure sheaf $\widehat{\cO}_{\aSh_{\levcp, \proket}^\Tor}$.  Both sheaves are $\widetilde{\levcp_p}$-equivariant, and there is a natural morphism of ringed sites $\pi_{\levcp_p}: (\aSh_{\levcp^p}^\Tor, \widehat{\cO}_\aSh) \to (\aSh_\levcp^\Tor, \cO_{\aSh_\levcp^\Tor})$.
\end{definition}

\begin{remark}\label{rem-Sh-infty}
    Intuitively, we think of the ringed site $(\aSh_{\levcp^p}^\Tor, \widehat{\cO}_\aSh)$ as a \Qtn{\emph{Shimura variety of infinite level at $p$}}.  One caveat is that, while this ad hoc definition will be enough for our purpose, the definition of the structural sheaf here might not be the most correct one in general.  From the point of view of \emph{perfectization} in \cite[\aSec 4.1]{Bhatt:2025-apaht}, it is unclear whether there should be some derived structure on the sheaf $\widehat{\cO}_\aSh$, unless we know that $\varprojlim_{\levcp_p' \subset \levcp_p} \aSh_{\levcp^p \levcp_p'}^\Tor$ is perfectoid.
\end{remark}

It was first observed by Scholze in \cite{Scholze:2015-tclsv} that any Hodge-type Shimura variety of finite level at $p$ has a Hodge--Tate period morphism towards the associated flag variety, which played a fundamental role in his study of the $p$-adic geometry of Shimura varieties.  \Pth{See \cite{Pilloni/Stroh:2016-ccerg, Lan:2022-citcs} for the analogous statement for toroidal compactifications.}  Using Diao--Lan--Liu--Zhu's work on \Pth{$p$-adic} logarithmic Riemann--Hilbert correspondence and its comparison with Deligne's \Pth{complex} Riemann--Hilbert correspondence on all Shimura varieties \Pth{see \cite[\aThm 1.5]{Diao/Lan/Liu/Zhu:2023-lrhrv}}, one can define Hodge--Tate period morphism for general Shimura varieties \Pth{\Refcf{} \cite[\aThm 4.2.1]{RodriguezCamargo:2022-lacc}}.  To state the result, we introduce some notation.

Let $\Fl$ denote the adic space associated with the partial flag variety $\Flalg_C = \Grp{P}_C \Lquot \Grp{G}_C$ \Pth{\Refcf{} \Refeq{\ref{eq-def-fl}}}.  From this quotient expression, with any representation $\rW$ of $\Grp{P}_C$, increasingly filtered as in Remark \ref{rem-fil-by-hc}, we can canonically associate an increasingly filtered $\Grp{G}_C$-equivariant vector bundle $(\GrSh{\rW}_{\Flalg_C}, \Fil_\bullet)$ on $\Flalg_C$.  By \Pth{$p$-adic} analytification, we obtain the associated \Pth{increasingly} filtered $\Grp{G}(C)$-equivariant vector bundle $(\cW_\Fl, \Fil_\bullet)$ on $\Fl$.  The assignment $\rW \mapsto (\rW_\Fl, \Fil_\bullet)$ defines a tensor functor from $\Rep_C(\Grp{P}_C)$ to the category of increasingly filtered vector bundles on $\Fl$.  Such a construction extends naturally to representations of quotient \Pth{algebraic} groups of $\Grp{P}_C$, including, in particular, $\Grp{P}_C^c$, $\Grp{M}_C$, and $\Grp{M}_C^c$.

As in Remark \ref{rem-Sh-var-aut-bdl-can-ext}, with each $\rW \in \Rep_\bC(\Grp{P}^{\std, c})$, we also have an associated \Pth{decreasingly} filtered vector bundle $(\GrSh{\rW}, \Fil^\bullet)$ over $\Sh_{\levcp, \bC}$, together with its \emph{canonical extension} $(\GrSh{\rW}^\canext, \Fil^\bullet)$ over $\Sh_{\levcp, \bC}^\Tor$.  By using the chosen isomorphism $\iota: \bC \Mi C$, and by $p$-adic analytification, we obtain a filtered vector bundle $(\cW^\canext, \Fil^\bullet)$ on $\aSh_\levcp^\Tor$.  The assignment $\rW \mapsto (\cW^\canext, \Fil^\bullet)$ defines a tensor functor from $\Rep_C(\Grp{P}_C^{\std, c})$ to the category of decreasingly filtered vector bundles on $\aSh_\levcp^\Tor$.

\begin{theorem}\label{thm-HT-mor}
    There is a $\levcp_p$-equivariant morphism of ringed sites
    \[
        \pi_\HT^\Tor: (\aSh_{\levcp^p}^\Tor, \widehat{\cO}_\aSh) \to (\Fl, \cO_\Fl),
    \]
    called the \emph{Hodge--Tate period morphism}, such that, for each $\rW \in \Rep_C(\Grp{M}^c)$, there is a $\levcp_p$-equivariant isomorphism of locally free $\widehat{\cO}_\aSh$-modules
    \[
        \pi_\HT^{\Tor, *}(\cW_\Fl) \Mi \pi_{\levcp_p}^*(\cW^\canext).
    \]
    Moreover, $\pi_\HT^\Tor$ is deduced from a $\levcp_p$-equivariant morphism $(\aSh_{\levcp^p}^\Tor, \widehat{\cO}_\aSh^+) \to (\Fl, \cO_\Fl^+)$ by inverting $p$.
\end{theorem}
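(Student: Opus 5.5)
The plan is to build $\pi_\HT^\Tor$ Tannakianly from the $p$-adic logarithmic Riemann--Hilbert correspondence of Diao--Lan--Liu--Zhu, following \cite[\aThm 4.2.1]{RodriguezCamargo:2022-lacc}. A morphism to $\Fl = \Grp{P}_C \Lquot \Grp{G}_C$ amounts to a reduction to $\Grp{P}_C$ of the trivial $\Grp{G}_C$-torsor. By Tannakian formalism, such a reduction over $(\aSh_{\levcp^p}^\Tor, \widehat{\cO}_\aSh)$ is an exact tensor functor $\rV \mapsto (\rV \otimes_C \widehat{\cO}_\aSh, \Fil_\bullet)$ from $\Rep_C(\Grp{G}^c_C)$ to increasingly filtered locally free $\widehat{\cO}_\aSh$-modules. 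The filtration must have type $\hc_\hd$ locally, \ie, be locally isomorphic to $\rV \otimes \widehat{\cO}_\aSh$ with the filtration of Remark \ref{rem-fil-by-hc}. So the work is to produce this filtered fiber functor and compute its graded pieces.

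\textbf{Step 1.} For $\rV \in \Rep_{\bQ_p}(\Grp{G}^c)$, view $\rV$ as a $\widetilde{\levcp_p}$-representation. This gives a pro-Kummer \'etale local system $\proketSh{\rV}_{\aSh_\levcp^\Tor}$ by Construction \ref{constr-proket-ls}. This local system is de Rham, and by \cite[\aThm 1.5]{Diao/Lan/Liu/Zhu:2023-lrhrv} together with Remark \ref{rem-can-ext-conn}, its associated filtered log connection is $(\dRSh{\rV}^\canext, \nabla, \Fil^\bullet)$ analytified via $\iota$.

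\textbf{Step 2.} The logarithmic $p$-adic Hodge theory of \cite{Diao/Lan/Liu/Zhu:2023-lasfr} then gives a Hodge--Tate filtration on $\proketSh{\rV} \otimes \widehat{\cO}_{\aSh_{\levcp, \proket}^\Tor}$. Its graded pieces are $\gr^i \dRSh{\rV}^\canext \otimes \widehat{\cO}(-i)$. Since $\proketSh{\rV}$ becomes constant on the tower $\varprojlim_{\levcp_p'} \aSh^\Tor_{\levcp^p\levcp_p'}$, restricting to $\aSh_{\levcp^p}^\Tor$ yields a filtration on $\rV \otimes \widehat{\cO}_\aSh$. This filtration is functorial, exact, and $\otimes$-compatible in $\rV$, and it is $\levcp_p$-equivariant by the Hecke functoriality of all constructions.

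\textbf{Step 3.} Check that the filtration is locally of type $\hc_\hd$, so that the Tannakian argument yields a $\Grp{P}_C$-torsor and hence $\pi_\HT^\Tor$. Its graded pieces are canonically $\pi_{\levcp_p}^*$ of the graded pieces of $\GrSh{\rV}^\canext$, up to Tate twists, which are trivialized over $C$. The pullback of $\cW_\Fl$ for $\rW \in \Rep_C(\Grp{M}^c)$ is the pushout of this $\Grp{P}_C$-torsor along $\Grp{P}_C \to \Grp{M}_C$, which is also the graded torsor. That graded torsor is identified with $\pi_{\levcp_p}^*$ of the $\Grp{M}^c$-torsor defining the automorphic bundles $\cW^\canext$, since both are the gradings of the same $\hc_\hd$-filtered fiber functor. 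This gives $\pi_\HT^{\Tor,*}(\cW_\Fl) \cong \pi_{\levcp_p}^*(\cW^\canext)$ equivariantly.

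\textbf{Step 4.} For the integral refinement, $\Fl$ is proper and covered by affinoids $\Spa(A, A^+)$, where $A^+$ is generated by power-bounded coordinates. Locally on a basis of affinoid perfectoid pro-Kummer \'etale opens, the map on $\widehat{\cO}_\aSh$-valued points sends these coordinates into $\widehat{\cO}_\aSh^+$ after shrinking, by the valuative description. Hence the morphism descends from $(\aSh_{\levcp^p}^\Tor, \widehat{\cO}_\aSh^+) \to (\Fl, \cO_\Fl^+)$.

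The main obstacle is Step 3, namely the local-freeness and type statement at the boundary on the ad hoc ringed site $(\aSh_{\levcp^p}^\Tor, \widehat{\cO}_\aSh)$ (Remark \ref{rem-Sh-infty}). The limit is not known to be perfectoid, so I would first establish local freeness on log affinoid perfectoid covers in $\aSh^\Tor_{\levcp,\proket}$, where the DLLZ comparison is available. I would then descend to the basis opens $\pi_{\levcp_p'}^{-1}(\cU)$ using that $\widehat{\cO}_\aSh$ is by definition the value of $\widehat{\cO}_{\aSh_{\levcp, \proket}^\Tor}$ on those objects.
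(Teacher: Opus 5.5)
Your proposal follows essentially the same route as the paper. The automorphic local systems attached to $\Rep_{\bQ_p}(\Grp{G}^c)$ are de Rham, and the log Riemann--Hilbert comparison of Diao--Lan--Liu--Zhu identifies their filtered log connections with the canonical extensions. Their tensor-compatible Hodge--Tate filtrations of type $\hc_\hd$ then give, by Tannakian formalism, a $\Grp{P}^c_C$-reduction of the $\Grp{G}^c_C$-torsor trivialized on the tower, hence the map to $\Fl \cong \Grp{P}^c_C \Lquot \Grp{G}^c_C$, and the graded pieces yield $\pi_\HT^{\Tor, *}(\cW_\Fl) \cong \pi_{\levcp_p}^*(\cW^\canext)$. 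Strictly speaking this is a $\Grp{P}^c_C$-reduction of a $\Grp{G}^c_C$-torsor rather than a $\Grp{P}_C$-reduction of a $\Grp{G}_C$-torsor, since only $\Grp{G}^c$ yields local systems on the tower, but the two flag varieties agree.

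The remaining differences are cosmetic. The paper builds the reduction on all of $\aShEv^\Tor_{\levcp, \proket}$ before restricting to the tower, which is how it sidesteps your worry about the ad hoc site. It also gets the integral refinement by factoring through $\Flalg_C$ and using $\Fl \cong \Flalg_C \times_{\Spec(C)} \Spa(C, \cO_C)$, rather than through your valuative argument.
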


\begin{remark}\label{rem-thm-HT-mor}
    To make the isomorphism also equivariant with respect to the action of local Galois group of $\ReFl_v$, one needs to put appropriate Tate twists by the Hodge cocharacter $\hc_\hd$ in the isomorphism \Pth{\Refcf{} \cite[\aThm 4.2.1]{RodriguezCamargo:2022-lacc} and also Remark \ref{rem-fil-by-hc}}.
\end{remark}

\begin{proof}[Proof of Theorem \ref{thm-HT-mor}]
    This is essentially \cite[\aThm 4.2.1]{RodriguezCamargo:2022-lacc}.  See also \cite[\aThm 4.4.40]{Boxer/Pilloni:2021-hct}.  Let us still give a sketch here.  As explained in \cite[\aSec 5.2]{Diao/Lan/Liu/Zhu:2023-lrhrv}, any finite-dimensional $\rV \in \Rep_{\bQ_p}(\Grp{G}^c)$, viewed as a representation of $\widetilde{\levcp_p}$ via $\widetilde{\levcp_p} \to \levcp_p^c \subset \Grp{G}^c(\bQ_p)$, defines an \Pth{automorphic} $\bQ_p$-\'etale local system $\etSh{\rV}$ on $\Sh_\levcp$, which has \emph{unipotent geometric monodromy} along the boundary \Pth{in any toroidal compactification} and is \emph{de Rham} by \cite[\aThm 1.2]{Liu/Zhu:2017-rrhpl}.  Therefore, by extending $\etSh{\rV}$ to a Kummer \'etale $\bQ_p$-local system $\ketSh{\rV}$ over $\aShEv_\levcp^\Tor$, by applying the algebraized $p$-adic logarithmic Riemann--Hilbert functor $\Ddlalg$ as in \cite[\aSec 4.1]{Diao/Lan/Liu/Zhu:2023-lrhrv}, and by base change from $\ReFl_v$ to $C$, we obtain a filtered log connection over $\Sh_{\levcp, C}^\Tor$.  By \cite[\aThm 5.3.1]{Diao/Lan/Liu/Zhu:2023-lrhrv}, this last filtered log connection is canonically isomorphic to $(\dRSh{\rV_\bC}^\canext, \nabla, \Fil^\bullet) \otimes_{\bC, \iota} C$, where $\rV_\bC := \rV \otimes_{\bQ_p, \iota^{-1}} \bC$ and $(\dRSh{\rV_\bC}^\canext, \nabla, \Fil^\bullet)$ is as in the last paragraph of Section \ref{sec-lsv-aut-bdl-can-ext}.  Moreover, such canonical isomorphisms are compatible with tensor products and duals.  In the following, we shall denote by $(\cV^\canext, \nabla, \Fil^\bullet)$ the \Pth{$p$-adic} analytification of $(\dRSh{\rV_\bC}^\canext, \nabla, \Fil^\bullet) \otimes_{\bC, \iota} C$ over $\aSh_\levcp^\Tor$.  Let $\proketSh{\rV}$ denote the pullback of $\ketSh{\rV}$ to $\aShEv_{\levcp, \proket}^\Tor$.  \Pth{Its further pullback to $\aSh_{\levcp, \proket}^\Tor$ is the $\proketSh{\rV}_{\aSh_\levcp^\Tor}$ constructed in Construction \ref{constr-proket-ls}.}  Since $\etSh{\rV}$ is de Rham and has unipotent geometric monodromy along the boundary, $\proketSh{\rV}$ on $\aShEv_{\levcp, \proket}^\Tor$ and $(\cV^\canext, \nabla, \Fil^\bullet)$ are associated in the sense that there is a natural isomorphism
    \[
        \proketSh{\rV} \otimes_{\bQ_p} \OBdl \cong \cV^\canext \otimes_{\cO_{\aShEv_\levcp^\Tor}} \OBdl
    \]
    over $\aShEv_{\levcp, \proket}^\Tor$, which is compatible with filtrations and log connections on both sides \Pth{see the paragraph preceding \cite[\aSec 3.5]{Diao/Lan/Liu/Zhu:2023-lrhrv} and the notation for period sheaves there}.  For simplicity, let us write $X = \aShEv_{\levcp, \proket}^\Tor$.  Similar to \cite[\aProp 7.9]{Scholze:2013-phtra}, one defines an increasing \emph{Hodge--Tate} filtration on $\proketSh{\rV} \otimes_{\bQ_p} \widehat{\cO}_X$ using the relative position of the $\BBdRp$-lattices $\bM = \proketSh{\rV} \otimes_{\bQ_p} \BBdRp$ and $\bM_0 = (\dRSh{\rV} \otimes_{\cO_{\aShEv_\levcp^\Tor}} \OBdlp)^{\nabla = 0}$ in $\proketSh{\rV}\otimes_{\bQ_p} \BBdR$; \ie,
    \[
        \Fil_{-j}(\proketSh{\rV} \otimes_{\bQ_p} \widehat{\cO}_X) := (\bM \cap \Fil^j \bM_0) / (\Fil^1 \bM \cap \Fil^j \bM_0),
    \]
    where both $\bM$ and $\bM_0$ are equipped with the $\ker(\theta)$-adic topology, and where $\theta: \BBdRp \to \widehat{\cO}_X$ denotes the usual $\theta$ map.  Moreover, we have
    \[
        \gr_j(\proketSh{\rV} \otimes_{\bQ_p} \widehat{\cO}_X) = (\gr^j \dRSh{\rV}) \otimes_{\cO_{\aShEv_\levcp^\Tor}} \widehat{\cO}_X(-j).
    \]
    Since the Hodge filtration $\Fil^\bullet$ on $\cV^\canext$ is induced \Pth{via $\iota: \bC \Mi C$} by the decreasing filtration on $\rV_\bC|_{\Grp{P}_\bC^{\std, c}}$, which is defined up to conjugation by $\Grp{G}^c$ by the action of $\Lie \hc_\hd$, or rather just $\hc_\hd$, as in Remark \ref{rem-fil-by-hc}, it follows from the above that the Hodge--Tate filtration on $\proketSh{\rV} \otimes_{\bQ_p} \widehat{\cO}_X$ is associated with the opposite parabolic subgroup $\Grp{P}_C^c$ \Pth{up to conjugation}.  Moreover, the formation of such filtrations are compatible with the formation of tensor products and duals.

    Consequently, the Tannakian $\Grp{G}_C^c$-torsor $\cG^c$ on $(X, \widehat{\cO}_X)$ given by the tensor functor $\bigl(\rV \in \Rep_{\bQ_p}(\Grp{G}^c)\bigr) \mapsto \proketSh{\rV}\otimes_{\bQ_p} \widehat{\cO}_X$ admits a $\Grp{P}_C^c$-reduction, by \cite[\aProp 1.7]{Milne:1990-cmsab} \Pth{based on \cite[\aCh IV, especially IV.2.2.5]{SaavedraRivano:1972-CT}}.  \Pth{Here the torsors are only defined by the Tannakian formalism, which makes sense over any ringed site.}  Since $\proketSh{\rV}$ is canonically trivialized on the pro-Kummer \'etale cover $Y := \varprojlim_{\levcp_p' \subset \levcp_p} \aSh_{\levcp^p \levcp_p'}^\Tor$, so is the restriction of $\cG^c$ to $Y$.  By the usual universal property of the algebraic partial flag variety $\Flalg_C$, the $\Grp{P}_C^{c, \an}$-reduction induces a morphism of ringed sites
    \[
        (Y, \widehat{\cO}_X|_Y) \to (\Flalg_C, \cO_{\Flalg_C}).
    \]
    Since $\Fl \cong \Flalg_C \times_{\Spec(C)} \Spa(C, \cO_C)$ \Pth{as in \cite[\aProp 3.8 and \aRem 4.6]{Huber:1994-gfsra}}, this factors through a morphism of ringed sites
    \[
        (Y, \widehat{\cO}_X|_Y) \to (\Fl, \cO_\Fl),
    \]
    whose restriction to the underlying topological space $|Y| = \aSh_{\levcp^p}^\Tor$ gives the desired $\pi_\HT^\Tor$, which is deduced from a morphism $(Y, \widehat{\cO}_X^+|_Y) \to (\Fl, \cO_\Fl^+)$ by inverting $p$.  All the claimed properties then follow from the construction.
\end{proof}

\begin{remark}\label{rem-HT-mor-comp}
    In the construction of $\pi_\HT$, one can use the associated adjoint Shimura datum $(\Grp{G}^\ad, \Shdom^\ad)$ instead of $\Grp{G}^c$.  Let $\levcp^\ad$ denote the image of $\levcp$ in $\Grp{G}^\ad(\bAi)$, which is neat as $\levcp$ is.  Up to refinement of cone decompositions for $\Sh_\levcp$, as in Proposition \ref{prop-lsv-funct}, we can choose smooth projective toroidal compactifications $\Sh_\levcp^\Tor$ of $\Sh_\levcp$ and $\Sh_{\levcp^\ad}^\Tor$ of $\Sh_{\levcp^\ad}$ with normal crossing boundary divisors as before such that the finite map $\Sh_\levcp \to \Sh_{\levcp^\ad}$ extends to a proper morphism between toroidal compactifications $\Sh_\levcp^\Tor \to \Sh_{\levcp^\ad}^\Tor$.  \Pth{We shall adopt a similar notation system for various objects defined by the similarly defined tower over $\Sh_{\levcp^\ad}^\Tor$.}  Then we obtain a natural morphism of ringed sites
    \[
        \phi: (\aSh_{\levcp^p}^\Tor, \widehat{\cO}_\aSh) \to (\aSh_{\levcp^{\ad, p}}^\Tor, \widehat{\cO}_{\aSh^\ad}),
    \]
    where $(\aSh_{\levcp^{\ad, p}}^\Tor, \widehat{\cO}_{\aSh^\ad})$ is the analogue of $(\aSh_{\levcp^p}^\Tor, \widehat{\cO}_\aSh)$ for $\Sh_{\levcp^\ad}^\Tor$.  The Hodge--Tate period morphisms for both the source and target of $\phi$ have the same target.  We claim that the Hodge--Tate period morphism for $(\aSh_{\levcp^p}^\Tor, \widehat{\cO}_\aSh)$ factors through $\phi$.  This follows from the construction and from the compatibility of the logarithmic $p$-adic Riemann--Hilbert functor for local systems with unipotent geometric monodromy under pull-back, by \cite[\aThm 3.2.3(4)]{Diao/Lan/Liu/Zhu:2023-lrhrv}.
\end{remark}

We will need a \Qtn{\emph{locally analytic}} version of the Hodge--Tate period morphism.

\begin{definition}\label{def-HT-mor-la}
    We define the sheaf $\cO_\aSh^\la$ on $\aSh_{\levcp^p}^\Tor$ as the subsheaf of $\widehat{\cO}_\aSh$ of $\widetilde{\levcp_p}$-locally analytic sections.  Concretely, for any open subgroup $\levcp_p' \subset \levcp_p$ and any affinoid $\cU \subset \aSh_{\levcp^p \levcp_p'}^\Tor$, we know that $\widehat{\cO}_\aSh\bigl(\pi_{\levcp_p'}^{-1}(\cU)\bigr) = \widehat{\cO}_\aSh^+(\pi_{\levcp_p'}^{-1}(\cU))[\frac{1}{p}]$ is a $p$-adic Banach space representation of $\levcp_p'$, and we define $\cO_\aSh^\la\bigl(\pi_{\levcp_p'}^{-1}(\cU)\bigr)$ to be the subspace of $\levcp_p'$-locally analytic vectors of $\widehat{\cO}_\aSh(\pi_{\levcp_p'}^{-1}(\cU))$.  This defines a $\widetilde{\levcp_p}$-equivariant subsheaf $\cO_\aSh^\la$ of $\widehat{\cO}_\aSh$.  \Pth{See also \cite[\aDef 6.2.1]{RodriguezCamargo:2022-lacc}.}
\end{definition}

Note that $\pi_{\levcp_p}: (\aSh_{\levcp^p}^\Tor, \widehat{\cO}_\aSh) \to (\aSh_\levcp^\Tor, \cO_{\aSh_\levcp^\Tor})$ induces a morphism
\[
    \pi_{\levcp_p}^\la: (\aSh_{\levcp^p}^\Tor, \cO_\aSh^\la) \to (\aSh_\levcp^\Tor, \cO_{\aSh_\levcp^\Tor}),
\]
because all the sections of $\cO_{\aSh_\levcp^\Tor}$ are $\levcp_p$-invariant, and hence $\levcp_p$-locally analytic.

\begin{theorem}\label{thm-HT-mor-la}
    The Hodge--Tate period morphism $\pi_\HT^\Tor$ in Theorem \ref{thm-HT-mor} induces a $\levcp_p$-equivariant morphism of ringed sites
    \[
        \pi_\HT^\la: (\aSh_{\levcp^p}^\Tor, \cO_\aSh^\la) \to (\Fl, \cO_\Fl)
    \]
    such that, for $\rW \in \Rep_C(\Grp{M}^c)$, there is a $\levcp_p$-equivariant isomorphism of locally free $\cO_\aSh^\la$-modules
    \[
        \pi_\HT^{\la, *}(\cW_\Fl) \cong \pi^{\la, *}_{\levcp_p}(\cW^\canext).
    \]
\end{theorem}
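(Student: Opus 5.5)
The plan is to show that $\pi_\HT^\Tor$ restricts to $\cO_\aSh^\la$, i.e. that the pullback of every local section of $\cO_\Fl$ lands in the locally analytic subsheaf. The isomorphism for $\rW$ is then obtained by restricting the isomorphism of Theorem \ref{thm-HT-mor}.

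\textbf{Step 1: reduction to affinoid opens.} Since $\pi_\HT^\Tor$ is $\levcp_p$-equivariant and $\Fl$ is covered by finitely many translates of an open big cell, it suffices to work over an affinoid open $\cV \subset \Fl$. For such $\cV$ we check that $\pi_\HT^{\Tor,*}$ carries $\cO_\Fl(\cV)$ into locally analytic vectors of $\widehat{\cO}_\aSh\bigl(\pi_{\levcp_p'}^{-1}(\cU)\bigr)$, for $\cU$ affinoid at some finite level with $\pi_{\levcp_p'}^{-1}(\cU) \subset (\pi_\HT^\Tor)^{-1}(\cV)$.

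\textbf{Step 2: coordinates come from matrix coefficients.} On the big cell, the coordinate functions are ratios of matrix coefficients of an algebraic representation $\rV$ of $\Grp{G}_C$. Concretely, $\Fl$ embeds into a product of projective spaces $\bP(\rV)$ via Plücker-type embeddings, and the coordinates are quotients $s_1/s_2$ of sections of $\cO(1)$ pulled back from $\rV^\vee$. By the construction in the proof of Theorem \ref{thm-HT-mor}, the pullback of such a section is the image of a fixed vector of $\rV$ under the trivialized Tannakian torsor
\[
\rV \otimes_{\bQ_p} \widehat{\cO}_\aSh \to \pi_{\levcp_p}^*(\text{line bundle}).
\]
The trivialization $\proketSh{\rV} \cong \rV$ over the tower is $\levcp_p$-equivariant, with $\levcp_p$ acting on $\rV$ algebraically. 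Therefore $g \in \levcp_p$ moves the pullback of $s$ to the pullback of $g^{-1}s$, an algebraic and hence analytic function of $g$.

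\textbf{Step 3: passing to ratios.} The pullbacks of the $s_i$ are thus locally analytic sections of a line bundle pulled back from finite level. Choose a trivialization of that line bundle over $\cU$ at finite level; it is invariant under a small open subgroup. Then $\pi_\HT^{\Tor,*}(s_1/s_2) = f_1/f_2$ with $f_1, f_2$ locally analytic and $f_2$ invertible. We then need the following lemma: the inverse of an invertible locally analytic vector is locally analytic, since locally analytic vectors form a ring closed under inversion in a Banach algebra. To prove it, expand $f_2^{-1}$ via a power series near each orbit point $g$, after shrinking $\cU$ and the subgroup so that $\|g f_2 - f_2\|$ is small relative to $\|f_2^{-1}\|^{-1}$. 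Functions in $\cO_\Fl(\cV)$ are convergent power series in the coordinates, so we must also bound these uniformly; this is handled by taking $\cV$ small and a fixed analyticity radius.

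\textbf{Main obstacle.} I expect the hard part to be this uniformity. We must show that a convergent power series in locally analytic coordinates stays locally analytic, which requires a common analyticity radius and norm bounds (for instance, the coordinates lying in $\cO^+$ over $\cU$). The plan is to use the integral version $(\aSh_{\levcp^p}^\Tor, \widehat{\cO}_\aSh^+) \to (\Fl, \cO_\Fl^+)$ from Theorem \ref{thm-HT-mor} to get these bounds, following \cite[\aSec 6]{RodriguezCamargo:2022-lacc}.
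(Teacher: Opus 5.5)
Your argument for the first assertion is correct in outline, but it takes a different route from the paper. The paper never uses coordinates. It shrinks $\levcp_p'$ so that the affinoid $\cV \subset \Fl$ is $\levcp_p'$-stable. Since $\Fl$ is of finite type, the action of $\levcp_p'$ on $\cO_\Fl^+(\cV)/p$ factors through a finite quotient, and a general criterion \cite[\aProp 2.7]{Pan:2024-nspah} then shows that $\cO_\Fl(\cV)$ is itself a \emph{locally analytic} Banach representation. The continuous $\levcp_p'$-equivariant map $\cO_\Fl(\cV) \to \widehat{\cO}_\aSh(\tilde{\cU})$ therefore lands in locally analytic vectors. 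In this way the uniformity problem you flag as the main obstacle becomes a statement about the flag variety alone, and only continuity and equivariance of $\pi_\HT^\Tor$ are needed.

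Your approach instead proves local analyticity on the Shimura side by hand. Matrix coefficients make the pulled-back Pl\"ucker sections locally algebraic, and your inversion lemma handles the denominators. The integral morphism to $\cO_\Fl^+$, together with shrinking the group so that finitely many generators have $H$-analytic norm at most $1$ in the Banach algebra of $H$-analytic vectors, gives a uniform radius. This works and is more self-contained; in effect you reprove Pan's criterion in this case. One correction: closed polydiscs are not a neighborhood basis at non-classical points of $\Fl$. You must allow rational domains, whose rings are topologically generated by the coordinates and ratios $f_j/f_0$ with $f_0$ invertible. Your inversion lemma and the integrality bound cover these generators as well.

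The second assertion has a genuine gap. ``Restricting'' the isomorphism of Theorem \ref{thm-HT-mor} is not automatic. Two locally free $\cO_\aSh^\la$-submodules spanning the same $\widehat{\cO}_\aSh$-module need not agree: compare $\cO_\aSh^\la e$ and $\cO_\aSh^\la f e$ with $f \in \widehat{\cO}_\aSh(\tilde{\cU})^\times$ not locally analytic. You must show that, for local frames, the transition matrix $M \in \GL_n(\widehat{\cO}_\aSh(\tilde{\cU}))$ and $M^{-1}$ have locally analytic entries.

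The paper does this with two frames. One is a frame of $\cW^\canext|_\cU$, which is $\levcp_p'$-invariant because it is pulled back from finite level. The other is a frame of $\GrSh{\rW}_{\Flalg_C}$ over a Zariski open set containing $\cV$. The group $\levcp_p'$ acts locally algebraically on both, so equivariance forces the entries of $M^{\pm 1}$ to be locally algebraic. Your Step 2 computation is exactly this argument for the line bundles cut out by $\rV^\vee$. You need to state and carry it out for arbitrary $\rW \in \Rep_C(\Grp{M}^c)$.
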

\begin{proof}
    The morphism $\pi_\HT^\Tor: (\aSh_{\levcp^p}^\Tor, \widehat{\cO}_\aSh) \to (\Fl, \cO_\Fl)$ gives a morphism of sheaves of rings $(\pi_\HT^\Tor)^\#: (\pi_\HT^\Tor)^{-1}(\cO_\Fl) \to \widehat{\cO}_\aSh$.  We claim that this morphism factors through $\cO_\aSh^\la$ and therefore defines a morphism $\pi_\HT^\la: (\aSh_{\levcp^p}^\Tor, \cO_\aSh^\la) \to (\Fl, \cO_\Fl)$.  This is a purely local question.  Suppose that $\tilde{\cU} = \pi_{\levcp_p'}^{-1}(\cU)$ is an open subset of $\aSh_{\levcp^p}^\Tor$ with $\cU \subset \aSh_{\levcp^p \levcp_p'}^\Tor$ affinoid and $\levcp_p' \subset \levcp_p$ an open subgroup, and that $\cV \subset \Fl$ is an affinoid containing $\pi_\HT^\Tor(\tilde{\cU})$.  Note that such $\tilde{\cU}$'s form a basis of the topology of $\aSh_{\levcp^p}^\Tor$.  Since $\cV$ is quasi-compact, it is stable under the action of an open subgroup of $\levcp_p$, which we may assume to be $\levcp_p'$ up to shrinking it.  Then $(\pi_\HT^\Tor)^\#$ gives a continuous map of $p$-adic Banach spaces
    \[
        \cO_\Fl(\cV) \to \widehat{\cO}_\aSh(\tilde{\cU}),
    \]
    which is $\levcp_p'$-equivariant.  Since $\Fl$ is of finite type over $C$, the action of $\levcp_p'$ on $\cO_\Fl^+(\cV) / p$ factors through a finite quotient.  Hence, the action of $\levcp_p'$ on $\cO_\Fl(\cV)$ is locally analytic, by \cite[\aProp 2.7]{Pan:2024-nspah}.

    Now let $\rW \in \Rep_C(\Grp{M}^c)$.  By Theorem \ref{thm-HT-mor}, there is a $\levcp_p$-equivariant isomorphism
    \[
        (\pi_\HT^\Tor)^{-1}(\cW_\Fl) \otimes_{(\pi_\HT^\Tor)^{-1}(\cO_\Fl)} \widehat{\cO}_\aSh \cong \pi_{\levcp_p}^{-1}(\cW^\canext) \otimes_{\pi_{\levcp_p}^{-1}(\cO_{\aSh_\levcp^\Tor})} \widehat{\cO}_\aSh.
    \]
    We need to show this remains an isomorphism if we replace $\widehat{\cO}_\aSh$ with $\cO_\aSh^\la$.  Again, this is a local question, and we shall use the same notation as above.  Up to shrinking $\cU$ and $\cV$ if necessary, we may assume that $\cW^\canext|_\cU$ is a trivial vector bundle, and that $\GrSh{\rW}_{\Flalg_C}$ is trivial on a Zariski open subset $\tilde{V}$ of $\Flalg_C$ whose associated adic space contains $\cV$.  Choose a basis of $\cW^\canext(\cU)$ and a basis of $\GrSh{\rW}_{\Flalg_C}|_{\tilde{V}}$, viewed as a basis of $\cW_\Fl(\cV)$.  Then the above isomorphism corresponds to a matrix $M \in \GL_n(\widehat{\cO}_\aSh(\tilde{\cU}))$, where $n = \dim_C(\rW)$, and it suffices to show that $M \in \GL_n(\cO_\aSh^\la(\tilde{\cU}))$.  Note that the actions of $\levcp_p'$ on the two bases we choose are locally algebraic.  Therefore, $M$ and $M^{-1}$ have entries in the subspace of $\levcp_p'$-locally algebraic vectors of $\widehat{\cO}_\aSh(\tilde{\cU})$, and hence in $\cO_\aSh^\la(\tilde{\cU})$.
\end{proof}

\begin{remark}\label{rem-HT-mor-la-comp}
    Similar to Remark \ref{rem-HT-mor-comp}, $\pi_\HT^\la$ factors through its analogue for the adjoint Shimura datum as well.
\end{remark}

\begin{remark}
    The ringed site $(\aSh_{\levcp^p}^\Tor, \cO_\aSh^\la)$ should be thought of as a \Qtn{\emph{toroidal compactification of the locally analytic Shimura variety of infinite level at $p$}}.  Even though this ad hoc definition looks very similar to the one in Definition \ref{def-O-plus}, we will see in Theorem \ref{thm-O-la-prim-comp} below that the issue mentioned in Remark \ref{rem-Sh-infty} will disappear when we replace $\widehat{\cO}_\aSh$ with $\cO_\aSh^\la$.
\end{remark}

\subsection{Geometric Sen theory}\label{sec-geom-Sen}

The goal of this subsection is to describe the differential equation satisfied by $\cO_\aSh^\la$ in terms of the geometry of $\Fl$ and relate $\cO_\aSh^\la$ to the construction in the previous section.

Consider the sequence of inclusions of Lie algebras
\[
    \Lie \Grp{N}_C \subset \Lie \Grp{P}_C \subset \Lie \Grp{G}_C,
\]
where $\Grp{N}_C$ is the unipotent radical of $\Grp{P}_C$, as in Sections \ref{sec-lsv-setup} and \ref{sec-Sh-var}.  The group $\Grp{P}_C$ acts on each term by the adjoint action.  Hence, this sequence of Lie of algebras defines a sequence of $\Grp{G}(C)$-equivariant vector bundles
\[
    \mathfrak{n}^0 \subset \mathfrak{p}^0 \subset \mathfrak{g}^0 = \cO_\Fl \otimes_C \mathfrak{g}
\]
on $\Fl$.  Here $\mathfrak{g} := \Lie \Grp{G}_C$, and the last equality follows as the $\Grp{P}_C$-action on $\Lie \Grp{G}_C$ can be extended to the adjoint action of $\Grp{G}_C$. Concretely, for any $C$-point $x$ of $\Fl$ defining a parabolic subgroup $\Grp{P}_x$ of $\Grp{G}_C$ conjugate to $\Grp{P}_C$, with unipotent radical $\Grp{N}_x$, the subspaces $\mathfrak{n}_x^0 \subset \mathfrak{p}_x^0 \subset \mathfrak{g}_x^0 = \mathfrak{g}$ are given by $\Lie \Grp{N}_x \subset \Lie \Grp{P}_x \subset \Lie \Grp{G}_C = \mathfrak{g}$.  By taking the derivation of the $G$-action on the flag variety, we see that $\mathfrak{g}$ acts on $\cO_\Fl$.  Therefore, $\mathfrak{g}^0 = \cO_\Fl \otimes_C \mathfrak{g}$ has a nature structure of a Lie algebroid on $\Fl$, with Lie bracket
\[
    [f_1 \otimes l_1, f_2 \otimes l_2] = f_1 l_1(f_2) \otimes l_2 - f_2 l_2(f_1) \otimes l_1 + f_1 f_2 \otimes [l_1, l_2],
\]
for any $f_1, f_2 \in \cO_\Fl$ and any $l_1, l_2 \in \mathfrak{g}$.  The kernel of the natural anchor map from $\mathfrak{g}^0$ to the tangent bundle of $\Fl$ is exactly $\mathfrak{p}^0$.

Since the action of $\levcp_p$ on $\cO_\aSh^\la$ is locally analytic, it induces a $C$-linear action of the Lie algebra $\mathfrak{g}$ on $\cO_\aSh^\la$.  By extending this action $\cO_\aSh^\la$-linearly, we obtain an action of $\cO_\aSh^\la \otimes_C \mathfrak{g} = \pi_\HT^{\la, *}(\mathfrak{g}^0)$ on $\cO_\aSh^\la$.  We will focus on restrictions of this action to $\pi_\HT^{\la, *}(\mathfrak{n}^0)$ and $\pi_\HT^{\la, *}(\mathfrak{p}^0)$.  The following result was first established in the case of modular curves by the second-named author \cite{Pan:2022-laccm} and in general by Juan Esteban Rodr{\'\i}guez Camargo \cite{RodriguezCamargo:2022-lacc}.

\begin{theorem}\label{thm-n-0-zero}
    The action of $\pi_\HT^{\la, *}(\mathfrak{n}^0)$ on $\cO_\aSh^\la$ is zero.
\end{theorem}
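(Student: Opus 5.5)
The plan is to reduce to a statement about the geometric Sen operator (the canonical Higgs field) of the tower. Then we identify that Higgs field, via the Hodge--Tate period morphism, with the pullback of $\mathfrak{n}^0$, and finally use that Sen operators kill locally analytic vectors of the structure sheaf.

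\emph{Step 1 (local structure).} Fix an open subgroup $\levcp_p' \subset \levcp_p$ and a log affinoid $\cU \subset \aSh_{\levcp^p \levcp_p'}^\Tor$ admitting toric log coordinates, which is possible by Proposition \ref{prop-lsv-tor-facts}\Refenum{\ref{prop-lsv-tor-facts-loc}}. Over $\tilde{\cU} = \pi_{\levcp_p'}^{-1}(\cU)$ we compare the $\levcp_p'$-tower with the toric tower obtained by extracting $p$-power roots of the coordinates. By the geometric Sen theory of \cite{RodriguezCamargo:2022-lacc} (extending \cite{Pan:2022-laccm}), the $\levcp_p'$-locally analytic vectors in $\widehat{\cO}_\aSh(\tilde{\cU})$ are annihilated by a canonical $\widehat{\cO}$-linear Sen operator
\[
    \theta_{\mathrm{Sen}}: \widehat{\cO}_\aSh^\dualsign \otimes \Omega^{\log,1,\dualsign}(-1) \to \widehat{\cO}_\aSh \otimes_C \mathfrak{g}.
\]
The precise statement is that, for each log vector field $\partial$, the operator $\theta_{\mathrm{Sen}}(\partial) \in \widehat{\cO}_\aSh \otimes \mathfrak{g}$, acting through the derived $\mathfrak{g}$-action, kills $\cO^\la_\aSh$. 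This follows because locally analytic vectors are fixed by an open subgroup of the toric Galois group up to infinitesimal action, and the infinitesimal Galois action is zero on functions on the finite-level space. So the content of the theorem reduces to computing the image of $\theta_{\mathrm{Sen}}$.

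\emph{Step 2 (identify the image with $\pi_\HT^{\la,*}\mathfrak{n}^0$).} This is the main obstacle. For $\rV \in \Rep_{\bQ_p}(\Grp{G}^c)$, the Sen operator of $\proketSh{\rV}\otimes\widehat{\cO}$ equals the geometric Higgs field attached to the de Rham local system. By Faltings' $p$-adic Simpson correspondence (the relative Hodge--Tate sequence), that Higgs field is the $\cO$-linear graded Kodaira--Spencer map $\gr^j \cV^\canext \to \gr^{j-1}\cV^\canext \otimes \Omega^{\log,1}(-1)$, using the filtration, connection and Hodge--Tate filtration from the proof of Theorem \ref{thm-HT-mor}. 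Griffiths transversality, together with the Tannakian description, shows that this operator lies in the Lie algebra of the unipotent radical of the parabolic stabilizing the Hodge--Tate filtration. By construction of $\pi_\HT^\Tor$, that stabilizer is the pullback of $\mathfrak{p}^0$, so the image lies in $\pi_\HT^{*}\mathfrak{n}^0$. Moreover, Kodaira--Spencer for automorphic bundles is an isomorphism $\Omega^{\log,1,\dualsign} \cong \GrSh{\Lie\Grp{N}}$ by Remark \ref{rem-can-ext-log-diff} and its dual. Hence $\theta_{\mathrm{Sen}}$ surjects onto $\pi_\HT^{\la,*}(\mathfrak{n}^0)$, once we use Theorem \ref{thm-HT-mor-la} to descend the identification to $\cO^\la_\aSh$. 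This step needs the compatibility of \cite{Diao/Lan/Liu/Zhu:2023-lrhrv} with boundary (unipotent monodromy), so that everything works on the toroidal compactification.

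\emph{Step 3.} Combining these gives the result. Every local section of $\pi_\HT^{\la,*}(\mathfrak{n}^0)$ is an $\cO^\la$-linear combination of the $\theta_{\mathrm{Sen}}(\partial)$, and these annihilate $\cO^\la_\aSh$ by Step 1. Near the boundary, the toric charts give log coordinates, and the same argument runs with $\Omega^{\log}$. I expect Step 2, namely matching the Sen operator with the Kodaira--Spencer map and keeping Tate twists and $\iota$ consistent, to be the hardest part; there I would follow \cite[\aSec 5--6]{RodriguezCamargo:2022-lacc} directly.
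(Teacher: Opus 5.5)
Your proposal is correct and takes the same approach as the paper. The paper proves this statement by citing \cite[Cor.~6.2.13]{RodriguezCamargo:2022-lacc} with your outline: the geometric Sen operators of the tower annihilate $\cO_\aSh^\la$, and computing them through the automorphic $p$-adic variations of Hodge structures (identified with the complex ones by \cite{Diao/Lan/Liu/Zhu:2023-lrhrv}) shows they are exactly the action of $\pi_\HT^{\la, *}(\mathfrak{n}^0)$, so your Step~2 (strict shift of the Hodge--Tate filtration plus the Kodaira--Spencer isomorphism) spells out what that citation supplies, and in Step~3 it is enough that the $\theta_{\mathrm{Sen}}(\partial)$ span $\pi_\HT^{*}(\mathfrak{n}^0)$ over $\widehat{\cO}_\aSh$.
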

\begin{proof}
    This is \cite[\aCor 6.2.13]{RodriguezCamargo:2022-lacc}.  The basic idea is that, by the general result in geometric Sen theory, the locally analytic vectors $\cO_\aSh^\la$ are annihilated by the geometric Sen operators of the tower $\{ \aSh_{\levcp^p \levcp_p'}^\Tor \}_{\levcp_p' \subset \levcp_p}$ of Shimura varieties, which can be computed via the $p$-adic variations of Hodge structures associated with automorphic local systems.  The main results of \cite{Diao/Lan/Liu/Zhu:2023-lrhrv} identify such $p$-adic variations of Hodge structures with the tautological complex variations of Hodge structures over Shimura varieties, and from this identification, it follows that the geometric Sen operators are essentially given by the actions of $\pi_\HT^{\la, *}(\mathfrak{n}^0)$.
\end{proof}

Next, we relate $\cO_\aSh^\la$ to the constructions in Section \ref{sec-HT-mor}.  Recall that the tower $\{ \aSh_{\levcp^p \levcp_p'} \}_{\levcp_p' \subset \levcp_p}$ defines a $\widetilde{\levcp_p}$-torsor in $\aSh_{\levcp, \proet}$.  Let $\sC^\la(\widetilde{\levcp_p}, \bQ_p)$ denote the LB space of $\bQ_p$-valued locally analytic functions on the $p$-adic Lie group $\widetilde{\levcp_p}$.  It is a representation of $\widetilde{\levcp_p}$ by the left translation action, and can be written as an inductive limit of unitary $p$-adic Banach space representations $\sC^\la(\widetilde{\levcp_p}, \bQ_p) = \varinjlim_n \sC_n$.  This can be seen by taking a normal uniform pro-$p$ subgroup $\levcp_0$ of finite index in $\widetilde{\levcp_p}$, which exists by \cite[\aCor 8.34]{Dixon/DuSautoy/Mann/Segal:1999-APG-2}; and by considering $\sC^\la(\widetilde{\levcp_p})$ as the direct limit of $\levcp_0^{p^n}$-analytic functions on $\widetilde{\levcp_p}$.  By applying Construction \ref{constr-proket-ls} to each $\sC_n$, and by taking direct limit, we obtain a pro-Kummer \'etale local system
\[
    \proketSh{\sC^\la(\widetilde{\levcp_p}, \bQ_p)} = \varinjlim_n \proketSh{\sC_n}
\]
on $\aSh_\levcp^\Tor$, which is independent of the choice of $\sC_n$.  Accordingly, consider
\[
    \proketSh{\sC^\la(\widetilde{\levcp_p}, \bQ_p)} \ho_{\bQ_p} \widehat{\cO}_{\aSh_{\levcp, \proket}^\Tor} := \varinjlim_n \bigl(\proketSh{\sC_n} \ho_{\bQ_p} \widehat{\cO}_{\aSh_{\levcp, \proket}^\Tor}\bigr),
\]
where the completed tensor product means $p$-adically completed tensor product.  Denote by $\eta_{\levcp_p}: \aSh_{\levcp, \proket}^\Tor \to \aSh_\levcp^\Tor$ the natural projection morphism of sites.

\begin{theorem}\label{thm-O-la-prim-comp}
    There is a natural quasi-isomorphism
    \[
        R \eta_{\levcp_p, *}\bigl(\proketSh{\sC^\la(\widetilde{\levcp_p}, \bQ_p)} \ho_{\bQ_p} \widehat{\cO}_{\aSh_{\levcp, \proket}^\Tor}\bigr) \cong \pi_{\levcp_p, *}^\la (\cO_\aSh^\la)
    \]
    of sheaves of $\cO_{\aSh_\levcp^\Tor}$-modules on $\aSh_\levcp^\Tor$.
\end{theorem}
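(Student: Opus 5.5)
The strategy is to compute both sides locally on a basis of $\aSh_\levcp^\Tor$ and compare them through the standard ``primitive comparison plus locally analytic vectors'' mechanism. Fix an affinoid $\cU \subset \aSh_\levcp^\Tor$; after shrinking we may assume its preimage $\tilde{\cU} = \pi_{\levcp_p}^{-1}(\cU)$ is log affinoid perfectoid in $\aSh_{\levcp, \proket}^\Tor$. Equivalently, it is a $\widetilde{\levcp_p}$-torsor over $\cU$ whose completed structure ring $B := \widehat{\cO}_\aSh(\tilde{\cU})$ is the ring of functions on an affinoid perfectoid, at least almost. In general this perfectoidness should be replaced by the statement that, after passing to a further pro-Kummer \'etale perfectoid cover, the resulting torsor is handled by Hochschild--Serre. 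The task is to identify $R\Gamma\bigl(\cU, R\eta_{\levcp_p,*}(\cdots)\bigr)$ with $\cO_\aSh^\la(\tilde{\cU})$ in degree $0$.

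\textbf{Step 1.} Since $\proketSh{\sC_n}$ becomes the constant sheaf $\sC_n$ on $\tilde{\cU}$, the Cartan--Leray (Hochschild--Serre) spectral sequence for the $\widetilde{\levcp_p}$-torsor $\tilde{\cU} \to \cU$ gives
\[
    R\Gamma(\cU_\proket, \proketSh{\sC_n} \ho_{\bQ_p} \widehat{\cO}) \cong R\Gamma_{\mathrm{cont}}\bigl(\widetilde{\levcp_p}, \sC_n \ho_{\bQ_p} R\Gamma(\tilde{\cU}, \widehat{\cO})\bigr).
\]
To get this we use the almost vanishing of $H^{>0}(\tilde{\cU}, \widehat{\cO}^+)$ on log affinoid perfectoids. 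The input is \cite[Prop.\ 5.3.13 and Thm.\ 5.4.3]{Diao/Lan/Liu/Zhu:2023-lasfr} together with Lemma \ref{lem-proket-ls} to pass from the mod $p^n$ statements to the completed ones. This yields $R\Gamma_{\mathrm{cont}}(\widetilde{\levcp_p}, \sC_n \ho B)$.

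\textbf{Step 2.} Taking the colimit over $n$ commutes with continuous cohomology. This is where the compactness of $\widetilde{\levcp_p}$ and the fact that the transition maps $\sC_n \to \sC_{n+1}$ are compact enter: the colimit of the continuous cohomology is the cohomology of the LB-space. We then invoke the theorem of Lazard--Tamme/Rodr\'iguez Camargo--Rodrigues Jacinto, which states that for a Banach representation $B$ one has
\[
    R\Gamma_{\mathrm{cont}}(\widetilde{\levcp_p}, \sC^\la(\widetilde{\levcp_p}, \bQ_p) \ho B) \cong B^{R\la},
\]
the derived locally analytic vectors. This reduces the theorem to the vanishing of the higher locally analytic vectors, $H^i(\widetilde{\levcp_p}, \sC^\la \ho B) = 0$ for $i > 0$, with degree $0$ recovering $B^\la = \cO_\aSh^\la(\tilde{\cU})$ by definition (Definition \ref{def-HT-mor-la}).

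\textbf{Step 3.} This is the main obstacle: showing that $B$ has no higher locally analytic vectors. I would follow \cite{RodriguezCamargo:2022-lacc}, using the structure of $B$ as a ``Sen-theoretic'' perfectoid torsor. Locally, $\tilde{\cU}$ is obtained from a toric chart, and the Hodge--Tate period map of Theorem \ref{thm-HT-mor} gives coordinates. Schematically, $B$ is then a completed tensor product of a relatively locally analytic part (generated over $\cO(\cU)$ by pullbacks of $\cO_\Fl$, which are locally analytic by Theorem \ref{thm-HT-mor-la}) and a decompletion. The geometric Sen theory input (Theorem \ref{thm-n-0-zero}) identifies $B$, via the Sen operator, with the locally analytic vectors in the sense of the Tate--Sen axioms, and the Tate--Sen formalism of Berger--Colmez (with Kummer \'etale charts at the boundary) gives $R^i\la = 0$ for $i > 0$.

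Finally, compatibility of the identifications with restriction maps glues the local statements into the asserted quasi-isomorphism of $\cO_{\aSh_\levcp^\Tor}$-modules.
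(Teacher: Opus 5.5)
\textbf{There is a genuine gap.} Steps 1--2 do not prove anything yet, and Step 3, which carries the whole content, has no working argument.

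\textbf{Step 1 needs perfectoidness, and your fallback does not supply it.} Even granting perfectoidness, Steps 1--2 only show that the theorem is \emph{equivalent} to $R^i\la(B)=0$ for $i>0$. Step 1 itself needs $R\Gamma(\tilde{\cU},\widehat{\cO})\cong B$ in degree $0$, i.e.\ that the (log) tower is affinoid perfectoid over $\cU$. The paper deliberately does not assume this for general Shimura varieties (Remark \ref{rem-Sh-infty}), and exceptional types are allowed. Your fallback does not repair this. Take a further perfectoid cover, say the fibre product with a perfectoid toric tower with group $\Gamma\cong\bZ_p^d$ over a log toric chart. Hochschild--Serre then gives $R\Gamma(\widetilde{\levcp_p}\times\Gamma,\ \sC^\la\ho\widehat{\cO}(\tilde{\cU}_\infty))$. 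Identifying the inner $R\Gamma(\Gamma,\widehat{\cO}(\tilde{\cU}_\infty))$ with $B$ is exactly the perfectoidness you wanted to avoid. For instance, for the trivial tower this inner cohomology is $\Omega^i(\cU)(-i)$ in degree $i$, not just $\cO(\cU)$.

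\textbf{Step 3 is missing the idea that forces concentration in degree $0$.} Theorem \ref{thm-n-0-zero} cannot provide it: it is a statement about degree-$0$ locally analytic vectors. The trivial-tower example shows the conclusion is false without some nondegeneracy property of the tower. What is needed is that the geometric Sen operators of the Shimura tower span the full rank-$d$ bundle $\pi_\HT^*\mathfrak{n}^0$; equivalently, the Sen map on the (log) tangent directions is injective.

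The Berger--Colmez Tate--Sen formalism also does not apply as stated. It needs normalized traces along an abelian, $\bZ_p^d$-type tower. You do not say which subgroup of the non-abelian $\widetilde{\levcp_p}$ would carry them; in Pan's modular-curve argument they came from explicit coordinates that are unavailable in general.

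The argument the paper cites, \cite[(6.23)]{RodriguezCamargo:2022-lacc}, works differently:
\begin{enumerate}
\item It decompletes along the auxiliary toric tower $\Gamma$, where Tate--Sen does apply.
\item This turns $R\Gamma(\Gamma,-)$ into a Koszul complex for $\Lie\Gamma$.
\item The geometric Sen operator rewrites that $\Lie\Gamma$-action through $\pi_\HT^*\mathfrak{n}^0$.
\end{enumerate}
The result computes the left-hand side by an explicit log Higgs complex. The full-rank property of the Sen operator makes this complex exact in positive degrees, which gives the concentration in degree $0$.
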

\begin{proof}
    This is \cite[(6.23)]{RodriguezCamargo:2022-lacc}.  Essentially, one needs to show that the left-hand side is concentrated in degree zero.  The key point is that $H^*$ of the left-hand side can be calculated via an explicit log Higgs complex involving geometric Sen operators.
\end{proof}

\begin{corollary}\label{cor-kv-Sh-infty}
    Let $\rW \in \Rep_C(\Grp{M}^c)$ be one-dimensional and of positive parallel weight, as in Definition \ref{def-pos} and Remark \ref{rem-def-pos}.  Consider the associated $\GrSh{\rW}^\canext$ over $\Sh_{\levcp, C}^\Tor$, as in Remark \ref{rem-Sh-var-aut-bdl-can-ext}, with analytification $\cL$ over $\aSh_\levcp^\Tor$.  Then we have
    \[
        R\Gamma\bigl(\aSh_{\levcp^p}^\Tor, \pi_{\levcp_p}^{\la, *}(\cL^{-1})\bigr) \in D^{\geq d}(C),
    \]
    where $d = \dim_\bC(\Shdom)$.
\end{corollary}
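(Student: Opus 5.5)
The plan is to combine three earlier results: the Kummer-\'etale Kodaira vanishing of Corollary~\ref{cor-kv-proket}, the ampleness statement of Proposition~\ref{prop-lsv-semi-ample}, and the comparison Theorem~\ref{thm-O-la-prim-comp}.

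\textbf{Step 1: identify the cohomology on the infinite-level space.}
Since $\pi_{\levcp_p}^{\la,*}(\cL^{-1}) = \cO_\aSh^\la \otimes_{(\pi_{\levcp_p}^\la)^{-1}\cO_{\aSh_\levcp^\Tor}} (\pi_{\levcp_p}^\la)^{-1}\cL^{-1}$ and $\cL^{-1}$ is locally free, the projection formula gives
\[
    R\Gamma\bigl(\aSh_{\levcp^p}^\Tor, \pi_{\levcp_p}^{\la,*}(\cL^{-1})\bigr) \cong R\Gamma\bigl(\aSh_\levcp^\Tor, R\pi_{\levcp_p,*}^{\la}(\cO_\aSh^\la) \otimes \cL^{-1}\bigr).
\]
I expect $R\pi_{\levcp_p,*}^\la \cO_\aSh^\la = \pi_{\levcp_p,*}^\la\cO_\aSh^\la$. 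The reason is that $\aSh_{\levcp^p}^\Tor$ is an inverse limit of spaces over $\aSh_\levcp^\Tor$, and this should be extractable from the proof in \cite{RodriguezCamargo:2022-lacc}: on the basis of opens $\pi_{\levcp_p'}^{-1}(\cU)$, the higher cohomology of $\cO^\la_\aSh$ vanishes.

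Theorem~\ref{thm-O-la-prim-comp} then identifies this sheaf with $R\eta_{\levcp_p,*}\bigl(\proketSh{\sC^\la(\widetilde{\levcp_p},\bQ_p)} \ho_{\bQ_p} \widehat{\cO}\bigr)$. Applying the projection formula once more, the left-hand side becomes
\[
    R\Gamma\bigl(\aSh_{\levcp,\proket}^\Tor,\ \varinjlim_n \proketSh{\sC_n} \ho_{\bQ_p} \nu_\proket^*\cL^{-1}\bigr).
\]

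\textbf{Step 2: commute cohomology with the colimit.}
Because $\aSh_\levcp^\Tor$ is qcqs, cohomology commutes with the filtered colimit over $n$ (after inverting $p$, the colimit being over a countable system of Banach representations). This uses the coherent/qcqs nature of the pro-Kummer \'etale site, via \cite[Tag 0739]{Stacks-Project}. It therefore suffices to show, for each unitary Banach representation $\sC_n$ of $\widetilde{\levcp_p}$, that
\[
    R\Gamma\bigl(\aSh_{\levcp,\proket}^\Tor, \proketSh{\sC_n}\ho_{\bQ_p}\nu_\proket^*\cL^{-1}\bigr) \in D^{\ge d}(C).
\]

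\textbf{Step 3: apply Corollary~\ref{cor-kv-proket}.}
The inputs are as follows. Take $X_C = \Sh^\Tor_{\levcp,C}$, which is smooth projective since $\Sigma$ is smooth projective. The boundary $D$ is a normal crossings divisor with complement $U = \Sh_{\levcp,C}$. The $\widetilde{\levcp_p}$-torsor is given by the tower $\{\Sh_{\levcp^p\levcp_p',C}\}$, and everything is defined over $\ReFl_v$.

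Condition~\eqref{eq-thm-kv-cond} for $L_C = \GrSh{\rW}^\canext$ is exactly Proposition~\ref{prop-lsv-semi-ample}: a power of $L_C$ is pulled back from an ample bundle on $\Sh^\Min$, the resulting map factors through the minimal compactification, and it is an immersion on $U$. That proposition transfers to the $C$-setting via $\iota$, with $\GrSh{\rW}^\canext$ for $\rW \in \Rep_C(\Grp{M}^c)$. Note also that $\dim X_C = d$.

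\textbf{Main obstacle.}
The delicate point is Step 1, specifically:
\begin{itemize}
    \item justifying the vanishing of $R^i\pi^\la_{\levcp_p,*}$ for $i>0$, and
    \item making precise that the completed tensor with the LB-space $\sC^\la$ commutes with cohomology.
\end{itemize}
I would handle both by appealing to the local acyclicity statements underlying \cite[(6.23)]{RodriguezCamargo:2022-lacc}, which are applied on the basis of affinoid perfectoid-type opens.
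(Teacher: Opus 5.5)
Your Steps 2 and 3 are exactly how the paper finishes. After Theorem~\ref{thm-O-la-prim-comp} and the projection formula, it applies Corollary~\ref{cor-kv-proket} with $X_C = \Sh_{\levcp, C}^\Tor$, $U = \Sh_{\levcp, C}$, $L_C = \GrSh{\rW}^\canext$ and $\rV = \sC_n$. It checks \eqref{eq-thm-kv-cond} via Proposition~\ref{prop-lsv-semi-ample}, and passes to the colimit over $n$ using properness of $\aSh_\levcp^\Tor$.

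The gap is in Step 1. There you need $R\pi^\la_{\levcp_p,*}\cO^\la_\aSh \cong \pi^\la_{\levcp_p,*}\cO^\la_\aSh$, and nothing in the paper provides it. Theorem~\ref{thm-O-la-prim-comp} is a statement about the \emph{underived} pushforward: its content is that $R\eta_{\levcp_p,*}$ is concentrated in degree $0$. It says nothing about higher direct images from the infinite-level space. What you would actually need is $H^{>0}(\pi_{\levcp_p}^{-1}(\cU), \cO^\la_\aSh) = 0$ for $\cU$ running through a basis of the target $\aSh^\Tor_\levcp$. That is an additional local acyclicity statement for $\cO^\la_\aSh$, which you only expect to extract from \cite{RodriguezCamargo:2022-lacc}. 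As written, Step 1 rests on an unproved assertion.

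The missing idea is that no derived pushforward from infinite level is needed. The space $\aSh^\Tor_{\levcp^p}$ is the inverse limit of the spectral spaces $|\aSh^\Tor_{\levcp^p \levcp'_p}|$. So for any sheaf $\cF$ on it, \cite[Tags 0EXJ and 09YP]{Stacks-Project} give $\cF \cong \varinjlim_{\levcp'_p} \pi_{\levcp'_p}^{-1} \pi_{\levcp'_p, *} \cF$, and hence
\[
    R\Gamma(\aSh^\Tor_{\levcp^p}, \cF) \cong \varinjlim_{\levcp'_p} R\Gamma(\aSh^\Tor_{\levcp^p \levcp'_p}, \pi_{\levcp'_p, *} \cF).
\]
Apply this to $\cF = \pi^{\la, *}_{\levcp_p}(\cL^{-1})$. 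By Remark~\ref{rem-can-ext-funct}, $\cF$ is also the pullback of the automorphic line bundle at level $\levcp'_p$, so each term is the same problem at level $\levcp'_p$. As in the paper, one treats $\levcp'_p = \levcp_p$. There only the underived sheaf $\pi^\la_{\levcp_p, *} \pi^{\la, *}_{\levcp_p}(\cL^{-1}) \cong \pi^\la_{\levcp_p, *}(\cO^\la_\aSh) \otimes \cL^{-1}$ appears. This is exactly what Theorem~\ref{thm-O-la-prim-comp} identifies with $R\eta_{\levcp_p, *}\bigl(\proketSh{\sC^\la(\widetilde{\levcp_p}, \bQ_p)} \ho_{\bQ_p} \widehat{\cO}_{\aSh_{\levcp, \proket}^\Tor}\bigr) \otimes \cL^{-1}$. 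A filtered colimit of objects of $D^{\geq d}(C)$ stays in $D^{\geq d}(C)$, so your Steps 2 and 3 then complete the argument.
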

\begin{proof}
    Since $\aSh_{\levcp_p}^\Tor = \varprojlim_{\levcp_p' \subset \levcp_p} |\aSh_{\levcp^p \levcp_p'}^\Tor|$, we can apply \cite[\href{https://stacks.math.columbia.edu/tag/0EXJ}{Tag 0EXJ}]{Stacks-Project} and write $\pi_{\levcp_p}^{\la, *}(\cL^{-1})$ as the colimit of the pullback sheaves of $\pi_{\levcp'_p, *}^\la \pi_{\levcp_p}^{\la, *}(\cL^{-1})$, for $\levcp'_p \subset \levcp_p$ open.  Hence, $R\Gamma\bigl(\aSh_{\levcp^p}^\Tor, \pi_{\levcp_p}^{\la, *}(\cL^{-1})\bigr)$ is the colimit of $R\Gamma\bigl(\aSh_{\levcp^p \levcp'_p}^\Tor, \pi_{\levcp_p', *}^\la \pi_{\levcp_p}^{\la, *}(\cL^{-1})\bigr)$, by \cite[\href{https://stacks.math.columbia.edu/tag/09YP}{Tag 09YP}]{Stacks-Project}; and it suffices to show that, for any $\levcp'_p$,
    \[
        R\Gamma\bigl(\aSh_{\levcp^p \levcp'_p}^\Tor, \pi_{\levcp_p', *}^\la \pi_{\levcp_p}^{\la, *}(\cL^{-1})\bigr) \in D^{\geq d}(C).
    \]
    In fact, we may assume that $\levcp'_p = \levcp_p$, because $\pi_{\levcp_p}^{\la, *}(\cL^{-1})$ is independent of the level $\levcp_p$, by Remark \ref{rem-can-ext-funct}.  By Theorem \ref{thm-O-la-prim-comp} and by applying the projection formulas twice, we obtain
    \[
    \begin{split}
        & R\Gamma\bigl(\aSh_{\levcp}^\Tor, \pi_{\levcp_p,*}^{\la}\pi_{\levcp_p}^{\la, *}(\cL^{-1})\bigr)\cong R\Gamma\bigl(\aSh_\levcp^\Tor, \pi_{\levcp_p, *}^\la(\cO_\aSh^\la) \otimes_{\cO_{\aSh_\levcp^\Tor}} \cL^{-1}\bigr) \\
        & \cong R\Gamma\bigl(\aSh_\levcp^\Tor, R \eta_{\levcp_p, *}(\proketSh{\sC^\la(\widetilde{\levcp_p}, \bQ_p)} \ho_{\bQ_p} \widehat{\cO}_{\aSh_{\levcp, \proket}^\Tor}) \otimes_{\cO_{\aSh_\levcp^\Tor}} \cL^{-1}\bigr) \\
        & \cong R\Gamma\bigl(\aSh_{\levcp, \proket}^\Tor, \proketSh{\sC^\la(\widetilde{\levcp_p}, \bQ_p)} \ho_{\bQ_p} \eta_{\levcp_p}^* \cL^{-1}\bigr).
    \end{split}
    \]
    Thus, this corollary follows, by applying Corollary \ref{cor-kv-proket} with $X_C = \Sh_{\levcp, C}^\Tor$, $L_C = \GrSh{\rW}^\canext$, $U = \Sh_{\levcp, C}$, and $\rV = \sC_n$, where the condition \Refeq{\ref{eq-thm-kv-cond}} in Theorem \ref{thm-kv} and Corollary \ref{cor-kv-proket} are satisfied because of Proposition \ref{prop-lsv-semi-ample}; and by taking direct limit over $n$, which is applicable because $\aSh_\levcp^\Tor$ is proper over $C$.
\end{proof}

The following summarizes what we have learned so far from studying both the complex and $p$-adic geometry of Shimura varieties:
\begin{theorem}\label{thm-geom-summary}
    Let $\pi_\HT^\la$ be as in Theorem \ref{thm-HT-mor-la}, and let
    \begin{equation}\label{eq-thm-geom-summary}
        \cO^\la := R \pi_{\HT, *}^\la(\cO_\aSh^\la)
    \end{equation}
    in $D^b(\cO_\Fl)$, the bounded derived category of sheaves of $\cO_\Fl$-modules on $\Fl$.  The Lie algebra $\mathfrak{g}$ acts naturally on $\cO^\la$ when the latter is viewed as an object in the bounded derived category of sheaves of abelian groups on $\Fl$, which is compatible with the $\cO_\Fl$-module structure; \ie, we have $l \cdot (f s) = l(f) s + f(l \cdot s)$, for all $l \in \mathfrak{g}$, $f \in \cO_\Fl$, and $s \in \cO^\la$.  This action extends $\cO_\Fl$-linearly to an action of $\mathfrak{g}^0$ on $\cO^\la$.  Moreover, we have the following:
    \begin{enumerate}
        \item\label{thm-geom-summary-1} The action of $\mathfrak{n}^0$ on $\cO^\la$ \Pth{defined by restricting that of $\mathfrak{g}^0$} is zero.

        \item\label{thm-geom-summary-2} Let $\rW \in \Rep_C(\Grp{M}^c)$ be one-dimensional and of \emph{positive parallel weight}, as in Definition \ref{def-pos} and Remark \ref{rem-def-pos}, with associated $\Grp{G}(C)$-equivariant line bundle $\cL_\Fl := \cW_\Fl$ over $\Fl$.  Then we have
            \[
                H^{< d}(\Fl, \cO^\la \otimes_{\cO_\Fl}^\bL \cL_\Fl^{-1}) = 0.
            \]
    \end{enumerate}
    From now on, we will drop the superscript \Qtn{$\bL$} in the notation of derived tensor product of $\cO^\la$ and vector bundles on $\Fl$.
\end{theorem}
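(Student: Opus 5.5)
The plan is to assemble Theorem \ref{thm-geom-summary} from the results of Section \ref{sec-geom-Sen}, using only formal manipulations with the Hodge--Tate period morphism $\pi_\HT^\la$. First, the $\mathfrak{g}$-action on $\cO_\aSh^\la$ comes from differentiating the locally analytic $\levcp_p$-action. Since $\pi_\HT^\la$ is $\levcp_p$-equivariant by Theorem \ref{thm-HT-mor-la}, this action is compatible with the derivation action of $\mathfrak{g}$ on $\cO_\Fl$ through $(\pi_\HT^\la)^\#$. So $\mathfrak{g}$ acts on $\cO_\aSh^\la$ by derivations satisfying $l(fs) = l(f)s + f\,l(s)$ for $f \in (\pi_\HT^\la)^{-1}\cO_\Fl$.

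To pass to $R\pi_{\HT,*}^\la$, I would compute with a functorial resolution, for instance Godement or Čech on a $\levcp_p'$-stable affinoid cover of $\Fl$. Each $l \in \mathfrak{g}$ acts on such a resolution as a morphism of complexes of abelian sheaves satisfying the Leibniz rule. This gives the $\mathfrak{g}$-action on $\cO^\la$ in the derived category of abelian sheaves, and the $\cO_\Fl$-linear extension to $\mathfrak{g}^0$ is then formal.

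Part \Refenum{\ref{thm-geom-summary-1}} follows directly from Theorem \ref{thm-n-0-zero}. The action of $\mathfrak{n}^0$ on $\cO^\la$ is induced by the $\cO_\aSh^\la$-linear action of $\pi_\HT^{\la,*}(\mathfrak{n}^0)$ on $\cO_\aSh^\la$, which is zero. By the projection formula, $\mathfrak{n}^0 \otimes_{\cO_\Fl} R\pi_{\HT,*}^\la\cO_\aSh^\la \cong R\pi^\la_{\HT,*}(\pi_\HT^{\la,*}\mathfrak{n}^0)$ acts through this zero map, using that $\mathfrak{n}^0$ is locally free.

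For part \Refenum{\ref{thm-geom-summary-2}}, I would use the projection formula for the locally free $\cL_\Fl$:
\[
    \cO^\la \otimes_{\cO_\Fl} \cL_\Fl^{-1} \cong R\pi_{\HT,*}^\la\bigl(\pi_\HT^{\la,*}\cL_\Fl^{-1}\bigr).
\]
By Theorem \ref{thm-HT-mor-la}, $\pi_\HT^{\la,*}\cL_\Fl^{-1} \cong \pi_{\levcp_p}^{\la,*}(\cL^{-1})$, where $\cL$ is the analytification of $\GrSh{\rW}^\canext$. Taking $R\Gamma(\Fl,-)$ then gives
\[
    R\Gamma(\Fl, \cO^\la\otimes\cL_\Fl^{-1}) \cong R\Gamma\bigl(\aSh_{\levcp^p}^\Tor, \pi_{\levcp_p}^{\la,*}(\cL^{-1})\bigr),
\]
which lies in $D^{\geq d}(C)$ by Corollary \ref{cor-kv-Sh-infty}.

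The main obstacle I expect is justifying the derived projection formula and the boundedness $\cO^\la \in D^b(\cO_\Fl)$ for the ringed-site morphism $\pi_\HT^\la$ on the non-standard space $\aSh^\Tor_{\levcp^p}$. I would handle this locally: over a $\levcp_p'$-stable affinoid $\cV \subset \Fl$, the preimage is a quasi-compact open of the inverse-limit space. Cohomology there can be computed as a colimit over finite levels, via Stacks Project Tag 09YP as in the proof of Corollary \ref{cor-kv-Sh-infty}, together with Theorem \ref{thm-O-la-prim-comp}. This reduces the projection formula to finite levels, where cohomological dimension bounds give boundedness and the tensor with a line bundle commutes with pushforward, since the statement is local on $\Fl$ where $\cL_\Fl$ is trivial.
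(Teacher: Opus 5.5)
Your proposal is correct and follows the paper's proof: part (1) is read off from Theorem \ref{thm-n-0-zero}, and part (2) comes from the projection formula, the identification $\pi_\HT^{\la, *}(\cL_\Fl^{-1}) \cong \pi_{\levcp_p}^{\la, *}(\cL^{-1})$ from Theorem \ref{thm-HT-mor-la}, and Corollary \ref{cor-kv-Sh-infty}. The detour through finite levels to justify the projection formula is harmless but unnecessary: since $\cL_\Fl$ is locally free, the comparison map can be checked locally on $\Fl$, and it is trivially an isomorphism wherever $\cL_\Fl$ is trivial.
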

\begin{proof}
    Part \Refenum{\ref{thm-geom-summary-1}} follows from \Refeq{\ref{eq-thm-geom-summary}} and Theorem \ref{thm-n-0-zero}.  Also, by \Refeq{\ref{eq-thm-geom-summary}} and the projection formula, we have $R\Gamma(\Fl, \cO^\la \otimes_{\cO_\Fl} \cL_\Fl^{-1}) \cong R\Gamma\bigl(\aSh_{\levcp^p}^\Tor, \pi_\HT^{\la, *}(\cL_\Fl^{-1})\bigr)$.  Based on this, Part \Refenum{\ref{thm-geom-summary-2}} follows from Theorem \ref{thm-HT-mor-la} and Corollary \ref{cor-kv-Sh-infty}.
\end{proof}

\subsection{Horizontal action}\label{sec-hor-act}

Finally, we discuss the induced action of the Levi quotient $\mathfrak{m}^0 \cong \mathfrak{p}^0 / \mathfrak{n}^0$ on $\cO^\la$.  Note that $\mathfrak{p}^0$ annihilates $\cO_\Fl$, because it is the kernel of the anchor map of the Lie algebroid $\mathfrak{g}^0$.  Hence, the restriction of the Lie bracket of $\mathfrak{g}^0$ to $\mathfrak{p}^0$ is $\cO_\Fl$-bilinear.  Equivalently, $\mathfrak{p}^0$ is the $\Grp{G}(C)$-equivariant bundle associated with the $\Grp{P}_C$-representation $\Lie \Grp{P}_C$, and this Lie bracket on $\mathfrak{p}^0$ agrees with the one coming from the Lie bracket on $\Lie \Grp{P}_C$ \Pth{viewed as a map $\Ex^2 \Lie \Grp{P}_C \to \Lie \Grp{P}_C$}.  Let $U(\mathfrak{m}^0)$ be the universal enveloping algebra of $\mathfrak{m}^0$ over $\cO_\Fl$.  Equivalently, $U(\mathfrak{m}^0)$ is the $\Grp{G}(C)$-equivariant bundle associated with the universal enveloping algebra $U(\mathfrak{m}) \in \Rep_C(\Grp{M}_C)$, where $\mathfrak{m} := \Lie \Grp{M}_C$.  The action of $\mathfrak{m}^0$ on $\cO^\la$ extends naturally to $U(\mathfrak{m}^0)$.  In particular, we get an action on $\cO^\la$ of the center
\[
    Z(U(\mathfrak{m})) \cong U(\mathfrak{m})^{\Grp{M}_C} \subset H^0(\Fl, U(\mathfrak{m}^0)).
\]
of $U(\mathfrak{m})$.  On the other hand, since the Lie algebra $\mathfrak{g}$ acts on $\cO^\la$, the center of the universal enveloping algebra $Z(U(\mathfrak{g}))$ also acts on $\cO^\la$.  To describe the relation between these two actions, we recall the Harish-Chandra homomorphism.  \Pth{We also take this opportunity to explain our choices of conventions.}  Fix a Cartan subalgebra $\mathfrak{h}$ of $\Lie \Grp{G}_C$ inside of $\Lie \Grp{P}_C$, and let $\WG_{\mathfrak{g}}$ denote the Weyl group of $\mathfrak{g}$ with respect to $\mathfrak{h}$.  We choose compatible positive roots of $\mathfrak{g}$ and $\mathfrak{m}$, and denote by $\hsum$ and $\hsum_{\mathfrak{m}}$ the usual half-sums of positive roots of $\mathfrak{g}$ and $\mathfrak{m}$, respectively.  There is an isomorphism of $C$-algebras
\begin{equation}\label{eq-HC-hom}
    \HC: Z(U(\mathfrak{g})) \Mi U(\mathfrak{h})^{\WG_{\mathfrak{g}}},
\end{equation}
called the \emph{Harish-Chandra homomorphism}, where the action of $\WG_{\mathfrak{g}}$ on $U(\mathfrak{h})$ is centered at $0$.  It has the property that:
\begin{equation}\label{eq-HC-hom-prop}
    \parbox{0.85\textwidth}{for any irreducible representation $\rV \in \Rep_C(\Grp{G}_C)$ of highest weight $\wt$, the center $Z(U(\mathfrak{g}))$ acts on $\rV$ via $Z(U(\mathfrak{g})) \Mapn{\HC} U(\mathfrak{h})^{\WG_{\mathfrak{g}}} \to U(\mathfrak{h}) \Mapn{\wt + \hsum} C$.}
\end{equation}
Similarly, there is the Harish-Chandra isomorphism of $C$-algebras
\[
    \HC_{\mathfrak{m}}: Z(U(\mathfrak{m})) \Mi U(\mathfrak{h})^{\WG_{\mathfrak{m}}},
\]
where $\WG_{\mathfrak{m}}$ denotes the Weyl group of $\mathfrak{m}$ with respect to $\mathfrak{h}$, viewed as a subgroup of $\WG_{\mathfrak{g}}$.  Consider the representation $\det \mathfrak{n}$ of $\Grp{M}_C$, whose weight we shall denote by $\wt_{\det \mathfrak{n}}$.  Since $\det \mathfrak{n}$ is one-dimensional, by the theory of highest weights, all rational multiples of $\wt_{\det \mathfrak{n}}$, including $\frac{1}{2} \wt_{\det \mathfrak{n}}$, are $\WG_{\mathfrak{m}}$-invariant.  Let $T_{- \frac{1}{2} \wt_{\det \mathfrak{n}}}: U(\mathfrak{h}) \to U(\mathfrak{h})$ denotes the translation by $-\frac{1}{2} \wt_{\det \mathfrak{n}}$; \ie, the induced map $T_{- \frac{1}{2} \wt_{\det \mathfrak{n}}}^*: \Spec U(\mathfrak{h}) \to \Spec U(\mathfrak{h})$ is the translation by $- \frac{1}{2} \wt_{\det \mathfrak{n}}$ when restricted to points defined by $\mathfrak{h}^*$.  Then there is an injection $\HC^{\mathfrak{m}}: Z(U(\mathfrak{g})) \to Z(U(\mathfrak{m}))$ which makes the following diagram commute:
\begin{equation}\label{eq-diag-gamma-m}
    \xymatrix{ {Z(U(\mathfrak{g}))} \ar[rr]^-{\HC^{\mathfrak{m}}} \ar[d]_-\HC & & {Z(U(\mathfrak{m}))} \ar[d]^-{\HC_{\mathfrak{m}}} \\
    {U(\mathfrak{h})^{\WG_{\mathfrak{g}}}} \ar[rr]^-{T_{- \frac{1}{2} \wt_{\det \mathfrak{n}}}} & & {U(\mathfrak{h})^{\WG_{\mathfrak{m}}}} }
\end{equation}

\begin{theorem}\label{thm-comp-Z(U(g))-Z(U(m))}
    The action of $Z(U(\mathfrak{g}))$ on $\cO^\la$ factors through $Z(U(\mathfrak{m}))$ via $\HC^{\mathfrak{m}}$.
\end{theorem}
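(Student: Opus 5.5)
The plan is to reduce the statement to a purely algebraic fact about universal enveloping algebras, using only Theorem \ref{thm-geom-summary}\Refenum{\ref{thm-geom-summary-1}} (the vanishing of the $\mathfrak{n}^0$-action on $\cO^\la$). The relevant fact is the classical Harish-Chandra projection.

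First, fix a point and work fiberwise. Over $\Fl$, consider the sheaf $U(\mathfrak{g}^0) = \cO_\Fl \otimes_C U(\mathfrak{g})$ with its subsheaves $\mathfrak{n}^0$ and $\mathfrak{p}^0$. By the PBW theorem applied to the decomposition $\mathfrak{g} = \mathfrak{n}_x^{\std} \oplus \mathfrak{m}_x \oplus \mathfrak{n}_x$ at each point $x$ (with $\mathfrak{n}_x = \Lie \Grp{N}_x$), every $z \in Z(U(\mathfrak{g}))$ satisfies
\[
    z \in \mathfrak{p}_x\text{-part} \oplus U(\mathfrak{g})\,\mathfrak{n}_x .
\]
More precisely, $z - \mathrm{pr}_x(z) \in U(\mathfrak{g})\mathfrak{n}_x$ for a unique $\mathrm{pr}_x(z) \in U(\mathfrak{m}_x)$. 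This follows because $z$ commutes with the weight-$0$ element of $\Lie \hc$, so $z$ has $\hc$-weight zero, which kills the $\mathfrak{n}^{\std}$-heavy terms modulo the left ideal.

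The sign conventions deserve care here. Since $\mathfrak{n}^0$ annihilates $\cO^\la$, I would write $z$ with the $\mathfrak{n}_x$-factors on the right, so that the right ideal generated by $\mathfrak{n}$ acts by zero. The resulting projection $\mathrm{pr}: Z(U(\mathfrak{g})) \to Z(U(\mathfrak{m}))$ is the Harish-Chandra map for the parabolic with nilradical $\mathfrak{n}$. It lands in $Z(U(\mathfrak{m}))$ and corresponds under $\HC$ and $\HC_{\mathfrak{m}}$ to the translation by $\pm\frac12\wt_{\det\mathfrak{n}}$, i.e. $\rho - \rho_{\mathfrak{m}}$ up to sign. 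This is the standard computation, done by comparing with actions on Verma modules induced from $\mathfrak{p}$ (or $\mathfrak{p}^{\std}$). I would check that the sign matches $T_{-\frac12\wt_{\det\mathfrak{n}}}$ in \Refeq{\ref{eq-diag-gamma-m}} by testing on a lowest/highest weight vector. Because $\mathfrak{n}$ here is the nilradical killing the module, the relevant vectors are $\mathfrak{n}$-invariants, which are highest weight vectors for the opposite Borel. This sign check is the step I expect to be most error-prone.

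Second, globalize. The element $\mathrm{pr}_x(z)$ varies algebraically and $\Grp{G}$-equivariantly in $x$. By transporting from a base point via $\Grp{G}_C$, it defines a global section of $U(\mathfrak{m}^0) = U(\mathfrak{p}^0)/U(\mathfrak{p}^0)\mathfrak{n}^0$. This section is exactly the image of $\HC^{\mathfrak{m}}(z)$ under $Z(U(\mathfrak{m})) \cong U(\mathfrak{m})^{\Grp{M}_C} \subset H^0(\Fl, U(\mathfrak{m}^0))$, and it is $\Grp{M}$-invariance that makes it independent of choices. Likewise $z - \HC^{\mathfrak{m}}(z)$ is a global section of $U(\mathfrak{g}^0)\mathfrak{n}^0$. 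The action of $U(\mathfrak{g}^0)$ on $\cO^\la$ extends the $\mathfrak{g}^0$-action compatibly with the Lie algebroid structure, so the left ideal $U(\mathfrak{g}^0)\mathfrak{n}^0$ acts by zero by Theorem \ref{thm-geom-summary}\Refenum{\ref{thm-geom-summary-1}}. Hence $z$ and $\HC^{\mathfrak{m}}(z)$ act identically on $\cO^\la$.

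The remaining technical point is that $\cO^\la$ is only an object of the derived category. To handle this, I would work instead with $\cO_\aSh^\la$ on $\aSh^\Tor_{\levcp^p}$, where the actions are genuine actions on a sheaf and $\pi_\HT^{\la,*}(\mathfrak{n}^0)$ acts by zero by Theorem \ref{thm-n-0-zero}. I would then apply $R\pi^\la_{\HT,*}$ functorially, which transports the equality of endomorphisms to $\cO^\la$.
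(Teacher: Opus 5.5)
Your overall strategy is the paper's: a Harish-Chandra-type projection $Z(U(\mathfrak{g})) \to Z(U(\mathfrak{m}))$, globalized over $\Fl$ through the equivariant-bundle dictionary, combined with the vanishing of the $\mathfrak{n}^0$-action. There is, however, a genuine error at the point where you pass from your fiberwise statement to the sheaf $U(\mathfrak{g}^0)\mathfrak{n}^0$. The error is which one-sided ideal you use, and it is not a sign that can be adjusted at the end.

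In $U(\mathfrak{g}^0)$, multiplying a constant $u \in U(\mathfrak{g})$ on the right by a non-constant section $n = \sum_i f_i \otimes l_i$ of $\mathfrak{n}^0$ produces derivative terms $u(f_i)$. So the fiber of $U(\mathfrak{g}^0)\mathfrak{n}^0$ at $x$ is not $U(\mathfrak{g})\mathfrak{n}_x$. Since $\mathfrak{n}^0$ is a Lie ideal of the algebroid $\mathfrak{g}^0$ and kills $\cO_\Fl$, one has $U(\mathfrak{g}^0)\mathfrak{n}^0 = \mathfrak{n}^0 U(\mathfrak{g}^0) = \mathfrak{n}^0 U(\mathfrak{g})$. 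This is the equivariant subbundle attached to the $\Grp{P}_C$-stable subspace $\mathfrak{n}U(\mathfrak{g})$, and its fiber at $x$ is $\mathfrak{n}_x U(\mathfrak{g})$, with $\mathfrak{n}_x$ on the \emph{left}.

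Both $U(\mathfrak{g})\mathfrak{n}$ and $\mathfrak{n}U(\mathfrak{g})$ are complements of $U(\mathfrak{m})$ in the weight-zero part, but they give different projections of $Z(U(\mathfrak{g}))$. The projection modulo $\mathfrak{n}U(\mathfrak{g})$ is $\HC^{\mathfrak{m}}$. Indeed, $Z(U(\mathfrak{g})) \subset U(\mathfrak{h}) \oplus \mathfrak{u}U(\mathfrak{g})$ yields $T_{-\rho}\circ\HC$, and $\rho = \frac{1}{2}\lambda_{\det\mathfrak{n}} + \rho_{\mathfrak{m}}$. Your projection modulo $U(\mathfrak{g})\mathfrak{n}$ instead corresponds to $T_{+\frac{1}{2}\lambda_{\det\mathfrak{n}}}$. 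So for your $\mathrm{pr}_x$, the element $z - \mathrm{pr}(z)$ is \emph{not} a section of $U(\mathfrak{g}^0)\mathfrak{n}^0$, and the conclusion you would reach is false.

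Here is a concrete check. Take $\mathfrak{g} = \mathfrak{sl}_2$ with $\mathfrak{n} = Ce$, and let $\Omega = \frac{1}{2}h^2 + ef + fe$ be the Casimir. Then $\Omega \equiv \frac{1}{2}h^2 + h$ modulo $U(\mathfrak{g})e$, but $\Omega \equiv \frac{1}{2}h^2 - h$ modulo $eU(\mathfrak{g})$. Consider the line bundle on $\bP^1 = G/B$ whose global sections form the $(k+1)$-dimensional irreducible representation; its isotropy weight is $h \mapsto -k$. Only $\frac{1}{2}h^2 - h$ gives the correct eigenvalue $\frac{k(k+2)}{2}$.

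Your proposed sign check on $\mathfrak{n}$-invariant vectors would confirm the wrong projection. The reason is that $\cO^\la$ is annihilated by the twisted sheaf $\mathfrak{n}^0$, not by the constant subalgebra $\mathfrak{n}_x$.

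The fix is to put the $\mathfrak{n}$-factors on the left, i.e., prove $Z(U(\mathfrak{g})) \subset U(\mathfrak{m}) + \mathfrak{n}U(\mathfrak{g})$ as in the paper's lemma. It is cleanest to phrase the projection as landing in $U(\mathfrak{p})/\mathfrak{n}U(\mathfrak{p}) \cong U(\mathfrak{m})$ inside $U(\mathfrak{g})/\mathfrak{n}U(\mathfrak{g})$. This matters because $\mathfrak{m}_x$ and $\mathfrak{n}^{\std}_x$ are not determined by $x$, and $\HC^{\mathfrak{m}}(z)$ lives in the quotient $U(\mathfrak{m}^0)$ rather than in $U(\mathfrak{g}^0)$. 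Your remark about working with $\cO_\aSh^\la$ and then applying $R\pi^\la_{\HT,*}$ is fine.
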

\begin{proof}
    This result was previously obtained by Juan Esteban Rodr{\'\i}guez Camargo, and probably well known to experts.  We may choose the positive roots such that all the roots in $\mathfrak{n}$ are positive.  In this case, we have $\frac{1}{2} \wt_{\det \mathfrak{n}} = \hsum - \hsum_{\mathfrak{m}}$.

    The first step is to produce a map $Z(U(\mathfrak{g})) \to Z(U(\mathfrak{m}))$ compatible with both actions on $\cO^\la$.  Consider the quotient of $U(\mathfrak{g})$ by its right ideal $\mathfrak{n} U(\mathfrak{g})$.  Let $\mathfrak{p} := \Lie \Grp{P}_C$.  By the Poincar\'e--Birkhoff--Witt theorem, the natural homomorphism of algebras $U(\mathfrak{m}) \cong U(\mathfrak{p}) / \mathfrak{n} U(\mathfrak{p}) \to U(\mathfrak{g}) / \mathfrak{n} U(\mathfrak{g})$ is injective.  We shall view $U(\mathfrak{m})$ as a subalgebra of $U(\mathfrak{g}) / \mathfrak{n} U(\mathfrak{g})$ via this injection.

    \begin{lemma}\label{lem-ident-gamma-m}
        The image of the composition of $\Grp{P}_C$-equivariant maps
        \[
            Z(U(\mathfrak{g})) \to U(\mathfrak{g}) \to U(\mathfrak{g}) / \mathfrak{n} U(\mathfrak{g})
        \]
        lies in $U(\mathfrak{m})$.  The induced map $Z(U(\mathfrak{g})) \to U(\mathfrak{m})^{\Grp{M}_C} = Z(U(\mathfrak{m}))$ agrees with $\HC^{\mathfrak{m}}$.
    \end{lemma}
    \begin{proof}[Proof of Lemma \ref{lem-ident-gamma-m}]
        For the first part, it suffices to show that
        \[
            Z(U(\mathfrak{g})) \subset U(\mathfrak{m}) + \mathfrak{n} U(\mathfrak{g}).
        \]
        Consider the action of the Hodge cocharacter $\hc_\hd$ as in \Refeq{\ref{eq-hc}} which gives \Pth{via $\iota: \bC \Mi C$} a weight decomposition $\mathfrak{g} = \mathfrak{n}^\std \oplus \mathfrak{m} \oplus \mathfrak{n}$ \Pth{compatible with \Refeq{\ref{eq-Cartan-decomp-C}}, as explained in Section \ref{sec-Sh-var}}, with $\mathfrak{m}$ being the subspace of weight $0$.  Using this decomposition, we can see that the weight-$0$ part of $U(\mathfrak{g})$ is contained in $U(\mathfrak{m}) + \mathfrak{n} U(\mathfrak{g})$.  The desired claim then follows as $Z(U(\mathfrak{g}))$ is fixed by $\hc_\hd$.

        For the second part, we need to recall the construction of the Harish-Chandra isomorphism $\HC$.  Our choice of the positive roots defines a Borel subalgebra $\mathfrak{b}$ of $\mathfrak{g}$.  Let $\mathfrak{u} := [\mathfrak{b}, \mathfrak{b}]$ be its unipotent radical.  Then Harish-Chandra showed that
        \[
            Z(U(\mathfrak{g})) \subset U(\mathfrak{h}) + \mathfrak{u} U(\mathfrak{g}).
        \]
        Moreover, the right-hand side is a direct sum and induces a map $Z(U(\mathfrak{g})) \to U(\mathfrak{h})$ which gives $T_{- \hsum} \circ \HC$.  Similarly, the choice of positive roots defines a Borel subalgebra $\mathfrak{b}_{\mathfrak{m}} = \mathfrak{b} \cap \mathfrak{m}$ of $\mathfrak{m}$ with unipotent radical $\mathfrak{u_{\mathfrak{m}}} := [\mathfrak{b}_{\mathfrak{m}}, \mathfrak{b}_{\mathfrak{m}}] = \mathfrak{u} \cap \mathfrak{m}$.  We have
        \[
            Z(U(\mathfrak{m})) \subset U(\mathfrak{h}) + \mathfrak{u_{\mathfrak{m}}} U(\mathfrak{m}),
        \]
        which induces $T_{- \hsum_{\mathfrak{m}}} \circ \HC_{\mathfrak{m}}$ in a similar way.  By putting these together, we obtain
        \[
            Z(U(\mathfrak{g})) \subset Z(U(\mathfrak{m})) + \mathfrak{n} U(\mathfrak{g}) \subset U(\mathfrak{h}) + \mathfrak{u_{\mathfrak{m}}} U(\mathfrak{m}) + \mathfrak{n} U(\mathfrak{g}) \subset U(\mathfrak{h}) + \mathfrak{u} U(\mathfrak{g}),
        \]
        which induces $\HC': Z(U(\mathfrak{g})) \to Z(U(\mathfrak{m}))$ in the following commutative diagram:
        \[
            \xymatrix{ {Z(U(\mathfrak{g}))} \ar[rr]^-{\HC'} \ar[d]_-{T_{- \hsum} \circ \HC} & & {Z(U(\mathfrak{m}))} \ar[d]^-{T_{- \hsum_{\mathfrak{m}}} \circ \HC_{\mathfrak{m}}} \\
            {U(\mathfrak{h})} \ar@{=}[rr] & & {U(\mathfrak{h})} }
        \]
        Since $\hsum = \frac{1}{2} \wt_{\det \mathfrak{n}} + \hsum_{\mathfrak{m}}$, this induced map $\HC'$ indeed agrees with $\HC^{\mathfrak{m}}$, as desired.
    \end{proof}

    Back to the proof of Theorem \ref{thm-comp-Z(U(g))-Z(U(m))}.  Let $U(\mathfrak{g}^0) := \cO_\Fl \otimes_C U(\mathfrak{g})$.  We can define a natural $C$-algebra structure on it, which extends the algebra structures of $\cO_\Fl$ and $U(\mathfrak{g})$, and satisfies the usual relation
    \[
        l f = f l + l(f),
    \]
    for $l \in \mathfrak{g}$ and $f \in \cO_\Fl$.  Since $\mathfrak{n}^0 \subset \mathfrak{g}^0$ is a Lie ideal and annihilates $\cO_\Fl$, we see that
    \[
        U(\mathfrak{g}^0) \mathfrak{n}^0 U(\mathfrak{g}^0) = \mathfrak{n}^0 U(\mathfrak{g}^0) = \mathfrak{n}^0 U(\mathfrak{g})
    \]
    is an ideal of $U(\mathfrak{g}^0)$.  It is $\Grp{G}_C$-equivariant, and under the dictionary between $\Grp{G}(C)$-equivariant vector bundles over $\Fl$ and representations of $\Grp{P}_C$, the inclusion $\mathfrak{n}^0 U(\mathfrak{g}) \subset U(\mathfrak{g}^0)$ corresponds to $\mathfrak{n} U(\mathfrak{g}) \subset U(\mathfrak{g})$.

    From the universal property of universal enveloping algebras, we obtain a natural morphism $U(\mathfrak{p}^0) \to U(\mathfrak{g}^0)$, which is nothing but the morphism of $\Grp{G}(C)$-equivariant bundles associated with the natural map $U(\mathfrak{p}) \subset U(\mathfrak{g})$ of $\Grp{P}_C$-representations.  By Lemma \ref{lem-ident-gamma-m}, the image of
    \[
        Z(U(\mathfrak{g})) \to \cO_\Fl \otimes_C Z(U(\mathfrak{g})) \to \cO_\Fl \otimes_C U(\mathfrak{g}) \to U(\mathfrak{g^0}) / \mathfrak{n^0} U(\mathfrak{g})
    \]
    is contained in the image of the injective map
    \[
        Z(U(\mathfrak{m})) \to U(\mathfrak{m^0}) \cong U(\mathfrak{p^0}) / \mathfrak{n^0} U(\mathfrak{p^0}) \subset U(\mathfrak{g^0}) / \mathfrak{n^0} U(\mathfrak{g}),
    \]
    and the induced map $Z(U(\mathfrak{g})) \to Z(U(\mathfrak{m}))$ agrees with $\gamma^{\mathfrak{m}}$.  This proves the theorem because, by Theorem \ref{thm-geom-summary}\Refenum{\ref{thm-geom-summary-1}}, the natural action of $U(\mathfrak{g}^0)$ on $\cO^\la$ \Pth{extending that of $\mathfrak{g}^0$} factors through the quotient $U(\mathfrak{g}^0) / \mathfrak{n}^0 U(\mathfrak{g})$.
\end{proof}

\section{Translation functors}\label{sec-trans}

In this section, we explain how to deduce a vanishing result from Theorem \ref{thm-geom-summary}, by using translation functors to \Qtn{untwist $\cL_\Fl$}.  The argument is pretty standard and goes back to the first paper of Beilinson--Bernstein on localizations \cite{Beilinson/Bernstein:1981-ldgm}.

We shall proceed with the same setting as in Section \ref{sec-geom-Sen-Sh}.  In particular, we shall continue to identity algebraic objects over $\bC$ with algebraic objects over $C$ via the chosen isomorphism $\iota: \bC \Mi C$.

\subsection{Sufficiently regular weights}\label{sec-suff-reg-wt}

Recall that we have fixed a Cartan subalgebra $\mathfrak{h}$ of $\mathfrak{g} = \Lie \Grp{G}_C$ inside of $\mathfrak{p} = \Lie \Grp{P}_C$, and denoted by $\WG_{\mathfrak{g}}$ and $\WG_{\mathfrak{m}}$ the Weyl groups of $\mathfrak{g}$ and $\mathfrak{m}$, respectively, with respect to $\mathfrak{h}$.  Moreover, we have the Harish-Chandra isomorphism $\gamma: Z(U(\mathfrak{g})) \Mi U(\mathfrak{h})^{\WG_{\mathfrak{g}}}$ with respect to the natural action of $\WG_{\mathfrak{g}}$ \Pth{centered at $0 \in \mathfrak{h}$}, as in \Refeq{\ref{eq-HC-hom}}.  Based on $\gamma$, a character $Z(U(\mathfrak{g})) \to C$ \Pth{\ie, a $C$-homomorphism} will be identified with a $\WG_{\mathfrak{g}}$-orbit $[\wt]$ in the space $\mathfrak{h}^*$ of weights $\wt: \mathfrak{h} \to C$.  Similarly, based on the Harish-Chandra isomorphism $\gamma_{\mathfrak{m}}: Z(U(\mathfrak{m})) \Mi U(\mathfrak{h})^{\WG_{\mathfrak{m}}}$, a character $Z(U(\mathfrak{m})) \to C$ will be identified with a $\WG_{\mathfrak{m}}$-orbit $[\wt]_{\mathfrak{m}}$ in $\mathfrak{h}^*$.

For simplicity, let us introduce the following:
\begin{definition}\label{def-shorter-rt}
    We say a root of $\mathfrak{g}$ is a \emph{shorter root} if it is shorter than some other root in the same $C$-simple factor of $\mathfrak{g}$.
\end{definition}

\begin{remark}\label{rem-shorter-rt}
    Shorter roots only exist on $C$-simple factors of $\mathfrak{g}$ of type $B$ or $C$.
\end{remark}

\begin{definition}\label{def-non-cpt-factor}
    We say that a $\bC$-simple factor of $\Lie \Grp{G}_\bC$, or rather of its \Pth{semisimple} derived subalgebra $[\Lie \Grp{G}_\bC, \Lie \Grp{G}_\bC] \cong \Lie \Grp{G}_\bC^\scc \cong \Lie \Grp{G}_\bC^\der \cong \Lie \Grp{G}_\bC^\ad$ \Pth{see Lemma \ref{lem-comp-ad} and the paragraph preceding it}, is \emph{compact} \Pth{\resp \emph{noncompact}} if the pullback of $\Lie \Grp{N}$ to it is zero \Pth{\resp nonzero}.  Accordingly, we say that a $\bC$-simple factor of $\Grp{G}_\bC^\scc$ or $\Grp{G}_\bC^\ad$ is \emph{compact} \Pth{\resp \emph{noncompact}} if its Lie algebra is a compact \Pth{\resp noncompact} factor of $\Lie \Grp{G}_\bC$.  With any choice of $\iota: \bC \Mi C$, we will say that the corresponding $C$-simple factor of $\mathfrak{g} = \Lie \Grp{G}_C$, its derived subalgebra $\mathfrak{g}^\der = [\mathfrak{g}, \mathfrak{g}] \cong \Lie \Grp{G}_C^\scc \cong \Lie \Grp{G}_C^\ad$, $\Grp{G}_C^\scc$, or $\Grp{G}_\bC^\ad$ is \emph{$\iota$-compact} \Pth{\resp \emph{$\iota$-noncompact}}.
\end{definition}

\begin{remark}\label{rem-non-cpt-factor}
    By the definition of $\Grp{N}$ in Section \ref{sec-lsv-setup}, a $\bC$-simple factor of $\Lie \Grp{G}_\bC$, $\Lie \Grp{G}_\bC^\scc \cong \Lie \Grp{G}_\bC^\ad$, $\Grp{G}_\bC^\scc$, or $\Grp{G}_\bC^\ad$ is compact if and only if it acts trivially on $\Flalg_\bC$.  However, since $\Flalg$ has a natural model over $\ReFl \subset \bC$ \Pth{see Remark \ref{rem-refl-def}}, the notions of $\iota$-compact and $\iota$-noncompact factors of $\mathfrak{g}$ depend on the embedding $\ReFl \Em C$ induced by $\iota$ in general.  We will take advantage of this flexibility in the following.
\end{remark}

\begin{definition}\label{def-suff-reg-wt}
    Let $\iota: \bC \Mi C$ be any isomorphism.
    \begin{enumerate}
        \item\label{def-suff-reg-wt-m-iota} We say a character $Z(U(\mathfrak{m})) \to C$ or its kernel is \emph{$\hc_\hd^\iota$-sufficiently regular} if, for every weight $\wt'$ in the corresponding $\WG_{\mathfrak{m}}$-orbit $[\wt]_{\mathfrak{m}}$ and every root $\rt$ in $\mathfrak{n}$, with its coroot denoted by $\dual{\rt}$ as usual, we have
            \[
                (\wt' - \tfrac{1}{2} \wt_{\det \mathfrak{n}}, \dual{\rt}) \not\in
                \begin{cases}
                    \{ -1, -2 \}, & \text{if $\rt$ is a shorter root, as in Definition \ref{def-shorter-rt}}; \\
                    \{ -1 \}, & \text{otherwise}.
                \end{cases}
            \]

        \item\label{def-suff-reg-wt-g-iota} Similarly, we say a character $Z(U(\mathfrak{g}))\to C$ or its kernel is \emph{$\hc_\hd^\iota$-sufficiently regular} if, for every weight $\wt'$ in the corresponding $\WG_{\mathfrak{g}}$-orbit $[\wt]$ and every root $\rt$ in $\mathfrak{n}$ \Pth{or, equivalently, in all $\iota$-noncompact $C$-simple factors of $\mathfrak{g}$---see Remark \ref{rem-root-orbit-n} below}, we have
            \[
                (\wt', \dual{\rt}) \not\in
                \begin{cases}
                    \{ -1, -2 \}, & \text{if $\rt$ is a shorter root, as in Definition \ref{def-shorter-rt}}; \\
                    \{ -1 \}, & \text{otherwise}.
                \end{cases}
            \]

       \item\label{def-suff-reg-wt-g} We say a character $Z(U(\Lie \Grp{G})) \to C$ or its kernel is \emph{$\hc_\hd$-sufficiently regular} if its $C$-linear extension $Z(U(\Lie \Grp{G}))\otimes_\bQ C \cong Z(U(\mathfrak{g})) \to C$ is $\hc_\hd^{\iota'}$-sufficiently regular for some isomorphism $\iota': \bC \Mi C$.
    \end{enumerate}
\end{definition}

\begin{remark}\label{rem-root-orbit-n}
    It will follow from our explicit descriptions in Sections \ref{sec-key-obs} and \ref{sec-key-obs-ex} that every root $\rt$ in an $\iota$-noncompact $C$-simple factor of $\mathfrak{g}$ \Pth{see Definition \ref{def-non-cpt-factor}} is in the $\WG_{\mathfrak{g}}$-orbit of some root of some noncompact factor of $\mathfrak{n}$, because each nonzero factor of $\mathfrak{n}$ contains both the longest and shortest roots.  Note that, once we consider such $\WG_{\mathfrak{g}}$-orbits, the signs in Definition \ref{def-suff-reg-wt} are no longer important, and this is why we did not use negative signs in the condition \Refeq{\ref{eq-cond-suff-reg-intro}} in the introduction.
\end{remark}

\begin{example}\label{ex-suff-reg-type-A-1}
    Definition \ref{def-suff-reg-wt} \Refenum{\ref{def-suff-reg-wt-m-iota}} and \Refenum{\ref{def-suff-reg-wt-g}} in some cases of type $\typeA_1$:
    \begin{enumerate}
        \item\label{ex-suff-reg-type-A-1-mc} \Pth{Modular curves.}  $(\Grp{G}, \Shdom) = (\GL_2, \cH^\pm)$, where $\cH^\pm = \bC \setminus \bR$ is the union of the upper and lower Poincar\'e half-planes.  In this case, $\WG_{\mathfrak{m}}$ is trivial, and a character $\wt'$ of $Z(U(\mathfrak{m})) = U(\mathfrak{m})$ is $\hc_\hd^\iota$-sufficiently regular if and only if $\wt'|_{\mathfrak{m} \cap \mathfrak{sl}_2(C)} \neq 0$, where $\mathfrak{sl}_2(C) = [\mathfrak{gl}_2(C), \mathfrak{gl}_2(C)]$ is the derived subalgebra.  A character $[\wt]$ of $Z(U(\mathfrak{gl}_2(\bQ)))$ is $\hc_\hd$-sufficiently regular if and only if $[\wt]|_{Z(U(\mathfrak{sl}_2(\bQ)))}$ is not the character $\chi_0$ of the trivial representation of $\mathfrak{sl}_2(\bQ)$.

        \item\label{ex-suff-reg-type-A-1-sc} \Pth{Shimura curves.}  Let $F$ be a finite totally real field extension of $\bQ$, and $D / F$ a quaternion algebra split at exactly one place $v_0 | \infty$.  Fix any isomorphism $D \otimes_F F_{v_0} \Mi \Mt_2(\bR)$, and consider the Shimura datum $(\Grp{G}, \Shdom) = (\Res_{F / \bQ} D^\times, \cH^\pm)$, where $(\Res_{F / \bQ} D^\times)(\bR) \cong \prod_{v | \infty} (D \otimes_F F_v)^\times$ acts on $\cH^\pm$ via the isomorphism $(D \otimes_F F_{v_0})^\times \Mi \GL_2(\bR)$ induced by the above fixed one.  In this case, $\mathfrak{g} \cong \oplus_{\tau: F \to C} (D \otimes_{F, \tau} C) \cong \oplus_{\tau: F \to C} \, \mathfrak{gl}_2(C)^\tau$, where each factor $\mathfrak{gl}_2(C)^\tau$ is a copy of $\mathfrak{gl}_2(C)$ indexed by $\tau$, containing its derived subalgebra $\mathfrak{sl}_2(C)^\tau$.  Via any $\iota: \bC \Mi C$, the embeddings $F \Em F_{v_0} \cong \bR \Em \bC$ induce an embedding $\tau_0: F \to C$ \Pth{depending on $\iota$}; and a character $\wt'$ of $Z(U(\mathfrak{m}))$ is $\hc_\hd^\iota$-sufficiently regular if and only if $\wt'|_{\mathfrak{m} \cap \mathfrak{sl}_2(C)^{\tau_0}} \neq 0$.  A character $[\wt]$ of $Z(U(\Lie \Grp{G}))$ is $\hc_\hd$-sufficiently regular if and only if its $C$-linear extension $[\wt] \otimes_\bQ C: Z(U(\Lie \Grp{G})) \otimes_\bQ C \cong \otimes_{\tau: F \to C} \, Z(U(\mathfrak{gl}_2(C)^\tau)) \to C$ is not $\chi_0 \otimes_\bQ C$ when restricted to $Z(U(\mathfrak{sl}_2(C)^\tau))$ in \emph{at least one} factor $Z(U(\mathfrak{gl}_2(C)^\tau))$.  \Pth{This last notion makes no use of any isomorphism $\iota: \bC \Mi C$, and hence there is no $\tau_0$ associated with the distinguished place $v_0 | \infty$ of $F$.}

        \item\label{ex-suff-reg-type-A-1-hmv} \Pth{Hilbert modular varieties.}  Let $F$ be a finite totally real field extension of $\bQ$.  Consider the Shimura datum $(\Grp{G}, \Shdom) = (\Res_{F / \bQ} \GL_{2, F}, \prod_{\tau: F \to \bR} \cH^\pm)$.  As in \Refenum{\ref{ex-suff-reg-type-A-1-sc}}, $\mathfrak{g} \cong \oplus_{\tau: F \to C} \, \mathfrak{gl}_2(C)^\tau$; and we have $\mathfrak{sl}_2(C)^\tau \subset \mathfrak{gl}_2(C)^\tau$, for each $\tau$.  A character $\wt'$ of  $Z(U(\mathfrak{m}))$ is $\hc_\hd^\iota$-sufficiently regular if and only if $\wt'|_{\mathfrak{m} \cap \mathfrak{sl}_2(C)^\tau} \neq 0$ for all $\tau$, and hence this notion is independent of the choice of $\iota$.  A character $[\wt]$ of $Z(U(\Lie \Grp{G}))$ is $\hc_\hd$-sufficiently regular if and only if $[\wt] \otimes_\bQ C: Z(U(\Lie \Grp{G})) \otimes_\bQ C \cong \otimes_{\tau: F \to C} \, Z(U(\mathfrak{gl}_2(C)^\tau)) \to C$ is not $\chi_0 \otimes_\bQ C$ when restricted to $Z(U(\mathfrak{sl}_2(C)^\tau))$ in \emph{every} factor $Z(U(\mathfrak{gl}_2(C)^\tau))$.
    \end{enumerate}
\end{example}

\begin{definition}\label{def-suff-reg-wt-id}
    The \emph{non-$\hc_\hd^\iota$-sufficiently regular} \emph{locus} of $\Spec Z(U(\mathfrak{m}))$ \Pth{\resp $\Spec Z(U(\mathfrak{g}))$} is the \Pth{necessarily reduced} Zariski closure of the closed points corresponding to non-$\hc_\hd^\iota$-sufficiently regular kernels of characters as in Definition \ref{def-suff-reg-wt}.  We denote by $\cI_{\mathfrak{m}}^\iota \subset Z(U(\mathfrak{m}))$ \Pth{\resp $\cI^\iota \subset Z(U(\mathfrak{g}))$} the \Pth{necessarily radical} ideal defining this locus.  Equivalently, it is the intersection of all the non-$\hc_\hd^\iota$-sufficiently regular kernels of characters $Z(U(\mathfrak{m})) \to C$ \Pth{\resp $Z(U(\mathfrak{g})) \to C$}.  Similarly, we define the ideal $\cI$ of $Z(U(\Lie \Grp{G}))$ as the intersection of the kernel of all the non-$\hc_\hd$-sufficiently regular characters.
\end{definition}

\begin{remark}\label{rem-suff-reg-wt-old-vs-new}
    Let us choose positive roots of $\mathfrak{g}$, with usual half-sum of positive roots $\hsum$. Then $Z(U(\mathfrak{g}))$ acts on any irreducible $\rV_\wt \in \Rep_C(\Grp{G}^c)$ of highest weight $\wt$ via the infinitesimal character $[\wt + \hsum]$.  Since $(\hsum, \dual{\rt}) = 1$ for all positive roots $\rt$ of $\mathfrak{g}$, it follows that, if $\wt$ is sufficiently regular \Pth{via $\iota^{-1}: C \Mi \bC$} in the sense of \cite[\aThm 4.10]{Lan:2016-vtcac}, then $[\wt + \hsum]$ is $\hc_\hd^\iota$-sufficient regular in the sense of Definition \ref{def-suff-reg-wt}\Refenum{\ref{def-suff-reg-wt-g-iota}}.  The converse is true in most cases, except for the following two issues:
    \begin{enumerate}
        \item On $\iota$-noncompact factors of types $\typeB$ and $\typeC$, the condition in \cite[\aThm 4.10]{Lan:2016-vtcac} does not distinguish between shorter and other \Pth{longer} roots, while the condition in Definition \ref{def-suff-reg-wt}\Refenum{\ref{def-suff-reg-wt-g-iota}} does.  Nevertheless, the statement and proof in \cite[\aThm 4.10]{Lan:2016-vtcac} can be improved to take care of this difference.

        \item We have the flexibility in choosing $\iota: \bC \Mi C$ and hence permuting the $C$-simple factors of $\mathfrak{g}$, which provides additional flexibility when $\Lie \Grp{G}_\bC$ admits nontrivial compact factors \Pth{see Definition \ref{def-non-cpt-factor} and Remark \ref{rem-non-cpt-factor}}.
    \end{enumerate}
\end{remark}

\begin{proposition}\label{prop-suff-reg-m-vs-g}
    Consider the finite homomorphism $\gamma^{\mathfrak{m}}: Z(U(\mathfrak{g})) \to Z(U(\mathfrak{m}))$ in \Refeq{\ref{eq-diag-gamma-m}}.
    \begin{enumerate}
        \item\label{prop-suff-reg-m-vs-g-1} $\cI^\iota = (\gamma^{\mathfrak{m}})^{-1}(\cI_{\mathfrak{m}}^\iota)$.  Equivalently, $f: Z(U(\mathfrak{g})) \to C$ is $\hc_\hd^\iota$-sufficiently regular if and only if every extension $f': Z(U(\mathfrak{m})) \to C$ of $f$ via $\gamma^{\mathfrak{m}}$ is $\hc_\hd^\iota$-sufficiently regular.

        \item\label{prop-suff-reg-m-vs-g-2} Every homomorphism $Z(U(\mathfrak{g})) \to C$ can be extended to a $\hc_\hd^\iota$-sufficiently regular character $Z(U(\mathfrak{m})) \to C$ via $\gamma^{\mathfrak{m}}$.

        \item\label{prop-suff-reg-m-vs-g-3} $\cI$ is the radical ideal of $Z(U(\Lie \Grp{G})) \cap \sum_{\iota'} \cI^{\iota'}$, where $\iota'$ runs through all isomorphisms $\iota': \bC \Mi C$ and the sum is a finite sum.
    \end{enumerate}
\end{proposition}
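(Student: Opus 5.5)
The plan is to work throughout on the level of closed points, via the Harish-Chandra isomorphisms. Under the commutative diagram \Refeq{\ref{eq-diag-gamma-m}}, a character $[\wt]$ of $Z(U(\mathfrak{g}))$ corresponds to a $\WG_{\mathfrak{g}}$-orbit. Its extensions $f'$ to $Z(U(\mathfrak{m}))$ via $\gamma^{\mathfrak{m}}$ correspond to the $\WG_{\mathfrak{m}}$-orbits $[\wt' + \tfrac{1}{2}\wt_{\det\mathfrak{n}}]_{\mathfrak{m}}$ with $\wt' \in [\wt]$. This holds because $U(\mathfrak{h})$ is finite over $U(\mathfrak{h})^{\WG_{\mathfrak{g}}}$, so every point of $\Spec U(\mathfrak{h})^{\WG_{\mathfrak{m}}}$ over a given point of $\Spec U(\mathfrak{h})^{\WG_{\mathfrak{g}}}$ comes from a point of $\mathfrak{h}^*$ in the fiber; the translation $T_{-\frac12\wt_{\det\mathfrak{n}}}$ accounts for the shift. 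Comparing Definition \ref{def-suff-reg-wt}\Refenum{\ref{def-suff-reg-wt-m-iota}} and \Refenum{\ref{def-suff-reg-wt-g-iota}}, the extension $f'$ attached to $\wt'$ is $\hc_\hd^\iota$-sufficiently regular if and only if $(w\wt',\dual{\rt}) \notin S_\rt$ for all $w \in \WG_{\mathfrak{m}}$ and all roots $\rt$ in $\mathfrak{n}$. Here $S_\rt$ denotes $\{-1,-2\}$ for shorter roots and $\{-1\}$ otherwise, and the shift by $\tfrac12\wt_{\det\mathfrak{n}}$ in the definition cancels the one built into the identification.

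For \Refenum{\ref{prop-suff-reg-m-vs-g-1}}, note that $f$ is sufficiently regular exactly when this condition holds for every $\wt'\in[\wt]$, since $\WG_{\mathfrak{m}}\subset\WG_{\mathfrak{g}}$. This is exactly the condition that every extension $f'$ is sufficiently regular, which gives the pointwise equivalence. The ideal statement $\cI^\iota=(\gamma^{\mathfrak{m}})^{-1}(\cI^\iota_{\mathfrak{m}})$ then follows from finiteness of $\gamma^{\mathfrak{m}}$. Both ideals are radical and cut out the Zariski closures of the bad loci. The preimage of a radical ideal is radical, and it defines the closure of the image of the bad locus upstairs under the finite (hence closed) map $\Spec Z(U(\mathfrak{m}))\to\Spec Z(U(\mathfrak{g}))$. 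It remains to check that this image equals the bad locus downstairs: by the pointwise equivalence a point downstairs is bad iff some point over it is bad, and the map is surjective by finiteness.

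For \Refenum{\ref{prop-suff-reg-m-vs-g-2}}, given $[\wt]$ I must find $\wt'\in[\wt]$ such that $(w\wt',\dual\rt)\notin S_\rt$ for all $w\in\WG_{\mathfrak{m}}$ and $\rt\in\mathfrak{n}$. Since $\WG_{\mathfrak{m}}$ permutes the roots of $\mathfrak{n}$, it suffices to have $(\wt',\dual\rt)\notin S_\rt$ for all $\rt\in\mathfrak{n}$. My first attempt is to pick $\wt'$ dominant with respect to a Borel $\mathfrak{b}$ for which all roots of $\mathfrak{n}$ are positive. Concretely, take $\wt'$ such that $\mathrm{Re}$ of $(\wt',\dual\rt)$, computed with respect to a suitable $\bQ$-linear functional $C\to\bR$ (e.g.\ from a real structure), is $\geq 0$ for all positive $\rt$. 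Then $(\wt',\dual\rt)$ can never equal $-1$ or $-2$, because in the dominant chamber the pairing with a positive coroot is never a negative integer. This is the main obstacle: for complex $\wt$, one must choose a total order on $\mathfrak{h}^*_\bQ\otimes C$ compatible with integrality. The standard trick is to define dominance by requiring $(\wt',\dual\rt)\notin\bZ_{<0}$ for positive $\rt$; such a $\wt'$ exists in every $\WG_{\mathfrak{g}}$-orbit, by the usual argument of reflecting in simple roots with negative integer pairing, which terminates by decreasing the length of the Weyl element. Finally, conjugate by $\WG_{\mathfrak{m}}$ so that the chosen Borel contains $\mathfrak{n}$; the positive system can be chosen so, by the hypothesis that $\mathfrak{h}\subset\mathfrak{p}$.

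For \Refenum{\ref{prop-suff-reg-m-vs-g-3}}, by Definition \ref{def-suff-reg-wt}\Refenum{\ref{def-suff-reg-wt-g}} a character of $Z(U(\Lie\Grp{G}))$ is non-$\hc_\hd$-sufficiently regular iff its $C$-extension is non-$\hc_\hd^{\iota'}$-sufficiently regular for all $\iota'$. So its kernel lies over the intersection of the closed loci $V(\cI^{\iota'})$, which is $V(\sum_{\iota'}\cI^{\iota'})$. These loci depend only on which $C$-simple factors are $\iota'$-noncompact, i.e.\ on the embedding $\ReFl\Em C$, so only finitely many distinct ideals occur and the sum is finite. Intersecting with $Z(U(\Lie\Grp{G}))$ corresponds to pushing forward along $\Spec Z(U(\mathfrak{g}))\to\Spec Z(U(\Lie\Grp{G}))$. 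Since the intersection locus is $\mathrm{Aut}(C/\bQ)$-stable up to the permutation of the $\iota'$, the Nullstellensatz identifies $\cI$ with the radical of the contraction.
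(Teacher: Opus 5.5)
Parts (1) and (3) follow the paper's proof, and your (1) correctly supplies the bookkeeping the paper omits. The gap is in (2). After your first attempt you fall back on a ``standard trick'', and its justification fails. Reflecting only in \emph{simple} roots $\rt$ with $(\wt, \dual{\rt}) \in \bZ_{<0}$ can stop at a weight that still pairs to a negative integer with a non-simple positive coroot. For $\mathfrak{sl}_3$, take $(\wt, \dual{\rt}_1) = (\wt, \dual{\rt}_2) = -\frac{1}{2}$: no simple reflection applies, yet $(\wt, (\rt_1 + \rt_2)^\vee) = -1$. The highest root $\rt_1 + \rt_2$ lies in $\mathfrak{n}$ for every proper parabolic whose unipotent radical consists of positive roots (this occurs for $U(2,1)$). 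So the weight your procedure outputs yields a non-$\hc_\hd^\iota$-sufficiently regular extension. The existence statement you want is true, but its proof needs the integral root system $\{\rt : (\wt, \dual{\rt}) \in \bZ\}$ and its own simple roots, which need not be simple in $\mathfrak{g}$.

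You do not need this detour: your first attempt is not an obstacle, and it is exactly the paper's argument. Fix a $\bQ$-linear $\ell: C \to \bR$ with $\ell(1) = 1$, e.g.\ $\mathrm{Re} \circ \iota^{-1}$; the paper instead uses the $e_1$-coordinate for a $\bQ$-basis $e_1 = 1, e_2, \ldots, e_r$ of the span of the pairings. Coroots pair rationally with the $\bQ$-span of the roots. Hence $\ell$ induces a $\WG_{\mathfrak{g}}$-equivariant map from $(\mathfrak{h} \cap \mathfrak{g}^\der)^*$ to the real span of the roots, with $\ell((\wt, \dual{\rt})) = (\ell(\wt), \dual{\rt})$. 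Fix at the outset a positive system containing the roots of $\mathfrak{n}$, which exists since $\mathfrak{h} \subset \mathfrak{p}$. Take $w \in \WG_{\mathfrak{g}}$ with $w \ell(\wt)$ in its closed dominant chamber. Then $\ell((w\wt, \dual{\rt})) \geq 0$ for every root $\rt$ in $\mathfrak{n}$, whereas $\ell(-1) = -1$ and $\ell(-2) = -2$. The $\WG_{\mathfrak{m}}$-stability of the set of roots of $\mathfrak{n}$ (and of root lengths) finishes the argument. Drop your last sentence about conjugating by $\WG_{\mathfrak{m}}$: since $\WG_{\mathfrak{m}}$ preserves the roots of $\mathfrak{n}$, it cannot move a Borel into one containing $\mathfrak{n}$.

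In (3), the $\mathrm{Aut}(C/\bQ)$-stability is unnecessary. By definition $\cI$ is a contraction, and $\sqrt{J} \cap A = \sqrt{J \cap A}$ for $A = Z(U(\Lie \Grp{G}))$ and $J = \sum_{\iota'} \cI^{\iota'}$. What you do use without saying is that each non-$\hc_\hd^{\iota'}$-sufficiently regular set of $C$-points is Zariski closed. This holds because that set is the image in $\mathfrak{h}^* / \WG_{\mathfrak{g}}$ of a finite union of affine hyperplanes.
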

\begin{proof}
    As for \Refenum{\ref{prop-suff-reg-m-vs-g-1}}, just note that the shift by $- \frac{1}{2} \wt_{\det \mathfrak{n}}$ in Definition \ref{def-suff-reg-wt} cancels the corresponding shift in the construction of $\gamma^{\mathfrak{m}}$.  We omit the details here.  As for \Refenum{\ref{prop-suff-reg-m-vs-g-2}}, it suffices to show that, given a $\WG_{\mathfrak{g}}$-orbit $[\wt]$ in $\mathfrak{h}^*$, we can find a $\WG_{\mathfrak{m}}$-orbit $[\wt]_{\mathfrak{m}} \subset [\wt]$ such that, for every $\wt' \in [\wt]$ and every root $\rt$ in $\mathfrak{n}$, we have
    \[
        (\wt', \dual{\rt}) \not \in
        \begin{cases}
            \{ -1, -2 \}, & \text{if $\rt$ is a shorter root}; \\
            \{ -1 \}, & \text{otherwise}.
        \end{cases}
    \]
    We may assume that $(\wt', \dual{\rt}) \in \bQ$, for all roots $\rt$ in $\mathfrak{g}$ and $\wt' \in [\wt]$, by choosing a $\bQ$-basis $e_1 = 1, e_2, \cdots, e_r$ of the $\bQ$-vector space spanned by all $(\wt', \dual{\rt})$ in $C$, and by considering the projection of $[\wt]$ onto its $e_1$-component.  Let us choose the positive roots of $\mathfrak{g}$ such that the roots in $\mathfrak{n}$ are positive, and choose $\wt' \in [\wt]$ in the dominant Weyl chamber.  Hence, $(\wt', \dual{\rt}) \geq 0$, for every positive root $\rt$.  Since roots in $\mathfrak{n}$ are $\WG_{\mathfrak{m}}$-invariant, $(w(\wt'), \dual{\rt}) \geq 0$, for all $w \in \WG_{\mathfrak{m}}$ and roots $\rt$ in $\mathfrak{n}$.  Thus, we can take $[\wt]_{\mathfrak{m}} = \WG_{\mathfrak{m}}(\wt')$.  Finally, for the last part, let $L \subset \bC$ be a sufficiently large finite extension of $E$ over which all $\bC$-simple factors are defined.  Then it follows that $\cI^\iota$ only depends on $\iota|_L: L \to C$.  Then our claim follows from Definition \ref{def-suff-reg-wt}.
\end{proof}

We now explain where the sufficient regularity condition comes from and how we are going to use it.  For technical reasons, let us assume that the following holds:
\begin{condition}\label{cond-der-sc-no-need-c}
    $\Grp{G}^\der$ is simply-connected, and $\Grp{G} = \Grp{G}^c$.
\end{condition}
In this case, $\Grp{G}^\scc \Mi \Grp{G}^\der \Mi \Grp{G}^{\der, c}$, and so Condition \ref{cond-der-c-sc} holds, with $\Grp{H}$ there replaced with $\Grp{G}$ here.  Let $\rW_0 \in \Rep_\bC(\Grp{M}^c)$ be any one-dimensional representation as in Proposition \ref{prop-lsv-pos-smallest}, and let $\wt_0$ denote the weight of $\rW_0$.  Concretely, in the notation of \cite[\aSec 3.3]{Lan:2016-vtcac} \Pth{via $\iota: \bC \Mi C$}, we have the following pullbacks of $\wt_0$ on individual \emph{$\iota$-noncompact} $C$-simple factors of $\Grp{G}_C^\scc$ \Pth{see Definition \ref{def-non-cpt-factor}}:
\begin{itemize}
    \item Type $\typeA$: $(1, \ldots, 1; 0, \ldots, 0) \pmod{(1, \ldots, 1; 1, \ldots, 1)}$.

    \item Types $\typeB$ and $\typeD^\bR$: $(1; 0, \ldots, 0)$.

    \item Type $\typeC$: $(1, 1, \ldots, 1)$.

    \item Type $\typeD^\bH$: $(\frac{1}{2}, \frac{1}{2}, \ldots, \frac{1}{2})$.

    \item Types $\typeE_6$ and $\typeE_7$: $(0, \ldots, 0, \frac{2\sqrt{3}}{3})$ and $(1, 0, \ldots, 0, \frac{\sqrt{2}}{2})$.
\end{itemize}
In \cite[\aSec 3.3]{Lan:2016-vtcac}, it was also shown \Pth{by working case-by-case on all possible $C$-simple factors of $\Grp{G}_C^\scc$} that $(\wt_0, \dual{\rt}) = -1$ for every root $\rt$ in $\mathfrak{n}$ unless $\rt$ is a shorter root as in Definition \ref{def-shorter-rt}, in which case $(\wt_0, \dual{\rt}) = -2$.  This explains the appearance of $-2$ in the definition of $\hc_\hd$-sufficiently regular weights.

Still assuming Condition \ref{cond-der-sc-no-need-c}, consider the finite-dimensional irreducible representation $\rV_0 \in \Rep_C(\Grp{G}_C)$ of extremal weight $\wt_0$; \ie, $\wt_0$ is in the $\WG_{\mathfrak{g}}$-orbit of the highest weight of $\rV_0$.  We shall also denote by $\rV_0$ the induced $\mathfrak{g}$-representation.

\begin{proposition}\label{prop-key-obs}
    Let $\wt_0$ and $\rV_0$ be as above.  Then a character of $Z(U(\mathfrak{m}))$ is $\hc_\hd$-\emph{sufficiently regular} as in Definition \ref{def-suff-reg-wt} if and only if the corresponding $\WG_{\mathfrak{m}}$-orbit $[\wt]_{\mathfrak{m}}$ satisfies that, for all weights $\wt_0'$ of $\rV_0$ other than $\wt_0$, we have
    \begin{equation}\label{eq-prop-key-obs}
        ([\wt]_{\mathfrak{m}} - \tfrac{1}{2} \wt_{\det \mathfrak{n}} - \wt_0 + \wt_0') \cap \WG_{\mathfrak{g}}([\wt]_{\mathfrak{m}} - \tfrac{1}{2} \wt_{\det \mathfrak{n}}) = \emptyset.
    \end{equation}
    Equivalently, $(\Id - w) (\wt - \frac{1}{2} \wt_{\det \mathfrak{n}}) \neq \wt_0 - \wt_0'$ for all $w \in \WG_{\mathfrak{g}}$, all $\wt \in [\wt]_{\mathfrak{m}}$, and all weights $\wt_0' \neq \wt_0$ of $\rV_0$.
\end{proposition}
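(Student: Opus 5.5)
We write $\mu := \wt - \tfrac{1}{2}\wt_{\det \mathfrak{n}}$ for $\wt \in [\wt]_{\mathfrak{m}}$. Since $\tfrac{1}{2}\wt_{\det \mathfrak{n}}$ is $\WG_{\mathfrak{m}}$-invariant, $\mu$ runs through a single $\WG_{\mathfrak{m}}$-orbit. In this notation, \Refeq{\ref{eq-prop-key-obs}} fails exactly when $\mu - w\mu = \wt_0 - \wt_0'$ for some $w \in \WG_{\mathfrak{g}}$ and some weight $\wt_0' \neq \wt_0$ of $\rV_0$. So the proposition is a statement purely about the weights of $\rV_0$ and the orbit of $\mu$.

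The plan uses three properties of $\wt_0$, all available from the discussion preceding the statement. Choose the positive roots so that every root in $\mathfrak{n}$ is positive. First, $\rW_0$ is a one-dimensional representation of $\Grp{M}_C$, so $(\wt_0, \dual{\rt}) = 0$ for every root $\rt$ of $\mathfrak{m}$. Second, by the case-by-case computation from \cite[\aSec 3.3]{Lan:2016-vtcac} recalled above, $(\wt_0, \dual{\rt}) = -1$, or $-2$ for shorter roots, for every $\rt$ in $\mathfrak{n}$. Together these make $\wt_0$ antidominant, so it is the lowest weight of $\rV_0$, and every other weight has the form $\wt_0' = \wt_0 + \beta$ with $\beta \neq 0$ a nonnegative integral combination of positive roots. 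Third, compact factors contribute nothing, so everything reduces to the $\iota$-noncompact $C$-simple factors.

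\emph{Failure of regularity implies failure of \Refeq{\ref{eq-prop-key-obs}}.} Suppose some $\mu'$ in the $\WG_{\mathfrak{m}}$-orbit and some $\rt$ in $\mathfrak{n}$ satisfy $(\mu', \dual{\rt}) = -1$, or $-2$ with $\rt$ shorter. Take $w = s_\rt$. Then $(\Id - s_\rt)\mu' = (\mu', \dual{\rt})\,\rt$, which equals $-\rt$ or $-2\rt$. On the other hand $s_\rt \wt_0 = \wt_0 - (\wt_0, \dual{\rt})\rt$, which equals $\wt_0 + \rt$ or $\wt_0 + 2\rt$ respectively, with the coefficients matching by the second property above. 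So $\wt_0' := s_\rt \wt_0$ is a weight of $\rV_0$ different from $\wt_0$, and $(\Id - s_\rt)\mu' = \wt_0 - \wt_0'$. Hence \Refeq{\ref{eq-prop-key-obs}} fails.

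\emph{Failure of \Refeq{\ref{eq-prop-key-obs}} implies failure of regularity.} This is the step I expect to be the main obstacle. Suppose $w\mu - \mu = \beta := \wt_0' - \wt_0$, a nonzero sum of positive roots for which $\wt_0 + \beta$ is a weight. As in the proof of Proposition \ref{prop-suff-reg-m-vs-g}, I first project onto a $\bQ$-component so that all pairings $(\mu, \dual{\rt})$ are rational, and then restrict attention to the integral root subsystem of $\mu$. Next I replace $\mu$ by its $\mathfrak{m}$-dominant $\WG_{\mathfrak{m}}$-conjugate and take $w$ of minimal length. A nonzero positive difference $w\mu - \mu$ should then force the existence of a positive root $\rt$ with $(\mu, \dual{\rt}) < 0$, and $\rt$ must lie in $\mathfrak{n}$ because $\mu$ is $\mathfrak{m}$-dominant. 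The goal is to show that the constraint ``$\wt_0 + \beta$ is a weight of $\rV_0$'' pins such an $\rt$ down to pairing value $-1$, or $-2$ for shorter roots.

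For the remaining argument I would split by type. In types $\typeA$, $\typeD$, $\typeE_6$ and $\typeE_7$, $\rV_0$ is minuscule. Then $\beta = v\wt_0 - \wt_0$ is a sum of mutually strongly orthogonal roots of $\mathfrak{n}$, each with coefficient one, and I would try to peel off one such root by induction on the length of $w$. In types $\typeB$ and $\typeC$, where the vector representation and $\omega_n$ are not minuscule, I would enumerate the weights explicitly. There $\beta$ turns out to be either a single root or twice a shorter root, and this is exactly the origin of the exceptional value $-2$.
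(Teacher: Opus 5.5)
Your first direction is essentially right. The converse, that failure of \Refeq{\ref{eq-prop-key-obs}} forces failure of sufficient regularity, is not proved, and it carries all the content of the proposition.

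Making $\mu$ $\mathfrak{m}$-dominant and using $w\mu - \mu = \beta \neq 0$ only yields \emph{some} root $\alpha$ of $\mathfrak{n}$ with $(\mu, \alpha^\vee) < 0$. Nothing controls the value of that pairing or singles out a root where it equals $-1$ (or $-2$ for a shorter root), and that is exactly the step you leave as ``the goal''.

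Your proposed type-by-type completion also rests on a false description in type $\typeC_n$ with $n \geq 3$. In the coordinates of Section~\ref{sec-key-obs-type-C-n}, where the roots of $\mathfrak{n}^{\mathrm{std}}$ are the $e_i + e_j$, the weights of $V_0$ on that factor are all vectors with entries in $\{0, \pm 1\}$ having an even number of zeros. So $\lambda_0 - \lambda_0'$ can be $2(e_1 + e_2 + e_3)$ or $e_1 + e_2 + 2e_3$, neither of which is a root or twice a shorter root.

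In the minuscule cases, writing $\lambda_0' - \lambda_0$ as a sum of strongly orthogonal roots does not locate the right $\alpha$ either. The element $w$ is unrelated to the Weyl element carrying $\lambda_0$ to $\lambda_0'$, and the ``peeling'' induction is never carried out.

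The paper obtains this direction as follows:
\begin{itemize}
\item For classical factors it uses explicit signed-permutation combinatorics. It writes the equation coordinatewise as $\lambda_i - \sgn(w, i)\, \lambda_{w(i)} \in \{\ldots\}$ and follows the cycles of $w$: the telescoping integer-valued function $F$ in type $\typeA$, and the location of sign changes along cycles in the other types. This produces a root whose coroot pairing is a telescoping sum of the prescribed values.
\item For $\typeE_6$ and $\typeE_7$ it verifies by computer that, for each pair $(w, \lambda_0')$, the affine solution space of $(\Id - w)\lambda = \lambda_0 - \lambda_0'$ lies in a single hyperplane $(\lambda, \alpha^\vee) = 1$.
\end{itemize}

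There are two smaller points.

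First, projecting $\mu$ onto a $\bQ$-component does not transfer the conclusion back. If $\pi$ denotes the projection, $\mu - \pi\mu$ is only known to be $w$-fixed, so $(\pi\mu, \alpha^\vee) = -1$ does not give $(\mu, \alpha^\vee) = -1$. You can repair this by a density argument: the solutions of $(\Id - w)\mu = \lambda_0 - \lambda_0'$ form an affine space defined over $\bQ$, and if its rational points lie in the union of finitely many hyperplanes $(\cdot, \alpha^\vee) = c$, then the whole space lies in one of them. Alternatively, argue directly over $C$ as the paper does.

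Second, your ``respectively'' in the first direction fails when $\alpha$ is shorter and $(\mu', \alpha^\vee) = -1$. Then $(\lambda_0, \alpha^\vee) = -2$, so $s_\alpha \lambda_0 = \lambda_0 + 2\alpha$, whereas $(\Id - s_\alpha)\mu' = -\alpha$. Take instead $\lambda_0' := \lambda_0 - (\mu', \alpha^\vee)\alpha$, which is a weight of $V_0$ because it lies on the $\alpha$-string between $\lambda_0$ and $s_\alpha \lambda_0$. With this fix, that direction becomes a clean uniform argument, more economical than the paper's type-by-type checks and Lemma~\ref{lem-key-obs-factor-if}.
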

The proof of Proposition \ref{prop-key-obs} will be given in Sections \ref{sec-key-obs} and \ref{sec-key-obs-ex}.

\subsection{Vanishing results}\label{sec-sc-van-res}

Recall that, in Theorem \ref{thm-geom-summary}, we defined $\cO^\la \in D^b(\cO_\Fl)$, with natural actions of $Z(U(\mathfrak{m}))$ and $\mathfrak{g}$.  Let $d = \dim_C(\Fl) = \dim(\Sh_\levcp) = \dim_\bC(\Shdom)$.

\begin{theorem}\label{thm-sc-van-m}
    Assume that Condition \ref{cond-der-sc-no-need-c} and Proposition \ref{prop-key-obs} hold.  Then $H^{< d}(\Fl, \cO^\la)$ is annihilated by $(\cI_{\mathfrak{m}}^\iota)^n$, for some $n > 0$.
\end{theorem}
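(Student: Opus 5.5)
We will run the Beilinson--Bernstein translation argument described in the overview, with $\cL_\Fl := \cW_{0, \Fl}$, the $\Grp{G}(C)$-equivariant line bundle on $\Fl$ attached to $\rW_0$ from Proposition \ref{prop-lsv-pos-smallest}, so that $\cL_\Fl^{-1}$ has weight $-\wt_0$. By Theorem \ref{thm-geom-summary}\Refenum{\ref{thm-geom-summary-2}}, $H^{< d}(\Fl, \cO^\la \otimes \cL_\Fl^{-1}) = 0$. Since $\rV_0$ is a representation of $\Grp{G}_C$, we have $\rV_0 \otimes_C \cO_\Fl \cong \rV_{0, \Fl}$ equivariantly. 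We will twist by $\rV_0$ in the ``diagonal'' way: the $\mathfrak{g}$-module $\cO^\la \otimes_{\cO_\Fl} \cL_\Fl^{-1} \otimes_{\cO_\Fl} \rV_{0, \Fl}$ has cohomology $H^*(\Fl, \cO^\la \otimes \cL_\Fl^{-1}) \otimes_C \rV_0$, using the untwisting $\cO_\Fl \otimes \rV_0 \cong \rV_{0, \Fl}$. Hence $H^{< d}(\Fl, \cO^\la \otimes \cL_\Fl^{-1} \otimes \rV_{0, \Fl}) = 0$.

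Next, we filter $\rV_{0, \Fl}$ by the $\Grp{P}_C$-stable filtration coming from $\rV_0|_{\Grp{P}_C}$, via $\Lie \hc_\hd$ as in Remark \ref{rem-fil-by-hc}. The graded pieces are the $\Grp{M}_C$-equivariant bundles attached to the $\hc_\hd$-weight spaces of $\rV_0$. Because $\wt_0$ is extremal and $\rW_0$ is one-dimensional, the line $C_{\wt_0} \subset \rV_0$ is an $\Grp{M}_C$-stable extreme graded piece. The resulting graded piece of $\cO^\la \otimes \cL_\Fl^{-1} \otimes \rV_{0, \Fl}$ is $\cO^\la \otimes \cL_\Fl^{-1} \otimes \cL_\Fl = \cO^\la$. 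The other graded pieces are $\cO^\la \otimes \cL_\Fl^{-1} \otimes \cW'_\Fl$, where $\cW'$ runs over $\Grp{M}$-constituents with weights $\wt_0' \neq \wt_0$. On each graded piece, $\mathfrak{n}^0$ acts trivially by Theorem \ref{thm-geom-summary}\Refenum{\ref{thm-geom-summary-1}}, so $\mathfrak{m}^0$ acts. Compatibility of the $\mathfrak{m}^0$-action on twisted pieces with the action on $\cO^\la$ then shows the following: $Z(U(\mathfrak{m}))$ acts on the piece indexed by $\wt_0'$ through characters shifted by $\wt_0' - \wt_0$. Meanwhile, the whole object carries a $\mathfrak{g}$-action, so $Z(U(\mathfrak{g}))$ acts on it and is compatible with the filtration.

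The key step is to use the generalized eigenspace decomposition for the finite-type $Z(U(\mathfrak{g}))$-action to split off the $\cO^\la$-piece. Work locally on $\Spec Z(U(\mathfrak{m}))$ at a $\hc_\hd^\iota$-sufficiently regular point $[\wt]_{\mathfrak{m}}$, and consider the $Z(U(\mathfrak{g}))$-character it induces, namely $\WG_{\mathfrak{g}}([\wt]_{\mathfrak{m}} - \frac{1}{2}\wt_{\det \mathfrak{n}})$ via $\gamma^{\mathfrak{m}}$ (Theorem \ref{thm-comp-Z(U(g))-Z(U(m))}). On a piece indexed by $\wt_0'$, the induced $Z(U(\mathfrak{g}))$-character is $\WG_{\mathfrak{g}}([\wt]_{\mathfrak{m}} - \frac{1}{2}\wt_{\det \mathfrak{n}} - \wt_0 + \wt_0')$. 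Proposition \ref{prop-key-obs}, specifically \Refeq{\ref{eq-prop-key-obs}}, says that these characters differ from the $\cO^\la$ one. Hence, after localizing $Z(U(\mathfrak{m}))$ at that point (or completing there), the filtration splits off $\cO^\la$ as a direct summand, namely the generalized eigen-summand for the $Z(U(\mathfrak{g}))$-character. Therefore the localization of $H^{< d}(\Fl, \cO^\la)$ at every sufficiently regular maximal ideal vanishes.

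To obtain the statement with a power of $\cI_{\mathfrak{m}}^\iota$ rather than pointwise, we will argue globally as follows. The $Z(U(\mathfrak{m}))$-module $H^{<d}(\Fl, \cO^\la)$ is a direct summand functorially, so we can construct an explicit central element. Take $z \in Z(U(\mathfrak{g}))$, viewed inside $Z(U(\mathfrak{m}))$ after twisting, whose values separate the finitely many translates $\wt_0' - \wt_0$. A product of the form $\prod_{\wt_0' \neq \wt_0}(\gamma^{\mathfrak{m}}(z) - T_{\wt_0 - \wt_0'}\gamma^{\mathfrak{m}}(z))$ kills the other graded pieces and acts on $\cO^\la$ by an element vanishing exactly on the non-sufficiently-regular locus. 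A Nullstellensatz argument in the finite $Z(U(\mathfrak{g}))$-algebra $Z(U(\mathfrak{m}))$ then yields $(\cI_{\mathfrak{m}}^\iota)^n$. The main obstacle will be making this splitting precise in the derived category: the filtration has length bounded by $\dim \rV_0$, so iterated extensions contribute only a bounded power $n$, but we must check that the central elements act on graded pieces as claimed, including the twist-compatibility of the $\mathfrak{m}^0$-action.
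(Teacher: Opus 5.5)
Your architecture matches the paper's: twist by $\cL_\Fl^{-1}$, tensor with $\rV_0$ so the middle term has no $H^{<d}$, isolate $\rW_0$ as the extreme $\Grp{P}_C$-piece of $\rV_0$, control the other pieces by Kostant's theorem, and separate them via Proposition~\ref{prop-key-obs}. However, the step meant to produce a uniform exponent fails as written, because no single $z \in Z(U(\mathfrak{g}))$ separates the translates uniformly in $\wt$.

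Write $x = \wt - \frac{1}{2}\wt_{\det \mathfrak{n}}$ and $\wtalt = \wt_0' - \wt_0$. If $\wtalt$ is not proportional to a root, then $x + \wtalt \in \WG_{\mathfrak{g}} x$ forces $\Id - w$ to have rank at least $2$, so that locus has codimension at least $2$. By contrast, the zero locus of $F(x + \wtalt) - F(x)$, with $F$ the Harish-Chandra image of $z$, is a hypersurface (or everything).

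Here is a concrete instance. Take a type $\typeA_3$ factor with $r_\pl = 2$ and $\wt_0 - \wt_0' = (1, 1; -1, -1)$. The Casimir takes the same value at $x = (2, 0; 0, 0)$ and at $x + \wtalt = (1, -1; 1, 1)$, yet these are not $\SG_4$-conjugate. Moreover $x$ is sufficiently regular, since its $\WG_{\mathfrak{m}}$-orbit pairs with the coroots of $\mathfrak{n}$ only into $\{0, \pm 2\}$. So your product vanishes at sufficiently regular points, and the Nullstellensatz yields no power of $\cI_{\mathfrak{m}}^\iota$.

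The element is also not well placed. A single translate $T_{\wt_0 - \wt_0'}\gamma^{\mathfrak{m}}(z)$ is not $\WG_{\mathfrak{m}}$-invariant. And an element of $Z(U(\mathfrak{m}))$ acting only through $\cO^\la$ cannot kill some graded pieces but not others; it must mix that action with the diagonal one.

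The repair is to argue with ideals rather than elements, which is the paper's route. This also removes the need for your localization and splitting, which are not justified as stated. After localizing a non-finitely-generated module at a maximal ideal of $Z(U(\mathfrak{m}))$, the semi-local finite algebra acting need not split into generalized eigen-summands, completion is not exact, and the ``functorial direct summand'' claim is unsupported.

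The argument runs as follows.

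First, use only the $\Grp{P}_C$-quotient $\rV_0 \Surj \rW_0$ with kernel $\rW'$. For $i < d$, the connecting map is an injection $H^i(\Fl, \cO^\la) \Em H^{i+1}(\Fl, \cO^\la \otimes \cL_\Fl^{-1} \otimes \cW'_\Fl)$. It is equivariant for $Z(U(\mathfrak{m})) \otimes Z(U(\mathfrak{g}))$, with the first factor acting on $\cO^\la$ alone and the second diagonally.

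Second, the target is filtered by pieces killed by $\cJ_\rW$, where $\rW$ runs over nontrivial constituents of $\rV_0 \otimes \dual{\rW}_0$. This is the precise form of your ``shifted characters''. By Bernstein--Gelfand/Kostant, the pair of actions on $\cW_\Fl \otimes \cO^\la$ is killed by every $Q$ with $Q(\chi + \wtalt, \chi) = 0$ for all weights $\wtalt$ of $\rW$. It is not a single shift, and $\cO^\la$ has no single character.

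Third, the source is killed by $\cJ_C$, by Theorem~\ref{thm-comp-Z(U(g))-Z(U(m))}. Hence $H^i(\Fl, \cO^\la)$ has a finite filtration whose pieces are killed by $\cJ_\rW + \cJ_C$.

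Finally, by Proposition~\ref{prop-key-obs}, every closed point of the finitely generated $C$-algebra $\bigl(Z(U(\mathfrak{m})) \otimes Z(U(\mathfrak{g}))\bigr)/(\cJ_\rW + \cJ_C)$ lies over the non-sufficiently-regular locus. The Nullstellensatz then makes the image of $\cI_{\mathfrak{m}}^\iota$ nilpotent there, which gives the uniform exponent directly.
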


\begin{corollary}\label{cor-sc-van-g}
    Under the same assumptions, $H^{< d}(\Fl, \cO^\la)$ is annihilated by $(\cI^\iota)^n$, for some $n > 0$.
\end{corollary}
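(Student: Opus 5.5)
Corollary \ref{cor-sc-van-g} follows quickly from Theorem \ref{thm-sc-van-m} together with Theorem \ref{thm-comp-Z(U(g))-Z(U(m))} and Proposition \ref{prop-suff-reg-m-vs-g}. By Theorem \ref{thm-comp-Z(U(g))-Z(U(m))}, the action of $Z(U(\mathfrak{g}))$ on $\cO^\la$, and hence on $H^{<d}(\Fl, \cO^\la)$, factors through $\gamma^{\mathfrak{m}}: Z(U(\mathfrak{g})) \to Z(U(\mathfrak{m}))$. So it suffices to find $N > 0$ with $\gamma^{\mathfrak{m}}\bigl((\cI^\iota)^N\bigr) \subset (\cI_{\mathfrak{m}}^\iota)^n$, where $n$ is as in Theorem \ref{thm-sc-van-m}.

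Proposition \ref{prop-suff-reg-m-vs-g}\Refenum{\ref{prop-suff-reg-m-vs-g-1}} gives $\cI^\iota = (\gamma^{\mathfrak{m}})^{-1}(\cI_{\mathfrak{m}}^\iota)$, and hence $\gamma^{\mathfrak{m}}(\cI^\iota) \subset \cI_{\mathfrak{m}}^\iota$. Taking powers, $\gamma^{\mathfrak{m}}\bigl((\cI^\iota)^n\bigr) \subset (\cI_{\mathfrak{m}}^\iota)^n$, because $\gamma^{\mathfrak{m}}$ is a ring homomorphism. Therefore, for any $z \in (\cI^\iota)^n$ and any class $c \in H^{<d}(\Fl, \cO^\la)$, we get $z \cdot c = \gamma^{\mathfrak{m}}(z) \cdot c = 0$ by Theorem \ref{thm-sc-van-m}. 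So the same exponent $n$ works.

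The only point needing care is that the action factors compatibly on cohomology, not just on the complex $\cO^\la$. This holds because the identification in Theorem \ref{thm-comp-Z(U(g))-Z(U(m))} is an equality of endomorphisms of $\cO^\la$ in the derived category, and $H^i(\Fl, -)$ is functorial. No step here is a real obstacle; all the difficulty lies in Theorem \ref{thm-sc-van-m} and Proposition \ref{prop-key-obs}.
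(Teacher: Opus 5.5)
Your proposal is correct and is the paper's own argument: the paper proves this corollary by combining Theorem \ref{thm-sc-van-m} with Proposition \ref{prop-suff-reg-m-vs-g} and Theorem \ref{thm-comp-Z(U(g))-Z(U(m))}. You do the same thing, factoring the $Z(U(\mathfrak{g}))$-action through $\gamma^{\mathfrak{m}}$ and using $\gamma^{\mathfrak{m}}(\cI^\iota) \subset \cI_{\mathfrak{m}}^\iota$, while spelling out the details the paper leaves implicit, including that the same exponent $n$ works.
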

\begin{proof}
    Combine Theorem \ref{thm-sc-van-m} with Proposition \ref{prop-suff-reg-m-vs-g} and Theorem \ref{thm-comp-Z(U(g))-Z(U(m))}.
\end{proof}

The proof of Theorem \ref{thm-sc-van-m} uses an action of $Z(U(\mathfrak{m})) \otimes_C Z(U(\mathfrak{g}))$.  For simplicity, let us drop the subscript $C$ in the remainder of this subsection.  In Section \ref{sec-hor-act}, we constructed an action of $Z(U(\mathfrak{m}))$ on $\cO^\la$, which is $\cO_\Fl$-linear and commutes with the $U(\mathfrak{g})$-action.  Let $\rW$ be a finite-dimensional representation of $\Grp{P}_C$, with associated $\mathfrak{g}$-equivariant vector bundle $\cW_\Fl$ over $\Fl$, as in Section \ref{sec-HT-mor}.  We define an action of $Z(U(\mathfrak{m})) \otimes Z(U(\mathfrak{g}))$ on $\cW_\Fl \otimes_{\cO_\Fl} \cO^\la$ with:
\begin{itemize}
    \item $Z(U(\mathfrak{m}))$ acting only on $\cO^\la$; and

    \item $Z(U(\mathfrak{g}))$ acting diagonally on the tensor product.
\end{itemize}
The upshot is that, for any map $\rW \to \rW'$ of $\Grp{P}_C$-representations, the induced map $\cW_\Fl \otimes_{\cO_\Fl} \cO^\la \to \cW_\Fl' \otimes_{\cO_\Fl} \cO^\la$ is equivariant with respect to this action.

If $\rW$ is a representation of $\Grp{M}_C$, then $\mathfrak{n}^0$ annihilates $\cW_\Fl$, and we obtain a diagonal action of $\mathfrak{m}^0 \cong \mathfrak{p}^0 / \mathfrak{n}^0$ on $\cW_\Fl \otimes_{\cO_\Fl} \cO^\la$, and this gives an action of $Z(U(\mathfrak{m})) \subset H^0(\Fl, U(\mathfrak{m}^0))$, as explained in Section \ref{sec-hor-act}.  Accordingly, we define an action of $Z(U(\mathfrak{m})) \otimes Z(U(\mathfrak{m}))$ on $\cW_\Fl \otimes_{\cO_\Fl} \cO^\la$, where the first \Pth{\resp second} $Z(U(\mathfrak{m}))$ acts only on $\cO^\la$ \Pth{\resp diagonally}.  It follows from the proof of Theorem \ref{thm-comp-Z(U(g))-Z(U(m))} that the previous $Z(U(\mathfrak{m})) \otimes Z(U(\mathfrak{g}))$-action factors through this via
\[
    \Id \otimes \gamma^{\mathfrak{m}}: Z(U(\mathfrak{m})) \otimes Z(U(\mathfrak{g})) \to Z(U(\mathfrak{m})) \otimes Z(U(\mathfrak{m})).
\]
When $\rW = C$ is the trivial representation, the action of $Z(U(\mathfrak{m})) \otimes Z(U(\mathfrak{m}))$ factors through the diagonal quotient $Z(U(\mathfrak{m}))$.  For general $\rW$, by \cite[\aThm 2.5]{Bernstein/Gelfand:1980-tprsl}, which generalizes an earlier work of Kostant's, the action of $Z(U(\mathfrak{m})) \otimes Z(U(\mathfrak{m}))$ on $\cW_\Fl \otimes_{\cO_\Fl} \cO^\la$ is zero when restricted to the ideal $\cJ_\rW^\Delta \subset Z(U(\mathfrak{m})) \otimes Z(U(\mathfrak{m}))$ defined as follows.  Under the Harish-Chandra isomorphism $\gamma_{\mathfrak{m}}: Z(U(\mathfrak{m})) \Mi U(\mathfrak{h})^{\WG_{\mathfrak{m}}}$, elements in $Z(U(\mathfrak{m})) \otimes Z(U(\mathfrak{m}))$ can be regarded as $\WG_{\mathfrak{m}} \times \WG_{\mathfrak{m}}$-invariant polynomial functions on $\mathfrak{h}^* \times \mathfrak{h}^*$.  Then $\cJ_\rW^\Delta$ consists of functions $Q$ such that:
\begin{itemize}
    \item $Q(\chi + \wtalt, \chi) = 0$ for any weight $\wtalt$ of $\rW$ and $\chi \in \mathfrak{h}^*$.
\end{itemize}
Denote by $\cJ_\rW \subset Z(U(\mathfrak{m})) \otimes Z(U(\mathfrak{g}))$ the ideal $(\Id \otimes \gamma^{\mathfrak{m}})^{-1}(\cJ_\rW^\Delta)$.  Recall that we introduced the representation $\rV_0 \in \Rep_C(\Grp{G}_C)$ of extremal weight $\wt_0$ in the paragraph preceding Proposition \ref{prop-key-obs}.  Let $\rW_0$ be the one-dimensional representation of $\Grp{M}_C$ of \Pth{highest} weight $\wt_0$, as before.
\begin{lemma}\label{lem-cons-of-Kostant}
    Suppose $\rW$ is any nontrivial representation of $\Grp{M}_C$ appearing as a subquotient of $\rV_0 \otimes_C \dual{\rW}_0$ \Pth{viewed as a representation of $\Grp{P}_C$}.  Then the composition of $Z(U(\mathfrak{m})) \Mapn{\Id \otimes 1} Z(U(\mathfrak{m})) \otimes Z(U(\mathfrak{g})) \to \bigl(Z(U(\mathfrak{m})) \otimes Z(U(\mathfrak{g}))\bigr) / (\cJ_\rW + \cJ_C)$ factors through $Z(U(\mathfrak{m})) / (\cI_{\mathfrak{m}}^\iota)^n$, for some $n > 0$.
\end{lemma}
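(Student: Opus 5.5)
The plan is to translate everything into polynomial functions on weight space and reduce the claim to a statement about zero loci. Then we apply the Nullstellensatz.

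Via $\gamma_{\mathfrak{m}}$ and $\gamma$, the ring $Z(U(\mathfrak{m})) \otimes Z(U(\mathfrak{g}))$ is the ring of regular functions on $(\mathfrak{h}^*/\WG_{\mathfrak{m}}) \times (\mathfrak{h}^*/\WG_{\mathfrak{g}})$. Since all rings involved are finitely generated $C$-algebras, the conclusion is equivalent to the following inclusion of zero loci: the image in $\Spec Z(U(\mathfrak{m})) = \mathfrak{h}^*/\WG_{\mathfrak{m}}$ of $V(\cJ_\rW) \cap V(\cJ_C)$ is contained in the non-$\hc_\hd^\iota$-sufficiently regular locus $V(\cI_{\mathfrak{m}}^\iota)$. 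Given this inclusion, the Nullstellensatz shows that every element of $\cI_{\mathfrak{m}}^\iota \otimes 1$ lies in the radical of $\cJ_\rW + \cJ_C$. Because $\cI_{\mathfrak{m}}^\iota$ is finitely generated, a single power $(\cI_{\mathfrak{m}}^\iota)^n \otimes 1$ then lies in $\cJ_\rW + \cJ_C$, which is exactly the asserted factorization.

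First I compute the zero loci. By definition, $\cJ^\Delta_\rW$ vanishes exactly on the $\WG_{\mathfrak{m}}\times\WG_{\mathfrak{m}}$-orbits of pairs $(\chi + \wtalt, \chi)$ with $\wtalt$ a weight of $\rW$. Pulling back along $\Id \otimes \gamma^{\mathfrak{m}}$ and using the diagram \Refeq{\ref{eq-diag-gamma-m}}, a point $([\nu]_{\mathfrak{m}}, [\xi])$ lies in $V(\cJ_\rW)$ if and only if the following holds. We need $\nu = \chi + \wtalt$ (up to $\WG_{\mathfrak{m}}$) for some weight $\wtalt$ of $\rW$, together with $[\xi] = \WG_{\mathfrak{g}}(\chi - \frac12\wt_{\det\mathfrak{n}})$. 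Here finiteness of $\gamma^{\mathfrak{m}}$ gives surjectivity on spectra, which is what makes this description of the image correct. Taking $\rW = C$ gives $V(\cJ_C) = \{([\nu]_{\mathfrak{m}}, \WG_{\mathfrak{g}}(\nu - \frac12 \wt_{\det\mathfrak{n}}))\}$. Hence a point of the intersection yields $\nu$, a weight $\wtalt$ of $\rW$, and some $w \in \WG_{\mathfrak{g}}$ with
\[
\nu - \wtalt - \tfrac12\wt_{\det\mathfrak{n}} = w\bigl(\nu - \tfrac12\wt_{\det\mathfrak{n}}\bigr).
\]

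Next I identify $\wtalt$. Since $\rW$ is a subquotient of $\rV_0 \otimes \dual{\rW}_0$ and $\rW_0$ is one-dimensional of weight $\wt_0$, every weight of $\rW$ has the form $\wtalt = \wt_0' - \wt_0$ with $\wt_0'$ a weight of $\rV_0$. I must exclude $\wt_0' = \wt_0$, i.e. $\wtalt = 0$. The weight $\wt_0$ is extremal, so it has multiplicity one in $\rV_0$. Moreover, the $\Grp{M}_C$-subrepresentation of $\rV_0 \otimes \dual{\rW}_0$ containing weight $0$ is the trivial one-dimensional representation, so a nontrivial irreducible $\rW$ (or a nontrivial subquotient, after passing to irreducible constituents) has no weight equal to $0$. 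I expect this multiplicity bookkeeping to be the main delicate point, particularly if $\rW$ is allowed to be reducible. In that case I would first reduce to irreducible $\rW$, noting that $\cJ_\rW$ only grows under passage to constituents, or else reinterpret "nontrivial" accordingly.

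With this in hand, the displayed relation reads $\nu - \frac12\wt_{\det\mathfrak{n}} - \wt_0 + \wt_0' \in \WG_{\mathfrak{g}}(\nu - \frac12\wt_{\det\mathfrak{n}})$ for some weight $\wt_0' \neq \wt_0$. This contradicts \Refeq{\ref{eq-prop-key-obs}}. By Proposition \ref{prop-key-obs}, $[\nu]_{\mathfrak{m}}$ is therefore not $\hc_\hd^\iota$-sufficiently regular, which gives the containment of zero loci and finishes the argument by the Nullstellensatz step above.
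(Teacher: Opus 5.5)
Your strategy is the paper's own: reduce to closed points by the Nullstellensatz, describe $V(\cJ_\rW)\cap V(\cJ_C)$ through the Harish-Chandra isomorphisms, write $\wtalt=\wt_0'-\wt_0$, then apply Proposition~\ref{prop-key-obs}. Your extra details are fine, and the last of them is used only tacitly in the paper:
\begin{itemize}
\item a single power via finite generation;
\item lying-over for the finite map $\Id\otimes\gamma^{\mathfrak m}$;
\item the multiplicity-one argument showing $\wt_0'\neq\wt_0$.
\end{itemize}
However, the final substitution contains a sign error, and it is not cosmetic. You took the locus of pairs $(\chi+\wtalt,\chi)$, with the first slot the action on $\cO^\la$ alone and the second slot the diagonal action. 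From it you correctly get $\nu-\wtalt-\frac12\wt_{\det\mathfrak n}=w(\nu-\frac12\wt_{\det\mathfrak n})$. Substituting $\wtalt=\wt_0'-\wt_0$ then gives $(\Id-w)(\nu-\frac12\wt_{\det\mathfrak n})=\wt_0'-\wt_0$. But \Refeq{\ref{eq-prop-key-obs}} concerns $(\Id-w)(\nu-\frac12\wt_{\det\mathfrak n})=\wt_0-\wt_0'$.

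At the level of $\WG_{\mathfrak m}$-orbits these are different conditions. Take the modular curve: $\WG_{\mathfrak m}$ is trivial, there is one root $\rt$ in $\mathfrak n$, and $\wt_0'-\wt_0=\rt=\wt_{\det\mathfrak n}$. There the non-$\hc_\hd^\iota$-sufficiently regular locus is $(\nu,\dual{\rt})=0$, whereas the relation you actually derived cuts out $(\nu,\dual{\rt})=2$. So, as written, your last sentence does not follow.

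The repair is to put the shift in the correct slot, which is what the paper's proof does. By the Bernstein--Gelfand (Kostant) theorem, if $Z(U(\mathfrak m))$ acts on $\cO^\la$ through $[\nu]_{\mathfrak m}$, then the diagonal action on $\cW_\Fl\otimes\cO^\la$ has generalized characters among the $[\nu+\wtalt]_{\mathfrak m}$. So the relevant points are $([\nu]_{\mathfrak m},[\nu+\wtalt]_{\mathfrak m})$, with the first slot the action on $\cO^\la$ alone and the second the diagonal action. The displayed definition of $\cJ_\rW^\Delta$ lists the two slots in the opposite order, and that is what you followed. With the correct order, $V(\cJ_\rW)$ is cut out by $\nu+\wtalt-\frac12\wt_{\det\mathfrak n}\in[\xi]$, which is the first bullet of the paper's proof. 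Then $\wtalt=\wt_0'-\wt_0$ gives \Refeq{\ref{eq-prop-key-obs}} exactly.

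A smaller point concerns reducible $\rW$. No reduction to constituents is needed, and ``$\cJ_\rW$ grows under passage to constituents'' points the wrong way; the relevant fact is that $V(\cJ_\rW)$ is the union of the loci of the constituents. All that matters is that $0$ is not a weight of $\rW$. If $\rW$ contains the trivial constituent, then $V(\cJ_\rW)\supseteq V(\cJ_C)$ and the conclusion fails. So the lemma must be read with $\rW$ irreducible, as in its use in Theorem~\ref{thm-sc-van-m}.
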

\begin{proof}
    Recall that $\cI_{\mathfrak{m}}^\iota$ was introduced in Definition \ref{def-suff-reg-wt-id} as the ideal defining the non-$\hc_\hd$-sufficiently regular locus of $\Spec Z(U(\mathfrak{m}))$.  By Hilbert's Nullstellensatz, it suffices to prove the assertion at the level of maximal ideals. Via the Harish-Chandra isomorphism, a closed point of $\Spec\bigl(\bigl(Z(U(\mathfrak{m})) \otimes Z(U(\mathfrak{g}))\bigr) / (\cJ_\rW + \cJ_C)\bigr)$ corresponds to a $\WG_{\mathfrak{m}}$-orbit $[\wt]_{\mathfrak{m}}$ and a $\WG_{\mathfrak{g}}$-orbit $[\chi]$ in $\mathfrak{h}^*$ satisfying:
    \begin{itemize}
        \item $\wt' + \wtalt - \frac{1}{2} \wt_{\det \mathfrak{n}} \in [\chi]$, for some $\wt' \in [\wt]_{\mathfrak{m}}$ and weight $\wtalt$ of $\rW$; and

        \item $\wt' - \frac{1}{2} \wt_{\det \mathfrak{n}} \in [\chi]$, for some $\wt' \in [\wt]_{\mathfrak{m}}$.
    \end{itemize}
    Hence,
    \[
        ([\wt]_{\mathfrak{m}} - \tfrac{1}{2} \wt_{\det \mathfrak{n}} + \wtalt) \cap \WG_{\mathfrak{g}}([\wt]_{\mathfrak{m}} - \tfrac{1}{2} \wt_{\det \mathfrak{n}}) \neq \emptyset,
    \]
    for some weight $\wtalt$ of $\rW$.  By our assumption, $\wtalt = - \wt_0 + \wt_0'$, for some weight $\wt_0'$ of $\rV_0$.  Thus, by Proposition \ref{prop-key-obs}, $[\wt]_{\mathfrak{m}}$ is not $\hc_\hd$-sufficiently regular.
\end{proof}

\begin{proof}[Proof of Theorem \ref{thm-sc-van-m}]
    Consider the $\Grp{P}_C$-equivariant surjection
    \[
        \rV_0 \Surj \rW_0,
    \]
    with kernel $\rW'$.  Let $\cL_\Fl$ \Pth{\resp $\cW_\Fl'$} be the line bundle \Pth{\resp vector bundle} on $\Fl$ associated with $\rW_0$ \Pth{\resp $\rW'$} in $\Rep_C(\Grp{P}_C)$, as in Section \ref{sec-HT-mor}.  Then we have a $\mathfrak{g}$-equivariant exact sequence of the corresponding vector bundles on $\Fl$
    \[
        0 \to \cW_\Fl' \to \cO_\Fl \otimes_C \rV_0 \to \cL_\Fl \to 0.
    \]
    By taking the tensor product with $\cO^\la \otimes_{\cO_\Fl} \cL_\Fl^{-1}$, we obtain an exact triangle
    \[
        \cO^\la \otimes_{\cO_\Fl} \cL_\Fl^{-1} \otimes_{\cO_\Fl} \cW_\Fl' \to \cO^\la \otimes_{\cO_\Fl} \cL_\Fl^{-1} \otimes_C \rV_0 \to \cO^\la \Mapn{+1}
    \]
    equivariant with respect to the $Z(U(\mathfrak{m})) \otimes Z(U(\mathfrak{g}))$-action introduced before.  By Proposition \ref{prop-lsv-pos-smallest}, Remark \ref{rem-Sh-var-aut-bdl-can-ext}, and Theorem \ref{thm-geom-summary}, we have
    \[
        H^{< d}(\Fl, \cO^\la \otimes_{\cO_\Fl} \cL_\Fl^{-1} \otimes_C \rV_0) \cong H^{< d}(\Fl, \cO^\la \otimes_{\cO_\Fl} \cL_\Fl^{-1}) \otimes_C \rV_0 = 0.
    \]
    Hence, for $i< d$, the cohomology of the above exact triangle gives a short exact sequence
    \[
        0 \to H^i(\Fl, \cO^\la) \to H^{i + 1}(\Fl, \cO^\la \otimes_{\cO_\Fl} \cL_\Fl^{-1} \otimes_{\cO_\Fl} \cW_\Fl').
    \]
    Note that $\cL_\Fl^{-1} \otimes_{\cO_\Fl} \cW_\Fl'$ is filtered by equivariant vector bundles on $\Fl$ associated with nontrivial irreducible $\Grp{P}_C$-subquotients $\rW$ of $\rV_0 \otimes \dual{\rW}_0$.  Therefore, its cohomology $H^{i + 1}(\Fl, \cO^\la \otimes_{\cO_\Fl} \cL_\Fl^{-1} \otimes_{\cO_\Fl} \cW_\Fl')$ is filtered by $Z(U(\mathfrak{m})) \otimes Z(U(\mathfrak{g}))$-modules annihilated by $\cJ_\rW$, for some such $\rW$.  Since $\cJ_C$ annihilates $H^i(\Fl, \cO^\la)$, the above short exact sequence implies that $H^i(\Fl, \cO^\la)$ is filtered by $Z(U(\mathfrak{m})) \otimes Z(U(\mathfrak{g}))$-modules annihilated by $\cJ_\rW + \cJ_C$.  Thus, by Lemma \ref{lem-cons-of-Kostant}, the $Z(U(\mathfrak{m}))$-action on $H^i(\Fl, \cO^\la)$ factors through $Z(U(\mathfrak{m})) / (\cI_{\mathfrak{m}}^\iota)^n$, for some $n > 0$, as desired.
\end{proof}

\begin{remark}\label{rem-pf-thm-sc-van-m}
    The proof shows that, to make the exponent of $\cI_{\mathfrak{m}}^\iota$ in Theorem \ref{thm-sc-van-m} explicit, it suffices to understand the exponent of $\cI_{\mathfrak{m}}^\iota$ in Lemma \ref{lem-cons-of-Kostant}.
\end{remark}

\subsection{Justification of the key observation}\label{sec-key-obs}

In order to prove Proposition \ref{prop-key-obs}, we need to show that a weight $\wt: \mathfrak{h} \to C$ satisfies $\wt - w(\wt) = \wt_0 - \wt_0'$, for some $1 \neq w \in \WG_{\mathfrak{g}}$ and weight $\wt_0'$ of $\rV_0$ other than $\wt_0$, if and only if there exists some root $\rt$ of $\mathfrak{n}$ such that $(\wt, \dual{\rt}) \in \{ -1, -2 \}$ or $\{ -1 \}$ depending on whether $\rt$ is a shorter root as in Definition \ref{def-shorter-rt} or not.  Since the roots in $\mathfrak{n}$ are chosen to be negative in \cite{Lan:2016-vtcac}, to reduce the number of negative signs, we shall switch from $\mathfrak{n}$ to the opposite algebra $\mathfrak{n}^\std$, and consider the equivalent condition that there exists some root $\rt$ of $\mathfrak{n}^\std$ such that $(\wt, \dual{\rt}) \in \{ 1, 2 \}$ or $\{ 1 \}$ depending on whether $\rt$ is a shorter root as in Definition \ref{def-shorter-rt} or not.  Since $\Grp{G}$ is reductive and $\mathfrak{g} = \Lie \Grp{G}(C)$, we have $\mathfrak{g} \cong \mathfrak{z} \oplus \mathfrak{g}^\der$, where $\mathfrak{z} = Z(\mathfrak{g})$ is the center of $\mathfrak{g}$, and $\mathfrak{g}^\der = [\mathfrak{g}, \mathfrak{g}] \cong \oplus_{\pl \in \Pl} \, \mathfrak{g}_\pl$, where the $\mathfrak{g}_\pl$'s are its $C$-simple factors \Pth{\Refcf{} Definition \ref{def-non-cpt-factor}}. Accordingly, we have decompositions $\mathfrak{m} \cong \mathfrak{z} \oplus (\oplus_{\pl \in \Pl} \, \mathfrak{m}_\pl)$, $\mathfrak{h} \cong \mathfrak{z} \oplus (\oplus_{\pl \in \Pl} \, \mathfrak{h}_\pl)$, $\WG_{\mathfrak{g}} \cong \prod_{\pl \in \Pl} \WG_{\mathfrak{g}_\pl}$, $\WG_{\mathfrak{m}} \cong \prod_{\pl \in \Pl} \WG_{\mathfrak{m}_\pl}$, etc.  \Pth{We shall use similar subscripts \Qtn{$\pl$} for other similar decompositions, without explicitly introducing them.}  In particular, we have $\mathfrak{m} \cap \mathfrak{g}^\der \cong \oplus_{\pl \in \Pl} \, \mathfrak{h}_\pl$ and $\mathfrak{h} \cap \mathfrak{g}^\der \cong \oplus_{\pl \in \Pl} \, \mathfrak{h}_\pl$.  Based on these decompositions, let us write $\wt|_{\mathfrak{g}^\der} = (\wt_\pl)_{\pl \in \Pl}$, $\wt_0|_{\mathfrak{m} \cap \mathfrak{g}^\der} = (\wt_{0, \pl})_{\pl \in \Pl}$, and $\wt_0'|_{\mathfrak{m} \cap \mathfrak{g}^\der} = (\wt_{0, \pl}')_{\pl \in \Pl}$.  Since $\rV_0$ is irreducible, $\wt_0 \neq \wt_0'$ \Pth{as weights of $\mathfrak{m} \cong \mathfrak{z} \oplus (\oplus_{\pl \in \Pl} \, \mathfrak{m}_\pl)$} if and only if $\wt_{0, \pl} \neq \wt_{0, \pl}'$ \Pth{as weights of $\mathfrak{m}_\pl$} for some $\pl \in \Pl$.  So it suffices to prove the following:

\begin{proposition}\label{prop-key-obs-factor}
    In the above, for each $\pl \in \Pl$, a weight $\wt_\pl: \mathfrak{h}_\pl \to C$ satisfies $\wt_\pl - w(\wt_\pl) = \wt_{0, \pl} - \wt_{0, \pl}'$, for some $1 \neq w \in \WG_{\mathfrak{g}_\pl}$ and weight $\wt_{0, \pl}'$ of $\rV_0|_{\mathfrak{m}_\pl}$ other than $\wt_{0, \pl}$, if and only if there exists some root $\rt$ of $\mathfrak{n}_\pl^\std$ such that $(\wt, \dual{\rt}) \in \{ 1, 2 \}$ or $\{ 1 \}$ depending on whether $\rt$ is a shorter root as in Definition \ref{def-shorter-rt} or not.
\end{proposition}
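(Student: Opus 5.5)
We argue factor by factor. If $\mathfrak{g}_\pl$ is $\iota$-compact, then $\mathfrak{n}_\pl^\std = 0$ and $\mathfrak{m}_\pl = \mathfrak{g}_\pl$. In that case $\rV_0|_{\mathfrak{g}_\pl}$ is one-dimensional, so there is no weight $\wt_{0,\pl}' \neq \wt_{0,\pl}$, and both sides of the equivalence are false. We may therefore assume that $\mathfrak{g}_\pl$ is $\iota$-noncompact. Then $\wt_{0,\pl}$ is the explicit minuscule-type weight listed before Proposition \ref{prop-key-obs}, in the coordinates of \cite[\aSec 3.3]{Lan:2016-vtcac}. We use the fact recalled there that $(\wt_{0,\pl}, \dual{\rt}) = -1$, or $-2$ for shorter $\rt$, for every root $\rt$ of $\mathfrak{n}_\pl$. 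Equivalently, $(\wt_{0,\pl}, \dual{\rt}) = 1$, or $2$, for every root $\rt$ of $\mathfrak{n}_\pl^\std$.

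For the direction ``if'', let $\rt$ be a root of $\mathfrak{n}^\std_\pl$ with $(\wt_\pl, \dual{\rt}) = k$, where $k = 1$, or $k \in \{1,2\}$ if $\rt$ is shorter. Take $w = s_\rt$. Then $\wt_\pl - s_\rt(\wt_\pl) = k\rt$. We need $\wt_{0,\pl} - k\rt$ to be a weight of $\rV_0$. By $\mathfrak{sl}_2$-theory along the $\rt$-string through the extremal weight $\wt_{0,\pl}$, the weights $\wt_{0,\pl} - j\rt$ occur for $0 \le j \le (\wt_{0,\pl}, \dual{\rt})$. This covers $k=1$ in all cases, and $k=2$ when $\rt$ is shorter, since then $(\wt_{0,\pl},\dual\rt)=2$. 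So this direction is formal.

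For the direction ``only if'', assume $\wt_\pl - w(\wt_\pl) = \wt_{0,\pl} - \wt_{0,\pl}' =: \beta$ with $\beta \neq 0$. Since $\wt_{0,\pl}$ is the lowest weight of $\rV_0$ with respect to positive roots chosen in $\mathfrak{n}^\std$, the difference $\beta$ lies in the cone spanned by the positive roots. We reduce to a rank-one situation as follows. Write $w$ as a product of reflections and consider the standard fact that $\wt_\pl - w(\wt_\pl)$ is a sum $\sum_i (\wt_\pl, \dual{\beta_i})\beta_i$ over the inversion set of $w^{-1}$, with coefficients computed along a reduced word. Combine this with the constraint that $\beta = \wt_{0,\pl}-\wt_{0,\pl}'$ is ``small'': every weight of $\rV_0$ lies in the convex hull of $\WG_{\mathfrak{g}_\pl}\wt_{0,\pl}$, so the pairing of $\beta$ with the relevant fundamental coweight is bounded by the pairing of $\wt_{0,\pl}$.

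The cleanest route is to pair both sides with the coweight $\dual{\varpi}$ defining the grading by $\hc_\hd$, which is a minuscule coweight. On one side, $(\beta, \dual\varpi) \in \{1, 2\}$ or $\{1\}$, bounded as above. On the other side, $(\wt_\pl - w\wt_\pl, \dual\varpi)$ is expressed through the pairings $(\wt_\pl,\dual\rt)$ for roots $\rt$ of $\mathfrak{n}^\std_\pl$. Each root of $\mathfrak{n}^\std_\pl$ has $\dual\varpi$-grade exactly $1$. Therefore the minimal element of the inversion set (after reordering) must be a single root $\rt$ of $\mathfrak{n}^\std_\pl$ with $(\wt_\pl,\dual\rt) = (\beta,\dual\varpi)$, which gives the required value $1$ or $2$.

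The main obstacle is making this reduction rigorous uniformly. We would first try the argument above in general. Failing that, we would verify it type by type ($\typeA$, $\typeB$, $\typeC$, $\typeD^\bR$, $\typeD^\bH$, $\typeE_6$, $\typeE_7$), as in \cite[\aSec 3.3]{Lan:2016-vtcac}, by listing the weights of the small representation $\rV_0$ explicitly: the standard or exterior-power representation, spin, $27$, and $56$. In each case we check directly that $\beta$ is a root of $\mathfrak{n}^\std_\pl$, or twice a shorter one, or a sum of roots that forces such a root with the stated pairing.
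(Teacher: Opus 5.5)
Your reduction to $\iota$-noncompact factors is fine: on an $\iota$-compact factor, $\mathfrak{g}_\pl$ acts trivially on $\rV_0$ and $\mathfrak{n}^\std_\pl=0$, so both sides fail. Your ``if'' direction is also fine. The $\mathfrak{sl}_2$-string argument uses only $(\wt_{0,\pl},\dual{\rt})=1$, resp.\ $2$ for shorter $\rt$. It is uniform in the type and makes the paper's Lemma~\ref{lem-key-obs-factor-if} unnecessary; that lemma needs the transitivity of $\WG_{\mathfrak{m}_\pl}$ on the roots of $\mathfrak{n}^\std_\pl$, checked by hand in types $\typeE_6$ and $\typeE_7$.

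The gap is in the ``only if'' direction, which is the substantive half.
\begin{itemize}
\item Your key claim $(\beta,\dual{\varpi})\in\{1,2\}$ is false. This number is the $\dual{\varpi}$-depth of $\wt'_{0,\pl}$ in $\rV_{0,\pl}$. It reaches $\min(r_\pl,n+1-r_\pl)$ in type $\typeA_n$, $2$ in type $\typeE_6$ and $3$ in type $\typeE_7$. In type $\typeC_n$ it reaches $n$: take $\wt'_{0,\pl}=-\wt_{0,\pl}$, so $\beta=(2,\ldots,2)$.
\item More seriously, the deduction that some root $\rt$ of $\mathfrak{n}^\std_\pl$ has $(\wt_\pl,\dual{\rt})=(\beta,\dual{\varpi})$ is wrong, and for non-shorter roots it aims at the wrong value anyway. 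Take $\mathfrak{g}_\pl=\mathfrak{sl}_4$, $r_\pl=2$, $\wt_{0,\pl}=(1,1;0,0)$ and $\wt'_{0,\pl}=(0,0;1,1)$ (modulo $(1,1;1,1)$), so $\beta=(1,1;-1,-1)$ has grade $2$. Let $w=(1\,3)(2\,4)$ and $\wt_\pl=(a+1,b+1;a,b)$ with $a,b\in C$ and $a-b\notin\{\pm1\}$. Then $\wt_\pl-w(\wt_\pl)=\beta$. The pairings with the coroots $e_i-e_j$ ($i\le 2<j$) are $1,1,1+a-b,1+b-a$, none equal to $2$. What is true, and what the proposition asserts, is that some pairing equals $1$.
\item The inversion-set formula you invoke is misstated. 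For a reduced word $w=s_{i_1}\cdots s_{i_\ell}$ one has $\wt-w(\wt)=\sum_k(\wt,\dual{\rt}_{i_k})\,s_{i_1}\cdots s_{i_{k-1}}(\rt_{i_k})$. The coefficients come from \emph{simple} coroots, and the roots run through the full inversion set of $w^{-1}$, including roots of $\mathfrak{m}_\pl$ and $\mathfrak{n}_\pl$. Since $\wt_\pl$ is arbitrary with complex pairings, pairing with $\dual{\varpi}$ gives no positivity or integrality that could isolate a single term.
\end{itemize}

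Your fallback is only a plan, and it puts the difficulty in the wrong place. Whether $\beta$ is a root or twice a shorter root is irrelevant by itself. One must show, for \emph{every} $w\in\WG_{\mathfrak{g}_\pl}$, that solvability of $(\Id-w)(\wt_\pl)=\beta$ forces a pairing $1$ (or $2$) on some coroot of $\mathfrak{n}^\std_\pl$. That root is generally not $\beta$.

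The missing idea is the paper's analysis of the cycle structure of $w$ as a (signed) permutation.
\begin{itemize}
\item In type $\typeA$, follow the $w$-cycle through an index $i_0\le r_\pl$ carrying a $+1$, and set $F(N)=\wt_{\pl,i_0}-\wt_{\pl,w^{N+1}(i_0)}$. These integers move in steps in $\{0,\pm1\}$, rise only at indices $\le r_\pl$, fall only at indices $>r_\pl$, start at $1$ and end at $0$. A discrete intermediate-value argument then gives $i_1\le r_\pl<i_2$ with $\wt_{\pl,i_1}-\wt_{\pl,i_2}=1$.
\item Types $\typeC$ and $\typeD^{\bH}$ use the first sign change along a cycle.
\item Types $\typeB$ and $\typeD^{\bR}$ use the parity of sign changes. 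In $\typeD_n^{\bR}$, $\wt'_{0,\pl}=-\wt_{0,\pl}$ gives $\beta=2e_1$, yet a pairing of exactly $1$ must be produced; the paper gets it from a second $w$-cycle with an odd number of sign changes.
\item For $\typeE_6$ and $\typeE_7$ the paper has no conceptual argument. It runs an exhaustive Gaussian-elimination check over all $51840$, resp.\ $2903040$, Weyl group elements (Method~\ref{method-key-obs-ex-only-if}).
\end{itemize}
Without something of this kind, the ``only if'' direction remains unproved.
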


On each $C$-simple factor $\mathfrak{m}_\pl$ of $\mathfrak{m} \cap \mathfrak{g}^\der$ as above, the weights $\wt_{0, \pl}'$ of $\rV_{0, \pl} := \rV_0|_{\mathfrak{m}_\pl}$ are integral in the sense that it paired integrally with the coroots of $\mathfrak{m}_\pl$.  Moreover, when we view them as subsets of the corresponding real vector spaces spanned by all integral weights, they lie in the convex hull of the $\WG_{\mathfrak{g}_\pl}$-orbit of $\wt_{0, \pl}$.  Given the explicit description of $\wt_{0, \pl}$, there is a very short list of all weights $\wt_{0, \pl}'$'s of $\rV_{0, \pl}$.

On classical factors, given our explicit knowledge of $\WG_{\mathfrak{g}_\pl}$ as combinations of sign changes and permutations, we can easily verify by case-by-case arguments, which we shall explain based on the types of $\mathfrak{g}_\pl$ in the following Sections \ref{sec-key-obs-type-A-n}--\ref{sec-key-obs-type-D-n-H}.  As for the exceptional factors, since there are only two possible cases, we shall resort to brute force computation, and we shall explain our methods in Section \ref{sec-key-obs-ex}.

\subsubsection{Type $\typeA_n$}\label{sec-key-obs-type-A-n}

In this case, by using the same notation system as in \cite[\aSec 3.3.1]{Lan:2016-vtcac}, we have the following:
\begin{itemize}
    \item $\wt_{0, \pl} = \varpi_{r_\pl} \equiv (1, \ldots, 1; 0, \ldots, 0) \pmod{(1, \ldots, 1; 1, \ldots, 1)}$, where the semicolon \Qtn{$;$} occurs between the $r_\pl$-th and $(r_\pl + 1)$-th entries;

    \item $\WG_{\mathfrak{g}_\pl} \cong \SG_{n + 1}$ \Pth{the permutation group on $n + 1$ elements}; and

    \item $\wt_{0, \pl} - \wt_{0, \pl}'$ can be any vector with entries in $\{ 0, \pm 1 \}$, with $+1$'s only occurring before \Qtn{$;$} and the $-1$'s only occurring after \Qtn{$;$}, and with the number of $+1$'s and $-1$'s being nonzero and equal to each other.  \Pth{See, \eg, the explicit descriptions on representations of $\mathfrak{sl}_{n + 1}(\bC)$ in \cite[\S\S 15.2--15.3]{Fulton/Harris:1991-RT}.}
\end{itemize}
Hence, $\wt_\pl - w(\wt_\pl) = \wt_{0, \pl} - \wt_{0, \pl}'$, where $\wt_\pl = (\wt_{\pl, 1}, \ldots, \wt_{\pl, n + 1})$, exactly when all the following hold:
\begin{itemize}
    \item $(\wt_\pl, e_i - e_{w(i)}) = \wt_{\pl, i} - \wt_{\pl, w(i)} \in \{ 0, 1 \}$, for $1 \leq i \leq r_\pl$;

    \item $(\wt_\pl, e_i - e_{w(i)}) = \wt_{\pl, i} - \wt_{\pl, w(i)} \in \{ 0, -1 \}$, for $r_\pl < i \leq n + 1$; and

    \item the numbers of $+1$'s and $-1$'s in the above are nonzero and equal to each other.
\end{itemize}
Assuming that all of these hold, we would like to show that $(\wt_\pl, \dual{\rt}) = 1$ for some root $\rt$ of $\mathfrak{n}_\pl^\std$.  Note that the roots of $\mathfrak{n}_\pl^\std$ are $e_{i_1} - e_{i_2}$, for all $1 \leq i_1 \leq r_\pl < i_2 \leq n + 1$, and there are no shorter roots.  Hence, it suffices to verify the claim that there exist some $1 \leq i_1 \leq r_\pl < i_2 \leq n + 1$ such that $\wt_{\pl, i_1} - \wt_{\pl, i_2} = (\wt_\pl, e_{i_1} - e_{i_2}) = 1$.

Let $i_0$ be any integer such that $1 \leq i_0 \leq r_\pl$ and $\wt_{\pl, i_0} - \wt_{\pl, w(i_0)} = 1$, which exists by assumption.  Consider its orbit under the action of powers of $w$ \Pth{as elements of $\SG_n$}.  Let $N_0$ denote the cardinality of this orbit.  For any integer $N$ such that $0 \leq N < N_0$, we define the \emph{integer-valued} function
\[
    F(N) := \sum_{j = 0}^N (\wt_{\pl, w^j(i_0)} - \wt_{\pl, w^{j + 1}(i_0)}) = \wt_{\pl, i_0} - \wt_{\pl, w^{N + 1}(i_0)}.
\]
By the choice of $i_0$, we have $F(0) = 1$.  Since $w^{N_0}(i_0) = i_0$, we have $F(N_0 - 1) = 0$.  Note that $F(N) - F(N - 1) \in \{ 0, 1 \}$ when $1 \leq w^N(i_0) \leq r_\pl$, and $F(N) - F(N - 1) \in \{ 0, -1 \}$ when $r_\pl < w^N(i_0) \leq n + 1$.  In particular, when the variable $N$ increases by $1$ from $N - 1$, the value of $F$ can either increase by $1$, stay the same, or decrease by $1$.  The value can increase only when $1 \leq w^N(i_0) \leq r_\pl$, and can decrease only when $r_\pl < w^N(i_0) \leq n + 1$.  Hence, we may assume that there exists some $N_1$ such that $1 \leq w^{N_1}(i_0) \leq r_\pl$ and $F(N_1) \geq F(N)$, for all $0 \leq N < N_0$.  In particular, $F(N_1) \geq F(0) = 1$.  Since $F(N_0 - 1) = 0 < F(N_1)$, the value of $F$ has to decrease by $1$ at some point; \ie, there exists some $N_2 > N_1$ such that $r_\pl < w^{N_2}(i_0) \leq n + 1$, $\wt_{\pl, w^{N_2}(i_0)} - \wt_{\pl, w^{N_2 + 1}(i_0)} = -1$, and $F(N_2) = F(N_1) - 1$.  If $1 \leq w^{N_2 + 1}(i_0) \leq r_\pl$, we set $i_1 := w^{N_2 + 1}(i_0)$ and $i_2 := w^{N_2}(i_0)$.  Otherwise, and we set $i_1 := w^{N_1}(i_0)$ and $i_2 := w^{N_2 + 1}(i_0)$.  In both cases, we have $1 \leq i_1 \leq r_\pl < i_2 \leq n + 1$ and $\wt_{\pl, i_1} - \wt_{\pl, i_2} = (\wt_\pl, e_{i_1} - e_{i_2}) = 1$, justifying the above claim.

Conversely, suppose $\rt = e_i - e_j$, for some $1 \leq i \leq r_\pl < j \leq n + 1$, is a root of $\mathfrak{n}_\pl^\std$, with coroot $\dual{\rt} = e_i - e_j$; and suppose $(\wt_\pl, \dual{\rt}) = \wt_{\pl, i} - \wt_{\pl, j} = 1$.  Let $w \in \WG_{\mathfrak{g}_\pl}$ be the simple reflection with respect to $\rt$.  Then $\wt_\pl - w(\wt_\pl) = (\wt_\pl, \dual{\rt}) \rt = \rt = e_i - e_j = \wt_{0, \pl} - \wt_{0, \pl}'$, if we take $\wt_{0, \pl}' = w(\wt_{0, \pl})$ in this case.

We shall write $\wt_\pl = (\wt_{\pl, 1}, \ldots, \wt_{\pl, n})$ in the remaining classical cases.

\subsubsection{Type $\typeB_n$}\label{sec-key-obs-type-B-n}

In this case, by using the same notation system as in \cite[\aSec 3.3.2]{Lan:2016-vtcac}, we have the following:
\begin{itemize}
    \item $\wt_{0, \pl} = \varpi_1 = (1; 0, \ldots, 0)$;

    \item $\WG_{\mathfrak{g}_\pl} \cong \SG_n \ltimes \{ \pm 1 \}^n$, and we shall denote by $\sgn(w, i)$ the sign of $w \in \WG_{\mathfrak{g}_\pl}$ at the $i$-th factor of $\{ \pm 1 \}^n$; and

    \item $\wt_{0, \pl} - \wt_{0, \pl}'$ is either $(2; 0, \ldots, 0)$ or $(1; 0, \ldots, 0)$ or $(1; 0, \ldots, 0, \pm 1, 0, \ldots, 0)$.  \Pth{See, \eg, the explicit description on the so-called \emph{standard representation} of $\mathfrak{so}_{2n + 1}(\bC)$ in \cite[\S 18.1 and \S 19.4]{Fulton/Harris:1991-RT}.}
\end{itemize}
Hence, $\wt_\pl - w(\wt_\pl) = \wt_{0, \pl} - \wt_{0, \pl}'$ exactly when one of the following three cases holds:
\begin{enumerate}
    \item\label{case-B-n-1}
        \begin{itemize}
            \item $\wt_{\pl, 1} - \sgn(w, 1) \, \wt_{\pl, w(1)} = 2$; and

            \item $\wt_{\pl, i} - \sgn(w, i) \, \wt_{\pl, w(i)} = 0$, for all $1 < i \leq n$.
        \end{itemize}

    \item\label{case-B-n-2}
        \begin{itemize}
            \item $\wt_{\pl, 1} - \sgn(w, 1) \, \wt_{\pl, w(1)} = 1$; and

            \item $\wt_{\pl, i} - \sgn(w, i) \, \wt_{\pl, w(i)} = 0$, for all $1 < i \leq n$.
        \end{itemize}

    \item\label{case-B-n-3}
        \begin{itemize}
            \item $\wt_{\pl, 1} - \sgn(w, 1) \, \wt_{\pl, w(1)} = 1$;

            \item $\wt_{\pl, i_0} - \sgn(w, i_0) \, \wt_{\pl, w(i_0)} = \pm 1$, for exactly one $1 < i_0 \leq n$; and

            \item $\wt_{\pl, i} - \sgn(w, i) \, \wt_{\pl, w(i)} = 0$, for all $i \neq 1, i_0$.
        \end{itemize}
\end{enumerate}
Assuming that one of the above cases holds, we would like to show that $(\wt_\pl, \dual{\rt}) \in \{ 1, 2 \}$ or $\{ 1 \}$ depending on whether $\rt$ is a shorter root or not.  Note that the roots of $\mathfrak{n}_\pl^\std$ are $e_1$ and $e_1 \pm e_i$, for all $1 < i \leq n$, and $e_1$ is the only shorter root.

In case \Refenum{\ref{case-B-n-1}}, consider the orbit of $1$ under the action of powers of $w$ \Pth{as elements of $\SG_n$}.  Let $N_1$ denote the cardinality of this orbit.  For all $1 \leq j < N_1$, since $w^j(1) \neq 1$, we have $\wt_{\pl, w^j(1)} - \sgn(w, w^j(1)) \, \wt_{\pl, w^{j + 1}(1)} = 0$, or rather $\wt_{\pl, w^j(1)} = \sgn(w, w^j(1)) \, \wt_{\pl, w^{j + 1}(1)}$.  Since we cannot have $\wt_{\pl, 1} - \wt_{\pl, 1} = 2$, we must have $\wt_{\pl, 1} - \sgn(w, 1) \, \wt_{\pl, w(1)} = \wt_{\pl, 1} - \prod_{0 \leq j < N_1} \sgn(w, w^j(1)) \, \wt_{\pl, 1} = \wt_{\pl, 1} + \wt_{\pl, 1} = 2$, in which case $\sgn(w, w^j(1)) = -1$ for an odd number of exponents $0 \leq j < N_1$.  If we consider the root $\rt = e_1$ of $\mathfrak{n}_\pl^\std$, with coroot $\dual{\rt} = 2 e_1$, then $(\wt_\pl, \dual{\rt}) = 2 \wt_{\pl, 1} = 2$.

In cases \Refenum{\ref{case-B-n-2}} and \Refenum{\ref{case-B-n-3}}, if $w(1) = 1$, in which case $\sgn(w, 1) = -1$ and $2 \wt_{\pl, 1} = 1$, and if we consider the root $\rt = e_1$ of $\mathfrak{n}_\pl^\std$, with coroot $\dual{\rt} = 2 e_1$, then $(\wt_\pl, \dual{\rt}) = 2 \wt_{\pl, 1} = 1$.  Otherwise, if $w(1) \neq 1$, and if we consider the root $\rt = e_1 - \sgn(w, 1) \, e_{w(1)}$ of $\mathfrak{n}_\pl^\std$, with coroot $\dual{\rt} = e_1 - \sgn(w, 1) \, e_{w(1)}$, then $(\wt_\pl, \dual{\rt}) = \wt_{\pl, 1} - \sgn(w, 1) \, \wt_{\pl, w(1)} = 1$.

In all three cases, we have $(\wt_\pl, \dual{\rt}) \in \{ 1, 2 \}$ or $\{ 1 \}$ depending on whether $\rt$ is a shorter root or not.

Conversely, suppose $\rt$ is a root of $\mathfrak{n}_\pl^\std$.
\begin{itemize}
    \item Suppose $\rt = e_1$, with $\dual{\rt} = 2 e_1$.  Let $w \in \WG_{\mathfrak{g}_\pl}$ denote the simple reflection with respect to $\rt$.  Then $\wt_\pl - w(\wt_\pl) = (\wt_\pl, \dual{\rt}) \rt$.  If $(\wt_\pl, \dual{\rt}) = (\wt_\pl, 2 e_1) = 2$, then $\wt_\pl - w(\wt_\pl) = (2; 0, \ldots, 0)$, and we are in case \Refenum{\ref{case-B-n-1}} above.  If $(\wt_\pl, \dual{\rt}) = (\wt_\pl, 2 e_1) = 1$, then $\wt_\pl - w(\wt_\pl) = (1; 0, \ldots, 0)$, and we are in case \Refenum{\ref{case-B-n-2}} above.

    \item Suppose $\rt = e_1 \pm e_i$ for some $1 < i \leq n$ and some sign $\pm$, with $\dual{\rt} = e_1 \pm e_i$.  Suppose $(\wt_\pl, \dual{\rt}) = 1$.  Let $w \in \WG_{\mathfrak{g}_\pl}$ denote the simple reflection with respect to $\rt$.  Then $\wt_\pl - w(\wt_\pl) = (\wt_\pl, \dual{\rt}) \rt = \rt = e_1 \pm e_i = \wt_{0, \pl} - \wt_{0, \pl}'$, if we take $\wt_{0, \pl}' = w(\wt_{0, \pl})$ in this case, and we are in case \Refenum{\ref{case-B-n-3}} above.
\end{itemize}
These show that, if $(\wt_\pl, \dual{\rt}) \in \{ 1, 2 \}$ or $\{ 1 \}$ depending on whether $\rt$ is a shorter root or not, then we are in one of the three cases \Refenum{\ref{case-B-n-1}}, \Refenum{\ref{case-B-n-2}}, and \Refenum{\ref{case-B-n-3}} above, for some $w \in \WG_{\mathfrak{g}_\pl}$ and weight $\wt_{0, \pl}'$ of $\rV_{0, \pl}$ other than $\wt_{0, \pl}$.

\subsubsection{Type $\typeC_n$}\label{sec-key-obs-type-C-n}

In this case, by using the same notation system as in \cite[\aSec 3.3.3]{Lan:2016-vtcac}, we have the following:
\begin{itemize}
    \item $\wt_{0, \pl} = \varpi_n = (1, 1, \ldots, 1)$;

    \item $\WG_{\mathfrak{g}_\pl} \cong \SG_n \ltimes \{ \pm 1 \}^n$, and we shall denote by $\sgn(w, i)$ the sign of $w \in \WG_{\mathfrak{g}_\pl}$ at the $i$-th factor of $\{ \pm 1 \}^n$; and

    \item $\wt_{0, \pl} - \wt_{0, \pl}'$ is any nonzero vector with entries in $\{ 0, 1, 2 \}$, with an even \Pth{and possibly zero} number of occurrences of $1$'s.  \Pth{See, \eg, the explicit descriptions on representations of $\mathfrak{sp}_{2 n}(\bC)$ in \cite[\S\S 17.2--17.3]{Fulton/Harris:1991-RT}.}
\end{itemize}
Hence, $\wt_\pl - w(\wt_\pl) = \wt_{0, \pl} - \wt_{0, \pl}'$ exactly when both of the following holds:
\begin{itemize}
    \item $\wt_{\pl, i} - \sgn(w, i) \, \wt_{\pl, w(i)} \in \{ 0, 1, 2 \}$, with an even number of occurrences of $1$'s as $i$ runs over all integers from $1$ to $n$; and

    \item there exists some integer $i_0$ such that $\wt_{\pl, i_0} - \sgn(w, i_0) \, \wt_{\pl, w(i_0)} \neq 0$.
\end{itemize}
Note that the roots of $\mathfrak{n}_\pl^\std$ are $e_{i_1} + e_{i_2}$, for all $1 \leq i_1, i_2 \leq n$ \Pth{including the case where $i_1 = i_2$ and $e_{i_1} + e_{i_2} = 2 e_{i_1}$}, and the shorter roots are those with $i_1 \neq i_2$.  The corresponding coroots are $e_{i_1} + e_{i_2}$ when $i_1 \neq i_2$, and $e_{i_1}$ when $i_1 = i_2$.

Assuming that both of the above hold, we claim that either $(\wt_\pl, e_{i_1} + e_{i_2}) \in \{ 1, 2 \}$, for some $1 \leq i_1, i_2 \leq n$ such that $i_1 \neq i_2$; or $(\wt_\pl, e_{i_1}) = 1$, for some $1 \leq i_1 \leq n$.  Let $i_0$ be any integer such that $1 \leq i_0 \leq n$ and $\wt_{\pl, i_0} - \sgn(w, i_0) \, \wt_{\pl, w(i_0)} \in \{ 1, 2 \}$.

Firstly, suppose $w(i_0) \neq i_0$.  Consider its orbit under the action of powers of $w$ \Pth{as elements of $\SG_n$}.  Let $N_0$ denote the cardinality of this orbit.  Since $w(i_0) \neq i_0$, we have $N_0 > 1$.  Moreover, $w^{j + 1}(i_0) \neq w^j(i_0)$, for all $j \in \bZ$.  If $\sgn(w, w^j(i_0)) = 1$, for all $0 \leq j < N_0$, then $\wt_{\pl, w^j(i_0)} - \sgn(w, w^j(i_0)) \, \wt_{\pl, w^{j + 1}(i_0)} = \wt_{\pl, w^j(i_0)} - \wt_{\pl, w^{j + 1}(i_0)} \in \{ 0, 1, 2 \}$, for all $0 \leq j < N_0$; and \Pth{as integers} $0 = \sum_{j = 0}^{N_0 - 1} (\wt_{\pl, w^j(i_0)} - \wt_{\pl, w^{j + 1}(i_0)}) \geq \wt_{\pl, i_0} - \wt_{\pl, w(i_0)} > 0$, which is a contradiction.  Hence, there exists some $0 \leq j_0 < N_0$ such that $\sgn(w, w^j(i_0)) = 1$, for all $0 \leq j < j_0$; but $\sgn(w, w^{j_0}(i_0)) = -1$.  In this case, $\wt_{\pl, w^{j_0}(i_0)} - \sgn(w, w^{j_0}(i_0)) \, \wt_{\pl, w^{j_0 + 1}(i_0)} = \wt_{\pl, w^{j_0}(i_0)} + \wt_{\pl, w^{j_0 + 1}(i_0)} \in \{ 0, 1, 2 \}$.  If $\wt_{\pl, w^{j_0}(i_0)} + \wt_{\pl, w^{j_0 + 1}(i_0)} \in \{ 1, 2 \}$, then we can verify the claim by setting $i_1 = w^{j_0}(i_0)$ and $i_2 = w^{j_0 + 1}(i_0)$.  Otherwise, $\wt_{\pl, w^{j_0}(i_0)} + \wt_{\pl, w^{j_0 + 1}(i_0)} = 0$, or rather $\wt_{\pl, w^{j_0 + 1}(i_0)} = - \wt_{\pl, w^{j_0}(i_0)}$, and we cannot have $j_0 = 0$.  Let $j_1$ be the largest integer such that $0 \leq j_1 < j_0$ and $\wt_{\pl, w^{j_1}(i_0)} - \sgn(w, w^{j_1}(i_0)) \, \wt_{\pl, w^{j_1 + 1}(i_0)} = \wt_{\pl, w^{j_1}(i_0)} - \wt_{\pl, w^{j_1 + 1}(i_0)} \in \{ 1, 2 \}$ \Pth{\ie, it is nonzero}.  Then $\wt_{\pl, w^j(i_0)} - \sgn(w, w^j(i_0)) \, \wt_{\pl, w^{j + 1}(i_0)} = \wt_{\pl, w^j(i_0)} - \wt_{\pl, w^{j + 1}(i_0)} = 0$, or rather $\wt_{\pl, w^j(i_0)} = \wt_{\pl, w^{j + 1}(i_0)}$, for all $j_1 < j < j_0$.  Therefore, $\wt_{\pl, w^{j_1 + 1}(i_0)} = \cdots = \wt_{\pl, w^{j_0}(i_0)} = - \wt_{\pl, w^{j_0 + 1}(i_0)}$, and $\wt_{\pl, w^{j_1}(i_0)} - \wt_{\pl, w^{j_1 + 1}(i_0)} = \wt_{\pl, w^{j_1}(i_0)} + \wt_{\pl, w^{j_0 + 1}(i_0)} \in \{ 1, 2 \}$.  Thus, we can verify the claim by setting $i_1 = w^{j_1}(i_0)$ and $i_2 = w^{j_0 + 1}(i_0)$.

Secondly, suppose $w(i_0) = i_0$.  Then we must have $\sgn(w, i_0) = -1$ and $2 \wt_{\pl, i_0} \in \{ 1, 2 \}$.  If $2 \wt_{\pl, i_0} = 2$, then we can verify the claim by setting $i_1 = i_2 = i_0$.  Otherwise, $2 \wt_{\pl, i_0} = 1$.  Since the number of occurrences of $1$'s in $\wt_{0, \pl} - \wt_{0, \pl}'$ is even, we know there exists some $i_0'$ such that $\wt_{\pl, i_0'} - \sgn(w, i_0') \, \wt_{\pl, w(i_0')} = 1$.  If $w(i_0') = i_0'$, then we must have $2 \wt_{\pl, i_0'} = 1$, and hence $\wt_{\pl, i_0} + \wt_{\pl, i_0'} = 1$, in which case we can verify the claim by setting $i_1 = i_0$ and $i_2 = i_0'$.  If $w(i_0') \neq i_0'$, then we can replace $i_0$ with $i_0'$, and resort to the previous paragraph.  Now the claim has been verified in all cases.

Conversely, suppose $\rt$ is a root of $\mathfrak{n}_\pl^\std$.
\begin{itemize}
    \item Suppose $\rt = 2 e_i$, with $\dual{\rt} = e_i$, for some $1 \leq i \leq n$.  Let $w \in \WG_{\mathfrak{g}_\pl}$ denote the simple reflection with respect to $\rt$.  Suppose $(\wt_\pl, \dual{\rt}) = 1$.  Then $\wt_\pl - w(\wt_\pl) = (\wt_\pl, \dual{\rt}) \rt = \rt = 2 e_i = \wt_{0, \pl} - \wt_{0, \pl}'$, if we take $\wt_{0, \pl}' = w(\wt_{0, \pl})$.

    \item Suppose $\rt = e_i + e_j$ for some $1 \leq i < j \leq n$, with $\dual{\rt} = e_i + e_j$.  Let $w \in \WG_{\mathfrak{g}_\pl}$ denote the simple reflection with respect to $\rt$.  Suppose $(\wt_\pl, \dual{\rt}) \in \{ 1, 2 \}$.  Then $\wt_\pl - w(\wt_\pl) = (\wt_\pl, \dual{\rt}) \rt$ is either $e_i + e_j$ or $2 e_i + 2 e_j$, both of which can occur as $\wt_{0, \pl} - \wt_{0, \pl}'$ for some $\wt_{0, \pl}'$.  \Pth{See the above.}
\end{itemize}

\subsubsection{Type $\typeD_n^\bR$}\label{sec-key-obs-type-D-n-R}

In this case, by using the same notation system as in \cite[\aSec 3.3.4, case of type $\typeD_n^\bR$]{Lan:2016-vtcac}, we have the following:
\begin{itemize}
    \item $\wt_{0, \pl} = \varpi_1 = (1; 0, \ldots, 0)$;

    \item $\WG_{\mathfrak{g}_\pl} \cong \SG_n \ltimes \{ \pm 1 \}^{n, +}$, where $+$ indicates an even number of $-1$'s, and we shall denote by $\sgn(w, i)$ the sign of $w \in \WG_{\mathfrak{g}_\pl}$ at the $i$-th factor of $\{ \pm 1 \}^{n, +}$; and

    \item $\wt_{0, \pl} - \wt_{0, \pl}'$ is either $(2; 0, \ldots, 0)$ or $(1; 0, \ldots, 0, \pm 1, 0, \ldots, 0)$.  \Pth{See, \eg, the explicit description on the so-called \emph{standard representation} of $\mathfrak{so}_{2n}(\bC)$ in \cite[\S 18.1 and \S 19.2]{Fulton/Harris:1991-RT}.}
\end{itemize}
Hence, $\wt_\pl - w(\wt_\pl) = \wt_{0, \pl} - \wt_{0, \pl}'$ exactly when one of the following three cases holds:
\begin{enumerate}
    \item\label{case-D-n-R-1}
        \begin{itemize}
            \item $\wt_{\pl, 1} - \sgn(w, 1) \, \wt_{\pl, w(1)} = 2$; and

            \item $\wt_{\pl, i} - \sgn(w, i) \, \wt_{\pl, w(i)} = 0$, for all $1 < i \leq n$.
        \end{itemize}

    \item\label{case-D-n-R-2}
        \begin{itemize}
            \item $\wt_{\pl, 1} - \sgn(w, 1) \, \wt_{\pl, w(1)} = 1$;

            \item $\wt_{\pl, i_0} - \sgn(w, i_0) \, \wt_{\pl, w(i_0)} = \pm 1$, for exactly one $1 < i_0 \leq n$; and

            \item $\wt_{\pl, i} - \sgn(w, i) \, \wt_{\pl, w(i)} = 0$, for all $i \neq 1, i_0$.
        \end{itemize}
\end{enumerate}
Assuming that one of the above cases holds, we would like to show that $(\wt_\pl, \dual{\rt}) = 1$ for some root $\rt$ of $\mathfrak{n}_\pl^\std$.  Note that the roots of $\mathfrak{n}_\pl^\std$ are $e_1 \pm e_i$, for all $1 < i \leq n$ and both signs $\pm$, and there are no shorter roots.

In case \Refenum{\ref{case-D-n-R-1}}, consider the orbit of $1$ under the action of powers of $w$ \Pth{as elements of $\SG_n$}.  Let $N_1$ denote the cardinality of this orbit.  For all $1 \leq j < N_1$, since $w^j(1) \neq 1$, we have $\wt_{\pl, w^j(1)} - \sgn(w, w^j(1)) \, \wt_{\pl, w^{j + 1}(1)} = 0$, or rather $\wt_{\pl, w^j(1)} = \sgn(w, w^j(1)) \, \wt_{\pl, w^{j + 1}(1)}$.  Since we cannot have $\wt_{\pl, 1} - \wt_{\pl, 1} = 2$, we must have $\wt_{\pl, 1} - \sgn(w, 1) \, \wt_{\pl, w(1)} = \wt_{\pl, 1} - \prod_{0 \leq j < N_1} \sgn(w, w^j(1)) \, \wt_{\pl, 1} = \wt_{\pl, 1} + \wt_{\pl, 1} = 2$, in which case $\sgn(w, w^j(1)) = -1$ for an odd number of exponents $0 \leq j < N_1$.  Consider all the orbits of the action of powers of $w$ on $\{ 1, 2, \ldots, n \}$.  Since the total number of $i$'s in the orbit of $1$ such that $\sgn(w, i) = -1$ is odd, and since the total number of $i$'s in $\{ 1, 2, \ldots, n \}$ such that $\sgn(w, i) = -1$ is even, there must exist some orbit not containing $1$ such that the total number of $i$'s in that orbit such that $\sgn(w, i) = -1$ is also odd.  Let $i_0$ be any element of this latter orbit, and let $N_0$ be the cardinality of this orbit.  Then $\wt_{\pl, w^j(i_0)} - \sgn(w, w^j(i_0)) \, \wt_{\pl, w^{j + 1}(i_0)} = 0$, or rather $\wt_{\pl, w^j(i_0)} = \sgn(w, w^j(i_0)) \, \wt_{\pl, w^j(i_0)}$, for all $0 \leq j < N_0$.  In this case, $\wt_{\pl, i_0} = \prod_{0 \leq j < N_0} \sgn(w, w^j(i_0)) \, \wt_{\pl, i_0} = - \wt_{\pl, i_0}$, which implies that $\wt_{\pl, i_0} = 0$.  If we consider the root $\rt = e_1 - e_{i_0}$ of $\mathfrak{n}_\pl^\std$, with coroot $e_1 - e_{i_0}$, then $(\wt_\pl, \dual{\rt}) = \wt_{\pl, 1} - \wt_{\pl, i_0} = 1$.

In case \Refenum{\ref{case-D-n-R-2}}, firstly, suppose $w(1) = 1$, in which case $\sgn(w, 1) = -1$ and $2 \wt_{\pl, 1} = 1$.  By assumption, we have $\wt_{\pl, i_0} - \sgn(w, i_0) \, \wt_{\pl, w(i_0)} = \pm 1$, for exactly one $1 < i_0 \leq n$, for some sign $\pm$.  Consider the orbit of $i_0$ under the action of powers of $w$.  Let $N_0$ denote the cardinality of this orbit.  For all $1 \leq j < N_0$, since $w^j(i_0) \neq i_0$, we have $\wt_{\pl, w^j(i_0)} - \sgn(w, w^j(i_0)) \, \wt_{\pl, w^{j + 1}(i_0)} = 0$, or rather $\wt_{\pl, w^j(i_0)} = \sgn(w, w^j(i_0)) \, \wt_{\pl, w^{j + 1}(i_0)}$.  Since we cannot have $\wt_{\pl, i_0} - \wt_{\pl, i_0} = \pm 1$, we must have $\wt_{\pl, i_0} - \sgn(w, i_0) \, \wt_{\pl, w(i_0)} = \wt_{\pl, i_0} - \prod_{0 \leq j < N_0} \sgn(w, w^j(i_0)) \, \wt_{\pl, i_0} = \wt_{\pl, i_0} + \wt_{\pl, i_0} = \pm 1$, in which case $\sgn(w, w^j(i_0)) = -1$ for an odd number of exponents $0 \leq j < N_0$.  Thus, $\wt_{\pl, 1} \pm \wt_{\pl, i_0} = 1$.  Secondly, suppose $w(1) \neq 1$.  Then $\wt_{\pl, 1} - \sgn(w, 1) \, \wt_{\pl, w(1)} = 1$ is also of the form $\wt_{\pl, 1} \pm \wt_{\pl, i_0}$ for some $1 < i_0 = w(1) \leq n$ and some sign $\pm$.  In both cases, if we consider the root $\rt = e_1 \pm e_{i_0}$, where the sign $\pm$ is the same as above, with coroot $\dual{\rt} = e_1 \pm e_{i_0}$, then $(\wt_\pl, \dual{\rt}) = \wt_{\pl, 1} \pm \wt_{\pl, i_0} = 1$.

Conversely, suppose $\rt = e_1 \pm e_i$ is a root of $\mathfrak{n}_\pl^\std$, for some $1 < i \leq n$ and some sign $\pm$.  Suppose $(\wt_\pl, \dual{\rt}) = 1$.  Let $w \in \WG_{\mathfrak{g}_\pl}$ denote the simple reflection with respect to $\rt$.  Then $\wt_\pl - w(\wt_\pl) = (\wt_\pl, \dual{\rt}) \rt = \rt = e_1 \pm e_i = \wt_{0, \pl} - \wt_{0, \pl}'$, if we take $\wt_{0, \pl}' = w(\wt_{0, \pl})$ in this case; and we are in case \Refenum{\ref{case-D-n-R-2}} above.  \Pth{By combining this with the above arguments, we see that case \Refenum{\ref{case-D-n-R-1}} is actually part of case \Refenum{\ref{case-D-n-R-2}}.}

\subsubsection{Type $\typeD_n^\bH$}\label{sec-key-obs-type-D-n-H}

In this case, by using the same notation system as in \cite[\aSec 3.3.4, case of type $\typeD_n^\bH$]{Lan:2016-vtcac}, up to a change of coordinates, we have the following:
\begin{itemize}
    \item $\wt_{0, \pl} = \varpi_n = (\frac{1}{2}, \frac{1}{2}, \ldots, \frac{1}{2})$;

    \item $\WG_{\mathfrak{g}_\pl} \cong \SG_n \ltimes \{ \pm 1 \}^{n, +}$ \Pth{with the same meaning as before}; and

    \item $\wt_{0, \pl} - \wt_{0, \pl}'$ is any vector with entries in $\{ 0, 1 \}$, with a positive and even number of occurrences of $1$'s.  \Pth{See, \eg, the explicit description on the so-called \emph{half-spin representations} of $\mathfrak{so}_{2n}(\bC)$ in \cite[\S 20.1]{Fulton/Harris:1991-RT}.}
\end{itemize}
Then $\wt_\pl - w(\wt_\pl) = \wt_{0, \pl} - \wt_{0, \pl}'$ exactly when both of the following holds:
\begin{itemize}
    \item $\wt_{\pl, i} - \sgn(w, i) \, \wt_{\pl, w(i)} \in \{ 0, 1 \}$, with an even number of occurrences of $1$'s as $i$ runs over all integers from $1$ to $n$; and

    \item there exists some $i_0$ such that $\wt_{\pl, i_0} - \sgn(w, i_0) \, \wt_{\pl, w(i_0)} \neq 0$.
\end{itemize}
Note that the roots of $\mathfrak{n}_\pl^\std$ are of the form $e_{i_1} + e_{i_2}$, with associated coroots of the form $e_{i_1} + e_{i_2}$, for $1 \leq i_1 < i_2 \leq n$.  There are no shorter roots.

Assuming that both of the above hold, we claim that $(\wt_\pl, e_{i_1} + e_{i_2}) = 1$, for some $1 \leq i_1, i_2 \leq n$ such that $i_1 \neq i_2$.  Let $i_0$ be any integer such that $1 \leq i_0 \leq n$ and $\wt_{\pl, i_0} - \sgn(w, i_0) \, \wt_{\pl, w(i_0)} = 1$.

Firstly, suppose $w(i_0) \neq i_0$.  Consider its orbit under the action of powers of $w$ \Pth{as elements of $\SG_n$}.  Let $N_0$ denote the cardinality of this orbit.  Since $w(i_0) \neq i_0$, we have $N_0 > 1$.  Moreover, $w^{j + 1}(i_0) \neq w^j(i_0)$, for all $j \in \bZ$.  If $\sgn(w, w^j(i_0)) = 1$, for all $0 \leq j < N_0$, then $\wt_{\pl, w^j(i_0)} - \sgn(w, w^j(i_0)) \, \wt_{\pl, w^{j + 1}(i_0)} = \wt_{\pl, w^j(i_0)} - \wt_{\pl, w^{j + 1}(i_0)} \in \{ 0, 1 \}$, for all $0 \leq j < N_0$; and \Pth{as integers} $0 = \sum_{j = 0}^{N_0 - 1} (\wt_{\pl, w^j(i_0)} - \wt_{\pl, w^{j + 1}(i_0)}) \geq \wt_{\pl, i_0} - \wt_{\pl, w(i_0)} > 0$, which is a contradiction.  Hence, there exists some $0 \leq j_0 < N_0$ such that $\sgn(w, w^j(i_0)) = 1$, for all $0 \leq j < j_0$; but $\sgn(w, w^{j_0}(i_0)) = -1$.  In this case, $\wt_{\pl, w^{j_0}(i_0)} - \sgn(w, w^{j_0}(i_0)) \, \wt_{\pl, w^{j_0 + 1}(i_0)} = \wt_{\pl, w^{j_0}(i_0)} + \wt_{\pl, w^{j_0 + 1}(i_0)} \in \{ 0, 1 \}$.  If $\wt_{\pl, w^{j_0}(i_0)} + \wt_{\pl, w^{j_0 + 1}(i_0)} = 1$, then we can verify the claim by setting $i_1 = w^{j_0}(i_0)$ and $i_2 = w^{j_0 + 1}(i_0)$.  Otherwise, $\wt_{\pl, w^{j_0}(i_0)} + \wt_{\pl, w^{j_0 + 1}(i_0)} = 0$, or rather $\wt_{\pl, w^{j_0 + 1}(i_0)} = - \wt_{\pl, w^{j_0}(i_0)}$, and we cannot have $j_0 = 0$.  Let $j_1$ be the largest integer such that $0 \leq j_1 < j_0$ and $\wt_{\pl, w^{j_1}(i_0)} - \sgn(w, w^{j_1}(i_0)) \, \wt_{\pl, w^{j_1 + 1}(i_0)} = \wt_{\pl, w^{j_1}(i_0)} - \wt_{\pl, w^{j_1 + 1}(i_0)} = 1$ \Pth{\ie, it is nonzero}.  Then $\wt_{\pl, w^j(i_0)} - \sgn(w, w^j(i_0)) \, \wt_{\pl, w^{j + 1}(i_0)} = \wt_{\pl, w^j(i_0)} - \wt_{\pl, w^{j + 1}(i_0)} = 0$, or rather $\wt_{\pl, w^j(i_0)} = \wt_{\pl, w^{j + 1}(i_0)}$, for all $j_1 < j < j_0$.  Therefore, $\wt_{\pl, w^{j_1 + 1}(i_0)} = \cdots = \wt_{\pl, w^{j_0}(i_0)} = - \wt_{\pl, w^{j_0 + 1}(i_0)}$, and $\wt_{\pl, w^{j_1}(i_0)} - \wt_{\pl, w^{j_1 + 1}(i_0)} = \wt_{\pl, w^{j_1}(i_0)} + \wt_{\pl, w^{j_0 + 1}(i_0)} = 1$.  Thus, we can verify the claim by setting $i_1 = w^{j_1}(i_0)$ and $i_2 = w^{j_0 + 1}(i_0)$.

Secondly, suppose $w(i_0) = i_0$.  Then we must have $\sgn(w, i_0) = -1$ and $2 \wt_{\pl, i_0} = 1$.  Since the number of occurrences of $1$'s in $\wt_{0, \pl} - \wt_{0, \pl}'$ is even, we know there exists some $i_0'$ such that $\wt_{\pl, i_0'} - \sgn(w, i_0') \, \wt_{\pl, w(i_0')} = 1$.  If $w(i_0') = i_0'$, then we must have $2 \wt_{\pl, i_0'} = 1$, and hence $\wt_{\pl, i_0} + \wt_{\pl, i_0'} = 1$, in which case we can verify the claim by setting $i_1 = i_0$ and $i_2 = i_0'$.  If $w(i_0') \neq i_0'$, then we can replace $i_0$ with $i_0'$, and resort to the previous paragraph.  Now the claim has been verified in all cases.

Conversely, suppose $\rt = e_i + e_j$, for some $1 \leq i < j \leq n$, is a root of $\mathfrak{n}_\pl^\std$, with $\dual{\rt} = e_i + e_j$.  Let $w \in \WG_{\mathfrak{g}_\pl}$ denote the simple reflection with respect to $\rt$.  Suppose $(\wt_\pl, \dual{\rt}) = 1$.  Then $\wt_\pl - w(\wt_\pl) = (\wt_\pl, \dual{\rt}) \rt = \rt = e_i + e_j$, which can occur as $\wt_{0, \pl} - \wt_{0, \pl}'$ for some weight $\wt_{0, \pl}'$ of $\rV_{0, \pl}$ other than $\wt_{0, \pl}$.  \Pth{See the above.}

\subsection{Computations in exceptional cases}\label{sec-key-obs-ex}

In this subsection, we shall finish the proof of Proposition \ref{prop-key-obs}, or rather Proposition \ref{prop-key-obs-factor}, by explaining our computations in the remaining cases of types $\typeE_6$ and $\typeE_7$.  Since there are no shorter roots in these two cases, we just need to verify that a weight $\wt_\pl: \mathfrak{h}_\pl \to C$ satisfies $\wt_\pl - w(\wt_\pl) = \wt_{0, \pl} - \wt_{0, \pl}'$, for some $1 \neq w \in \WG_{\mathfrak{g}_\pl}$ and some weight $\wt_{0, \pl}'$ of $\rV_{0, \pl}$ other than $\wt_{0, \pl}$, if and only if there exists some root $\rt$ of $\mathfrak{n}_\pl^\std$ such that $(\wt_\pl, \dual{\rt}) = 1$.  Unlike the classical cases in Section \ref{sec-key-obs}, where the Weyl groups can be explicitly described as signed or unsigned permutations, the Weyl groups of Lie algebras of types $\typeE_6$ and $\typeE_7$ do not have such handy descriptions.

\subsubsection{General description of the methods}\label{sec-key-obs-ex-method}

Let us start with the \Qtn{if} part, which is easier and can be done without using computers.  We shall resort to the following lemma, and verify the two conditions in it:
\begin{lemma}\label{lem-key-obs-factor-if}
    For each $\pl \in \Pl$, the condition $(\wt_\pl, \dual{\rt}) = 1$ for some root $\rt$ of $\mathfrak{n}_\pl^\std$ implies the condition that $\wt_\pl - w(\wt_\pl) = \wt_{0, \pl} - \wt_{0, \pl}'$ for some weight $\wt_{0, \pl}'$ of $\rV_{0, \pl}$ other than $\wt_{0, \pl}$, provided that the following two conditions hold:
    \begin{enumerate}
        \item\label{lem-key-obs-factor-if-trans} $\WG_{\mathfrak{m}_\pl}$ acts transitively on the roots of $\mathfrak{n}_\pl^\std$.

        \item\label{lem-key-obs-factor-if-base} There exists at least one root $\rt_0$ of $\mathfrak{n}_\pl^\std$ such that $\rt_0 = \wt_{0, \pl} - \wt_{0, \pl}''$ for some weight $\wt_{0, \pl}''$ of $\rV_{0, \pl}$ other than $\wt_{0, \pl}$.
    \end{enumerate}
\end{lemma}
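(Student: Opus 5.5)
The plan is to mirror the \Qtn{conversely} arguments of the classical cases in Section \ref{sec-key-obs}, using the simple reflection attached to the given root and then transporting the base root $\rt_0$ to it by $\WG_{\mathfrak{m}_\pl}$. Suppose $\rt$ is a root of $\mathfrak{n}_\pl^\std$ with $(\wt_\pl, \dual{\rt}) = 1$. Let $w = s_\rt \in \WG_{\mathfrak{g}_\pl}$ be the reflection with respect to $\rt$. Then $\wt_\pl - w(\wt_\pl) = (\wt_\pl, \dual{\rt})\, \rt = \rt$, so it remains to write $\rt = \wt_{0, \pl} - \wt_{0, \pl}'$ for some weight $\wt_{0, \pl}' \neq \wt_{0, \pl}$ of $\rV_{0, \pl}$. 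Note also that $w \neq 1$ automatically.

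By condition \Refenum{\ref{lem-key-obs-factor-if-trans}}, there is some $u \in \WG_{\mathfrak{m}_\pl}$ with $u(\rt_0) = \rt$, where $\rt_0$ is the root provided by condition \Refenum{\ref{lem-key-obs-factor-if-base}}. The key point is that $\wt_{0, \pl}$ is $\WG_{\mathfrak{m}_\pl}$-invariant. Indeed, $\wt_0$ is the weight of the one-dimensional representation $\rW_0$ of $\Grp{M}_C$, so it pairs trivially with every coroot of $\mathfrak{m}$; this is the same reasoning used for $\frac{1}{2}\wt_{\det \mathfrak{n}}$ in Section \ref{sec-hor-act}. Hence $u(\wt_{0, \pl}) = \wt_{0, \pl}$, and
\[
    \rt = u(\rt_0) = u(\wt_{0, \pl}) - u(\wt_{0, \pl}'') = \wt_{0, \pl} - u(\wt_{0, \pl}'').
\]

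Now set $\wt_{0, \pl}' := u(\wt_{0, \pl}'')$. Since $\rV_0$ is a representation of $\Grp{G}_C$, its set of weights is stable under the full Weyl group $\WG_{\mathfrak{g}_\pl} \supset \WG_{\mathfrak{m}_\pl}$, so $\wt_{0, \pl}'$ is again a weight of $\rV_{0, \pl}$. It differs from $\wt_{0, \pl}$ because $\rt \neq 0$. This gives the desired identity $\wt_\pl - w(\wt_\pl) = \wt_{0, \pl} - \wt_{0, \pl}'$.

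The only step requiring care is the $\WG_{\mathfrak{m}_\pl}$-invariance of $\wt_{0, \pl}$, and I do not expect it to be a real obstacle. One should just confirm that the restriction to the factor $\mathfrak{m}_\pl$ is compatible with the decomposition of $\WG_{\mathfrak{m}}$ into factors, which is clear from the product decompositions recorded before Proposition \ref{prop-key-obs-factor}. The substantive work, verifying conditions \Refenum{\ref{lem-key-obs-factor-if-trans}} and \Refenum{\ref{lem-key-obs-factor-if-base}} for $\typeE_6$ and $\typeE_7$, lies outside this lemma.
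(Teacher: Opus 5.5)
Your proposal is correct and is essentially the paper's proof. The paper takes $w = w_0 s_{\rt_0} w_0^{-1}$ with $w_0 \in \WG_{\mathfrak{m}_\pl}$ and $w_0(\rt_0) = \rt$, which equals the reflection $s_\rt$ you use directly, and it sets $\wt_{0,\pl}' = w_0(\wt_{0,\pl}'')$ using the $\WG_{\mathfrak{m}_\pl}$-invariance of $\wt_{0,\pl}$, exactly as you do.
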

\begin{proof}
    Suppose $(\wt_\pl, \dual{\rt}) = 1$ for some root $\rt$ of $\mathfrak{n}_\pl^\std$.  By \Refenum{\ref{lem-key-obs-factor-if-trans}}, there exists some $w_0 \in \WG_{\mathfrak{m}_\pl}$ such that $\rt = w_0(\rt_0)$.  Note that $w_0(\wt_{0, \pl}) = \wt_{0, \pl}$, because $\wt_{0, \pl}$ is $\WG_{\mathfrak{m}_\pl}$-invariant; and $(w_0^{-1}(\wt_\pl), \dual{\rt}_0) = (\wt_\pl, w_0(\dual{\rt}_0)) = (\wt, \dual{\rt}) = 1$, because the pairing is $\WG$-invariant.  Let $\rt_0 = \wt_{0, \pl} - \wt_{0, \pl}''$ be as in \Refenum{\ref{lem-key-obs-factor-if-base}}, and let $w_1 \in \WG_{\mathfrak{g}_\pl}$ be the simple reflection with respect to $\rt_0$.  Let $w := w_0 w_1 w_0^{-1}$ and $\wt_{0, \pl}' := w_0(\wt_{0, \pl}'')$.  Then $\wt_\pl - w(\wt_\pl) = w_0(w_0^{-1}(\wt_\pl) - w_1(w_0^{-1}(\wt_\pl))) = w_0((w_0^{-1}(\wt_\pl), \dual{\rt}_0) \rt_0) = w_0(\rt_0) = w_0(\wt_{0, \pl} - \wt_{0, \pl}'') = \wt_{0, \pl} - \wt_{0, \pl}'$, verifying the condition, as desired.
\end{proof}

As for the \Qtn{only if} part, we shall use computers, and our method can be described as follows, which works on any computer algebra system that performs Gaussian elimination and allows the creation of multiple loops:
\begin{method}\label{method-key-obs-ex-only-if}
    Fix any $\pl \in \Pl$.
    \begin{enumerate}
        \item\label{method-key-obs-ex-only-if-1} Filter the Weyl group $\WG_{\mathfrak{g}_\pl}$ by an increasing sequence
            \[
                \WG_{\mathfrak{g}_\pl, 0} = \{1\} \subset \WG_{\mathfrak{g}_\pl, 1} \subset \cdots \subset \WG_{\mathfrak{g}_\pl, m} = \WG_{\mathfrak{g}_\pl},
            \]
            with explicitly known coset representatives $\WG_{\mathfrak{g}_\pl}^j$ for $\WG_{\mathfrak{g}_\pl, j} / \WG_{\mathfrak{g}_\pl, j - 1}$, for $j = 1, \ldots, m$.  Then we can create loops running through all elements of $\WG_{\mathfrak{g}_\pl}$ by forming products of the form
            \[
                w = w_m \cdot w_{m - 1} \cdot \cdots \cdot w_1,
            \]
            with $w_j$ running through all elements of $\WG_{\mathfrak{g}_\pl}^j$, for $j = 1, \ldots, m$.

        \item\label{method-key-obs-ex-only-if-2} For each $w \in \WG_{\mathfrak{g}_\pl}$ in the above loops in \Refenum{\ref{method-key-obs-ex-only-if-1}}, compute the rank $R_w$ of the matrix $A_w$ of $\Id - w$.  Create a loop running over all weights $\wt_{0, \pl}'$ of $\rV_{0, \pl}$ other than $\wt_{0, \pl}$.

        \item\label{method-key-obs-ex-only-if-3} For each $\wt_{0, \pl}'$ in the above loop in \Refenum{\ref{method-key-obs-ex-only-if-2}}, construct the augmented matrix $A_{w, \wt_{0, \pl}'}$ for the system of linear equations $(\Id - w)(\wt_\pl) = \wt_{0, \pl} - \wt_{0, \pl}'$ \Pth{with variables given by the entries of $\wt_\pl$} by adding a column given by $\wt_{0, \pl} - \wt_{0, \pl}'$.  Compute the rank $R_{w, \wt_{0, \pl}'}$ of $A_{w, \wt_{0, \pl}'}$ by performing Gaussian elimination.  If $R_w \neq R_{w, \wt_{0, \pl}'}$, move on to the next step of the loop for $\wt_{0, \pl}'$.  Otherwise, create a loop running over all roots $\rt$ of $\mathfrak{n}_\pl^\std$.

        \item\label{method-key-obs-ex-only-if-4} For each $\rt$ in the above loop in \Refenum{\ref{method-key-obs-ex-only-if-3}}, construct an augmented matrix $A_{w, \wt_{0, \pl}', \rt}$ by adding to $A_{w, \wt_{0, \pl}'}$ \Pth{or its row echelon form obtained after performing Gaussian elimination in \Refenum{\ref{method-key-obs-ex-only-if-3}}} one more row for the additional equation $(\wt_\pl, \dual{\rt}) = 1$, and compute the rank $R_{w, \wt_{0, \pl}', \rt}$ of $A_{w, \wt_{0, \pl}', \rt}$ by performing Gaussian elimination.  If $R_{w, \wt_{0, \pl}', \rt} \neq R_{w, \wt_{0, \pl}'}$, move on to the next step of the loop for $\rt$.  Otherwise, $R_{w, \wt_{0, \pl}', \rt} = R_{w, \wt_{0, \pl}'}$, and we stop the loop for $\rt$ here.  In the latter case, the condition $(\Id - w)(\wt_\pl) = \wt_{0, \pl} - \wt_{0, \pl}'$ implies the condition $(\wt_\pl, \dual{\rt}) = 1$, for these particular $w$, $\wt_{0, \pl}'$, and $\rt$.
    \end{enumerate}
    The verification succeeds for a particular pair of $w \in \WG_{\mathfrak{g}_\pl}$ and weight $\wt_{0, \pl}'$ of $\rV_{0, \pl}$ other than $\wt_{0, \pl}$ if either $R_{w, \wt_{0, \pl}'} \neq R_w$, or if $R_{w, \wt_{0, \pl}'} = R_w$ and we can find at least one root $\rt$ of $\mathfrak{n}_\pl^\std$ such that $R_{w, \wt_{0, \pl}', \rt} = R_{w, \wt_{0, \pl}'}$.  The whole verification succeeds if the verification succeeds for all pairs of $w$ and $\wt_{0, \pl}'$.
\end{method}

\begin{remark}\label{rem-method-key-obs-ex-only-if}
    The loops in \Refenum{\ref{method-key-obs-ex-only-if-1}} can run through at most $\# \WG_{\mathfrak{g}_\pl}$ possibilities of $w$.  For each $w$, there is one Gaussian elimination for the matrix $A_w$.  The loop in \Refenum{\ref{method-key-obs-ex-only-if-2}} can run through at most $\dim_C(\rV_{0, \pl}) - 1$ possibilities of $\wt_{0, \pl}'$.  For each pair of $w$ and $\wt_{0, \pl}'$, there is one Gaussian elimination for the matrix $A_{w, \wt_{0, \pl}'}$.  The loop in \Refenum{\ref{method-key-obs-ex-only-if-3}} can run through at most $d_\pl = \dim_C(\mathfrak{n}_\pl) = \dim_C(\mathfrak{n}_\pl^\std) = \dim_C(\mathfrak{g}_\pl) - \dim_C(\mathfrak{p}_\pl)$ possibilities of $\rt$, but this last loop is only needed when $R_{w, \wt_{0, \pl}'} = R_w$, and even when it is needed, it does not need to run through all the $d_\pl$ possibilities.  For each triple of $w$, $\wt_{0, \pl}'$, and $\rt$, there is one Gaussian elimination for the matrix $A_{w, \wt_{0, \pl}', \rt}$.  Hence, the total number of Gaussian eliminations is bounded by $(\# \WG_{\mathfrak{g}_\pl}) + (\# \WG_{\mathfrak{g}_\pl}) \cdot (\dim_C(\rV_{0, \pl}) - 1) + (\# \WG_{\mathfrak{g}_\pl}) \cdot (\dim_C(\rV_{0, \pl}) - 1) \cdot d_\pl = (\# \WG_{\mathfrak{g}_\pl}) \cdot [1 + (\dim_C(\rV_0) - 1) \cdot (d_\pl + 1)]$.  Since the computations for distinct pairs $(w, \wt_{0, \pl}')$ are unrelated to each other, they can be performed in a parallel manner.
\end{remark}

\subsubsection{Type $\typeE_6$}\label{sec-key-obs-type-E-6}

In this case, we shall follow the same notation system as in \cite[\aSec 3.3.5]{Lan:2016-vtcac}.  We shall assume that $\rt_6 \not\in \RT_{\Grp{M}_\pl}$, so that $\wt_{0, \pl} = \varpi_6$.  \Pth{By using an automorphism that swaps simple roots as in \cite[\aSec 5.1.5]{Lan:2016-vtcac}, the case where $\rt_1 \not\in \RT_{\Grp{M}_\pl}$ can be deduced from the case where $\rt_6 \not\in \RT_{\Grp{M}_\pl}$.}

Let $s_1, s_2, \ldots, s_6 \in \WG_{\mathfrak{g}_\pl}$ denote the simple reflections associated with the six positive roots $\rt_1, \rt_2, \ldots, \rt_6$, respectively.  We filter the Weyl group $\WG_{\mathfrak{g}_\pl}$ by subgroups
\[
\begin{split}
    \{ 1 \} \subset \, & \WG_{\mathfrak{g}_\pl, \typeA_1} = \langle s_1 \rangle \subset \, \WG_{\mathfrak{g}_\pl, \typeA_2} = \langle s_1, s_2 \rangle \subset \, \WG_{\mathfrak{g}_\pl, \typeA_3} = \langle s_1, s_2, s_3 \rangle \\
    \subset \, & \WG_{\mathfrak{g}_\pl, \typeA_4} = \langle s_1, s_2, s_3, s_4 \rangle \subset \, \WG_{\mathfrak{g}_\pl, \typeD_5} = \langle s_1, s_2, s_3, s_4, s_5 \rangle \\
    \subset \, & \WG_{\mathfrak{g}_\pl, \typeE_6} = \langle s_1, s_2, s_3, s_4, s_5, s_6 \rangle = \WG_{\mathfrak{g}_\pl},
\end{split}
\]
where the notation $\langle \cdots \rangle$ means the subgroup generated by the elements listed within the angular brackets.  Between successive filtered pieces, we choose coset representatives as follows:
\[
\begin{split}
    \WG_{\mathfrak{g}_\pl}^{\typeA_1} & = \{ 1, s_1 \} \Utext{~for~} \WG_{\mathfrak{g}_\pl, \typeA_1} / \{ 1 \}; \\
    \WG_{\mathfrak{g}_\pl}^{\typeA_2 / \typeA_1} & = \{ 1, s_2, s_1 s_2 s_1 \} \Utext{~for~} \WG_{\mathfrak{g}_\pl, \typeA_2} / \WG_{\mathfrak{g}_\pl, \typeA_1}; \\
    \WG_{\mathfrak{g}_\pl}^{\typeA_3 / \typeA_2} & = \{ 1, s_3, s_2 s_3 s_2, s_1 s_2 s_3 s_2 s_1 \} \Utext{~for~} \WG_{\mathfrak{g}_\pl, \typeA_3} / \WG_{\mathfrak{g}_\pl, \typeA_2}; \\
    \WG_{\mathfrak{g}_\pl}^{\typeA_4 / \typeA_3} & = \{ 1, s_4, s_3 s_4 s_3, s_2 s_3 s_4 s_3 s_2, s_1 s_2 s_3 s_4 s_3 s_2 s_1 \} \Utext{~for~} \WG_{\mathfrak{g}_\pl, \typeA_4} / \WG_{\mathfrak{g}_\pl, \typeA_3}; \\
    \WG_{\mathfrak{g}_\pl}^{\typeD_5 / \typeA_4} & = \{ \pm 1 \}^{5, +} \Utext{~for~} \WG_{\mathfrak{g}_\pl, \typeD_5} / \WG_{\mathfrak{g}_\pl, \typeA_4},
\end{split}
\]
where $\{ \pm 1 \}^{5, +}$ means reflections in the first five coordinates of the form $(x_i) \mapsto (\pm x_i)$ with an even number of $-$'s, as in Section \ref{sec-key-obs-type-D-n-H}.  \Pth{There are $2^{5 - 1} = 16$ such reflections in total.}  As for the representatives $\WG_{\mathfrak{g}_\pl}^{\typeE_6 / \typeD_5}$ for $\WG_{\mathfrak{g}_\pl, \typeE_6} / \WG_{\mathfrak{g}_\pl, \typeD_5}$, we choose them to be the 27 Weyl elements in the third column of Table \ref{tab-ex-e-6-alpha-6}, which define distinct cosets modulo $\WG_{\mathfrak{g}_\pl, \typeD_5}$, because $\WG_{\mathfrak{g}_\pl, \typeD_5}$ fixes $\varpi_6$ when it only acts on the first five coordinates by evenly signed permutations, and the first column of Table \ref{tab-ex-e-6-alpha-6} shows that distinct $w \in \WG_{\mathfrak{g}_\pl}^{\typeE_6 / \typeD_5}$ define distinct $w \varpi_6$.  Using these representatives, we can express all elements of $\WG_{\mathfrak{g}_\pl}$ as
\[
    w = w^{\typeE_6 / \typeD_5} \cdot w^{\typeD_5 / \typeA_4} \cdot w^{\typeA_4 / \typeA_3} \cdot w^{\typeA_3 / \typeA_2} \cdot w^{\typeA_2 / \typeA_1} \cdot w^{\typeA_1},
\]
where $w^? \in \WG_{\mathfrak{g}_\pl}^?$, for $? = \typeE_6 / \typeD_5$, $\typeD_5 / \typeA_4$, $\typeA_4 / \typeA_3$, $\typeA_3 / \typeA_2$, $\typeA_2 / \typeA_1$, $\typeA_1$.  Since $\wt_{0, \pl} = \varpi_6$ is a minuscule weight \Pth{\ie, $|(\wt_{0, \pl}, \dual{\rt})| \leq 1$ for all roots $\rt$ of $\mathfrak{g}_\pl$}, by the theory of highest weights, all weights of the irreducible $\mathfrak{g}_\pl$-representation $\rV_{0, \pl}$ of highest weight $\wt_{0, \pl}$ are extremal and hence lie in the same $\WG_{\mathfrak{g}_\pl}$-orbit.  Since $\wt_{0, \pl}$ is fixed by $\WG_{\mathfrak{g}_\pl, \typeD_5}$, we can express each weight of $\rV_{0, \pl}$ as
\[
    w^{\typeE_6 / \typeD_5} \varpi_6,
\]
for some \Pth{uniquely determined} $w^{\typeE_6 / \typeD_5} \in \WG_{\mathfrak{g}_\pl}^{\typeE_6 / \typeD_5}$; and we have recorded all such weights in the first column of Table \ref{tab-ex-e-6-alpha-6}.  These allow us to efficiently carry out Method \ref{method-key-obs-ex-only-if} and verify the \Qtn{only if} part.  Note that
\[
    \# \WG_{\mathfrak{g}_\pl} = (\# \WG_{\mathfrak{g}_\pl}^{\typeE_6 / \typeD_5}) \cdot (\# \WG_{\mathfrak{g}_\pl}^{\typeD_5 / \typeA_4}) \cdot (\# \WG_{\mathfrak{g}_\pl, \typeA_4}) = 27 \cdot 16 \cdot 5! = 51840,
\]
$\dim_C(\rV_{0, \pl}) - 1 = 27 - 1 = 26$, and $d_\pl = 16$.  As explained in Remark \ref{rem-method-key-obs-ex-only-if}, the total number of Gaussian eliminations needed is bounded by
\[
    (\# \WG_{\mathfrak{g}_\pl}) \cdot [1 + (\dim_C(\rV_{0, \pl}) - 1) \cdot (d_\pl + 1)] = 51840 \cdot [1 + 26 \cdot 17] = 22965120.
\]
Assume that we can compute at least $10000$ Gaussian eliminations per second.  This is not unreasonable, given that computations for distinct $(w, \wt_{0, \pl}')$ can be performed in a parallel manner, and that the matrices all have small sizes and entries.  Then the total amount of time needed is bounded by $38.2752$ minutes.  Our actual computations took \emph{less than one minute} on a personal computer with an $8$-core CPU.

\begin{table}[ht!]
    \tiny
    \caption{$\{ w \varpi_6 \}_{w \in \WG_{\mathfrak{g}_\pl}^{\typeE_6 / \typeD_5}}$ in the case of type $\typeE_6$}\label{tab-ex-e-6-alpha-6}
    \begin{tabular}{|l|l|l|}
        \hline
        $(w \varpi_6)^T \text{(transposed)}$ & $l(w)$ & $w \in \WG_{\mathfrak{g}_\pl}^{\typeE_6 / \typeD_5}$ \\
        \hline
        $(0, 0, 0, 0, 0, \frac{2}{\sqrt{3}})$ & $0$ & $1$ \\
        $(\frac{1}{2}, \frac{1}{2}, \frac{1}{2}, \frac{1}{2}, \frac{1}{2}, \frac{1}{2\sqrt{3}})$ & $1$ & $w_1 = s_6$ \\
        $(\frac{1}{2}, \frac{1}{2}, \frac{1}{2}, -\frac{1}{2}, -\frac{1}{2}, \frac{1}{2\sqrt{3}})$ & $2$ & $w_2 = s_5 w_1$ \\
        $(\frac{1}{2}, \frac{1}{2}, -\frac{1}{2}, \frac{1}{2}, -\frac{1}{2}, \frac{1}{2\sqrt{3}})$ & $3$ & $w_3 = s_3 w_2$ \\
        $(\frac{1}{2}, -\frac{1}{2}, \frac{1}{2}, \frac{1}{2}, -\frac{1}{2}, \frac{1}{2\sqrt{3}})$ & $4$ & $w_{4_{\Utext{I}}} = s_2 w_3$ \\
        $(\frac{1}{2}, \frac{1}{2}, -\frac{1}{2}, -\frac{1}{2}, \frac{1}{2}, \frac{1}{2\sqrt{3}})$ & $4$ & $w_{4_{\Utext{II}}} = s_4 w_3$ \\
        $(-\frac{1}{2}, \frac{1}{2}, \frac{1}{2}, \frac{1}{2}, -\frac{1}{2}, \frac{1}{2\sqrt{3}})$ & $5$ & $w_{5_{\Utext{I}}} = s_1 w_{4_{\Utext{I}}}$ \\
        $(\frac{1}{2}, -\frac{1}{2}, \frac{1}{2}, -\frac{1}{2}, \frac{1}{2}, \frac{1}{2\sqrt{3}})$ & $5$ & $w_{5_{\Utext{II}}} = s_2 w_{4_{\Utext{II}}}$ \\
        $(-\frac{1}{2}, \frac{1}{2}, \frac{1}{2}, -\frac{1}{2}, \frac{1}{2}, \frac{1}{2\sqrt{3}})$ & $6$ & $w_{6_{\Utext{I}}} = s_4 w_{5_{\Utext{I}}} = s_1 w_{5_{\Utext{II}}}$ \\
        $(\frac{1}{2}, -\frac{1}{2}, -\frac{1}{2}, \frac{1}{2}, \frac{1}{2}, \frac{1}{2\sqrt{3}})$ & $6$ & $w_{6_{\Utext{II}}} = s_3 w_{5_{\Utext{II}}}$ \\
        $(-\frac{1}{2}, \frac{1}{2}, -\frac{1}{2}, \frac{1}{2}, \frac{1}{2}, \frac{1}{2\sqrt{3}})$ & $7$ & $w_{7_{\Utext{I}}} = s_3 w_{6_{\Utext{I}}} = s_1 w_{6_{\Utext{II}}}$ \\
        $(\frac{1}{2}, -\frac{1}{2}, -\frac{1}{2}, -\frac{1}{2}, -\frac{1}{2}, \frac{1}{2\sqrt{3}})$ & $7$ & $w_{7_{\Utext{II}}} = s_5 w_{6_{\Utext{II}}}$ \\
        $(-\frac{1}{2}, -\frac{1}{2}, \frac{1}{2}, \frac{1}{2}, \frac{1}{2}, \frac{1}{2\sqrt{3}})$ & $8$ & $w_{8_{\Utext{I}}} = s_2 w_{7_{\Utext{I}}}$ \\
        $(-\frac{1}{2}, \frac{1}{2}, -\frac{1}{2}, -\frac{1}{2}, -\frac{1}{2}, \frac{1}{2\sqrt{3}})$ & $8$ & $w_{8_{\Utext{II}}} = s_5 w_{7_{\Utext{I}}} = s_1 w_{7_{\Utext{II}}}$ \\
        $(1, 0, 0, 0, 0, -\frac{1}{\sqrt{3}})$ & $8$ & $w_{8_{\Utext{III}}} = s_6 w_{7_{\Utext{II}}}$ \\
        $(-\frac{1}{2}, -\frac{1}{2}, \frac{1}{2}, -\frac{1}{2}, -\frac{1}{2}, \frac{1}{2\sqrt{3}})$ & $9$ & $w_{9_{\Utext{I}}} = s_5 w_{8_{\Utext{I}}} = s_2 w_{8_{\Utext{II}}}$ \\
        $(0, 1, 0, 0, 0, -\frac{1}{\sqrt{3}})$ & $9$ & $w_{9_{\Utext{II}}} = s_6 w_{8_{\Utext{II}}} = s_1 w_{8_{\Utext{III}}}$ \\
        $(-\frac{1}{2}, -\frac{1}{2}, -\frac{1}{2}, \frac{1}{2}, -\frac{1}{2}, \frac{1}{2\sqrt{3}})$ & $10$ & $w_{10_{\Utext{I}}} = s_3 w_{9_{\Utext{I}}}$ \\
        $(0, 0, 1, 0, 0, -\frac{1}{\sqrt{3}})$ & $10$ & $w_{10_{\Utext{II}}} = s_6 w_{9_{\Utext{I}}} = s_2 w_{9_{\Utext{II}}}$ \\
        $(-\frac{1}{2}, -\frac{1}{2}, -\frac{1}{2}, -\frac{1}{2}, \frac{1}{2}, \frac{1}{2\sqrt{3}})$ & $11$ & $w_{11_{\Utext{I}}} = s_4 w_{10_{\Utext{I}}}$ \\
        $(0, 0, 0, 1, 0, -\frac{1}{\sqrt{3}})$ & $11$ & $w_{11_{\Utext{II}}} = s_6 w_{10_{\Utext{I}}} = s_3 w_{10_{\Utext{II}}}$ \\
        $(0, 0, 0, 0, 1, -\frac{1}{\sqrt{3}})$ & $12$ & $w_{12_{\Utext{I}}} = s_6 w_{11_{\Utext{I}}}$ \\
        $(0, 0, 0, 0, -1, -\frac{1}{\sqrt{3}})$ & $12$ & $w_{12_{\Utext{II}}} = s_5 w_{11_{\Utext{II}}}$ \\
        $(0, 0, 0, -1, 0, -\frac{1}{\sqrt{3}})$ & $13$ & $w_{13} = s_5 w_{12_{\Utext{I}}} = s_4 w_{12_{\Utext{II}}}$ \\
        $(0, 0, -1, 0, 0, -\frac{1}{\sqrt{3}})$ & $14$ & $w_{14} = s_3 w_{13}$ \\
        $(0, -1, 0, 0, 0, -\frac{1}{\sqrt{3}})$ & $15$ & $w_{15} = s_2 w_{14}$ \\
        $(-1, 0, 0, 0, 0, -\frac{1}{\sqrt{3}})$ & $16$ & $w_{16} = s_1 w_{15}$ \\
        \hline
    \end{tabular}
\end{table}

As for the \Qtn{if} part, we shall apply Lemma \ref{lem-key-obs-factor-if} by verifying the two conditions \Refenum{\ref{lem-key-obs-factor-if-trans}} and \Refenum{\ref{lem-key-obs-factor-if-base}} there.  As for \Refenum{\ref{lem-key-obs-factor-if-trans}}, the roots of $\mathfrak{n}_\pl^\std$ \Pth{\Refcf{} \cite[(3.82)]{Lan:2016-vtcac}} is
\[
    \{ \text{$(\pm \tfrac{1}{2}, \pm \tfrac{1}{2}, \pm \tfrac{1}{2}, \pm \tfrac{1}{2}, \pm \tfrac{1}{2}, +\tfrac{\sqrt{3}}{2})^T$ \Pth{transposed} with an odd number of $+$'s} \},
\]
which admits a transitive action of $\WG_{\mathfrak{m}_\pl} = \WG_{\mathfrak{g}_\pl, \typeD_5} \cong \SG_5 \ltimes \{ \pm 1 \}^{5, +}$ \Pth{acting on the first five coordinates}.  As for \Refenum{\ref{lem-key-obs-factor-if-base}}, by taking $\wt_{0, \pl}'' = (\frac{1}{2}, \frac{1}{2}, \frac{1}{2}, \frac{1}{2}, \frac{1}{2}, \frac{1}{2\sqrt{3}})^T$ \Pth{see the second row of Table \ref{tab-ex-e-6-alpha-6}}, we have $\wt_{0, \pl} - \wt_{0, \pl}'' = (-\frac{1}{2}, -\frac{1}{2}, -\frac{1}{2}, -\frac{1}{2}, -\frac{1}{2}, \frac{\sqrt{3}}{2})^T$, which is a root of $\mathfrak{n}_\pl^\std$ \Pth{\Refcf{} \cite[(3.82)]{Lan:2016-vtcac}}.  Thus, the \Qtn{if} part has also been verified.

\subsubsection{Type $\typeE_7$}\label{sec-key-obs-type-E-7}

In this case, we shall follow the same notation system as in \cite[\aSec 3.3.6]{Lan:2016-vtcac}, so that $\rt_1 \not\in \RT_{\Grp{M}_\pl}$ and $\wt_{0, \pl} = \varpi_1$.

Let $s_1, s_2, \ldots, s_7 \in \WG_{\mathfrak{g}_\pl}$ denote the simple reflections associated with the seven positive roots $\rt_1, \rt_2, \ldots, \rt_7$, respectively.  We filter the Weyl group $\WG_{\mathfrak{g}_\pl}$ by subgroups
\[
\begin{split}
    \{ 1 \} \subset \, & \WG_{\mathfrak{g}_\pl, \typeA_1} = \langle s_2 \rangle \subset \, \WG_{\mathfrak{g}_\pl, \typeA_2} = \langle s_2, s_3 \rangle \subset \, \WG_{\mathfrak{g}_\pl, \typeA_3} = \langle s_2, s_3, s_4 \rangle \\
    \subset \, & \WG_{\mathfrak{g}_\pl, \typeA_4} = \langle s_2, s_3, s_4, s_5 \rangle \subset \, \WG_{\mathfrak{g}_\pl, \typeD_5} = \langle s_2, s_3, s_4, s_5, s_6 \rangle \\
    \subset \, & \WG_{\mathfrak{g}_\pl, \typeE_6} = \langle s_2, s_3, s_4, s_5, s_6, s_7 \rangle \subset \, \WG_{\mathfrak{g}_\pl, \typeE_7} = \langle s_1, s_2, s_3, s_4, s_5, s_6, s_7 \rangle = \WG_{\mathfrak{g}_\pl}.
\end{split}
\]
Note that, for $? = \typeA_1, \typeA_2, \typeA_3, \typeA_4, \typeD_5, \typeE_6$, the subgroup $\WG_{\mathfrak{g}_\pl, ?}$ is as in Section \ref{sec-key-obs-type-E-6}, except that the indices of the generators are shifted by one.  Between successive filtered pieces, we choose coset representatives as follows:
\[
\begin{split}
    \WG_{\mathfrak{g}_\pl}^{\typeA_1} & = \{ 1, s_2 \} \Utext{~for~} \WG_{\mathfrak{g}_\pl, \typeA_1} / \{ 1 \}; \\
    \WG_{\mathfrak{g}_\pl}^{\typeA_2 / \typeA_1} & = \{ 1, s_3, s_2 s_3 s_2 \} \Utext{~for~} \WG_{\mathfrak{g}_\pl, \typeA_2} / \WG_{\mathfrak{g}_\pl, \typeA_1}; \\
    \WG_{\mathfrak{g}_\pl}^{\typeA_3 / \typeA_2} & = \{ 1, s_4, s_3 s_4 s_3, s_2 s_3 s_4 s_3 s_2 \} \Utext{~for~} \WG_{\mathfrak{g}_\pl, \typeA_3} / \WG_{\mathfrak{g}_\pl, \typeA_2}; \\
    \WG_{\mathfrak{g}_\pl}^{\typeA_4 / \typeA_3} & = \{ 1, s_5, s_4 s_5 s_4, s_3 s_4 s_5 s_4 s_2, s_2 s_3 s_4 s_5 s_4 s_3 s_2 \} \Utext{~for~} \WG_{\mathfrak{g}_\pl, \typeA_4} / \WG_{\mathfrak{g}_\pl, \typeA_3}; \\
    \WG_{\mathfrak{g}_\pl}^{\typeD_5 / \typeA_4} & = \{ \pm 1 \}^{5, +} \Utext{~for~} \WG_{\mathfrak{g}_\pl, \typeD_5} / \WG_{\mathfrak{g}_\pl, \typeA_4},
\end{split}
\]
where $\{ \pm 1 \}^{5, +}$ means reflections in the middle five coordinates of the form $(x_i) \mapsto (\pm x_i)$ with an even number of $-$'s, as in Section \ref{sec-key-obs-type-D-n-H}.  \Pth{There are $2^{5 - 1} = 16$ such reflections in total.}  As for the representatives $\WG_{\mathfrak{g}_\pl}^{\typeE_6 / \typeD_5}$ for $\WG_{\mathfrak{g}_\pl, \typeE_6} / \WG_{\mathfrak{g}_\pl, \typeD_5}$, we choose them to be the 27 Weyl elements in the third column of Table \ref{tab-ex-e-7-alpha-1-alpha-7}, which define distinct cosets modulo $\WG_{\mathfrak{g}_\pl, \typeD_5}$, because $\WG_{\mathfrak{g}_\pl, \typeD_5}$ fixes $\varpi_7$ when it only acts on the middle five coordinates by evenly signed permutations, and the first column of Table \ref{tab-ex-e-7-alpha-1-alpha-7} shows that distinct $w \in \WG_{\mathfrak{g}_\pl}^{\typeE_6 / \typeD_5}$ define distinct $w \varpi_6$.  As for the representatives $\WG_{\mathfrak{g}_\pl}^{\typeE_7 / \typeE_6}$ for $\WG_{\mathfrak{g}_\pl, \typeE_7} / \WG_{\mathfrak{g}_\pl, \typeE_6}$, we choose them to be the 56 Weyl elements in the third column of Table \ref{tab-ex-e-7-alpha-1}, which define distinct cosets modulo $\WG_{\mathfrak{g}_\pl, \typeE_6}$, because $\WG_{\mathfrak{g}_\pl, \typeE_6} = \langle s_2, s_3, s_4, s_5, s_6, s_7 \rangle$ fixes $\varpi_1$ when $(\varpi_1, \dual{\rt}_i) = 0$ for all $2 \leq i \leq 7$, and because the first column of Table \ref{tab-ex-e-7-alpha-1} showes that distinct $w \in \WG_{\mathfrak{g}_\pl}^{\typeE_6 / \typeD_5}$ define distinct $w \varpi_6$.  Using these representatives, we can express all elements of $\WG_{\mathfrak{g}_\pl}$ as
\[
    w = w^{\typeE_7 / \typeE_6} \cdot w^{\typeE_6 / \typeD_5} \cdot w^{\typeD_5 / \typeA_4} \cdot w^{\typeA_4 / \typeA_3} \cdot w^{\typeA_3 / \typeA_2} \cdot w^{\typeA_2 / \typeA_1} \cdot w^{\typeA_1},
\]
where $w^? \in \WG_{\mathfrak{g}_\pl}^?$, for $? = \typeE_7 / \typeE_6$, $\typeE_6 / \typeD_5$, $\typeD_5 / \typeA_4$, $\typeA_4 / \typeA_3$, $\typeA_3 / \typeA_2$, $\typeA_2 / \typeA_1$, $\typeA_1$.  Since $\wt_{0, \pl} = \varpi_1$ is a minuscule weight \Pth{\Refcf{} Section \ref{sec-key-obs-type-E-6}} fixed by $\WG_{\mathfrak{g}_\pl, \typeE_6}$, we can express each weight of the irreducible $\mathfrak{g}_\pl$-representation $\rV_{0, \pl}$ of highest weight $\wt_{0, \pl}$ as
\[
    w^{\typeE_7 / \typeE_6} \varpi_1,
\]
for some \Pth{uniquely determined} $w^{\typeE_7 / \typeE_6} \in \WG_{\mathfrak{g}_\pl}^{\typeE_6 / \typeD_5}$; and we have recorded all such weights in the first column of Table \ref{tab-ex-e-7-alpha-1}.  These allow us to efficiently carry out Method \ref{method-key-obs-ex-only-if} and verify the \Qtn{only if} part.  Note that
\[
    \# \WG_{\mathfrak{g}_\pl} = (w_\pl^{\typeE_7 / \typeE_6}) \cdot (\# \WG_{\mathfrak{g}_\pl, \typeE_6}) = 56 \cdot 51840 = 2903040,
\]
$\dim_C(\rV_{0, \pl}) - 1 = 56 - 1 = 55$, and $d_\pl = 27$.  As explained in Remark \ref{rem-method-key-obs-ex-only-if}, the total number of Gaussian eliminations needed is bounded by
\[
    (\# \WG_{\mathfrak{g}_\pl}) \cdot [1 + (\dim_C(\rV_{0, \pl}) - 1) \cdot (d_\pl + 1)] = 2903040 \cdot [1 + 55 \cdot 28] = 4473584640.
\]
Assuming as in Section \ref{sec-key-obs-type-E-6} that we can compute at least $10000$ Gaussian eliminations per second, the total amount of time needed is bounded by $5.17776$ days.  Our actual computations took \emph{less than 3 hours} on a personal computer with an $8$-core CPU.

\begin{table}[ht!]
    \tiny
    \caption{$\{ w \varpi_7 \}_{w \in \WG_{\mathfrak{g}_\pl}^{\typeE_6 / \typeD_5}}$ in the case of type $\typeE_7$}\label{tab-ex-e-7-alpha-1-alpha-7}
    \begin{tabular}{|l|l|l|}
        \hline
        $(w \varpi_7)^T \text{(transposed)}$ & $l(w)$ & $w \in \WG_{\mathfrak{g}_\pl}^{\typeE_6 / \typeD_5}$ \\
        \hline
        $(0, 0, 0, 0, 0, 0, \sqrt{2})$ & $0$ & $1$ \\
        $(\frac{1}{2}, \frac{1}{2}, \frac{1}{2}, \frac{1}{2}, \frac{1}{2}, \frac{1}{2}, \frac{1}{\sqrt{2}})$ & $1$ & $w_1 = s_7$ \\
        $(\frac{1}{2}, \frac{1}{2}, \frac{1}{2}, \frac{1}{2}, -\frac{1}{2}, -\frac{1}{2}, \frac{1}{\sqrt{2}})$ & $2$ & $w_2 = s_6 w_1$ \\
        $(\frac{1}{2}, \frac{1}{2}, \frac{1}{2}, -\frac{1}{2}, \frac{1}{2}, -\frac{1}{2}, \frac{1}{\sqrt{2}})$ & $3$ & $w_3 = s_4 w_2$ \\
        $(\frac{1}{2}, \frac{1}{2}, -\frac{1}{2}, \frac{1}{2}, \frac{1}{2}, -\frac{1}{2}, \frac{1}{\sqrt{2}})$ & $4$ & $w_{4_{\Utext{I}}} = s_3 w_3$ \\
        $(\frac{1}{2}, \frac{1}{2}, \frac{1}{2}, -\frac{1}{2}, -\frac{1}{2}, \frac{1}{2}, \frac{1}{\sqrt{2}})$ & $4$ & $w_{4_{\Utext{II}}} = s_5 w_3$ \\
        $(\frac{1}{2}, -\frac{1}{2}, \frac{1}{2}, \frac{1}{2}, \frac{1}{2}, -\frac{1}{2}, \frac{1}{\sqrt{2}})$ & $5$ & $w_{5_{\Utext{I}}} = s_2 w_{4_{\Utext{I}}}$ \\
        $(\frac{1}{2}, \frac{1}{2}, -\frac{1}{2}, \frac{1}{2}, -\frac{1}{2}, \frac{1}{2}, \frac{1}{\sqrt{2}})$ & $5$ & $w_{5_{\Utext{II}}} = s_3 w_{4_{\Utext{II}}}$ \\
        $(\frac{1}{2}, -\frac{1}{2}, \frac{1}{2}, \frac{1}{2}, -\frac{1}{2}, \frac{1}{2}, \frac{1}{\sqrt{2}})$ & $6$ & $w_{6_{\Utext{I}}} = s_5 w_{5_{\Utext{I}}} = s_2 w_{5_{\Utext{II}}}$ \\
        $(\frac{1}{2}, \frac{1}{2}, -\frac{1}{2}, -\frac{1}{2}, \frac{1}{2}, \frac{1}{2}, \frac{1}{\sqrt{2}})$ & $6$ & $w_{6_{\Utext{II}}} = s_4 w_{5_{\Utext{II}}}$ \\
        $(\frac{1}{2}, -\frac{1}{2}, \frac{1}{2}, -\frac{1}{2}, \frac{1}{2}, \frac{1}{2}, \frac{1}{\sqrt{2}})$ & $7$ & $w_{7_{\Utext{I}}} = s_4 w_{6_{\Utext{I}}} = s_2 w_{6_{\Utext{II}}}$ \\
        $(\frac{1}{2}, \frac{1}{2}, -\frac{1}{2}, -\frac{1}{2}, -\frac{1}{2}, -\frac{1}{2}, \frac{1}{\sqrt{2}})$ & $7$ & $w_{7_{\Utext{II}}} = s_6 w_{6_{\Utext{II}}}$ \\
        $(\frac{1}{2}, -\frac{1}{2}, -\frac{1}{2}, \frac{1}{2}, \frac{1}{2}, \frac{1}{2}, \frac{1}{\sqrt{2}})$ & $8$ & $w_{8_{\Utext{I}}} = s_3 w_{7_{\Utext{I}}}$ \\
        $(\frac{1}{2}, -\frac{1}{2}, \frac{1}{2}, -\frac{1}{2}, -\frac{1}{2}, -\frac{1}{2}, \frac{1}{\sqrt{2}})$ & $8$ & $w_{8_{\Utext{II}}} = s_6 w_{7_{\Utext{I}}} = s_2 w_{7_{\Utext{II}}}$ \\
        $(1, 1, 0, 0, 0, 0, 0)$ & $8$ & $w_{8_{\Utext{III}}} = s_7 w_{7_{\Utext{II}}}$ \\
        $(\frac{1}{2}, -\frac{1}{2}, -\frac{1}{2}, \frac{1}{2}, -\frac{1}{2}, -\frac{1}{2}, \frac{1}{\sqrt{2}})$ & $9$ & $w_{9_{\Utext{I}}} = s_6 w_{8_{\Utext{I}}} = s_3 w_{8_{\Utext{II}}}$ \\
        $(1, 0, 1, 0, 0, 0, 0)$ & $9$ & $w_{9_{\Utext{II}}} = s_7 w_{8_{\Utext{II}}} = s_2 w_{8_{\Utext{III}}}$ \\
        $(\frac{1}{2}, -\frac{1}{2}, -\frac{1}{2}, -\frac{1}{2}, \frac{1}{2}, -\frac{1}{2}, \frac{1}{\sqrt{2}})$ & $10$ & $w_{10_{\Utext{I}}} = s_4 w_{9_{\Utext{I}}}$ \\
        $(1, 0, 0, 1, 0, 0, 0)$ & $10$ & $w_{10_{\Utext{II}}} = s_7 w_{9_{\Utext{I}}} = s_3 w_{9_{\Utext{II}}}$ \\
        $(\frac{1}{2}, -\frac{1}{2}, -\frac{1}{2}, -\frac{1}{2}, -\frac{1}{2}, \frac{1}{2}, \frac{1}{\sqrt{2}})$ & $11$ & $w_{11_{\Utext{I}}} = s_5 w_{10_{\Utext{I}}}$ \\
        $(1, 0, 0, 0, 1, 0, 0)$ & $11$ & $w_{11_{\Utext{II}}} = s_7 w_{10_{\Utext{I}}} = s_4 w_{10_{\Utext{II}}}$ \\
        $(1, 0, 0, 0, 0, 1, 0)$ & $12$ & $w_{12_{\Utext{I}}} = s_7 w_{11_{\Utext{I}}}$ \\
        $(1, 0, 0, 0, 0, -1, 0)$ & $12$ & $w_{12_{\Utext{II}}} = s_6 w_{11_{\Utext{II}}}$ \\
        $(1, 0, 0, 0, -1, 0, 0)$ & $13$ & $w_{13} = s_6 w_{12_{\Utext{I}}} = s_5 w_{12_{\Utext{II}}}$ \\
        $(1, 0, 0, -1, 0, 0, 0)$ & $14$ & $w_{14} = s_4 w_{13}$ \\
        $(1, 0, -1, 0, 0, 0, 0)$ & $15$ & $w_{15} = s_3 w_{14}$ \\
        $(1, -1, 0, 0, 0, 0, 0)$ & $16$ & $w_{16} = s_2 w_{15}$ \\
        \hline
    \end{tabular}
\end{table}

\begin{table}[ht!]
    \tiny
    \caption{$\{ w \varpi_1 \}_{w \in \WG_{\mathfrak{g}_\pl}^{\typeE_7 / \typeE_6}}$ in the case of type $\typeE_7$}\label{tab-ex-e-7-alpha-1}
    \begin{tabular}{|l|l|l|}
        \hline
        $(w \varpi_1)^T \text{(transposed)}$ & $l(w)$ & $w \in \WG_{\mathfrak{g}_\pl}^{\typeE_7 / \typeE_6}$ \\
        \hline
        $(1, 0, 0, 0, 0, 0, \frac{1}{\sqrt{2}})$ & $0$ & $1$ \\
        $(0, 1, 0, 0, 0, 0, \frac{1}{\sqrt{2}})$ & $1$ & $w_1 = s_1$ \\
        $(0, 0, 1, 0, 0, 0, \frac{1}{\sqrt{2}})$ & $2$ & $w_2 = s_2 w_1$ \\
        $(0, 0, 0, 1, 0, 0, \frac{1}{\sqrt{2}})$ & $3$ & $w_3 = s_3 w_2$ \\
        $(0, 0, 0, 0, 1, 0, \frac{1}{\sqrt{2}})$ & $4$ & $w_4 = s_4 w_3$ \\
        $(0, 0, 0, 0, 0, 1, \frac{1}{\sqrt{2}})$ & $5$ & $w_{5_{\Utext{I}}} = s_5 w_4$ \\
        $(0, 0, 0, 0, 0, -1, \frac{1}{\sqrt{2}})$ & $5$ & $w_{5_{\Utext{II}}} = s_6 w_4$ \\
        $(0, 0, 0, 0, -1, 0, \frac{1}{\sqrt{2}})$ & $6$ & $w_{6_{\Utext{I}}} = s_6 w_{5_{\Utext{I}}}$ \\
        $(\frac{1}{2}, \frac{1}{2}, \frac{1}{2}, \frac{1}{2}, \frac{1}{2}, -\frac{1}{2}, 0)$ & $6$ & $w_{6_{\Utext{II}}} = s_7 w_{5_{\Utext{II}}}$ \\
        $(0, 0, 0, -1, 0, 0, \frac{1}{\sqrt{2}})$ & $7$ & $w_{7_{\Utext{I}}} = s_4 w_{6_{\Utext{I}}}$ \\
        $(\frac{1}{2}, \frac{1}{2}, \frac{1}{2}, \frac{1}{2}, -\frac{1}{2}, \frac{1}{2}, 0)$ & $7$ & $w_{7_{\Utext{II}}} = s_5 w_{6_{\Utext{II}}}$ \\
        $(0, 0, -1, 0, 0, 0, \frac{1}{\sqrt{2}})$ & $8$ & $w_{8_{\Utext{I}}} = s_3 w_{7_{\Utext{I}}}$ \\
        $(\frac{1}{2}, \frac{1}{2}, \frac{1}{2}, -\frac{1}{2}, \frac{1}{2}, \frac{1}{2}, 0)$ & $8$ & $w_{8_{\Utext{II}}} = s_4 w_{7_{\Utext{II}}}$ \\
        $(0, -1, 0, 0, 0, 0, \frac{1}{\sqrt{2}})$ & $9$ & $w_{9_{\Utext{I}}} = s_2 w_{8_{\Utext{I}}}$ \\
        $(\frac{1}{2}, \frac{1}{2}, -\frac{1}{2}, \frac{1}{2}, \frac{1}{2}, \frac{1}{2}, 0)$ & $9$ & $w_{9_{\Utext{II}}} = s_3 w_{8_{\Utext{II}}}$ \\
        $(\frac{1}{2}, \frac{1}{2}, \frac{1}{2}, -\frac{1}{2}, -\frac{1}{2}, -\frac{1}{2}, 0)$ & $9$ & $w_{9_{\Utext{III}}} = s_6 w_{8_{\Utext{II}}}$ \\
        $(-1, 0, 0, 0, 0, 0, \frac{1}{\sqrt{2}})$ & $10$ & $w_{10_{\Utext{I}}} = s_1 w_{9_{\Utext{I}}}$ \\
        $(\frac{1}{2}, -\frac{1}{2}, \frac{1}{2}, \frac{1}{2}, \frac{1}{2}, \frac{1}{2}, 0)$ & $10$ & $w_{10_{\Utext{II}}} = s_7 w_{9_{\Utext{I}}} = s_2 w_{9_{\Utext{II}}}$ \\
        $(\frac{1}{2}, \frac{1}{2}, -\frac{1}{2}, \frac{1}{2}, -\frac{1}{2}, -\frac{1}{2}, 0)$ & $10$ & $w_{10_{\Utext{III}}} = s_3 w_{9_{\Utext{III}}}$ \\
        $(-\frac{1}{2}, \frac{1}{2}, \frac{1}{2}, \frac{1}{2}, \frac{1}{2}, \frac{1}{2}, 0)$ & $11$ & $w_{11_{\Utext{I}}} = s_7 w_{10_{\Utext{I}}} = s_1 w_{10_{\Utext{II}}}$ \\
        $(\frac{1}{2}, -\frac{1}{2}, \frac{1}{2}, \frac{1}{2}, -\frac{1}{2}, -\frac{1}{2}, 0)$ & $11$ & $w_{11_{\Utext{II}}} = s_6 w_{10_{\Utext{II}}} = s_2 w_{10_{\Utext{III}}}$ \\
        $(\frac{1}{2}, \frac{1}{2}, -\frac{1}{2}, -\frac{1}{2}, \frac{1}{2}, -\frac{1}{2}, 0)$ & $11$ & $w_{11_{\Utext{III}}} = s_4 w_{10_{\Utext{III}}}$ \\
        $(-\frac{1}{2}, \frac{1}{2}, \frac{1}{2}, \frac{1}{2}, -\frac{1}{2}, -\frac{1}{2}, 0)$ & $12$ & $w_{12_{\Utext{I}}} = s_6 w_{11_{\Utext{I}}} = s_1 w_{11_{\Utext{II}}}$ \\
        $(\frac{1}{2}, -\frac{1}{2}, \frac{1}{2}, -\frac{1}{2}, \frac{1}{2}, -\frac{1}{2}, 0)$ & $12$ & $w_{12_{\Utext{II}}} = s_4 w_{11_{\Utext{II}}} = s_2 w_{11_{\Utext{III}}}$ \\
        $(\frac{1}{2}, \frac{1}{2}, -\frac{1}{2}, -\frac{1}{2}, -\frac{1}{2}, \frac{1}{2}, 0)$ & $12$ & $w_{12_{\Utext{III}}} = s_5 w_{11_{\Utext{III}}}$ \\
        $(-\frac{1}{2}, \frac{1}{2}, \frac{1}{2}, -\frac{1}{2}, \frac{1}{2}, -\frac{1}{2}, 0)$ & $13$ & $w_{13_{\Utext{I}}} = s_4 w_{12_{\Utext{I}}} = s_1 w_{12_{\Utext{II}}}$ \\
        $(\frac{1}{2}, -\frac{1}{2}, -\frac{1}{2}, \frac{1}{2}, \frac{1}{2}, -\frac{1}{2}, 0)$ & $13$ & $w_{13_{\Utext{II}}} = s_3 w_{12_{\Utext{II}}}$ \\
        $(\frac{1}{2}, -\frac{1}{2}, \frac{1}{2}, -\frac{1}{2}, -\frac{1}{2}, \frac{1}{2}, 0)$ & $13$ & $w_{13_{\Utext{III}}} = s_5 w_{12_{\Utext{II}}} = s_2 w_{12_{\Utext{III}}}$ \\
        $(-\frac{1}{2}, \frac{1}{2}, -\frac{1}{2}, \frac{1}{2}, \frac{1}{2}, -\frac{1}{2}, 0)$ & $14$ & $w_{14_{\Utext{I}}} = s_3 w_{13_{\Utext{I}}} = s_1 w_{13_{\Utext{II}}}$ \\
        $(\frac{1}{2}, -\frac{1}{2}, -\frac{1}{2}, \frac{1}{2}, -\frac{1}{2}, \frac{1}{2}, 0)$ & $14$ & $w_{14_{\Utext{II}}} = s_5 w_{13_{\Utext{II}}} = s_3 w_{13_{\Utext{III}}}$ \\
        $(-\frac{1}{2}, \frac{1}{2}, \frac{1}{2}, -\frac{1}{2}, -\frac{1}{2}, \frac{1}{2}, 0)$ & $14$ & $w_{14_{\Utext{III}}} = s_1 w_{13_{\Utext{III}}}$ \\
        $(-\frac{1}{2}, -\frac{1}{2}, \frac{1}{2}, \frac{1}{2}, \frac{1}{2}, -\frac{1}{2}, 0)$ & $15$ & $w_{15_{\Utext{I}}} = s_2 w_{14_{\Utext{I}}}$ \\
        $(\frac{1}{2}, -\frac{1}{2}, -\frac{1}{2}, -\frac{1}{2}, \frac{1}{2}, \frac{1}{2}, 0)$ & $15$ & $w_{15_{\Utext{II}}} = s_4 w_{14_{\Utext{II}}}$ \\
        $(-\frac{1}{2}, \frac{1}{2}, -\frac{1}{2}, \frac{1}{2}, -\frac{1}{2}, \frac{1}{2}, 0)$ & $15$ & $w_{15_{\Utext{III}}} = s_1 w_{14_{\Utext{II}}} = s_3 w_{14_{\Utext{III}}}$ \\
        $(-\frac{1}{2}, -\frac{1}{2}, \frac{1}{2}, \frac{1}{2}, -\frac{1}{2}, \frac{1}{2}, 0)$ & $16$ & $w_{16_{\Utext{I}}} = s_5 w_{15_{\Utext{I}}} = s_2 w_{15_{\Utext{III}}}$ \\
        $(-\frac{1}{2}, \frac{1}{2}, -\frac{1}{2}, -\frac{1}{2}, \frac{1}{2}, \frac{1}{2}, 0)$ & $16$ & $w_{16_{\Utext{II}}} = s_1 w_{15_{\Utext{II}}}$ \\
        $(\frac{1}{2}, -\frac{1}{2}, -\frac{1}{2}, -\frac{1}{2}, -\frac{1}{2}, -\frac{1}{2}, 0)$ & $16$ & $w_{16_{\Utext{III}}} = s_6 w_{15_{\Utext{II}}}$ \\
        $(-\frac{1}{2}, -\frac{1}{2}, \frac{1}{2}, -\frac{1}{2}, \frac{1}{2}, \frac{1}{2}, 0)$ & $17$ & $w_{17_{\Utext{I}}} = s_4 w_{16_{\Utext{I}}}$ \\
        $(-\frac{1}{2}, \frac{1}{2}, -\frac{1}{2}, -\frac{1}{2}, -\frac{1}{2}, -\frac{1}{2}, 0)$ & $17$ & $w_{17_{\Utext{II}}} = s_6 w_{16_{\Utext{II}}}$ \\
        $(1, 0, 0, 0, 0, 0, -\frac{1}{\sqrt{2}})$ & $17$ & $w_{17_{\Utext{III}}} = s_7 w_{16_{\Utext{III}}}$ \\
        $(-\frac{1}{2}, -\frac{1}{2}, -\frac{1}{2}, \frac{1}{2}, \frac{1}{2}, \frac{1}{2}, 0)$ & $18$ & $w_{18_{\Utext{I}}} = s_3 w_{17_{\Utext{I}}}$ \\
        $(-\frac{1}{2}, -\frac{1}{2}, \frac{1}{2}, -\frac{1}{2}, -\frac{1}{2}, -\frac{1}{2}, 0)$ & $18$ & $w_{18_{\Utext{II}}} = s_2 w_{17_{\Utext{II}}}$ \\
        $(0, 1, 0, 0, 0, 0, -\frac{1}{\sqrt{2}})$ & $18$ & $w_{18_{\Utext{III}}} = s_1 w_{17_{\Utext{III}}}$ \\
        $(-\frac{1}{2}, -\frac{1}{2}, -\frac{1}{2}, \frac{1}{2}, -\frac{1}{2}, -\frac{1}{2}, 0)$ & $19$ & $w_{19_{\Utext{I}}} = s_6 w_{18_{\Utext{I}}} = s_3 w_{18_{\Utext{II}}}$ \\
        $(0, 0, 1, 0, 0, 0, -\frac{1}{\sqrt{2}})$ & $19$ & $w_{19_{\Utext{II}}} = s_7 w_{18_{\Utext{II}}} = s_2 w_{18_{\Utext{III}}}$ \\
        $(0, 0, 0, 1, 0, 0, -\frac{1}{\sqrt{2}})$ & $20$ & $w_{20_{\Utext{I}}} = s_7 w_{19_{\Utext{I}}} = s_3 w_{19_{\Utext{II}}}$ \\
        $(-\frac{1}{2}, -\frac{1}{2}, -\frac{1}{2}, -\frac{1}{2}, \frac{1}{2}, -\frac{1}{2}, 0)$ & $20$ & $w_{20_{\Utext{II}}} = s_4 w_{19_{\Utext{I}}}$ \\
        $(0, 0, 0, 0, 1, 0, -\frac{1}{\sqrt{2}})$ & $21$ & $w_{21_{\Utext{I}}} = s_4 w_{20_{\Utext{I}}}$ \\
        $(-\frac{1}{2}, -\frac{1}{2}, -\frac{1}{2}, -\frac{1}{2}, -\frac{1}{2}, \frac{1}{2}, 0)$ & $21$ & $w_{21_{\Utext{II}}} = s_5 w_{20_{\Utext{II}}}$ \\
        $(0, 0, 0, 0, 0, -1, -\frac{1}{\sqrt{2}})$ & $22$ & $w_{22_{\Utext{I}}} = s_6 w_{21_{\Utext{I}}}$ \\
        $(0, 0, 0, 0, 0, 1, -\frac{1}{\sqrt{2}})$ & $22$ & $w_{22_{\Utext{II}}} = s_7 w_{21_{\Utext{II}}}$ \\
        $(0, 0, 0, 0, -1, 0, -\frac{1}{\sqrt{2}})$ & $23$ & $w_{23_{\Utext{I}}} = s_5 w_{22_{\Utext{I}}} = s_6 w_{22_{\Utext{II}}}$ \\
        $(0, 0, 0, -1, 0, 0, -\frac{1}{\sqrt{2}})$ & $24$ & $w_{24_{\Utext{I}}} = s_4 w_{23_{\Utext{I}}}$ \\
        $(0, 0, -1, 0, 0, 0, -\frac{1}{\sqrt{2}})$ & $25$ & $w_{25_{\Utext{I}}} = s_3 w_{24_{\Utext{I}}}$ \\
        $(0, -1, 0, 0, 0, 0, -\frac{1}{\sqrt{2}})$ & $26$ & $w_{26_{\Utext{I}}} = s_2 w_{25_{\Utext{I}}}$ \\
        $(-1, 0, 0, 0, 0, 0, -\frac{1}{\sqrt{2}})$ & $27$ & $w_{27_{\Utext{I}}} = s_1 w_{26_{\Utext{I}}}$ \\
        \hline
    \end{tabular}
\end{table}

As for the \Qtn{if} part, we shall apply Lemma \ref{lem-key-obs-factor-if} by verifying the two conditions \Refenum{\ref{lem-key-obs-factor-if-trans}} and \Refenum{\ref{lem-key-obs-factor-if-base}} there.  The roots of $\mathfrak{n}_\pl^\std$ \Pth{\Refcf{} \cite[(3.92)]{Lan:2016-vtcac}} is a union of three subsets
\[
\begin{split}
    & \{ e_1 \pm e_j : j = 2, 3, 4, 5, 6 \} \\
    & \cup \{ \text{$(\tfrac{1}{2}, \pm \tfrac{1}{2}, \pm \tfrac{1}{2}, \pm \tfrac{1}{2}, \pm \tfrac{1}{2}, \pm \tfrac{1}{2}, \tfrac{\sqrt{2}}{2})^T$ \Pth{transposed} with an even number of $\tfrac{1}{2}$'s} \} \\
    & \cup \{ (0, 0, 0, 0, 0, 0, \sqrt{2})^T \},
\end{split}
\]
each of which admits a transitive action of $\WG_{\mathfrak{g}_\pl, \typeD_5} \cong \SG_5 \ltimes \{ \pm 1 \}^{5, +}$ \Pth{acting on the middle five coordinates}.  In order to verify \Refenum{\ref{lem-key-obs-factor-if-trans}}, it suffices to show that some elements of $\WG_{\mathfrak{m}_\pl}$ bring, for example, $e_1 + e_2$ \Pth{in the first subset} and $(0, 0, 0, 0, 0, 0, \sqrt{2})^T$ \Pth{in the third subset} to $(\frac{1}{2}, \frac{1}{2}, \frac{1}{2}, \frac{1}{2}, \frac{1}{2}, \frac{1}{2}, \frac{\sqrt{2}}{2})^T$ \Pth{in the second subset}, respectively.  For the former, the reflection with respect to the root $(-\frac{1}{2}, -\frac{1}{2}, \frac{1}{2}, \frac{1}{2}, \frac{1}{2}, \frac{1}{2}, \frac{\sqrt{2}}{2})^T$ of $\mathfrak{m}_\pl$ \Pth{\Refcf{} \cite[(3.90)]{Lan:2016-vtcac}} brings $e_1 + e_2$ to $(\frac{1}{2}, \frac{1}{2}, \frac{1}{2}, \frac{1}{2}, \frac{1}{2}, \frac{1}{2}, \frac{\sqrt{2}}{2})^T$.  For the latter, the reflection with respect to the root $(-\frac{1}{2}, -\frac{1}{2}, -\frac{1}{2}, -\frac{1}{2}, -\frac{1}{2}, -\frac{1}{2}, \frac{\sqrt{2}}{2})^T$ of $\mathfrak{m}_\pl$ \Pth{\Refcf{} \cite[(3.90)]{Lan:2016-vtcac} again} brings $(0, 0, 0, 0, 0, 0, \sqrt{2})^T$ to $(\frac{1}{2}, \frac{1}{2}, \frac{1}{2}, \frac{1}{2}, \frac{1}{2}, \frac{1}{2}, \frac{\sqrt{2}}{2})^T$.  As for \Refenum{\ref{lem-key-obs-factor-if-base}}, by taking $\wt_{0, \pl}'' = (0, 1, 0, 0, 0, 0, \frac{1}{\sqrt{2}})^T$ \Pth{see the second row of Table \ref{tab-ex-e-7-alpha-1}}, we have $\wt_{0, \pl} - \wt_{0, \pl}'' = e_1 - e_2$, which is a root of $\mathfrak{n}_\pl^\std$.  Thus, the \Qtn{if} part has also been verified.

\section{Vanishing results for completed cohomology}\label{sec-van-res}

\subsection{Completed cohomology}\label{sec-cmpl-coh}

The $p$-adically completed cohomology for locally symmetric spaces was introduced by Emerton to serve as a $p$-adic analogue of spaces of automorphic forms.  We refer to \cite{Emerton:2014-ccplg, Calegari/Emerton:2012-ccs} for more details.  In this paper, we will only consider the case of Shimura varieties.

\begin{definition}\label{def-cmpl-coh}
    Let $(\Grp{G}, \Shdom)$ be a Shimura datum.  Fix a tame level, \ie, an open compact subgroup $\levcp^p$ of $\Grp{G}(\bAip)$.  The \Pth{$i$-th} completed cohomology at tame level $\levcp^p$ is defined as
    \[
        \tilde{H}^i := \varprojlim_n \varinjlim_{\levcp_p} H^i(\Sh_{\levcp^p \levcp_p, \bC}^\an, \bZ / p^n),
    \]
    where $\levcp_p$ runs through all open compact subgroups of $\Grp{G}(\bQ_p)$ such that $\levcp^p \levcp_p$ is neat.  \Pth{See Section \ref{sec-Sh-var} for the notation here.}
\end{definition}

The completed cohomology $\tilde{H}^i$ is $p$-adically complete and admits a natural admissible representation of $\Grp{G}(\bQ_p)$.  In particular, $\tilde{H}^i \otimes_\bZ \bQ$ is an admissible $p$-adic Banach space representation of $\Grp{G}(\bQ_p)$.  We denote by $\tilde{H}^{i, \la}$ the subspace of $\Grp{G}(\bQ_p)$-locally analytic vectors, which is a $\Grp{G}(\bQ_p)$-invariant dense subspace, by \cite[\aThm 7.1]{Schneider/Teitelbaum:2003-apdar}.  The Lie algebra $\Lie \Grp{G}_{\bQ_p}$ of $\Grp{G}(\bQ_p)$ acts naturally on it by derivation.  We put $\tilde{H}_C^i := \tilde{H}^i \ho_{\bZ_p} C$, the $p$-adically completed tensor product, and denote by $\tilde{H}_C^{i, \la}$ its subspace of $\Grp{G}(\bQ_p)$-locally analytic vectors.  Equivalently,
\[
    \tilde{H}_C^{i, \la} = \tilde{H}^{i, \la} \ho_{\bQ_p} C,
\]
the completed tensor product for LB-spaces, as in \cite[\aProp 1.1.32]{Emerton:2017-LAR}.  Note that, after fixing an isomorphism $\iota: \bC \Mi C$, by \cite[\aThm 6.2.6]{RodriguezCamargo:2022-lacc} and the very construction of $\cO^\la = R \pi_{\HT, *}^\la(\cO_\aSh^\la)$ in \Refeq{\ref{eq-thm-geom-summary}}, we have $\mathfrak{g}$-equivariant isomorphisms
\begin{equation}\label{eq-RC-prim-comp}
    \tilde{H}_C^{i, \la} \cong H^i(\aSh_{\levcp^p}^\Tor, \cO_\aSh^\la) \cong H^i(\Fl, \cO^\la).
\end{equation}
\Pth{Here we freely use the notation introduced in Section \ref{sec-geom-Sen-Sh}.}  As explained in Section \ref{sec-hor-act}, there is a natural action of $Z(U(\mathfrak{m}))$ on $\cO^\la$ and hence \Pth{by \Refeq{\ref{eq-RC-prim-comp}}} also on $\tilde{H}_C^{i, \la}$, extending the natural action of $Z(U(\mathfrak{g}))$.  Recall that, in Definitions \ref{def-suff-reg-wt} and \ref{def-suff-reg-wt-id}, we introduced the notion of $\hc_\hd^\iota$-sufficiently regular infinitesimal characters and ideals $\cI^\iota \subset Z(U(\mathfrak{g}))$ and $\cI_{\mathfrak{m}}^\iota \subset Z(U(\mathfrak{m}))$.

\begin{theorem}\label{thm-main}
    Let $d = \dim_\bC(\Shdom)$ and $\iota: \bC \Mi C$ an isomorphism.  Then $(\cI^\iota)^n$ and $(\cI_{\mathfrak{m}}^\iota)^n$ annihilate $\tilde{H}_C^{< d, \la}$, for some $n > 0$.  In particular, if $[\wt]: Z(U(\mathfrak{g})) \to C$ \Pth{\resp $[\wt]_{\mathfrak{m}}: Z(U(\mathfrak{m})) \to C$} is $\hc_\hd^\iota$-sufficiently regular, then the $[\wt]$- \Pth{\resp $[\wt]_{\mathfrak{m}}$-} isotypic part $\tilde{H}_{C, [\wt]}^{< d, \la}$ \Pth{\resp $\tilde{H}_{C, [\wt]_{\mathfrak{m}}}^{< d, \la}$} of $\tilde{H}_C^{< d, \la}$ is zero.
\end{theorem}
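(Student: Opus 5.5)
We reduce Theorem \ref{thm-main} to Theorem \ref{thm-sc-van-m} and Corollary \ref{cor-sc-van-g}, which already give the statement whenever Condition \ref{cond-der-sc-no-need-c} holds: by \Refeq{\ref{eq-RC-prim-comp}} we have $\tilde{H}_C^{i,\la} \cong H^i(\Fl, \cO^\la)$, compatibly with the actions of $Z(U(\mathfrak{m}))$ and $Z(U(\mathfrak{g}))$. The work is therefore to remove Condition \ref{cond-der-sc-no-need-c}. The isotypic vanishing then follows formally, since a $\hc_\hd^\iota$-sufficiently regular character has kernel $\mathfrak{m}_{[\wt]} \not\supset \cI^\iota$, so $\cI^\iota + \mathfrak{m}_{[\wt]} = Z(U(\mathfrak{g}))$.

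First, we compare with an auxiliary Shimura datum $(\Grp{G}_1, \Shdom_1)$ satisfying Condition \ref{cond-der-sc-no-need-c}, with $\Grp{G}_1^\der = \Grp{G}^\scc$ and with the same adjoint datum. One could take a Deligne-style construction, $\Grp{G}_1 = (\Grp{G}^\scc \times Z_1)/\text{(finite central)}$ with a suitable torus $Z_1$ split of equal rank over $\bQ$ and $\bR$. Then $\Grp{G}_1^c = \Grp{G}_1$, and the Hodge cocharacters give the same $\mathfrak{m}$, $\mathfrak{n}$, and $\Fl$ up to the center. Since the conditions in Definition \ref{def-suff-reg-wt} only involve pairings with coroots of $\mathfrak{n}$, the ideals $\cI^\iota$ and $\cI_{\mathfrak{m}}^\iota$ are pulled back from the derived (equivalently adjoint) parts. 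Hence it suffices to prove annihilation by the corresponding ideals of $Z(U(\mathfrak{g}^\der))$ and $Z(U(\mathfrak{m}\cap\mathfrak{g}^\der))$.

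Next, we relate the geometry. By Proposition \ref{prop-lsv-funct}, Remarks \ref{rem-dcs-funct-sc} and \ref{rem-dcs-funct-sc-der}, the connected components of $\Sh_\levcp$ for both data are quotients of the same $\Gamma\backslash\Dom^+$ by neat arithmetic groups in $\Grp{G}^\ad(\bQ)_+$. In the limit over $\levcp_p$, each connected component of the infinite-level space is a (pro-)finite étale quotient of one for $\Grp{G}_1$. Concretely, I would express $\tilde{H}_C^{i,\la}$ for $\Grp{G}$ as a direct summand, via invariants of a profinite group acting through a finite quotient at each level, and using $\bQ_p$-coefficients, of an induced representation built from $\tilde{H}_C^{i,\la}$ of connected components for $\Grp{G}_1$, with finitely many components at each level. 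Alternatively, and probably more cleanly, I would work on $\Fl$. Remark \ref{rem-HT-mor-la-comp} says that $\pi_\HT^\la$ factors through the adjoint tower, so $\cO^\la_{\Grp{G}}$ is a direct summand of a pushforward or induction of $\cO^\la_{\Grp{G}_1}$ restricted to components. This would be equivariant for $\mathfrak{g}^\der$ and for $Z(U(\mathfrak{m}\cap\mathfrak{g}^\der))$, since both actions come from the same Sen operators and the same $\mathfrak{n}^0 \subset \mathfrak{g}^0$ on $\Fl$. Annihilation by a power of an ideal is preserved under passing to direct summands, products, and finite filtrations.

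The main obstacle is making this comparison rigorous. We need to check that passing between the towers for $\Grp{G}$ and $\Grp{G}_1$, including the quotient $\widetilde{\levcp_p}$ by closures of central elements as in Example \ref{ex-KZ-mod-KZ'-tower-p-infty}, is compatible with locally analytic vectors and with the horizontal $Z(U(\mathfrak{m}))$-action. The locally analytic vectors of the center $\mathfrak{z}$ act possibly nontrivially, but $\mathfrak{z}$ is irrelevant to the ideals. There is a second, alternative route that avoids the auxiliary datum: rerun the proof of Theorem \ref{thm-sc-van-m} directly. The only uses of Condition \ref{cond-der-sc-no-need-c} are the existence of $\rW_0$ in Proposition \ref{prop-lsv-pos-smallest}, which needs $\rW_0$ in $\Rep(\Grp{M}^c)$, and of $\rV_0$. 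One could construct the line bundle with weight $\wt_0$ on the tower for $\Grp{G}$ after passing to a finite-index subtower where the relevant character of $\Grp{M}^c$ exists up to a finite central twist, and then untwist by Proposition \ref{prop-cyc-cov-coh-inj}-type invariants. I would try the first route first, and fall back on this second route if the component bookkeeping becomes unwieldy.
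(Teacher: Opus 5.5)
Your main route is essentially the paper's. The paper reduces, via the $Z(U(\mathfrak{m} \cap \mathfrak{g}^\der))$-equivariant decomposition of Proposition \ref{prop-ind-cmpl-coh}, to the connected version Theorem \ref{thm-main-conn} for the ideals intersected with $Z(U(\mathfrak{g}^\der))$ and $Z(U(\mathfrak{m} \cap \mathfrak{g}^\der))$. It then compares with an auxiliary datum $(\tilde{\Grp{G}}, \tilde{\Shdom})$ with $\tilde{\Grp{G}}^\der = \Grp{G}^\scc$ satisfying Condition \ref{cond-der-sc-no-need-c} (Lemma \ref{lem-ext-G-sc}), and obtains the needed equivariance from the factorization of the Hodge--Tate period maps through the adjoint tower (Remark \ref{rem-HT-mor-la-comp} and Lemma \ref{lem-HT-mor-tor-conn-scc-compat}).

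The one technical difference is this: instead of exhibiting a direct summand via invariants, the paper shrinks the level so that $\Gamma_\levcp = \Gamma' \times \Xi$. It then uses a degree/trace argument (as in Lemma \ref{lem-fin-cov-coh-comp}) to show that the pullback \Refeq{\ref{eq-cmpl-coh-conn-scc}} on connected completed cohomology is injective after inverting $p$. Injectivity suffices, because annihilation by a power of an ideal passes to equivariant submodules.
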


Recall that, in Definitions \ref{def-suff-reg-wt} and \ref{def-suff-reg-wt-id}, we also introduce the notion of $\hc_\hd$-sufficiently regular infinitesimal characters and ideal $\cI$ of $Z(U(\Lie\Grp{G}))$.  Note that $Z(U(\Lie \Grp{G})) \subset Z(U(\Lie \Grp{G}_{\bQ_p}))$ acts naturally on $\tilde{H}^{i, \la}$.

\begin{corollary}\label{cor-thm-main}
    The ideal $\cI^n$ annihilates $\tilde{H}^{< d, \la}$, for some $n > 0$.  In particular, if $[\wt]: Z(U(\Lie \Grp{G})) \to \bQ_p$ is $\hc_\hd$-sufficiently regular, then $\tilde{H}_{[\wt]}^{< d, \la} = 0$.
\end{corollary}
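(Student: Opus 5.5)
We deduce Corollary \ref{cor-thm-main} from Theorem \ref{thm-main} by base change and descent. First, the $\Grp{G}(\bQ_p)$-locally analytic vectors commute with base change. By the discussion preceding Theorem \ref{thm-main}, $\tilde{H}_C^{i, \la} = \tilde{H}^{i, \la} \ho_{\bQ_p} C$, and this identification is equivariant for the action of $Z(U(\Lie \Grp{G})) \subset Z(U(\Lie \Grp{G}_{\bQ_p}))$. On the right-hand side, $Z(U(\Lie \Grp{G}))$ acts through $Z(U(\Lie \Grp{G})) \otimes_\bQ C \cong Z(U(\mathfrak{g}))$, with $C$ acting on the second factor. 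Since $\tilde{H}^{i, \la} \to \tilde{H}^{i, \la} \ho_{\bQ_p} C$ is injective (for instance, choose a $\bQ_p$-linear continuous retraction $C \to \bQ_p$, or argue with an orthonormalizable Banach model), it suffices to show that $\cI^n \otimes 1$ annihilates $\tilde{H}_C^{< d, \la}$.

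Next, apply Theorem \ref{thm-main} to every isomorphism $\iota': \bC \Mi C$. This gives, for each $\iota'$, an integer $n_{\iota'}$ such that $(\cI^{\iota'})^{n_{\iota'}}$ annihilates $\tilde{H}_C^{< d, \la}$. By the proof of Proposition \ref{prop-suff-reg-m-vs-g}\Refenum{\ref{prop-suff-reg-m-vs-g-3}}, $\cI^{\iota'}$ depends only on $\iota'|_L$ for a finite extension $L$. So only finitely many ideals $\cI^{\iota_1}, \ldots, \cI^{\iota_k}$ occur, and we can take a uniform exponent $N = \max n_{\iota_j}$.

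The annihilator of $\tilde{H}_C^{< d, \la}$ in $Z(U(\mathfrak{g}))$ is an ideal. It therefore contains the sum $J := \sum_j (\cI^{\iota_j})^{N}$. In the Noetherian ring $Z(U(\mathfrak{g}))$, the radical of $J$ equals the radical of $\sum_j \cI^{\iota_j}$. Hence $(\sum_j \cI^{\iota_j})^{M} \subset J$ for some $M$; to see this, take $M \geq kN$, so that every product of $M$ elements has some $\cI^{\iota_j}$ repeated at least $N$ times. By Proposition \ref{prop-suff-reg-m-vs-g}\Refenum{\ref{prop-suff-reg-m-vs-g-3}}, $\cI$ is the radical of $Z(U(\Lie \Grp{G})) \cap \sum_{\iota'} \cI^{\iota'}$. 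Since $Z(U(\Lie \Grp{G}))$ is Noetherian and $\cI$ is finitely generated, some power $\cI^{r}$ lies in $Z(U(\Lie \Grp{G})) \cap \sum_j \cI^{\iota_j}$. Then $\cI^{rM}$ maps into $J$ and annihilates $\tilde{H}_C^{< d, \la}$, hence annihilates $\tilde{H}^{< d, \la}$. Take $n = rM$.

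For the last assertion, let $[\wt]$ be $\hc_\hd$-sufficiently regular, with kernel $\mathfrak{p}_\wt$. By definition, $\cI$ is the intersection of the kernels of the non-$\hc_\hd$-sufficiently regular characters. Because $\mathfrak{p}_\wt$ is a maximal ideal not among those kernels, $\cI \not\subset \mathfrak{p}_\wt$; this uses that the non-sufficiently-regular locus is closed, which follows from Definition \ref{def-suff-reg-wt-id}. Choose $z \in \cI$ with $[\wt](z) \neq 0$. Then $z^n$ acts by zero on $\tilde{H}^{< d, \la}$, but it acts invertibly on any (generalized) $[\wt]$-isotypic part. Hence $\tilde{H}_{[\wt]}^{< d, \la} = 0$.

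The main delicate point is the bookkeeping of radicals and exponents across different $\iota'$, together with the compatibility of the $\bQ_p$-structure with the $C$-linear extension. Both are handled by Noetherianity and the finiteness of the relevant embeddings.
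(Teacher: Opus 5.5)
Your argument for the annihilation statement is correct and is the paper's proof. Proposition \ref{prop-suff-reg-m-vs-g}\Refenum{\ref{prop-suff-reg-m-vs-g-3}} reduces to finitely many $\iota_j$, Theorem \ref{thm-main} gives a uniform exponent $N$, and a power of $\cI$ lands in $\sum_j (\cI^{\iota_j})^N$. Your explicit descent from $\tilde{H}_C^{<d,\la}$ to $\tilde{H}^{<d,\la}$ (via injectivity of $\tilde{H}^{i,\la} \to \tilde{H}^{i,\la} \ho_{\bQ_p} C$) and the pigeonhole bookkeeping fill in steps that the paper leaves implicit.

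The deduction of the last assertion, however, has a gap.

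First, $\mathfrak{p}_\wt = \ker [\wt]$ is in general \emph{not} maximal. The image of $Z(U(\Lie \Grp{G}))$ in $\bQ_p$ is a finitely generated $\bQ$-algebra, and it is a field only if $[\wt]$ takes algebraic values. For example, with $\Lie \Grp{G} = \mathfrak{gl}_2$, sending the Casimir element to an element of $\bQ_p$ transcendental over $\bQ$ gives a non-maximal kernel.

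Second, and more importantly, $\cI \not\subset \mathfrak{p}_\wt$ does not follow from Definition \ref{def-suff-reg-wt-id}. Different characters $Z(U(\Lie \Grp{G})) \to C$ can share a kernel. What you need is that the non-$\hc_\hd$-sufficiently regular characters are \emph{exactly} the $\chi$ with $\chi(\cI) = 0$. Proposition \ref{prop-suff-reg-m-vs-g}\Refenum{\ref{prop-suff-reg-m-vs-g-3}} only gives $\cI \otimes_\bQ C \subset \sqrt{\sum_{\iota'} \cI^{\iota'}}$ for free. The reverse inclusion is true, but it needs an extra argument: the sum is $\mathrm{Aut}(C/\bQ)$-stable because $\sigma(\cI^{\iota'}) = \cI^{\sigma \circ \iota'}$, and then one applies Galois descent.

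The easiest repair avoids $\cI$ altogether.
\begin{itemize}
\item If $[\wt]$ is $\hc_\hd$-sufficiently regular, then $[\wt] \otimes C$ is $\hc_\hd^{\iota'}$-sufficiently regular for some $\iota'$.
\item Hence $\tilde{H}_{C, [\wt] \otimes C}^{<d, \la} = 0$ by the last assertion of Theorem \ref{thm-main}. Here closedness is genuine: the non-$\hc_\hd^{\iota'}$-sufficiently regular locus is the image of finitely many affine hyperplanes under the finite map $\mathfrak{h}^* \to \mathfrak{h}^*/\WG_{\mathfrak{g}}$, and $\ker([\wt] \otimes C)$ is maximal.
\item Since $\ker([\wt] \otimes C) = \ker([\wt] \otimes \bQ_p) \otimes_{\bQ_p} C$, your injection $\tilde{H}^{<d,\la} \Em \tilde{H}_C^{<d,\la}$ carries $\tilde{H}_{[\wt]}^{<d,\la}$ into $\tilde{H}_{C, [\wt] \otimes C}^{<d, \la}$, which gives the claim.
\end{itemize}
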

\begin{proof}
    By Proposition \ref{prop-suff-reg-m-vs-g}\Refenum{\ref{prop-suff-reg-m-vs-g-3}}, we can find isomorphisms $\iota_1, \cdots, \iota_l: \bC \Mi C$ such that the radical ideal of $\sum_{i = 1}^l \cI^{\iota_i}$ contains $\cI$.  By Theorem \ref{thm-main}, there exists $N > 0$ such that $(\cI^{\iota_i})^N$ annihilates $\tilde{H}_C^{< d, \la}$, for all $i = 1, \cdots, l$.  Hence, a power of $\cI$ annihilates it as well.
\end{proof}

\begin{remark}\label{rem-thm-main-spec}
    When Condition \ref{cond-der-sc-no-need-c} holds, Theorem \ref{thm-main} follows from Theorem \ref{thm-sc-van-m} and Corollary \ref{cor-sc-van-g}, which we have already proved in Section \ref{sec-sc-van-res}.  In the next subsection, we will explain how to reduce the general case to this special case.
\end{remark}

\begin{remark}\label{rem-LS-van-reprove}
    Let $\levcp_p$ be an open compact subgroups of $\Grp{G}(\bQ_p)$ such that $\levcp^p \levcp_p$ is neat.  Suppose that $\rV$ is a finite-dimensional representation of $\Grp{G}^c$ over $\bC$.  Denote by $\GrSh{\rV_\bC}$ the corresponding $\bC$-local system on $\Sh_{\levcp^p \levcp_p, \bC}^\an$.  Our main result implies that, if $\rV_C$ has $\hc_\hd$-sufficiently regular infinitesimal weights for an isomorphism $\bC\cong C$, then
    \[
        H^{< d}(\Sh_{\levcp^p \levcp_p, \bC}^\an, \GrSh{\rV_\bC}) = 0.
    \]
    This vanishing result was previously obtained by the first-named author \cite[\aThm 4.10]{Lan:2016-vtcac} by a different method.  To see the implication, by using an isomorphism $\iota: \bC \Mi C$ over $\ReFl$, we may assume that $\rV$ is defined over a finite extension $L$ of $\bQ_p$ and $\rV_C$ has $\hc_\hd^\iota$-sufficiently regular infinitesimal character.  By the comparison theorems for various \'etale cohomology, it suffices to prove the analogue for the \'etale local system $\etSh{\rV_L}$ on $\Sh_{\levcp^p \levcp_p, C}$.  By \cite[\aThm 0.5]{Emerton:2014-ccplg}, there is a spectral sequence
    \[
        E_2^{i, j} =  \Ext_{\Lie \widetilde{\levcp_p}}^i(\dual{\rV}_L, \tilde{H}^{j, \la}) \Rightarrow H_\et^{i + j}(\etSh{\rV_L}),
    \]
    where $\dual{\rV}_L$ denotes the dual of $\rV_L$, and $H_\et^{i + j}(\etSh{\rV_L}) := \varinjlim_{\levcp_p} H_\et^{i + j}(\Sh_{\levcp^p \levcp_p, C}, \etSh{\rV_L})$.  Note that there is a natural smooth action of $\levcp_p$ on $H_\et^{i + j}(\etSh{\rV_L})$, and $H_\et^{i + j}(\etSh{\rV_L})^{\levcp_p} \cong H_\et^{i + j}(\Sh_{\levcp^p \levcp_p, C}, \etSh{\rV_L})$. By considering the action of the center $Z(U(\Lie\Grp{G}_{\bQ_p}))$, we see that our vanishing result gives $E_2^{i, < d} = 0$ \Pth{see, \eg, the proof of \cite[\aThm 1.1]{Milicic:2014-vocot}}, and hence $H_\et^{< d}(\Sh_{\levcp^p \levcp_p, C}, \etSh{\rV_L}) = 0$.
\end{remark}

A similar argument as in Remark \ref{rem-LS-van-reprove} works for more general pro-\'etale $\bZ_p$-local systems arising from a locally analytic representation of $\widetilde{\levcp_p}$ \Pth{not necessarily of finite rank}.  Let $\Sh_{\levcp^p \levcp_p}^\Tor$ be a toroidal compactification of $\Sh_{\levcp^p \levcp_p}$, as before, and consider the tower $\varprojlim_{\levcp_p' \subset \levcp_p} \aSh_{\levcp^p \levcp_p'}^\Tor$ defining a pro-Kummer-\'etale $\widetilde{\levcp_p}$-torsor over $\aSh_{\levcp^p \levcp_p}^\Tor$, as in the beginning of Section \ref{sec-HT-mor}.  \Pth{So, implicitly, we fix an isomorphism $\iota: \bC \Mi C$.}  Recall that, in Construction \ref{constr-proket-ls}, for each $p$-adic unitary Banach space representation $\rV$ of $\widetilde{\levcp_p}$, we defined a pro-Kummer-\'etale sheaf $\proketSh{\rV} = \proketSh{\rV}_{\aSh_{\levcp^p \levcp_p}^\Tor}$ functorial in $\rV$.  \Pth{For simplicity, we shall similarly omit the subscript \Qtn{$\aSh_{\levcp^p \levcp_p}^\Tor$} in the notation for the similarly constructed sheaves below.}  When $\rV$ is a general $p$-adic Banach space representation, one can choose a $\widetilde{\levcp_p}$-invariant open bounded lattice of $\rV$ and apply the same construction.  Then it is standard that the resulting pro-Kummer-\'etale sheaf is independent of the choice of the lattice.

\begin{corollary}\label{cor-van-la-sys}
    Suppose that $\rV$ is a locally analytic \Pth{$p$-adic} Banach space representation of $\widetilde{\levcp_p}$.  The center $Z(U(\Lie \Grp{G}_{\bQ_p}))$ acts naturally on $\rV$, $\proketSh{\rV}$, and their cohomology.  The ideal $\cI^n$ annihilates $H_\proket^{< d}(\aSh_{\levcp^p \levcp_p}^\Tor, \proketSh{\rV})$, for some $n>0$.  In particular, $H_\proket^{< d}(\aSh_{\levcp^p \levcp_p}^\Tor, \proketSh{\rV}) = 0$ if $\rV$ has a $\hc_\hd$-sufficiently regular infinitesimal character.
\end{corollary}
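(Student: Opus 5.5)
The plan is to reduce Corollary \ref{cor-van-la-sys} to Corollary \ref{cor-thm-main} (or rather its $C$-version, Theorem \ref{thm-main}) via an Emerton-type spectral sequence, in the same way as in Remark \ref{rem-LS-van-reprove}, with the finite-dimensional $\dual{\rV}_L$ replaced by the locally analytic Banach representation $\rV$.

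\emph{Step 1: rewrite the cohomology.} Since $\rV$ is locally analytic, the orbit map $v \mapsto (g \mapsto g v)$ gives a $\widetilde{\levcp_p}$-equivariant embedding $\rV \Em \sC^\la(\widetilde{\levcp_p}, \bQ_p) \ho_{\bQ_p} \rV$, where the target carries the left-translation action on the first factor and the trivial action on $\rV$. More precisely, I would use the standard untwisting isomorphism $\sC^\la(\widetilde{\levcp_p}, \rV)$ with diagonal action $\cong \sC^\la(\widetilde{\levcp_p}, \bQ_p) \ho \rV$ with the action only on the first factor. Taking derived $\Lie\widetilde{\levcp_p}$-invariants (in the sense of Lazard/Emerton: $\rV \cong R\mathrm{Hom}_{\Lie \widetilde{\levcp_p}}(\bQ_p, \sC^\la(\widetilde{\levcp_p}, \rV))$ up to the smooth part, then taking $\widetilde{\levcp_p}$-invariants, which is exact in characteristic zero on the relevant finite quotient) expresses $\proketSh{\rV}$ as computed by a complex built out of $\proketSh{\sC^\la(\widetilde{\levcp_p},\bQ_p)} \ho \rV$. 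Using Theorem \ref{thm-O-la-prim-comp} and the primitive comparison, $H_\proket^\bullet(\aSh_{\levcp^p \levcp_p}^\Tor, \proketSh{\sC^\la} \ho \rV)$ is $\tilde{H}^{\bullet,\la}\ho \rV$. Hence I obtain a spectral sequence
\[
E_2^{i,j} = H^i\bigl(\Lie \widetilde{\levcp_p}, \tilde{H}^{j,\la} \ho_{\bQ_p} \rV\bigr)^{\widetilde{\levcp_p}} \Rightarrow H_\proket^{i+j}(\aSh_{\levcp^p \levcp_p}^\Tor, \proketSh{\rV}),
\]
equivariant for $Z(U(\Lie \Grp{G}_{\bQ_p}))$, where the center acts on $\proketSh{\rV}$ through $\rV$ (and, via the untwisting, this corresponds to the action on $\tilde{H}^{j,\la}$ through the antipode, so the relevant ideal should be stable; I need to check $\cI$ is stable under the antipode, which follows from the $\WG$-symmetric/sign-free form in Remark \ref{rem-root-orbit-n} together with $\lambda\mapsto -\lambda$ being in $-\WG$ or at least preserving the condition).

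\emph{Step 2: apply the main vanishing.} By Corollary \ref{cor-thm-main}, $\cI^n$ kills $\tilde{H}^{j,\la}$ for $j<d$. The key point, as in the proof of \cite[\aThm 1.1]{Milicic:2014-vocot}, is that on $\Ext^i_{\Lie\widetilde{\levcp_p}}$ the two actions of $Z(U(\mathfrak g))$ (through the $\tilde{H}$-factor and through the $\rV$-factor) agree, since $Z$ is central and Lie algebra cohomology is computed by a complex on which $z\otimes 1 - 1\otimes S(z)$ acts null-homotopically. So $\cI^n$ annihilates $E_2^{i,j}$ for $j<d$, and thus annihilates the abutment in degrees $<d$ up to raising the exponent (finite filtration of length $\le d$, so exponent $nd$ suffices). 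If $\rV$ has an infinitesimal character $\chi$ which is $\hc_\hd$-sufficiently regular, then $\chi(\cI)\ne 0$, so some element acts both invertibly and nilpotently, giving vanishing.

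The main obstacle I expect is Step 1: making the spectral sequence rigorous for infinite-dimensional Banach $\rV$ (completed tensor products with LB-spaces, commuting $R\Gamma$ with the direct limit over $\sC_n$, and the passage between $\Ext_{\Lie}$ of locally analytic vectors and pro-Kummer-étale cohomology). I would first try to prove it directly at the sheaf level: resolve $\proketSh{\rV}$ by the Chevalley–Eilenberg complex of $\proketSh{\sC^\la(\widetilde{\levcp_p},\bQ_p)}\ho\rV$ (which is exact by the analytic Poincaré lemma for $\sC^\la$ on a small subgroup), then take cohomology, which avoids Emerton's general machinery.
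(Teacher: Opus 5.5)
Your route is sound in outline, but it differs from the paper's and is noticeably heavier.

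\textbf{The paper's route.} The paper never uses Lie algebra cohomology, and never uses a spectral sequence with coefficients in $\rV$.
\begin{enumerate}
\item After shrinking $\levcp_p$ so that $\widetilde{\levcp_p}$ is uniform, it proves (Lemma \ref{lem-cmpl-coh-la-sys}) that $H_\proket^i(\proketSh{C^{(r)}}) \cong (\tilde{H}^i \ho_{\bZ_p} C^{(r)})^{\widetilde{\levcp_p}} \subset \tilde{H}^{i, \la}$. Hence $\cI^n$ kills these groups in degrees $< d$, by Corollary \ref{cor-thm-main}.
\item For general $\rV$ it does a d\'evissage. The orbit map embeds $\rV$ into $C^{(r)} \ho_{\bQ_p} \rV$. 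This untwists to $C^{(r)}$ tensored with $\rV$ carrying the trivial action, so $H^i(\proketSh{C^{(r)} \ho \rV}) \cong H^i(\proketSh{C^{(r)}}) \ho \rV$ is killed by $\cI^n$.
\item The cokernel $N$ is again a locally analytic Banach representation. Induction on $i$ through the long exact sequence finishes the proof, with the exponent growing by $n$ in each degree.
\end{enumerate}
Because the orbit map carries the $Z(U(\mathfrak{g}))$-action on $\rV$ directly to translation on $C^{(r)}$, no comparison of two central actions on an $\Ext$ group is ever needed.

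\textbf{What each route buys.} Your approach produces a Banach-coefficient analogue of Emerton's spectral sequence (as in Remark \ref{rem-LS-van-reprove}), which carries more information than the bare annihilation statement. The cost is three ingredients that your write-up asserts but does not yet supply.

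\emph{First, the $E_2$-identification.} It rests on the $\bQ_p$-coefficient statement of Lemma \ref{lem-cmpl-coh-la-sys}, which the paper proves via Hochschild--Serre at finite level, the vanishing $H^{>0}(\widetilde{\levcp_p}, \tilde{H}^j \ho C^{(r)}) = 0$ for admissible $\tilde{H}^j$, and bounded torsion. It does not follow from Theorem \ref{thm-O-la-prim-comp}, which concerns $\widehat{\cO}$-coefficients. You also need a Lazard-type comparison for the non-admissible coefficient $\tilde{H}^j \ho \rV$; this reduces to the admissible case by the same untwisting of $C^{(r)} \ho \rV$. Note too that the Chevalley--Eilenberg complex of $\sC^\la(\widetilde{\levcp_p}, \rV)$ resolves the smooth induction of $\rV$, not $\proketSh{\rV}$ itself, so your final $\widetilde{\levcp_p}$-invariants step is essential.

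\emph{Second, agreement of the two central actions.} For infinite-dimensional $\rV$ you cannot rewrite $H^i(\mathfrak{g}, M \ho \rV)$ as $\Ext^i_{\mathfrak{g}}(\dual{\rV}, M)$, so your null-homotopy claim needs its own argument. One that works for any $U(\mathfrak{g} \oplus \mathfrak{g})$-module $X$ is as follows. Let $S$ be the antipode and $T = z \otimes 1 - 1 \otimes S(z)$.
\begin{itemize}
\item $T$ is central in $U(\mathfrak{g} \oplus \mathfrak{g})$.
\item $T$ lies in the left ideal generated by the diagonal $\mathfrak{g}$, since that ideal is the kernel of $a \otimes b \mapsto a S(b)$.
\item By Shapiro's lemma, $H^*(\mathfrak{g}, X) \cong \Ext^*_{U(\mathfrak{g} \oplus \mathfrak{g})}(U(\mathfrak{g} \oplus \mathfrak{g}) \otimes_{U(\mathfrak{g})} \bQ_p, X)$, with $U(\mathfrak{g})$ embedded diagonally.
\item $T$ acts by zero on the first argument, hence by zero on $H^*(\mathfrak{g}, X)$.
\end{itemize}

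\emph{Third, antipode stability of $\cI$.} This does hold. Under the Harish-Chandra isomorphism, $S$ corresponds to $\wt \mapsto -\wt$, and by Remark \ref{rem-root-orbit-n} the sufficient regularity condition is insensitive to sign.
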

\begin{proof}
    One may shrink $\levcp_p$ and assume that $\widetilde{\levcp_p}$ is uniform pro-$p$.  \Pth{See the discussion after Lemma \ref{lem-fin-cov-coh-comp} below.}  For $r \in (1 / p, 1) \cap p^\bQ$, we can consider the space $C^{(r)} \subset \sC^\la(\widetilde{\levcp_p}, \bQ_p)$ of $r$-analytic functions on $\widetilde{\levcp_p}$, which is dual to the distribution algebra $D_{< r}(\widetilde{\levcp_p}, \bQ_p)$ considered in \cite{Schneider/Teitelbaum:2003-apdar}.  See also \cite[\aDef IV.1]{Colmez/Dospinescu:2014-curgl}, where it was denoted by $\mathrm{LA}^{(h)}$.  Via the left translation action, $C^{(r)}$ is a locally analytic unitary Banach space representation of $\widetilde{\levcp_p}$.  We have the following:
    \begin{lemma}\label{lem-cmpl-coh-la-sys}
        There is a natural isomorphism
        \[
            H_\proket^i(\aSh_{\levcp^p \levcp_p}^\Tor, \proketSh{C^{(r)}}) \cong (\tilde{H}^i \ho_{\bZ_p} C^{(r)})^{\widetilde{\levcp_p}} \subset (\tilde{H}^i \ho_{\bZ_p} \sC^\la(\widetilde{\levcp_p}, \bQ_p))^{\widetilde{\levcp_p}} \cong \tilde{H}^{i, \la}.
        \]
        Therefore, $\cI^n$ annihilates $H_\proket^{< d}(\aSh_{\levcp^p \levcp_p}^\Tor, \proketSh{C^{(r)}})$, for some $n > 0$.
    \end{lemma}
    \begin{proof}[Proof of Lemma \ref{lem-cmpl-coh-la-sys}]
        For each $n > 0$, the $\widetilde{\levcp_p}$-representation $\rV_n := C^{(r), \circ} / p^n$ defines an \'etale sheaf $\etSh{\rV_n}$ on $\Sh_{\levcp^p \levcp_p, C}$.  The same argument as in \cite[(2.1.10)]{Emerton:2006-iseah} gives a spectral sequence
        \[
            E_2^{i, j} = H^i(\widetilde{\levcp_p}, \tilde{H}^j \otimes_{\bZ_p} \rV_n) \Rightarrow H^{i + j}(\Sh_{\levcp^p \levcp_p, C}, \etSh{\rV_n}).
        \]
        A result of Schneider--Teitelbaum's implies that $H^{> 0}(\widetilde{\levcp_p}, \tilde{H}^j \ho_{\bZ_p} C^{(r)}) = 0$, as $\tilde{H}^j$ is admissible \Pth{see \cite[\aThm 2.2.3]{Pan:2022-laccm}}.  A standard argument using the open mapping theorem \Pth{see \cite[\aProp 2.3.3]{Pan:2022-laccm}} shows that $H^{> 0}(\widetilde{\levcp_p}, \tilde{H}^j \ho_{\bZ_p} C^{(r), \circ})$ is annihilated by $p^N$, for some $N > 0$, and hence
        \[
            p^N H^{> 0}(\widetilde{\levcp_p}, \tilde{H}^j \otimes_{\bZ_p} \rV_n) = 0,
        \]
        for all $n \geq 0$.  Up to enlarging $N$ if necessary, we see that $p^N$ kills the kernel and cokernel of the natural map
        \[
            (\tilde{H}^j \otimes_{\bZ_p} \rV_n)^{\widetilde{\levcp_p}} \to H_\et^j(\Sh_{\levcp^p \levcp_p, C}, \etSh{\rV_n}).
        \]
        Moreover, $p^N$ kills the cokernel of $(\tilde{H}^j \otimes_{\bZ_p} \rV_{n + 1})^{\widetilde{\levcp_p}} \to (\tilde{H}^j \otimes_{\bZ_p} \rV_n)^{\widetilde{\levcp_p}}$ \Pth{by considering the exact sequence $0 \to \rV_1 \Mapn{\times p^n} \rV_{n + 1} \to \rV_n \to 0$}, and so it also kills $R^1\lim_n (\tilde{H}^j \otimes_{\bZ_p} \rV_n)^{\widetilde{\levcp_p}}$.  Therefore, we have
        \[
            (\tilde{H}^j \ho_{\bZ_p} C^{(r)})^{\widetilde{\levcp_p}} \cong \bigl(\lim_n (\tilde{H}^j \otimes_{\bZ_p} \rV_n)^{\widetilde{\levcp_p}}\bigr) \otimes_{\bZ_p} \bQ_p \cong \bigl(R\lim_n (\tilde{H}^j \otimes_{\bZ_p} \rV_n)^{\widetilde{\levcp_p}}\bigr) \otimes_{\bZ_p} \bQ_p.
        \]
        Thus, we see that $(\tilde{H}^j \ho_{\bZ_p} C^{(r)})^{\widetilde{\levcp_p}} \cong \bigl(R\lim_n H_\et^j(\Sh_{\levcp^p \levcp_p, C}, \etSh{\rV_n})\bigr) \otimes_{\bZ_p} \bQ_p$.  Finally, by Huber's comparison theorem \cite[\aThm 3.8.1]{Huber:1996-ERA}, by Proposition \ref{prop-ket}\Refenum{\ref{prop-ket-1}}, and by the construction of $\proketSh{C^{(r)}}$, we see that
        \[
            \bigl(R\lim_n H_\et^j(\Sh_{\levcp^p \levcp_p, C}, \etSh{\rV_n})\bigr) \otimes_{\bZ_p} \bQ_p \cong H_\proket^i(\aSh_{\levcp^p \levcp_p}^\Tor, \proketSh{C^{(r)}}).
        \]
        The lemma now follows.
    \end{proof}

    Back to the proof of Corollary \ref{cor-van-la-sys}, for a general Banach locally analytic representation $\rV$, we shall proceed by induction on $i$.  For any such $\rV$, there exists some $r$ such that, for any $v \in \rV$, the orbit map $o_v: g \in \widetilde{\levcp_p} \mapsto g(v)$ is a function in $C^{(r)} \ho_{\bQ_p} \rV$.  The map $\rV \to C^{(r)} \ho_{\bQ_p} \rV$ sending $v$ to $o_v$ is a closed embedding, and is $\widetilde{\levcp_p}$-equivariant with respect to the diagonal action of $\widetilde{\levcp_p}$ on $C^{(r)} \ho_{\bQ_p} \rV$.  We shall denote the cokernel by $N$, and consider the long exact sequence \Pth{for simplicity, we shall drop all the $\aSh_{\levcp^p \levcp_p}^\Tor$ and the subscripts \Qtn{$\proket$}}
    \[
        \cdots \to H^{i - 1}(\proketSh{N}) \to H^i(\proketSh{C^{(r)}}) \to H^i(\proketSh{C^{(r)} \ho_{\bQ_p} \rV}) \to \cdots.
    \]
    By our choice of $r$, there is an isomorphism $C^{(r)} \ho_{\bQ_p} \rV \cong C^{(r)} \ho_{\bQ_p} \rV$, where $\widetilde{\levcp_p}$ acts on every term except the last $\rV$ \Pth{\Refcf{} \cite[\aSec 2.1.1]{Pan:2026-laccm-2}}.  Hence,
    \[
        H^i(\proketSh{C^{(r)} \ho_{\bQ_p} \rV}) \cong H^i(\proketSh{C^{(r)}}) \ho_{\bQ_p} \rV
    \]
    is annihilated by $\cI^n$, for $n \gg 0$.  Note that $\cI^n$ also annihilates $H^{i - 1}(\proketSh{N})$, by the induction hypothesis.  The proof of Corollary \ref{cor-van-la-sys} is now complete.
\end{proof}

\begin{remark}
    Fix an isomorphism $\iota: \bC \Mi C$ and an infinitesimal character $[\wt]: Z(U(\mathfrak{g})) \to C$, not necessarily $\mu_\hd^\iota$-sufficiently regular.  By Proposition \ref{prop-suff-reg-m-vs-g}, there always exists a $\hc_\hd^\iota$-sufficiently regular character $[\wt']_{\mathfrak{m}}: Z(U(\mathfrak{m})) \to C$ extending $[\wt]$ via $\gamma^{\mathfrak{m}}$ such that
    \[
        \tilde{H}_{C, [\wt']_{\mathfrak{m}}}^{< d, \la}=0.
    \]
    Intuitively, in view of the relationship between the $Z(U(\mathfrak{m}))$-action and the $\iota|_E$-arithmetic Sen operator on $\tilde{H}_C^{< d, \la}$, fixing $[\wt]$ determines all possible Hodge--Tate--Sen weights appearing in the $[\wt]$-isotypic part, and this result says that at least one possible Hodge--Tate--Sen weight is missing in $\tilde{H}_{C, [\wt]}^{< d, \la}$.
\end{remark}

\subsection{Proof of Theorem \ref{thm-main}}

The basic idea is quite simple.  We will relate the completed cohomology for $(\Grp{G}, \Shdom)$ to the completed cohomology of another Shimura datum $(\Grp{G}_1, \Shdom_1')$ with $\Grp{G}_1^\der$ being the simply-connected cover of $\Grp{G}^\der$, in which case we can apply the result in Section \ref{sec-sc-van-res}.  Firstly, we need to understand the relationship between the usual completed cohomology and a connected version of it.  As before, we shall fix an isomorphism $\iota: \bC \Mi C$.

Same setup as in Section \ref{sec-cmpl-coh} and, in particular, Definition \ref{def-cmpl-coh}.  Fix $\levcp_p$ such that $\levcp = \levcp^p \levcp_p$ is neat, and fix a connected component $\Shdom^+$ of $\Shdom$.  Consider the \emph{neutral component} $\Sh_{\levcp, \bC}^{\circ, \an} := \Gamma_\levcp \Lquot \Shdom^+$ of $\Sh_{\levcp, \bC}^\an$, as in \Refeq{\ref{eq-dsc-circ-an}}, which forms a tower as $\levcp$ varies.

\begin{definition}\label{def-cmpl-coh-conn}
    Let $\levcp = \levcp^p \levcp_p$ be a neat level.  We define the \Pth{$i$-th} completed cohomology of the neutral component $\Sh_{\levcp, \bC}^{\circ, \an}$ as
    \[
        \tilde{H}^i(\Sh_{\levcp, \bC}^{\circ, \an}) := \varprojlim_n \varinjlim_{\levcp_p} H^i(\Sh_{\levcp^p \levcp_p, \bC}^{\circ, \an}, \bZ / p^n),
    \]
    equipped with the projective limit topology.
\end{definition}

In general, $\Grp{G}(\bQ_p)$ no longer acts on $\tilde{H}^i(\Sh_{\levcp, \bC}^{\circ, \an})$, but a subgroup of $\Grp{G}^\der(\bQ_p)$ still does.  More precisely, let $\Grp{G}^\scc$ be the simply-connected cover of $\Grp{G}^\der$, and let $\homom: \Grp{G}^\scc \to \Grp{G}^\der$ denote the central isogeny, as in Section \ref{sec-lsv-funct}.  Denote by
\[
    \homom_p := \homom(\bQ_p): \Grp{G}^\scc(\bQ_p) \to \Grp{G}^\der(\bQ_p)
\]
the induced map between $\bQ_p$-points.  By \cite[2.1.3]{Deligne:1979-vsimc}, $\im(\homom_p)$ acts trivially on the set of connected components of Shimura varieties.  It follows that there is a natural action of $\im(\homom_p)$ on $\tilde{H}^i(\Sh_\levcp^{\circ, \an})$.  Thus, an open subgroup of $\levcp_p \cap \Grp{G}^\der(\bQ_p)$ acts on $\tilde{H}^i(\Sh_\levcp^{\circ, \an})$, by the following well-known fact.
\begin{proposition}
    Let $\phi: G_1 \to G_2$ be a homomorphism of $p$-adic Lie groups such that $d\phi: \Lie(G_1) \to \Lie(G_2)$ is an isomorphism.  Then there exist open subgroups $H_1 \subset G_1$ and $H_2 \subset G_2$ such that $\phi(H_1) = H_2$ and $\phi|_{H_1}: H_1 \to H_2$ is an isomorphism of $p$-adic Lie groups.
\end{proposition}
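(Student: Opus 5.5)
We would deduce this from the $p$-adic analytic inverse function theorem together with the fact that small open subgroups of a $p$-adic Lie group are uniform pro-$p$ groups on which the exponential and logarithm maps are well behaved. Since $d\phi$ is an isomorphism, $\phi$ is a local analytic isomorphism at the identity. The remaining work is to shrink to subgroups, rather than mere neighborhoods, on which $\phi$ is bijective.

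First, by \cite[\aCor 8.34]{Dixon/DuSautoy/Mann/Segal:1999-APG-2}, choose an open uniform pro-$p$ subgroup $U_2 \subset G_2$. Then choose an open uniform pro-$p$ subgroup $U_1 \subset G_1$ with $\phi(U_1) \subset U_2$; this is possible because $\phi$ is continuous, so $\phi^{-1}(U_2)$ is open and contains a uniform open subgroup. For uniform groups, the lattice $L_i := \log(U_i) \subset \Lie(G_i)$ is a $\bZ_p$-Lie lattice, and $\exp: L_i \to U_i$ is a homeomorphism. Moreover, by functoriality of $\log$ and $\exp$ under analytic homomorphisms, we have $\phi \circ \exp = \exp \circ d\phi$ on $L_1$.

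Second, $d\phi(L_1)$ is a $\bZ_p$-lattice in $\Lie(G_2)$, since $d\phi$ is a $\bQ_p$-linear isomorphism. Replacing $U_1$ by $U_1^{p^k}$ (still uniform, with lattice $p^k L_1$) for $k$ large, we may assume $d\phi(L_1) \subset L_2$. Then $H_1 := U_1$ and $H_2 := \exp(d\phi(L_1))$ satisfy $\phi(H_1) = H_2$. The map $\phi|_{H_1}$ is injective because $\exp$ is injective on $L_2$ and $d\phi$ is injective.

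Third, we must check that $H_2$ is an open subgroup. It is a subgroup because $H_2 = \phi(H_1)$ is the image of a group homomorphism. It is open because $d\phi(L_1)$ is open in $L_2$ and $\exp$ is a homeomorphism onto the open set $U_2$. Finally, $\phi|_{H_1}$ is a continuous bijection between compact Hausdorff spaces, hence a homeomorphism. Since it is analytic with invertible differential everywhere (by equivariance under translation), its inverse is analytic as well, so $\phi|_{H_1}$ is an isomorphism of $p$-adic Lie groups.

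The main obstacle we expect is justifying the compatibility $\phi \circ \exp = \exp \circ d\phi$ on the lattice $L_1$. We would handle it with the standard fact from \cite{Dixon/DuSautoy/Mann/Segal:1999-APG-2} that continuous homomorphisms between uniform groups induce $\bZ_p$-Lie algebra homomorphisms of the associated lattices, intertwining $\log$. Alternatively, one could bypass lattices and use the inverse function theorem directly: take a neighborhood $V$ of $1$ on which $\phi$ is an analytic isomorphism onto an open set, choose an open subgroup $H_1 \subset V$ small enough that $\phi$ restricted to it is injective, and note that $\phi(H_1)$ is then an open compact subgroup by the invariance of domain of analytic isomorphisms.
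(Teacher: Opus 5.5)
Your proof is correct. It necessarily differs from the paper's, because the paper gives no argument of its own and simply cites \cite[\aProp 18.17]{Schneider:2011-PLG}.

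Your main route does the local inversion through the theory of uniform groups. The real input there is the comparison from \cite{Dixon/DuSautoy/Mann/Segal:1999-APG-2}: it identifies $L_U=\log(U)$ with a lattice in $\Lie(G)$, naturally in continuous homomorphisms, and this gives $\phi\circ\exp=\exp\circ d\phi$ on $L_1$. This is cleanest if you recall that $L_U$ is the set $U$ with operations defined as limits of group operations. The map $\phi$ preserves those operations tautologically, by uniqueness of $p^n$-th roots in uniform groups.

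Two small remarks on this route:
\begin{enumerate}
\item Your rescaling to $U_1^{p^k}$ in the second step is unnecessary. Since $\phi(U_1)\subset U_2$, the compatibility already gives $d\phi(L_1)=\log(\phi(U_1))\subset L_2$.
\item Analyticity of $(\phi|_{H_1})^{-1}$ also follows at once from the automatic analyticity of continuous homomorphisms between $p$-adic Lie groups.
\end{enumerate}

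Your alternative route via the $p$-adic inverse function theorem is more elementary and complete as stated. The extra shrinking for injectivity is not even needed, since $\phi|_V$ is already injective. Take any compact open subgroup $H_1\subset V$; these exist because compact open subgroups form a neighborhood basis of $1$. Then $\phi(H_1)$ is a subgroup, and it is open because $\phi|_V$ is a homeomorphism onto an open subset of $G_2$. The inverse is the restriction of the analytic inverse of $\phi|_V$.

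The uniform-group argument gives you a concrete description $H_2=\exp(d\phi(L_1))$ inside a fixed uniform $U_2$, at the cost of importing the comparison theorems between $L_U$ and the analytic Lie algebra. The inverse-function argument needs only basic $p$-adic analysis plus the existence of arbitrarily small compact open subgroups. It is probably the better fit for how the paper uses this fact, namely just to pass to an open subgroup on which $\homom_p$ is an isomorphism.
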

\begin{proof}
    See \cite[\aProp 18.17]{Schneider:2011-PLG}.
\end{proof}

As explained in Section \ref{sec-Sh-var}, by Corollary \ref{cor-KZ-mod-KZ'-tower}, the action of $\levcp_p$ on the tower $\varprojlim_{\levcp_p' \subset \levcp_p} \Sh_{\levcp^p \levcp_p, \bC}^\an$ factors through the quotient $\widetilde{\levcp_p}$ given by \Refeq{\ref{eq-ex-KZ-mod-KZ'-tower-p-infty}} \Pth{with $\Grp{H}$ there replaced with $\Grp{G}$ here}.  Let $\levcp_p^\circ \subset \widetilde{\levcp_p}$ denote the stabilizer of the neutral component $\varprojlim_{\levcp_p' \subset \levcp_p} \Sh_{\levcp^p \levcp_p', \bC}^{\circ, \an}$ of $\varprojlim_{\levcp_p' \subset \levcp_p} \Sh_{\levcp^p \levcp_p', \bC}^\an$.  Our discussion above shows that there is a natural homomorphism of $p$-adic Lie groups
\[
    \levcp_p \cap \im(\homom_p) \to \levcp_p^\circ.
\]

\begin{lemma}\label{lem-K-p-circ-vs-K-p-der}
    This homomorphism is locally an isomorphism.  Equivalently, it induces $\Lie(\levcp_p^\circ) = \Lie(\Grp{G}_{\bQ_p}^\der)$.
\end{lemma}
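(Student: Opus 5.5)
The plan is to compute the Lie algebras on both sides and compare. Then I will show that the differential of $\levcp_p \cap \im(\homom_p) \to \levcp_p^\circ$ is injective with image all of $\Lie(\levcp_p^\circ)$. The conclusion then follows from the proposition on $p$-adic Lie groups just quoted (\cite[\aProp 18.17]{Schneider:2011-PLG}). On the source side, $\homom$ is a central isogeny, so $\Lie(\levcp_p \cap \im(\homom_p)) = \Lie(\Grp{G}^\der_{\bQ_p})$. On the target side, $\levcp_p^\circ$ is an open subgroup of the stabilizer, inside $\widetilde{\levcp_p} = \levcp_p / \bigl(\levcp_p \cap \overline{Z(\Grp{G})(\bQ)}\levcp^p\bigr)$ (Example \ref{ex-KZ-mod-KZ'-tower-p-infty}), of the neutral component of the tower. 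So
\[
    \Lie(\widetilde{\levcp_p}) = \Lie \Grp{G}_{\bQ_p} / \mathfrak{z}_0,
\]
where $\mathfrak{z}_0$ is the Lie algebra of the $p$-component of $\levcp \cap \overline{Z(\Grp{G})(\bQ)}$. By Remark \ref{rem-KZ-mod-KZ'-tower-cf-K-c}, $\mathfrak{z}_0 \subset \Lie Z_s(\Grp{G})_{\bQ_p} \subset \Lie Z(\Grp{G})_{\bQ_p}$.

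For injectivity of the differential, I only need $\Lie \Grp{G}^\der_{\bQ_p} \cap \mathfrak{z}_0 = 0$. This holds because $\Lie \Grp{G}^\der \cap \Lie Z(\Grp{G}) = \Lie(Z(\Grp{G}) \cap \Grp{G}^\der) = \Lie Z(\Grp{G}^\der) = 0$, using Lemma \ref{lem-comp-ad}\Refenum{\ref{lem-comp-ad-der-vs-ad}} and the finiteness of the center of a semisimple group. Thus $\Lie(\Grp{G}^\der_{\bQ_p})$ maps isomorphically onto its image in $\Lie(\widetilde{\levcp_p})$, and the homomorphism is locally injective.

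For surjectivity on Lie algebras, I will describe the stabilizer via Deligne's description of $\pi_0$ \cite[2.1.3]{Deligne:1979-vsimc}. With $\nu: \Grp{G} \to \Grp{T} := \Grp{G}/\Grp{G}^\der$, the set of connected components of the tower at tame level $\levcp^p$ maps, with fibres on which $\im(\homom(\bAi))$ acts transitively, to $\Grp{T}(\bQ)^\dagger \backslash \Grp{T}(\bAi) / \nu(\levcp^p)$, modulo closures. Hence an element $k \in \levcp_p$ whose image lies in $\levcp_p^\circ$ must satisfy
\[
    \nu(k) \in \overline{\Grp{T}(\bQ)^\dagger \, \nu(\levcp^p)} \cap \Grp{T}(\bQ_p),
\]
after projecting to the $p$-component. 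I will then use that $\nu|_{Z(\Grp{G})^\circ}: Z(\Grp{G})^\circ \to \Grp{T}$ is an isogeny, and that $\Grp{T}(\bQ)^\dagger$ contains the image of $Z(\Grp{G})(\bQ)$ with finite index up to $\nu(\Grp{G}(\bQ)_+)$ (again from \cite[2.0, 2.1]{Deligne:1979-vsimc}). From this, the group of such $\nu(k)$ has Lie algebra $\nu_*(\mathfrak{z}_0)$. Consequently the stabilizer of the neutral component in $\levcp_p$ has Lie algebra contained in $\Lie \Grp{G}^\der_{\bQ_p} + \mathfrak{z}_0$, whose image in $\Lie(\widetilde{\levcp_p})$ is exactly the image of $\Lie \Grp{G}^\der_{\bQ_p}$. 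Combined with the containment $\im(\homom_p) \cap \levcp_p \to \levcp_p^\circ$ already noted, this gives $\Lie(\levcp_p^\circ) = \Lie(\Grp{G}^\der_{\bQ_p})$.

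The main obstacle is the step just described: controlling the closure $\overline{\Grp{T}(\bQ)^\dagger \nu(\levcp^p)}$ in the $p$-adic direction. I must check that its Lie algebra is no larger than what the closure of $Z(\Grp{G})(\bQ)$ contributes. This is a Chevalley-type statement comparing closures of arithmetic subgroups of tori under the isogeny $Z(\Grp{G})^\circ \to \Grp{T}$. The finite kernel and cokernel of an isogeny change closures only up to finite index, so Lie algebras agree, and I would argue exactly that. I also need to handle the passage between the neutral component of the limit and those at finite levels, by working with compatible systems of components via Corollary \ref{cor-KZ-mod-KZ'-tower}.
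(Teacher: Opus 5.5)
Your argument is correct, but it is organized differently from the paper's. You work at the given level and bound $\Lie(\levcp_p^\circ)$ from above by pushing the stabilizer forward along $\nu: \Grp{G} \to \Grp{T}$. For this you use Deligne's description of $\pi_0$ and a commensurability argument comparing arithmetic subgroups of $\Grp{T}$ with those of $Z(\Grp{G})^\circ$. You then upgrade the resulting isomorphism of Lie algebras to a local isomorphism via the Schneider proposition.

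The paper instead first shrinks $\levcp$, using \cite[\aCor 2.0.13]{Deligne:1979-vsimc}, so that $\Gamma_\levcp = \Gamma' \times \Xi$, with $\Gamma' \subset \Grp{G}^\der(\bQ)$ a neat congruence subgroup and $\Xi \subset Z(\Grp{G})^\circ(\bQ)$. The neutral components at levels $\levcp^p \levcp_1 M_1$ are then simply $(\Gamma' \cap \levcp_1) \Lquot \Shdom^+$. Hence $\levcp_p^\circ$ is identified on the nose with the closure $\overline{\Gamma'}$ of $\Gamma'$ in $\Grp{G}^\der(\bQ_p)$, the closure of $\Xi$ being exactly what is killed in $\widetilde{\levcp_p}$. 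The map from $\levcp_p \cap \im(\homom_p)$ is then an injection of an open subgroup of $\Grp{G}^\der(\bQ_p)$ into a subgroup of $\Grp{G}^\der(\bQ_p)$, hence a local isomorphism. This needs no Lie algebra bookkeeping and no comparison of $p$-adic closures of arithmetic subgroups of tori.

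Your route avoids shrinking the level and the product decomposition. The cost is the torus-closure comparison, and you only obtain Lie-algebra information. The paper's route gives the explicit description $\levcp_p^\circ = \overline{\Gamma'}$, which is reused in Remark \ref{rem-cmpl-coh-conn-csg} and in the proof of Theorem \ref{thm-main-conn}.

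One sentence in your proposal needs repair. At the level of rational points, $\Grp{T}(\bQ)^\dagger$ does \emph{not} contain $\nu(Z(\Grp{G})(\bQ))$ with finite index. For $\Grp{G} = \GL_2$, $\nu = \det$ sends $Z(\bQ)$ to the squares, which have infinite index in $\bQ_{>0}^\times$. What your argument actually uses is the following chain:
\begin{itemize}
\item Suppose $\nu(k) \in \Grp{T}(\bQ)^\dagger \, \nu(\levcp^p) \, U_p$ for arbitrarily small open $U_p \subset \nu(\levcp_p)$.
\item The witnessing $\tau \in \Grp{T}(\bQ)^\dagger$ then has prime-to-$p$ component in $\nu(\levcp^p)$ and $p$-component in $\nu(\levcp_p)$.
\item So $\tau$ lies in the single arithmetic subgroup $\Grp{T}(\bQ) \cap \nu(\levcp)$.
\item This group is commensurable with $\nu(Z(\Grp{G})^\circ(\bQ) \cap \levcp)$, because $\nu|_{Z(\Grp{G})^\circ}$ is an isogeny.
\item Commensurable groups have $p$-adic closures with the same Lie algebra.
\end{itemize}
This gives $\Lie \nu(\text{stabilizer}) \subset \nu_*(\mathfrak{z}_0)$, as you claim. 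Also, $\levcp_p^\circ$ is by definition the stabilizer itself, not an open subgroup of it.
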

\begin{proof}
    It suffices to prove this for sufficiently small $\levcp^p$ and $\levcp_p$.  In particular, by \cite[\aCor 2.0.13]{Deligne:1979-vsimc}, we may shrink $\levcp = \levcp^p \levcp_p$ and assume that
    \begin{equation}\label{eq-lem-K-p-circ-vs-K-p-der-prod}
        \Gamma_\levcp = \Gamma' \times \Xi
    \end{equation}
    for some neat congruence subgroup $\Gamma'$ of $\Grp{G}^\der(\bQ)$ and some congruence subgroup $\Xi$ of $\Grp{Z}^\circ(\bQ)$, where $\Grp{Z}^\circ$ denotes the neutral component of the center $\Grp{Z}$ of $\Grp{G}$.  Clearly, $\Gamma_\levcp \to \Gamma_\levcp^\ad$ induces $\Gamma' \Mi \Gamma_\levcp^\ad$.  Note that subgroups of the form $\levcp_1 M_1$, where $\levcp_1$ is an open compact subgroup of $\Grp{G}^\der(\bQ_p)$ and $M_1$ is an open compact subgroup of $\Grp{Z}(\bQ_p)$, form a system of open neighborhoods of $1 \in \Grp{G}(\bQ_p)$.  Moreover, for sufficiently small $\levcp_1$ and $M_1$, the above \Refeq{\ref{eq-lem-K-p-circ-vs-K-p-der-prod}} induces
    \[
        \Gamma_{\levcp_1 M_1} = \Gamma_\levcp \cap (\levcp_1 M_1) = (\Gamma' \cap \levcp_1) \times (\Xi \cap M_1).
    \]
    Hence, $\Gamma_{\levcp_1 M_1} \to \Gamma_{\levcp_1 M_1}^\ad$ induces $\Gamma' \cap \levcp_1 \Mi \Gamma_{\levcp_1 M_1}^\ad$.  Consequently, $\levcp_p^\circ$ is nothing but the closure of $\Gamma'$ in $\Grp{G}^\der(\bQ_p)$, which is, in particular, a subgroup of $\Grp{G}^\der(\bQ_p)$.

    On the other hand, since the kernel of the natural map $\levcp_p \to \widetilde{\levcp_p}$ is contained in $\Grp{Z}(\bQ_p)$, the induced homomorphism $\levcp_p \cap \im(\homom_p) \to \levcp_p^\circ \subset \Grp{G}^{\der}(\bQ_p)$ is injective, and hence is an isomorphism in an open neighborhood of $1 \in \Grp{G}^\der(\bQ_p)$.
\end{proof}

\begin{remark}\label{rem-cmpl-coh-conn-csg}
    It follows from the proof of Lemma \ref{lem-K-p-circ-vs-K-p-der} that, if $\Gamma_\levcp = \Gamma' \times \Xi$ for some neat congruence subgroup $\Gamma'$ of $\Grp{G}^\der(\bQ)$ and some congruence subgroup $\Xi$ of $\Grp{Z}^\circ(\bQ)$, as in \Refeq{\ref{eq-lem-K-p-circ-vs-K-p-der-prod}}, then we have
    \[
        \tilde{H}^i(\Sh_{\levcp, \bC}^{\circ, \an}) = \varprojlim_n \varinjlim_{\levcp_p^\der} H^i((\Gamma' \cap \levcp_p^\der) \Lquot \Shdom^+, \bZ / p^n),
    \]
    where $\levcp_p^\der$ runs through open compact subgroups of $\Grp{G}^\der(\bQ_p)$.
\end{remark}

\begin{proposition}\label{prop-ind-cmpl-coh}
    Let $\{ g_i \}_{i \in I}$ be any set of representatives of $\Grp{G}(\bQ)_+ \Lquot \Grp{G}(\bAi) / \levcp$.  There is a natural $\levcp_p$-equivariant isomorphism
    \[
        \tilde{H}^i \cong \bigoplus_{i \in I} \Ind_{g_i \levcp_p^\circ g_i^{-1}}^{g_i \widetilde{\levcp_p} g_i^{-1}} \tilde{H}^i(\Sh_{g_i \levcp g_i^{-1}}^{\circ, \an}),
    \]
    where $\Ind_{g_i \levcp_p^\circ g_i^{-1}}^{g_i \widetilde{\levcp_p} g_i^{-1}} \tilde{H}^i(\Sh_{g_i \levcp g_i^{-1}}^\circ)$ denote the continuous induction, \ie, continuous functions $f: g_i \widetilde{\levcp_p} g_i^{-1} \to \tilde{H}^i(\Sh_{g_i \levcp g_i^{-1}}^\circ)$ such that $f(g h) = g \cdot f(h)$, for all $g \in g_i \levcp_p^\circ g_i^{-1}$ and $h \in g_i \widetilde{\levcp_p} g_i^{-1}$, and such that $\levcp_p$ acts on it via $(l \cdot f)(h) = f(h g_i \bar{l} g_i^{-1})$, for all $l \in \levcp_p$ with image $\bar{l}$ in $\widetilde{\levcp_p}$.
\end{proposition}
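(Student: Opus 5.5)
The plan is to prove the isomorphism first at each finite level and mod $p^n$, then pass to the limits. By \Refeq{\ref{eq-dcs-decomp-an}} (with $h_i = g_i$), for each open subgroup $\levcp_p' \subset \levcp_p$ the space $\Sh_{\levcp^p \levcp_p', \bC}^\an$ decomposes as the disjoint union of its connected components $\LSV_{\levcp^p\levcp_p', h}^\an$. Right multiplication by $g_i$ identifies $\LSV_{\levcp^p \levcp_p', g_i}^\an = \Gamma_{\levcp^p\levcp_p', g_i} \Lquot \Shdom^+$ with the neutral component of the Shimura variety at level $g_i \levcp^p \levcp_p' g_i^{-1}$. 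So I first group the components of $\Sh_{\levcp^p\levcp_p',\bC}^\an$ lying over the component $\LSV_{\levcp, g_i}^\an$ of $\Sh_{\levcp,\bC}^\an$, and work one $i \in I$ at a time.

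Fix $i$. By Construction \ref{constr-dcs-normal} and Corollary \ref{cor-KZ-mod-KZ'-tower}, the profinite group $\widetilde{\levcp_p}$ acts on the tower, and on the fibre over $\LSV_{\levcp,g_i}^\an$ it permutes the connected components at level $\levcp_p'$ (for $\levcp_p'$ normal in $\levcp_p$) transitively. To see transitivity, note that the components over $\LSV_{\levcp,g_i}$ are indexed by $\Grp{G}(\bQ)_+ \Lquot \Grp{G}(\bQ)_+ g_i \levcp / \levcp^p\levcp_p'$. After conjugating by $g_i$, the stabilizer of the distinguished component is the image of $g_i \levcp_p'' g_i^{-1}$ stabilizing the neutral component of the conjugated tower. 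In the limit this stabilizer is $g_i\levcp_p^\circ g_i^{-1}$, by the definition of $\levcp_p^\circ$ applied to the level $g_i \levcp g_i^{-1}$. Hence for normal $\levcp_p'$ we get
\[
    \bigoplus_{\text{components over } \LSV_{\levcp,g_i}} H^i(\,\cdot\,, \bZ/p^n) \cong \Ind_{\mathrm{Stab}}^{g_i\widetilde{\levcp_p}g_i^{-1}/(\text{image of } \levcp_p')} H^i(\Sh^{\circ,\an}_{g_i \levcp^p\levcp_p' g_i^{-1}}, \bZ/p^n),
\]
which is a finite induction. This step is the usual Mackey/Shapiro identification of functions on a finite transitive set. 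The action of $l\in\levcp_p$ by right translation $h\mapsto h g_i \bar l g_i^{-1}$ matches the stated convention, since right multiplication by $l$ corresponds after conjugation to right multiplication by $g_i l g_i^{-1}$.

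Next I take $\varinjlim_{\levcp_p'}$. The finite inductions form a direct system whose colimit is the space of smooth (locally constant) functions $g_i\widetilde{\levcp_p}g_i^{-1} \to \varinjlim H^i(\Sh^{\circ}, \bZ/p^n)$ that are equivariant for $g_i \levcp_p^\circ g_i^{-1}$. Since $g_i\widetilde{\levcp_p}g_i^{-1}/g_i\levcp_p^\circ g_i^{-1}$ is profinite, this equals the space of continuous functions into the discrete module. Finally I apply $\varprojlim_n$. Continuous induction from a closed subgroup of a profinite group commutes with inverse limits of discrete $p$-power torsion modules, because after choosing a continuous section of the quotient map the induction is $\sC(\text{quotient}, -)$, and $\sC(X, \varprojlim M_n) = \varprojlim \sC(X, M_n)$ for profinite $X$. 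This yields the claimed isomorphism with the $p$-adically completed modules, and the finite direct sum over $I$ commutes with both limits.

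I expect the main obstacle to be the careful identification of the stabilizer in the limit with $g_i\levcp_p^\circ g_i^{-1}$, and its compatibility along the tower. Concretely, one must check that the stabilizers at finite level (images of $\Gamma_{\levcp,g_i}$-type groups modulo the $\overline{Z(\Grp{G})(\bQ)}$-part appearing in \Refeq{\ref{eq-ex-KZ-mod-KZ'-tower-p-infty}}) form a compatible system whose inverse limit is exactly the stabilizer of the limiting neutral component. For this I would use Proposition \ref{prop-KZ-mod-K'Z} at each finite level and pass to closures as in the proof of Corollary \ref{cor-KZ-mod-KZ'-tower}.
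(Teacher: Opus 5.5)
Your proposal is correct and follows essentially the same route as the paper. Both arguments decompose along the components $\Gamma_{\levcp, g_i}\backslash\Shdom^+$ of $\Sh_{\levcp,\bC}^\an$ and use that $\widetilde{\levcp_p}$ permutes the components of the tower over each of them transitively, with stabilizer the relevant $\levcp_p^\circ$; this gives the Shapiro-type identification $s \mapsto (h \mapsto r(h \cdot s))$, which is then transported to a neutral component via the Hecke action of $g_i$ and summed over $I$. The only difference is that you spell out what the paper leaves implicit: the finite-level induction, the compatibility of finite-level stabilizers with $\levcp_p^\circ$ (via the transitivity in Proposition \ref{prop-KZ-mod-K'Z}), and the passage to $\varprojlim_n$.
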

\begin{proof}
    For an open subgroup $\levcp_p' \subset \levcp_p$, we denote by $\pi_{\levcp_p'}: \Sh_{\levcp^p \levcp_p', \bC}^\an \to \Sh_{\levcp, \bC}^\an$ the natural projection map.  Then $\{ \pi_{\levcp_p'}^{-1}(\Sh_{\levcp, \bC}^{\circ, \an}) \}_{\levcp_p' \subset \levcp_p}$ forms a projective system.  Since $\widetilde{\levcp_p}$ acts transitively on the set of connected components of $\varprojlim \pi_{\levcp_p'}^{-1}(\Sh_{\levcp, \bC}^{\circ, \an})$ and $\levcp_p^\circ$ is its stabilizer at $\varprojlim \Sh_{\levcp^p \levcp_p', \bC}^{\circ, \an}$, we see that there is a natural $\widetilde{\levcp_p}$-equivariant isomorphism
    \[
        \varinjlim_{\levcp_p'} H^i(\pi_{\levcp_p'}^{-1}(\Sh_{\levcp, \bC}^{\circ, \an}), \bZ / p^n) \cong \Ind_{\levcp_p^\circ}^{\widetilde{\levcp_p}}\Bigl(\varinjlim_{\levcp_p'} H^i(\Sh_{\levcp^p \levcp_p', \bC}^{\circ, \an}, \bZ / p^n)\Bigr)
    \]
    sending $s \in \varinjlim_{\levcp_p'} H^i(\pi_{\levcp_p'}^{-1}(\Sh_{\levcp, \bC}^{\circ, \an}, \bZ / p^n)$ to the function $f: h \in \widetilde{\levcp_p} \mapsto r(h \cdot s)$, where $r: \varinjlim H^i(\pi_{\levcp_p'}^{-1}(\Sh_{\levcp, \bC}^{\circ, \an}), \bZ / p^n) \to \varinjlim H^i(\Sh_{\levcp^p \levcp_p', \bC}^{\circ, \an}, \bZ / p^n)$ is the restriction map induced by the natural inclusion $\Sh_{\levcp^p \levcp_p', \bC}^{\circ, \an} \subset \pi_{\levcp_p'}^{-1}(\Sh_{\levcp, \bC}^{\circ, \an})$.

    In general, the Hecke action of $g_i$ induces an isomorphism between $\Sh_{g_i \levcp g_i^{-1}, \bC}^{\circ, \an}$ and the connected component $\Gamma_{\levcp, g_i} \Lquot \Shdom^+$ of $\Sh_{\levcp, \bC}^\an$ and identifies the completed cohomology of the tower $\{ \pi_{\levcp_p'}^{-1}(\Gamma_{\levcp, g_i} \Lquot \Shdom^+) \}_{\levcp_p' \subset \levcp_p}$ with the induction of the completed cohomology of $\Sh_{g_i \levcp g_i^{-1}, \bC}^{\circ, \an}$ from $g_i \levcp_p^\circ g_i^{-1}$ to $g_i \widetilde{\levcp_p} g_i^{-1}$.  Then the proposition follows by writing the completed cohomology $\tilde{H}^i$ as the direct sum over $i \in I$ of the completed cohomology of the towers $\{ \pi_{\levcp_p'}^{-1}(\Gamma_{\levcp, g_i} \Lquot \Shdom^+) \}_{\levcp_p' \subset \levcp_p}$.
\end{proof}

Denote by $\tilde{H}^i(\Sh_{\levcp, \bC}^{\circ, \an})_C := \tilde{H}^i(\Sh_{\levcp, \bC}^{\circ, \an}) \ho_{\bZ_p} C$, the $p$-adically completed tensor product, and by $\tilde{H}^i(\Sh_{\levcp, \bC}^{\circ, \an})_C^\la$ its subspace of $\im(\homom_p)$-locally analytic vectors.  The derived subalgebra $\mathfrak{g}^\der \cong \Lie(\Grp{G}_C^\der)$ naturally acts on it, and induces an action of $Z(U(\mathfrak{g}^\der))$ on $\tilde{H}^i(\Sh_{\levcp, \bC}^{\circ, \an})_C^\la$.  The natural restriction map $\tilde{H}_C^{i, \la} \to \tilde{H}^i(\Sh_{\levcp, \bC}^{\circ, \an})_C^\la$ is $\levcp_p^\circ$-equivariant and hence $Z(U(\mathfrak{g}^\der))$-equivariant.  Recall that there is a natural action of $Z(U(\mathfrak{m}))$ on $\tilde{H}_C^{i, \la}$.  This algebra $Z(U(\mathfrak{m}))$ has a subalgebra $Z(U(\mathfrak{m} \cap \mathfrak{g}^\der))$.

\begin{proposition}
    The action of $Z(U(\mathfrak{g}^\der))$ on $\tilde{H}^i(\Sh_{\levcp, \bC}^{\circ, \an})_C^\la$ naturally extends to an action of $Z(U(\mathfrak{m} \cap \mathfrak{g}^\der))$ such that the restriction map $\tilde{H}_C^{i, \la} \to \tilde{H}^i(\Sh_{\levcp, \bC}^{\circ, \an})_C^\la$ is $Z(U(\mathfrak{m} \cap \mathfrak{g}^\der))$-equivariant.
\end{proposition}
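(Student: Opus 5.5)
We construct the $Z(U(\mathfrak{m} \cap \mathfrak{g}^\der))$-action geometrically, by rerunning the construction of Section \ref{sec-hor-act} on the neutral component rather than on the whole Shimura variety. Consider the tower of neutral components $\{\Sh_{\levcp^p \levcp_p', C}^{\circ}\}_{\levcp_p'}$ with its toroidal compactifications. These are open and closed in the full towers, so they form a $\levcp_p^\circ$-torsor over the connected component $\Sh_{\levcp, C}^{\circ, \Tor}$.

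The steps are as follows.

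\begin{enumerate}
\item Restrict the Hodge--Tate period map $\pi_\HT^\la$ of Theorem \ref{thm-HT-mor-la} to the open and closed subset $\aSh_{\levcp^p}^{\circ, \Tor} \subset \aSh_{\levcp^p}^\Tor$, the preimage of the neutral component.

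\item Define $\cO_{\aSh^\circ}^\la$ as the $\levcp_p^\circ$-locally analytic sections. By Lemma \ref{lem-K-p-circ-vs-K-p-der}, $\Lie \levcp_p^\circ = \Lie \Grp{G}^\der_{\bQ_p}$. Also, locally analytic vectors for $\widetilde{\levcp_p}$ agree with those for the open subgroup $\levcp_p^\circ$ once we restrict to a single component. So $\cO_{\aSh^\circ}^\la$ is simply the restriction of $\cO_\aSh^\la$ to this open and closed subset.

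\item The analogue of \Refeq{\ref{eq-RC-prim-comp}} gives
\[
\tilde{H}^i(\Sh_{\levcp, \bC}^{\circ, \an})_C^\la \cong H^i(\aSh_{\levcp^p}^{\circ, \Tor}, \cO_{\aSh^\circ}^\la) \cong H^i(\Fl, \cO^{\circ, \la}),
\]
where $\cO^{\circ, \la} := R\pi_{\HT, *}^\la(\cO_{\aSh^\circ}^\la)$. This is proved exactly as in \cite[\aThm 6.2.6]{RodriguezCamargo:2022-lacc}, or by taking the neutral summand in Proposition \ref{prop-ind-cmpl-coh} and passing to locally analytic vectors.

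\item The Lie algebra $\mathfrak{g}^\der$ acts on $\cO^{\circ, \la}$, extended $\cO_\Fl$-linearly to the Lie algebroid $\cO_\Fl \otimes \mathfrak{g}^\der$. Its subbundle $(\mathfrak{n}^0 \subset \mathfrak{p}^0) \cap (\cO_\Fl \otimes \mathfrak{g}^\der)$ still has $\mathfrak{n}^0 \subset \cO_\Fl \otimes \mathfrak{g}^\der$, because $\mathfrak{n} \subset \mathfrak{g}^\der$.

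\item Theorem \ref{thm-n-0-zero} restricts to the open subset, so $\mathfrak{n}^0$ acts by zero on $\cO^{\circ,\la}$.

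\item Repeat the argument of Section \ref{sec-hor-act} with $\mathfrak{g}$ replaced by $\mathfrak{g}^\der$ and $\mathfrak{m}$ replaced by $\mathfrak{m}\cap\mathfrak{g}^\der$. This yields an action of $Z(U(\mathfrak{m}\cap \mathfrak{g}^\der)) \subset H^0(\Fl, U((\mathfrak{m}\cap\mathfrak{g}^\der)^0))$ on $\cO^{\circ,\la}$.

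\item Lemma \ref{lem-ident-gamma-m} applies verbatim to the semisimple pair $(\mathfrak{g}^\der, \mathfrak{m}\cap\mathfrak{g}^\der)$. This shows the new action extends that of $Z(U(\mathfrak{g}^\der))$ via the analogue of $\HC^{\mathfrak{m}}$.
\end{enumerate}

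For equivariance of the restriction map, note that $\tilde{H}_C^{i,\la} \to \tilde{H}^i(\Sh_{\levcp, \bC}^{\circ,\an})_C^\la$ is induced by restricting sheaves along the open and closed immersion $\aSh_{\levcp^p}^{\circ,\Tor} \hookrightarrow \aSh_{\levcp^p}^\Tor$. This gives a map $\cO^\la \to \cO^{\circ,\la}$ of sheaves on $\Fl$, in fact projection to a direct summand. The map is compatible with the $\mathfrak{g}^\der$-actions, and hence with the $\cO_\Fl$-linearly extended actions of $U(\mathfrak{p}^0 \cap (\cO_\Fl\otimes\mathfrak{g}^\der))$. On $\cO^\la$, the subalgebra $Z(U(\mathfrak{m}\cap\mathfrak{g}^\der)) \subset Z(U(\mathfrak{m}))$ acts through these same operators. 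Taking $H^i(\Fl, -)$ then gives the claimed equivariance.

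\textbf{Main obstacle.} The delicate point is step 3, the primitive comparison identification for the connected tower. We must check that $\levcp_p^\circ$-locally analytic vectors in $\tilde{H}^i(\Sh^{\circ}_{\levcp})_C$ match the $H^i$ of $\cO_{\aSh^\circ}^\la$. Our first approach is to deduce it from the full statement \Refeq{\ref{eq-RC-prim-comp}} using Proposition \ref{prop-ind-cmpl-coh}. Locally analytic vectors of a continuous induction from the open subgroup $\levcp_p^\circ$ are the induction of the locally analytic vectors, and the geometric side decomposes compatibly over components of $\aSh_{\levcp^p}^\Tor$. Comparing the neutral summands then gives the identification.
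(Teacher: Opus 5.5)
Your overall architecture matches the paper's: build a locally analytic sheaf on the neutral component at infinite level, push it forward along $\pi_\HT$, use $\mathfrak{n}^0$-annihilation to rerun Section~\ref{sec-hor-act} for $(\mathfrak{g}^\der, \mathfrak{m}\cap\mathfrak{g}^\der)$, and get equivariance from restriction. Steps 4, 6 and 7 are fine. However, your justification of steps 1, 2, 3, 5 and of the equivariance rests on a false premise: $\levcp_p^\circ$ is \emph{not} open in $\widetilde{\levcp_p}$.

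Lemma~\ref{lem-K-p-circ-vs-K-p-der}, which you cite, says $\Lie(\levcp_p^\circ) = \Lie(\Grp{G}^\der_{\bQ_p})$, whereas $\widetilde{\levcp_p}$ generally has central directions. For example, for modular curves with full level $N \geq 3$ away from $p$, $\widetilde{\levcp_p} = \levcp_p \subset \GL_2(\bQ_p)$ while $\levcp_p^\circ = \levcp_p \cap \mathrm{SL}_2(\bQ_p)$. Consequently, the infinite-level neutral component $\varprojlim_{\levcp_p'} |\aSh^{\circ,\Tor}_{\levcp^p\levcp_p'}|$ is an intersection of infinitely many open-closed sets. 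It is closed but not open in $\aSh^\Tor_{\levcp^p}$. The open-closed set $\pi_{\levcp_p}^{-1}(\aSh^{\circ,\Tor}_\levcp)$ (``the preimage of the neutral component'') is a different object: the union of its translates by $\widetilde{\levcp_p}/\levcp_p^\circ$, which is in general an infinite compact $p$-adic manifold.

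This breaks the route you chose for the main obstacle. The neutral summand in Proposition~\ref{prop-ind-cmpl-coh} is $\Ind_{\levcp_p^\circ}^{\widetilde{\levcp_p}} \tilde{H}^i(\Sh^{\circ,\an}_{\levcp,\bC})$, a continuous induction from a closed subgroup of positive codimension. It computes the cohomology of $\pi_{\levcp_p}^{-1}(\aSh^{\circ,\Tor}_\levcp)$, and the passage to $\tilde{H}^i(\Sh^{\circ,\an}_{\levcp,\bC})$ is evaluation at $1$. That map has a large kernel; in fact it \emph{is} the restriction map in the statement. So ``comparing neutral summands'' does not give step 3.

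Similarly, topologically restricting $\cO^\la_\aSh$ to a closed, non-open subset produces germs of $\widetilde{\levcp_p}$-locally analytic sections, not $\levcp_p^\circ$-locally analytic sections. For instance, a locally analytic function pulled back from the set of components that vanishes at the neutral one has zero restriction but nonzero germ. So your identification in step 2 and ``Theorem~\ref{thm-n-0-zero} restricts to the open subset'' in step 5 are unjustified. The restriction map is also not a projection onto a direct summand.

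The fix is what the paper does. Define $\cO^\la_{\aSh^\circ}$ intrinsically, as in Definition~\ref{def-HT-mor-la}, from the pro-Kummer-\'etale $\levcp_p^\circ$-torsor $\varprojlim_{\levcp_p'} \aSh^{\circ,\Tor}_{\levcp^p\levcp_p'}$ over $\aSh^{\circ,\Tor}_\levcp$. Then apply Rodr\'{\i}guez Camargo's geometric Sen theory and primitive comparison to this torsor directly; this is your parenthetical alternative in step 3, and it is the one that works. It gives step 3. The geometric Sen operator is still $\pi_\HT^{\la,*}(\mathfrak{n}^0)$, which lies in $\cO_\Fl \otimes_C \mathfrak{g}^\der$ since $\mathfrak{n} \subset \mathfrak{g}^\der$, and this gives step 5.

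For equivariance no splitting is needed. Restricting a $\levcp'$-locally analytic section to the neutral component gives a $(\levcp' \cap \levcp_p^\circ)$-locally analytic one. Hence there is a natural map $\cO^\la_\aSh \to i_*\cO^\la_{\aSh^\circ}$, where $i$ is the inclusion of the neutral component, compatible with $\pi_\HT$ and with the $\mathfrak{g}^\der$-actions. Pushing forward gives $\cO^\la \to \cO^{\la,\circ}$ compatible with the $\cO_\Fl \otimes_C \mathfrak{g}^\der$-actions, and hence with $Z(U(\mathfrak{m}\cap\mathfrak{g}^\der))$.
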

\begin{proof}
    The construction of the $Z(U(\mathfrak{m}))$-action works verbatim here.  One can define a $\mathfrak{g}^\der$-equivariant object $\cO^{\la, \circ} \in D^b(\cO_\Fl)$ similar to $\cO^\la$, which is annihilated by $\mathfrak{n}^0$ and whose cohomology computes $\tilde{H}^i(\Sh_{\levcp, \bC}^{\circ, \an})_C^\la$.  In fact, $\cO^{\la, \circ}$ is nothing but the pushforward along $\pi_\HT$ of the restriction of $\cO_\aSh^\la$ to the neutral component \Pth{see Section \ref{sec-geom-Sen-Sh} for the notation}.  The equivariance of $Z(U(\mathfrak{m} \cap \mathfrak{g}^\der))$ is tautological.
\end{proof}

In view of the definition of $\mu_\hd^\iota$-sufficiently regular infinitesimal characters in Definition \ref{def-suff-reg-wt-id}, we see that the ideals $\cI^\iota \subset Z(U(\mathfrak{g}))$ and $\cI_{\mathfrak{m}}^\iota \subset Z(U(\mathfrak{m}))$ are generated by $\cI^{\iota, \der} := \cI^\iota \cap Z(U(\mathfrak{g}^\der))$ and $\cI_{\mathfrak{m}}^{\iota, \der} := \cI_{\mathfrak{m}}^\iota \cap Z(U(\mathfrak{m} \cap \mathfrak{g}^\der))$, respectively.

We will prove the following connected version of Theorem \ref{thm-main}:
\begin{theorem}\label{thm-main-conn}
    $(\cI^{\iota, \der})^n$ and $(\cI_{\mathfrak{m}}^{\iota, \der})^n$ annihilate $\tilde{H}^{< d}(\Sh_{\levcp, \bC}^{\circ, \an})_C^\la$, for some $n > 0$.
\end{theorem}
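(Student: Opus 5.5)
We reduce Theorem \ref{thm-main-conn} to the special case already treated in Section \ref{sec-sc-van-res}, by comparing the neutral component of $(\Grp{G}, \Shdom)$ with the neutral component of an auxiliary Shimura datum $(\Grp{G}_1, \Shdom_1)$ satisfying Condition \ref{cond-der-sc-no-need-c}. The auxiliary datum should have $\Grp{G}_1^\der = \Grp{G}^\scc$, $\Grp{G}_1^\ad = \Grp{G}^\ad$, the same connected component $\Shdom^+$, and $Z(\Grp{G}_1)$ of the same rank over $\bQ$ and over $\bR$, so that $\Grp{G}_1 = \Grp{G}_1^c$. We construct it by the standard lifting technique of Deligne and Milne--Shih. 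Take a torus $\Grp{T}$ with $\Grp{G}_1 = (\Grp{G}^\scc \times \Grp{T}) / \Grp{Z}'$, where $\Grp{Z}'$ is finite central, and choose $\Grp{T}$ so that the Hodge cocharacter $\hc^\ad$ lifts to $\Grp{G}_1$. Then pass to the quotient by $Z_s(\Grp{G}_1)$. By Remark \ref{rem-dcs-funct-sc} and Remark \ref{rem-dcs-funct-sc-der}, the neutral components of both towers are quotients $\Gamma \Lquot \Shdom^+$, where $\Gamma$ runs over neat congruence subgroups of $\Grp{G}^\ad(\bQ)$ defined via $\Grp{G}^\scc$ and $\Grp{G}^\der$ respectively.

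The key comparison compares the two connected towers. Using Remark \ref{rem-cmpl-coh-conn-csg}, we shrink $\levcp$ so that $\Gamma_\levcp = \Gamma' \times \Xi$. Then $\tilde{H}^i(\Sh_{\levcp, \bC}^{\circ, \an})$ is computed by the tower $\{(\Gamma' \cap \levcp_p^\der) \Lquot \Shdom^+\}$. The preimage of $\Gamma'$ in $\Grp{G}^\scc(\bQ)$ maps onto a finite-index subgroup of $\Gamma'$ (congruence subgroup property modulo finite issues; by Deligne, images of congruence subgroups of $\Grp{G}^\scc$ are cofinal among arithmetic subgroups of $\Grp{G}^\der$ up to finite index). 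So the tower for $\Gamma'$ and the tower coming from $\Grp{G}_1$ are related by finite étale Galois covers at each level, with Galois groups of bounded order that stabilize along the tower. Taking invariants under these finite groups is exact after inverting $p$. We therefore obtain that $\tilde{H}^i(\Sh_{\levcp, \bC}^{\circ, \an})_C^\la$ is a direct summand, as a module over $Z(U(\mathfrak{m} \cap \mathfrak{g}^\der))$, of $\tilde{H}^i$ for the neutral component of $\Sh(\Grp{G}_1, \Shdom_1)$. This uses that the $\mathfrak{g}^\der$- and $Z(U(\mathfrak{m}\cap\mathfrak{g}^\der))$-actions are compatible with pullback along finite covers. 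For the latter we use that $\pi_\HT^\la$ factors through the adjoint datum (Remark \ref{rem-HT-mor-la-comp}), so both $\cO^{\la,\circ}$ objects live over the same $\Fl$ with compatible Sen-theoretic actions.

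For $(\Grp{G}_1, \Shdom_1)$, Theorem \ref{thm-sc-van-m} and Corollary \ref{cor-sc-van-g} give annihilation of $H^{<d}(\Fl, \cO_1^\la)$ by powers of $\cI_{\mathfrak{m}_1}^\iota$ and $\cI_1^\iota$. Since $\mathfrak{g}_1^\der = \mathfrak{g}^\der$, $\mathfrak{m}_1 \cap \mathfrak{g}_1^\der = \mathfrak{m} \cap \mathfrak{g}^\der$, and $\mathfrak{n}_1 = \mathfrak{n}$, sufficient regularity depends only on the derived part. So the ideals $\cI^{\iota,\der}$ and $\cI_{\mathfrak{m}}^{\iota,\der}$ coincide for the two data, and generate $\cI_1^\iota$ and $\cI_{\mathfrak{m}_1}^\iota$. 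Proposition \ref{prop-ind-cmpl-coh} expresses the full completed cohomology of $\Grp{G}_1$ as a finite sum of inductions of neutral-component cohomology. On the inducing subgroup these inductions restrict to contain the neutral component as a summand, compatibly with $Z(U(\mathfrak{g}^\der))$ and, via the proposition above, with $Z(U(\mathfrak{m}\cap\mathfrak{g}^\der))$. So the annihilation transfers to $\tilde{H}^{<d}$ of the neutral component of $\Grp{G}_1$, and hence to that of $\Grp{G}$.

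We expect the main obstacle to be the middle step. First, we must construct $(\Grp{G}_1, \Shdom_1)$ satisfying Condition \ref{cond-der-sc-no-need-c}; in particular, killing $Z_s$ must not destroy simple connectivity of the derived group, which holds since $\Grp{G}_1^\der \cap Z_s$ is finite, and we then re-lift. Second, we must verify that the passage between the two arithmetic towers via $\Grp{G}^\scc \to \Grp{G}^\der$ preserves locally analytic vectors and the horizontal $Z(U(\mathfrak{m}\cap\mathfrak{g}^\der))$-action. For this we would work at the sheaf level on $\Fl$, using the Sen-theoretic description, instead of on completed cohomology directly.
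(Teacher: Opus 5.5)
Your overall architecture matches the paper's. You pass to an auxiliary Shimura datum with derived group $\Grp{G}^\scc$ satisfying Condition \ref{cond-der-sc-no-need-c}. You compare the two neutral towers through the finite covers $(\Gamma^{\scc, \prime} \cap \levcp_p^\der) \Lquot \Shdom^+ \to (\Gamma' \cap \homom_p(\levcp_p^\der)) \Lquot \Shdom^+$. You get $Z(U(\mathfrak{m} \cap \mathfrak{g}^\der))$-equivariance from both Hodge--Tate maps factoring through the common adjoint datum. And you settle the auxiliary case by Theorem \ref{thm-sc-van-m} together with Proposition \ref{prop-ind-cmpl-coh}.

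\textbf{The gap is in the construction of the auxiliary datum.} Quotienting $\Grp{G}_1$ by $Z_s(\Grp{G}_1)$ replaces its derived group $\Grp{G}^\scc$ by $\Grp{G}^\scc / (\Grp{G}^\scc \cap Z_s(\Grp{G}_1))$. This is \emph{not} simply connected as soon as that finite central subgroup is nontrivial, so finiteness is not enough. ``Re-lifting'' to the simply connected cover just reproduces the problem you started with. Choosing $\Grp{T}$ only so that $\hc^\ad$ lifts does not control $Z_s$ either, since nothing prevents the center from having larger split rank over $\bR$ than over $\bQ$.

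So as written you never reach a datum satisfying Condition \ref{cond-der-sc-no-need-c}. That condition is what Proposition \ref{prop-lsv-pos-smallest}, and hence the translation argument in Theorem \ref{thm-sc-van-m}, require.

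The missing idea is to shrink the center by passing to a \emph{subgroup}, not a quotient. As in Lemma \ref{lem-ext-G-sc}, first take Milne's extension $(\tilde{\Grp{G}}_0, \tilde{\Shdom}_0)$ of $(\Grp{G}^\scc, \Shdom^+)$. It has $\tilde{\Grp{G}}_0^\der = \Grp{G}^\scc$, and the image of $\tilde{\hd}_0$ in $(\tilde{\Grp{G}}_0 / \tilde{\Grp{G}}_0^\der)_\bR$ is anisotropic modulo a weight cocharacter defined over $\bQ$. Then let $\tilde{\Grp{G}}$ be the preimage of the smallest subtorus $\Grp{T}$ with $\Grp{T}_\bR$ containing that image. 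This keeps $\tilde{\Grp{G}}^\der = \Grp{G}^\scc$ and makes $Z_s(\tilde{\Grp{G}})$ trivial.

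In the comparison step you claim more than you need. You do not need the covers to be Galois, nor a $Z(U(\mathfrak{m} \cap \mathfrak{g}^\der))$-linear splitting; the latter would also require the deck transformations to commute with the horizontal action, which you do not check. The paper only uses that the coset map $(\Gamma' \cap \homom_p(\levcp_p^\der)) / \homom(\bQ)(\Gamma^{\scc, \prime} \cap \levcp_p^\der) \to \Gamma' / \homom(\bQ)(\Gamma^{\scc, \prime})$ is injective. Hence all covering degrees are bounded by $N = [\Gamma' : \homom(\bQ)(\Gamma^{\scc, \prime})]$, uniformly in $\levcp_p^\der$. The trace argument then kills the kernel of pullback by $N!$ at every finite level and every $n$, so the map \Refeq{\ref{eq-cmpl-coh-conn-scc}} is injective. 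An injective $Z(U(\mathfrak{m} \cap \mathfrak{g}^\der))$-equivariant map (Lemma \ref{lem-Z-U-m-der-equivar}) already pulls back annihilation.

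The same degree argument (Corollary \ref{cor-fin-cov-coh-comp}) is also what justifies your initial shrinking of $\levcp$ to arrange $\Gamma_\levcp = \Gamma' \times \Xi$. You use that shrinking without comment.
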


\begin{remark}
    By Proposition \ref{prop-suff-reg-m-vs-g}, it suffices to show that $(\cI_{\mathfrak{m}}^{\iota, \der})^n$ annihilates $\tilde{H}^{< d}(\Sh_{\levcp, \bC}^{\circ, \an})_C^\la$, for some $n > 0$.
\end{remark}

\begin{lemma}
    Theorems \ref{thm-main-conn} and \ref{thm-main} imply each other.
\end{lemma}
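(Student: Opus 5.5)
The plan is to pass between $\tilde{H}_C^{i, \la}$ and the connected version through Proposition \ref{prop-ind-cmpl-coh}, and to check that taking locally analytic vectors and the actions of $Z(U(\mathfrak{g}^\der))$ and $Z(U(\mathfrak{m} \cap \mathfrak{g}^\der))$ are compatible with the induction appearing there. Two preliminary observations are needed. First, $\cI^\iota$ is generated by $\cI^{\iota, \der}$ and $\cI_{\mathfrak{m}}^\iota$ is generated by $\cI_{\mathfrak{m}}^{\iota, \der}$. Consequently, a power of $\cI^\iota$ (resp.\ $\cI_{\mathfrak{m}}^\iota$) annihilates a module exactly when a power of the corresponding derived ideal does. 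Second, since $Z(U(\mathfrak{g})) \cong U(\mathfrak{z}) \otimes Z(U(\mathfrak{g}^\der))$, nothing is lost by only tracking the derived parts.

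For Theorem \ref{thm-main-conn} $\Rightarrow$ Theorem \ref{thm-main}, I would take the isomorphism of Proposition \ref{prop-ind-cmpl-coh}, base change to $C$, and pass to locally analytic vectors. Since $\levcp_p^\circ$ is open in $\widetilde{\levcp_p}$ (this is Lemma \ref{lem-K-p-circ-vs-K-p-der}, together with the fact that $\Lie \widetilde{\levcp_p}$ differs from $\Lie \Grp{G}^\der$ only by a central part), the induction is a finite direct sum of translates. The locally analytic vectors of a finite induction from an open subgroup are the induction of the locally analytic vectors. This gives
\[
    \tilde{H}_C^{i, \la} \cong \bigoplus_{i} \bigoplus_{h \in \widetilde{\levcp_p} / \levcp_p^\circ} h \cdot \tilde{H}^i(\Sh_{g_i \levcp g_i^{-1}, \bC}^{\circ, \an})_C^\la.
\]
The element $Z(U(\mathfrak{g}^\der))$ is $\Ad$-invariant, so its action commutes with translation by $h$ and is preserved under this decomposition. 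For $Z(U(\mathfrak{m} \cap \mathfrak{g}^\der))$ I would use the restriction maps. The restriction to each component is $Z(U(\mathfrak{m} \cap \mathfrak{g}^\der))$-equivariant by the preceding proposition, applied with $\levcp$ replaced by $g_i \levcp g_i^{-1}$ and using the $\levcp_p$-equivariance of $\pi_\HT^\la$. The direct sum of all these restrictions is the above isomorphism, and so it is injective and equivariant. Applying Theorem \ref{thm-main-conn} to each of the finitely many $g_i \levcp g_i^{-1}$, and taking the maximum of the resulting exponents, then gives Theorem \ref{thm-main}.

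For the converse, Theorem \ref{thm-main} $\Rightarrow$ Theorem \ref{thm-main-conn}, I would take the neutral summand ($g_i = 1$, $h = 1$) of the same decomposition. It is a direct summand of $\tilde{H}_C^{i, \la}$, and the projection onto it is exactly the restriction map, which is equivariant for both centers. So the annihilation statement of Theorem \ref{thm-main} passes to the summand.

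The main obstacle I expect is the equivariance of the $Z(U(\mathfrak{m} \cap \mathfrak{g}^\der))$-action across the different components. This action is not defined group-theoretically; it comes from $\cO^\la$ on $\Fl$. So I must check that the decomposition of $\cO^\la$ according to the connected components of the infinite-level tower is compatible with the $\mathfrak{p}^0$-action. This should hold because the decomposition comes from a decomposition of $\aSh_{\levcp^p}^\Tor$ into open and closed pieces, and $\mathfrak{m}^0$ acts locally on $\cO_\aSh^\la$. The translation by $h$ permutes these pieces $\levcp_p$-equivariantly over $\Fl$, so it intertwines the two actions.
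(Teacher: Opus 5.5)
Your overall strategy is the paper's: the lemma rests on the $Z(U(\mathfrak{m} \cap \mathfrak{g}^\der))$-equivariance of the induction isomorphism of Proposition \ref{prop-ind-cmpl-coh}. However, the structural claim you build on is false. $\levcp_p^\circ$ is in general \emph{not} open in $\widetilde{\levcp_p}$. Lemma \ref{lem-K-p-circ-vs-K-p-der} gives $\Lie \levcp_p^\circ = \Lie \Grp{G}^\der_{\bQ_p}$. On the other hand, $\widetilde{\levcp_p}$ surjects onto $\levcp_p^c$ (Remark \ref{rem-KZ-mod-KZ'-tower-cf-K-c}), whose Lie algebra $\Lie \Grp{G}^c_{\bQ_p}$ is strictly larger as soon as $Z(\Grp{G})^\circ \neq Z_s(\Grp{G})$. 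So the ``central part'' you mention is exactly what prevents openness, not something negligible. For example, for $(\GL_2, \cH^\pm)$ one has $\widetilde{\levcp_p} = \levcp_p$ with Lie algebra $\mathfrak{gl}_2$ and $\Lie \levcp_p^\circ = \mathfrak{sl}_2$. The components of the tower over a fixed component are then indexed, via the determinant, by the infinite profinite group $\det(\levcp_p)$. This is why Proposition \ref{prop-ind-cmpl-coh} uses \emph{continuous} induction over the profinite space $\levcp_p^\circ \backslash \widetilde{\levcp_p}$. Your finite direct-sum decomposition of $\tilde{H}_C^{i,\la}$ does not exist, and the neutral piece is not a direct summand.

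The direction Theorem \ref{thm-main-conn} $\Rightarrow$ Theorem \ref{thm-main} is easily repaired. The maps $s \mapsto r(h \cdot s)$, for $h \in \widetilde{\levcp_p}$ and each $g_i$, are jointly injective, since a continuous function vanishing at every point is zero. They land in locally analytic vectors. They are equivariant because the $Z(U(\mathfrak{m}))$-action commutes with $\levcp_p$: it comes from $\Grp{G}$-invariant sections of $U(\mathfrak{m}^0)$ acting on the $\levcp_p$-equivariant $\cO^\la$. For the converse, your direct-summand argument genuinely fails. What you need instead is that the restriction map $r \colon \tilde{H}_C^{i,\la} \to \tilde{H}^i(\Sh_{\levcp,\bC}^{\circ,\an})_C^\la$ (evaluation at $1$ under the induction isomorphism) is \emph{surjective}, and this needs an argument. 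One way: choose a locally analytic section $\sigma$ of $\widetilde{\levcp_p} \to \levcp_p^\circ \backslash \widetilde{\levcp_p}$ with $\sigma$ sending the trivial coset to $1$. Write each $h$ as $h = k(h)\,\sigma(\levcp_p^\circ h)$ with $k(h) \in \levcp_p^\circ$, and for a locally analytic $v$ set $f(h) := k(h) \cdot v$. Then $f$ satisfies the induction condition, and $f(1) = v$. Moreover $f$ is a locally analytic vector of the induced representation, because $(h, l) \mapsto k(hl) \cdot v$ is locally analytic on the compact group $\widetilde{\levcp_p} \times \widetilde{\levcp_p}$. With $r$ surjective and equivariant, annihilation by a power of $\cI^{\iota,\der}$ or $\cI^{\iota,\der}_{\mathfrak{m}}$ passes from $\tilde{H}^{<d,\la}_C$ to the connected version.
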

\begin{proof}
    It follows from the proof of Proposition \ref{prop-ind-cmpl-coh} that the isomorphism there is $Z(U(\mathfrak{m} \cap \mathfrak{g}^\der))$-equivariant.
\end{proof}

By Remark \ref{rem-thm-main-spec}, Theorem \ref{thm-sc-van-m} and Corollary \ref{cor-sc-van-g} imply the following:
\begin{corollary}\label{cor-main-thm-cond-sc}
    Theorem \ref{thm-main-conn} holds when Condition \ref{cond-der-sc-no-need-c} holds.
\end{corollary}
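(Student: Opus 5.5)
The plan is to reduce to Remark \ref{rem-thm-main-spec}: under Condition \ref{cond-der-sc-no-need-c}, Theorem \ref{thm-main} follows from Theorem \ref{thm-sc-van-m} and Corollary \ref{cor-sc-van-g}. By the lemma just above, Theorem \ref{thm-main} and Theorem \ref{thm-main-conn} are equivalent, so it is enough to prove Theorem \ref{thm-main-conn} in the special case.

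To apply Theorem \ref{thm-main} there, we first fix the objects. We take $\Grp{G} = \Grp{G}^c$ satisfying Condition \ref{cond-der-sc-no-need-c} and assume Proposition \ref{prop-key-obs}, which is proved in Sections \ref{sec-key-obs}--\ref{sec-key-obs-ex}. Then Corollary \ref{cor-sc-van-g} and Theorem \ref{thm-sc-van-m} show that $(\cI^\iota)^n$ and $(\cI_{\mathfrak{m}}^\iota)^n$ annihilate $H^{<d}(\Fl,\cO^\la)$. By \Refeq{\ref{eq-RC-prim-comp}} this group is $\tilde{H}_C^{<d,\la}$, and the identification is compatible with the actions of $Z(U(\mathfrak{m}))$ and $\mathfrak{g}$. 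This gives Theorem \ref{thm-main} for $(\Grp{G},\Shdom)$.

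We then pass to the neutral component using Proposition \ref{prop-ind-cmpl-coh}. That proposition writes $\tilde{H}^i$ as a finite direct sum of continuous inductions of the groups $\tilde{H}^i(\Sh^{\circ,\an}_{g_i\levcp g_i^{-1},\bC})$, and the decomposition is $Z(U(\mathfrak{m}\cap\mathfrak{g}^\der))$-equivariant. So $\tilde{H}^{<d}(\Sh^{\circ,\an}_{\levcp,\bC})^\la_C$ is a $Z(U(\mathfrak{m}\cap\mathfrak{g}^\der))$-equivariant direct summand of $\tilde{H}_C^{<d,\la}$, taking the summand with $g_i=1$ and evaluating at the identity, after restricting to $\levcp_p^\circ$-locally analytic vectors. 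Lemma \ref{lem-K-p-circ-vs-K-p-der} makes these notions of local analyticity agree.

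The ideals $\cI^{\iota,\der}$ and $\cI^{\iota,\der}_{\mathfrak{m}}$ lie in $\cI^\iota$ and $\cI^\iota_{\mathfrak{m}}$ respectively, so their $n$-th powers kill this summand. This is Theorem \ref{thm-main-conn}.

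The main obstacle is checking that evaluation at the identity from the induced module to $\tilde{H}^i(\Sh^\circ)$ is compatible with locally analytic vectors and with the $Z(U(\mathfrak{m}\cap\mathfrak{g}^\der))$-actions. I would first try to get this compatibility directly from the construction of $\cO^{\la,\circ}$ as the restriction of $\cO^\la_\aSh$ to the neutral component.
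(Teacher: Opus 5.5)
Your proposal is correct and is essentially the paper's argument: the paper obtains Theorem \ref{thm-main} under Condition \ref{cond-der-sc-no-need-c} from Remark \ref{rem-thm-main-spec}, i.e.\ from Theorem \ref{thm-sc-van-m} and Corollary \ref{cor-sc-van-g}. It then passes to Theorem \ref{thm-main-conn} through the lemma that the two theorems imply each other, which rests exactly on the $Z(U(\mathfrak{m} \cap \mathfrak{g}^\der))$-equivariance of the decomposition in Proposition \ref{prop-ind-cmpl-coh}. The compatibility you flag as the main obstacle is the paper's proposition that the restriction map $\tilde{H}_C^{i, \la} \to \tilde{H}^i(\Sh_{\levcp, \bC}^{\circ, \an})_C^\la$ is $Z(U(\mathfrak{m} \cap \mathfrak{g}^\der))$-equivariant, which the paper proves via $\cO^{\la, \circ}$ just as you suggest; note that surjectivity of this map on locally analytic vectors, rather than a full direct-summand splitting, is all you need.
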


In order to prove Theorem \ref{thm-main-conn} in general, we may shrink the level if necessary:
\begin{lemma}\label{lem-fin-cov-coh-comp}
    Let $\Gamma_1$ be a subgroup of $\Gamma_\levcp$ of finite index $N$, and $\levcp'$ an open subgroup of $\levcp$.  Then $(\Gamma_{\levcp'} \cap \Gamma_1) \Lquot \Shdom^+$ is a finite cover of $\Gamma_{\levcp'} \Lquot \Shdom^+$, and the kernel of the natural map $H^i(\Gamma_{\levcp'} \Lquot \Shdom^+, \bZ / p^n) \to H^i((\Gamma_{\levcp'} \cap \Gamma_1) \Lquot \Shdom^+, \bZ / p^n)$ is $N!$-torsion.
\end{lemma}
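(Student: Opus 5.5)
The plan is to pass to the finite cover and back using a transfer (corestriction) argument on group cohomology. Write $\Gamma' := \Gamma_{\levcp'}$ and $\Gamma'_1 := \Gamma_{\levcp'} \cap \Gamma_1$.

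First, the covering claim. Since $[\Gamma_\levcp : \Gamma_1] = N$, we get $[\Gamma' : \Gamma'_1] \leq N$ by the usual index inequality for intersections. The quotient $\Gamma'_1 \Lquot \Shdom^+ \to \Gamma' \Lquot \Shdom^+$ is then a finite covering map of degree $[\Gamma'^{\ad} : \Gamma_1'^{\ad}]$, which divides $[\Gamma' : \Gamma'_1]$. It is an honest covering because $\Gamma'$ is neat, so $\Gamma'^{\ad}$ acts freely and properly discontinuously on $\Shdom^+$.

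Second, the torsion claim, via transfer.
\begin{itemize}
\item Since $\Shdom^+$ is contractible (a Hermitian symmetric domain) and the action of $\Gamma'^{\ad}$ is free, we have $H^i(\Gamma' \Lquot \Shdom^+, \bZ/p^n) \cong H^i(\Gamma'^{\ad}, \bZ/p^n)$, and similarly for $\Gamma'_1$.
\item Under these identifications, the pullback map in question becomes the restriction map $H^i(\Gamma'^{\ad}, \bZ/p^n) \to H^i(\Gamma_1'^{\ad}, \bZ/p^n)$.
\item For a finite-index subgroup of index $m$, the composition of restriction with corestriction (transfer) is multiplication by $m$. Topologically, this is pullback followed by pushforward (trace) along the degree-$m$ covering.
\item Here $m \leq N$, so $m$ divides $N!$. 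Hence every class in the kernel of restriction is killed by $m$, and therefore by $N!$.
\end{itemize}

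The only point needing care is that the index $m$ depends on $\levcp'$. The bound $[\Gamma' : \Gamma'_1] \leq N$ is exactly what makes the single constant $N!$ work uniformly in $\levcp'$. The point of stating it this way is to apply it in the inverse and direct limits defining $\tilde{H}^i(\Sh_{\levcp,\bC}^{\circ,\an})$ with a uniform torsion bound.

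A secondary check is neatness. If one prefers to avoid the adjoint quotient, one can work directly with the covering map of topological spaces and the trace map for finite covers, $\pi_* \pi^* = \deg \pi$ on cohomology with constant coefficients. This requires only that the map be a finite covering of degree at most $N$. I expect no real obstacle.
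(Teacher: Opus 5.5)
Your proposal is correct and follows essentially the same argument as the paper: the coset injection $\Gamma_{\levcp'}/(\Gamma_{\levcp'}\cap\Gamma_1)\hookrightarrow\Gamma_\levcp/\Gamma_1$ bounds the covering degree by $N$, and the trace (restriction--corestriction) argument shows that the kernel is killed by that degree, hence by $N!$. Your extra steps (passing to the adjoint images and identifying the cohomology with group cohomology via the contractibility of $\Shdom^+$) are valid, but they are not needed, because the topological trace for a finite covering of degree at most $N$ already suffices.
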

\begin{proof}
    There is a natural injection of sets of cosets $\Gamma_{\levcp'} / (\Gamma_{\levcp'} \cap \Gamma_1) \to \Gamma_\levcp / \Gamma_1$.  Thus, the degree of the covering map $(\Gamma_{\levcp'} \cap \Gamma_1) \Lquot \Shdom^+ \to \Gamma_{\levcp'} \Lquot \Shdom^+$ is bounded by $N$ and hence divides $N!$.  Our claim follows as the kernel of the induced map of cohomology groups is annihilated by the degree by the standard argument using the trace map \Pth{or restriction-corestriction sequence}.
\end{proof}

\begin{corollary}\label{cor-fin-cov-coh-comp}
    The natural map $\tilde{H}^i(\Sh_{\levcp, \bC}^{\circ, \an})_C \to \tilde{H}^i(\Sh_{\levcp_1, \bC}^{\circ, \an})_C$ is injective, for any open subgroup $\levcp_1 = \levcp_1^p \levcp_{1, p}$ of $\levcp$, by taking $\Gamma_1 = \Gamma_{\levcp_1}$ and $\levcp' = \levcp^p \levcp_p'$, where $\levcp_p'$ varies over open subgroups of $\levcp_p$.
\end{corollary}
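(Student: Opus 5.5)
The plan is to apply Lemma \ref{lem-fin-cov-coh-comp} level by level and then pass to the limits in the definition of completed cohomology, keeping track of the uniform torsion bound so that it disappears after inverting $p$.

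First I fix $\levcp_1 = \levcp_1^p \levcp_{1,p} \subset \levcp$ open. Set $\Gamma_1 := \Gamma_{\levcp_1}$, which has finite index $N$ in $\Gamma_\levcp$ because $\levcp_1$ has finite index in $\levcp$. For each open $\levcp_p' \subset \levcp_p$, put $\levcp' = \levcp^p \levcp_p'$. Then $\Gamma_{\levcp'} \cap \Gamma_1 = \Gamma_{\levcp_1^p (\levcp_{1,p} \cap \levcp_p')}$ inside $\Grp{G}(\bQ)_+$, and the neutral components correspond: $\Sh^{\circ,\an}_{\levcp'} = \Gamma_{\levcp'} \Lquot \Shdom^+$ and $\Sh^{\circ,\an}_{\levcp_1^p(\levcp_{1,p}\cap\levcp_p')} = (\Gamma_{\levcp'}\cap\Gamma_1)\Lquot\Shdom^+$. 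Lemma \ref{lem-fin-cov-coh-comp} then shows that, for every $n$ and every $\levcp_p'$, the kernel of
\[
H^i(\Gamma_{\levcp'} \Lquot \Shdom^+, \bZ/p^n) \to H^i((\Gamma_{\levcp'} \cap \Gamma_1) \Lquot \Shdom^+, \bZ/p^n)
\]
is killed by $N!$. The important point is that $N$ does not depend on $\levcp_p'$ or on $n$.

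Next I take the filtered colimit over $\levcp_p'$. The subgroups $\levcp_{1,p}\cap\levcp_p'$ are cofinal among open subgroups of $\levcp_{1,p}$, so the target colimit is exactly the colimit defining $\tilde H^i(\Sh^{\circ,\an}_{\levcp_1,\bC})$ mod $p^n$. Filtered colimits are exact, so the kernel $\mathcal K_n$ of $\varinjlim H^i(\cdots,\bZ/p^n) \to \varinjlim H^i(\cdots,\bZ/p^n)$ is still killed by $N!$. Passing to $\varprojlim_n$ is left exact, so the kernel of $\tilde H^i(\Sh^{\circ,\an}_{\levcp,\bC}) \to \tilde H^i(\Sh^{\circ,\an}_{\levcp_1,\bC})$ is $\varprojlim_n \mathcal K_n$, and this limit is killed by $N!$. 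Since $p^{v_p(N!)}$ also kills it, it is bounded $p$-power torsion.

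The last step is to show that $\ho_{\bZ_p} C$ turns this into an injection, and I expect this step to be the main obstacle. Inverting $p$ on a completed tensor product is not exact in general, so I would argue with lattices. Write $M \to M'$ for the map of $p$-adically complete $\bZ_p$-modules, with kernel $K$ killed by $p^a$. The idea is to work with the images and with the torsion-free quotient $M/K$. Precisely, $M/K$ embeds into $M'$, and $(M/K)\ho_{\bZ_p}\cO_C \to M'\ho_{\bZ_p}\cO_C$ should be injective after inverting $p$. For this I would use that $\cO_C$ is flat over $\bZ_p$, so tensoring with $\cO_C/p^m$ preserves the injections mod $p^m$ up to bounded torsion coming from $\mathrm{Tor}$ terms killed by $p^a$. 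Then I would take $\varprojlim_m$, where the Mittag-Leffler condition holds up to bounded torsion. Meanwhile $K\ho\cO_C$ is killed by $p^a$ and so vanishes rationally. Combining these gives injectivity of $\tilde H^i(\Sh^{\circ,\an}_{\levcp,\bC})_C \to \tilde H^i(\Sh^{\circ,\an}_{\levcp_1,\bC})_C$.
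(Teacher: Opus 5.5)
Your argument follows the paper's route. The paper's proof is exactly the application of Lemma~\ref{lem-fin-cov-coh-comp} with $\Gamma_1=\Gamma_{\levcp_1}$ and $\levcp'=\levcp^p\levcp_p'$. The following points are all correct:
\begin{itemize}
\item the bound $N!$ is uniform in $\levcp_p'$ and $n$;
\item $\Gamma_{\levcp'}\cap\Gamma_1=\Gamma_{\levcp_1^p(\levcp_{1,p}\cap\levcp_p')}$;
\item exactness of $\varinjlim_{\levcp_p'}$ and left exactness of $\varprojlim_n$ leave a kernel $K$ killed by $N!$.
\end{itemize}
The paper leaves the remaining passage to $\ho_{\bZ_p}C$ implicit.

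Your sketch of that last step, however, relies on a false intermediate claim. Take the injection $M/K\hookrightarrow M'$ with cokernel $Q$. The kernel of $(M/K)/p^m\to M'/p^m$ is a quotient of $\mathrm{Tor}_1^{\bZ_p}(Q,\bZ/p^m)=Q[p^m]$. It is controlled by $Q$, not by $K$, and in general it is not killed by any $p^a$ independent of $m$. For example, for $e_k\mapsto p^ke_k$ on $\widehat{\bigoplus}_k\bZ_p e_k$, the class of $e_m$ lies in the kernel modulo $p^m$ and has order $p^m$. So these contributions are not bounded stage by stage; they only disappear in the limit. Concretely, for $f\colon M\to M'$ one has $\varprojlim_m f^{-1}(p^mM')/p^mM\cong K$, because $R^1\varprojlim_m p^mM=0$ when $M$ is $p$-adically complete and $M'$ is separated.

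A clean way to finish: since $\bQ_p$ is discretely valued, the Banach space $C$ is topologically isomorphic to some $c_0(I)$. Hence for $p$-adically complete and separated $M$ (as completed cohomology is), $M\ho_{\bZ_p}C\cong c_0(I,M)[1/p]$, where $c_0(I,M)$ denotes the $p$-adically null families in $M$. The map $c_0(I,M)\to c_0(I,M')$ is coordinatewise, so its kernel consists of families with values in $K$. That kernel is killed by $N!$ and therefore vanishes after inverting $p$. No detour through $M/K$ is needed.
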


Let us now begin the proof of Theorem \ref{thm-main-conn}.  By Corollary \ref{cor-fin-cov-coh-comp}, we may shrink $\levcp$ and assume as in Remark \ref{rem-cmpl-coh-conn-csg} that
\[
    \Gamma_\levcp = \Gamma' \times \Xi,
\]
for some neat congruence subgroup $\Gamma'$ of $\Grp{G}^\der(\bQ)$ and some congruence subgroup $\Xi$ of $\Grp{Z}^\circ(\bQ)$ \Pth{by \cite[\aCor 2.0.13]{Deligne:1979-vsimc}}.  As in the proof of Lemma \ref{lem-K-p-circ-vs-K-p-der}, for sufficiently small open compact subgroups $\levcp_1$ of $\Grp{G}^\der(\bQ_p)$ and $M_1$ of $\Grp{Z}(\bQ_p)$, the canonical homomorphism $\Gamma_\levcp \to \Gamma_\levcp^\ad$ induces
\[
    \Gamma' \cap \levcp_1 \Mi \Gamma_{\levcp^p \levcp_1 M_1}^\ad.
\]
Recall that $\homom: \Grp{G}^\scc \to \Grp{G}^\der$ denotes the simply-connected cover of $\Grp{G}^\der$.  By Lemma \ref{lem-comp-ad}, $\Grp{G}^\scc$ and $\Grp{G}^\der$ have the same adjoint quotient $\Grp{G}^\ad$.  By \cite[\aCor 5.8]{Milne:2005-isv}, $(\Grp{G}^\der, \Shdom^+)$ is a connected Shimura datum, as in \cite[\aProp 4.8]{Milne:2005-isv}.  It follows that $(\Grp{G}^\scc, \Shdom^+)$ is a connected Shimura datum as well.

\begin{lemma}\label{lem-ext-G-sc}
    We can extend $(\Grp{G}^\scc, \Shdom^+)$ to a Shimura datum $(\tilde{\Grp{G}}, \tilde{\Shdom})$ such that $\tilde{\Grp{G}}^\der = \Grp{G}^\scc$ and $\tilde{\Grp{G}} \Mi \tilde{\Grp{G}}^c$.  In this case, $(\tilde{\Grp{G}}, \tilde{\Shdom})$ satisfies Condition \ref{cond-der-sc-no-need-c}.
\end{lemma}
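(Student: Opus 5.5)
We plan to follow the standard construction of Deligne \cite[2.3.10]{Deligne:1979-vsimc} (see also \cite[\aProp 5.?]{Milne:2005-isv}). It builds a Shimura datum whose derived group is a prescribed simply-connected group with the given connected Shimura datum, and it arranges for a center that is "small". First, the connected Shimura datum $(\Grp{G}^\scc, \Shdom^+)$ determines a cocharacter $\hc^\ad: \bG_{m, \bC} \to \Grp{G}^\ad_\bC$. This is defined over some number field $L \subset \bC$, which we may take Galois over $\bQ$ and CM (or totally real) of our choosing. Next, choose a $\bQ$-torus $\Grp{T}$ with a maximal torus $\Grp{T}' \subset \Grp{G}^\scc$ defined over $\bQ$ and split over $L$, chosen so that $\hc^\ad$ lifts (up to conjugation) to a fractional cocharacter of $\Grp{T}'$. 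Then set $\tilde{\Grp{G}} := (\Grp{G}^\scc \times \Grp{T}_0)/Z(\Grp{G}^\scc)$, where $\Grp{T}_0$ is a torus receiving $Z(\Grp{G}^\scc)$ and chosen so that the Hodge cocharacter lifts to $\tilde{\Grp{G}}_\bC$. Concretely, we would take $\Grp{T}_0 = \Res_{L/\bQ}\bG_m$ (or a CM-type subtorus of it) with $Z(\Grp{G}^\scc) \hookrightarrow \Grp{T}_0$ via an embedding into $\Grp{T}'$ composed with a suitable norm-type map. Then $\tilde{\Grp{G}}^\der = \Grp{G}^\scc$ automatically, since $\Grp{G}^\scc \cap Z(\tilde{\Grp{G}})$ is finite and the quotient $\tilde{\Grp{G}}/\Grp{G}^\scc$ is a torus.

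We then define $\tilde{\Shdom}$ as the $\tilde{\Grp{G}}(\bR)$-conjugacy class of a lift $\tilde{\hd}$ of $\hd^\ad$ corresponding to a point of $\Shdom^+$. We verify Deligne's axioms (SV1)--(SV3) for it. Axioms (SV1) and (SV3) depend only on the adjoint datum, so they hold because $(\Grp{G}^\scc, \Shdom^+)$ is a connected Shimura datum. What remains is to produce $\tilde{\hd}$ as an honest homomorphism $\Res_{\bC/\bR}\bG_m \to \tilde{\Grp{G}}_\bR$ lifting $\hd^\ad$. This is where the choice of $\Grp{T}_0$ enters: we need the weight/central part to be integral, and Deligne's argument supplies it by choosing $\Grp{T}_0$ large enough (a CM torus) that the fractional cocharacter becomes integral after adjoining a central cocharacter.

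Finally, we check $\tilde{\Grp{G}} = \tilde{\Grp{G}}^c$, which means that the center $Z(\tilde{\Grp{G}})^\circ$, essentially the image of $\Grp{T}_0$, has the same split rank over $\bQ$ and over $\bR$. This forces the choice of $\Grp{T}_0$. A CM torus of the form $\{x \in \Res_{L/\bQ}\bG_m : x\bar{x} \in \bG_m\}$ has $\bQ$-split rank $1$ and $\bR$-split rank $1$, so the condition holds. Alternatively, we can replace $\Grp{T}_0$ by its quotient by the maximal subtorus $Z_s$ that is $\bR$-anisotropic but $\bQ$-anisotropic-with-different-rank; this does not change $\tilde{\Grp{G}}^\der$ or the Shimura datum axioms, since $Z_s$ acts trivially on $\tilde{\Shdom}$ (indeed, passing to $\tilde{\Grp{G}}^c$ is always a Shimura datum). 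Indeed, the simplest route is to take any extension and then replace $\tilde{\Grp{G}}$ by $\tilde{\Grp{G}}^c$: its derived group is still the image of $\Grp{G}^\scc$, which is isomorphic to it because $\Grp{G}^\scc \cap Z_s$ is trivial as $Z_s \subset Z^\circ$. To check this, we need $(\tilde{\Grp{G}}^c)^c = \tilde{\Grp{G}}^c$, which follows from the minimality in the definition of $Z_s$. The last clause then follows at once: $\tilde{\Grp{G}}^\der = \Grp{G}^\scc$ is simply-connected and $\tilde{\Grp{G}} = \tilde{\Grp{G}}^c$, which is exactly Condition \ref{cond-der-sc-no-need-c}.

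The main obstacle is the lifting step. We have to ensure that $\hd^\ad$ lifts to a genuine (not fractional) homomorphism into $\tilde{\Grp{G}}_\bR$ while keeping the center with equal $\bQ$- and $\bR$-split ranks. We expect to cite Deligne \cite[2.3.10]{Deligne:1979-vsimc} and Milne's exposition for this rather than reprove it. After that, we still have to check carefully that the intersection $\Grp{G}^\scc \cap Z_s$ is trivial (not merely finite) when passing to the $c$-quotient.
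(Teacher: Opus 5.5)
There is a genuine gap: the fallback route is false as stated, and the main route leaves its one substantive step unproved.

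Your fallback (``the simplest route'') fails at the claim that $\Grp{G}^\scc \cap Z_s$ is trivial because $Z_s \subset Z^\circ$. The identity component of the center generally meets the derived group in a nontrivial finite group, and $Z_s$ can contain part of that intersection. Take $F$ real quadratic and $\tilde{\Grp{G}}_0 = \Res_{F/\bQ}\GL_2$, which is a legitimate extension of $(\Res_{F/\bQ}\mathrm{SL}_2, \Shdom^+)$ with simply connected derived group. Its center $\Res_{F/\bQ}\bG_m$ has $\bQ$-split rank $1$ and $\bR$-split rank $2$, so $Z_s = \ker(\mathrm{Nm}_{F/\bQ})$. This torus contains the diagonal $\Grpmu_2 = \{\pm 1\} \subset Z(\Res_{F/\bQ}\mathrm{SL}_2)$. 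Hence $(\tilde{\Grp{G}}_0^c)^\der \cong \Res_{F/\bQ}\mathrm{SL}_2/\Grpmu_2$ is not simply connected, and Condition~\ref{cond-der-sc-no-need-c} fails.

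The fix is to shrink the center by passing to a \emph{subgroup}, not a quotient, which is what the paper does. Start from an extension $(\tilde{\Grp{G}}_0, \tilde{\Shdom}_0)$ given by Milne's Proposition~8.5, in which the image of $\tilde{\hd}_0$ in the cocenter $\tilde{\Grp{G}}_0/\tilde{\Grp{G}}_0^\der$ is anisotropic over $\bR$ modulo a $\bQ$-rational weight. Let $\tilde{\Grp{G}}$ be the preimage of the smallest $\bQ$-subtorus whose base change to $\bR$ contains this image. Since $\tilde{\Grp{G}} \supset \tilde{\Grp{G}}_0^\der$ with commutative quotient, $\tilde{\Grp{G}}^\der = \Grp{G}^\scc$ is unchanged, and the anisotropy makes $Z_s(\tilde{\Grp{G}})$ trivial. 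In the example above this produces $\{ g : \det g \in \bG_m \}$.

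Your primary route is not carried out either, and you have put the difficulty in the wrong place. Lifting $\hd^\ad$ is not the obstacle. Deligne's axioms only see the adjoint datum. Once $Z(\Grp{G}^\scc) \hookrightarrow \Grp{T}_0$ over $\bQ$, the element of $Z(\Grp{G}^\scc)(\bC)$ measuring the non-integrality of a fractional lift of $\hc^\ad$ to an $\bR$-torus $T$ of $\Grp{G}^\scc$ has the form $\exp(2\pi i b)$ for some $b \in X_*(\Grp{T}_0) \otimes \bQ$. This $b$ corrects the lift, and the resulting $\tilde{\hd}$ is automatically defined over $\bR$.

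What you actually need, and do not prove, is a $\bQ$-embedding of $Z(\Grp{G}^\scc)$ into a torus with equal $\bQ$- and $\bR$-split ranks. A ``norm-type map'' need not be injective on $Z(\Grp{G}^\scc)$; the norm on $\Res_{F/\bQ}\Grpmu_2$ already has a kernel. Citing Deligne for ``the lift'' does not control the center either; the paper notes that even the argument in Milne's Corollary~8.6 slips on exactly this point.

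The missing step can be supplied as follows.
\begin{enumerate}
\item $\Grp{G}^\scc_\bR$ has an $\bR$-anisotropic maximal torus $T$, by the compact/Hermitian equal-rank property.
\item Since $X^*(Z(\Grp{G}^\scc)) = X^*(T)/\bZ\Phi$ equivariantly, every complex conjugation acts by $-1$ on $X^*(Z(\Grp{G}^\scc))$.
\item Hence, for a CM Galois field $K$ splitting $Z(\Grp{G}^\scc)$, the module $X^*(Z(\Grp{G}^\scc))$ is a quotient of $(\bZ[\mathrm{Gal}(K/\bQ)]/(1+c))^k$ for some $k$.
\item Equivalently, $Z(\Grp{G}^\scc)$ embeds in a power $\Grp{T}_0$ of the $\bR$-anisotropic torus $\ker(\mathrm{Nm}_{K/K^+})$.
\item Then $(\Grp{G}^\scc \times \Grp{T}_0)/Z(\Grp{G}^\scc)$ has derived group $\Grp{G}^\scc$ and connected center $\cong \Grp{T}_0$ with split ranks $0 = 0$, so it satisfies the lemma.
\end{enumerate}
Without some argument of this kind, the proposal does not establish $\tilde{\Grp{G}} = \tilde{\Grp{G}}^c$ while keeping $\tilde{\Grp{G}}^\der = \Grp{G}^\scc$.
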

\begin{proof}
    By \cite[\aProp 8.5]{Milne:2013-svm}, we can extend $(\Grp{G}^\scc, \Shdom^+)$ to a Shimura datum $(\tilde{\Grp{G}}_0, \tilde{\Shdom}_0)$ with $\tilde{\Grp{G}}_0^\der = \Grp{G}^\scc$ such that the image of any $\tilde{\hd}_0: \Res_{\bC / \bR} \bG_{m, \bC} \to \tilde{\Grp{G}}_{0, \bR}$ \Pth{parameterized by $\tilde{\Shdom}_0$ as in Section \ref{sec-Sh-var}} in $(\tilde{\Grp{G}}_0 / \tilde{\Grp{G}}_0^\der)_\bR$ is anisotropic modulo the image of its weight cocharacter defined over $\bQ$.  Let $\Grp{T}$ be the smallest subtorus of $\tilde{\Grp{G}}_0 / \tilde{\Grp{G}}_0^\der$ such that $\Grp{T}_\bR$ contains this image of $\tilde{\hd}_0$ in $(\tilde{\Grp{G}}_0 / \tilde{\Grp{G}}_0^\der)_\bR$.  Let $\tilde{\Grp{G}}$ be the preimage of $\Grp{T}$ in $\tilde{\Grp{G}}_0$, and replace $\tilde{\Shdom}_0$ with the $\tilde{\Grp{G}}(\bR)$-orbit of $\tilde{\hd}_0$.  Then $Z_s(\tilde{\Grp{G}})$ is trivial, and $\tilde{\Grp{G}} \Mi \tilde{\Grp{G}}^c$, as desired.  \Pth{This is essentially the same argument as in the proof of \cite[\aCor 8.6]{Milne:2013-svm}, but there is a minor mistake there which missed the quotient by the image of a weight cocharacter defined over $\bQ$.}
\end{proof}

By \cite[\aCor 2.0.13]{Deligne:1979-vsimc}, we can find a neat open compact subgroup $\levcp^\scc = \levcp^{\scc, p} \levcp_p^\scc$ of $\tilde{\Grp{G}}(\bAi)$ such that $\homom(\bAi)(\levcp^\scc \cap \Grp{G}^\scc(\bAi)) \subset \levcp$, and
\[
    \Gamma_{\levcp^\scc} = \Gamma^{\scc, \prime} \times \Xi',
\]
for some neat congruence subgroup $\Gamma^{\scc, \prime}$ of $\Grp{G}^\scc(\bQ)$ and congruence subgroup $\Xi'$ of $\Grp{Z}(\tilde{\Grp{G}})^\circ(\bQ)$.  Note that $\varrho(\bQ)|_{\Gamma^{\scc, \prime}}$ is injective by the neatness, and
\[
    \homom(\bQ)(\Gamma^{\scc, \prime}) \subset \Gamma',
\]
which has finite index, say $N$, by a result of Borel's \cite[\aThm 8.9]{Borel:2019-IAG}.  This gives a natural finite covering map $\Sh_{\levcp^\scc, \bC}^{\circ, \an} \to \Sh_{\levcp, \bC}^{\circ, \an}$, which canonically algebraizes to a finite \'etale map $\Sh_{\levcp^\scc}^\circ \to \Sh_\levcp^\circ$, as in Construction \ref{constr-dcs-comp-funct}.  Suppose that $\levcp_p^\der$ is a torsionfree open compact subgroup of $\Grp{G}^\scc(\bQ_p)$.  Then $\homom_p|_{\levcp_p^\der}$ is injective, and its image is an open compact subgroup of $\Grp{G}^\der(\bQ_p)$.  Hence, the natural map of cosets
\[
    (\Gamma' \cap \homom_p(\levcp_p^\der)) / \homom(\bQ)(\Gamma^{\scc, \prime} \cap \levcp_p^\der) \to \Gamma' / \homom(\bQ)(\Gamma^{\scc, \prime})
\]
is injective.  The same argument as in the proof of Lemma \ref{lem-fin-cov-coh-comp} shows that the natural map
\[
    H^i\bigl((\Gamma' \cap \homom_p(\levcp_p^\der)) \Lquot \Shdom^+, \bZ / p^n\bigr) \to H^i\bigl((\Gamma^{\scc, \prime} \cap \levcp_p^\der) \Lquot \Shdom^+, \bZ / p^n\bigr)
\]
is injective modulo $N!$-torsion.  Therefore, by Remark \ref{rem-cmpl-coh-conn-csg}, the induced map
\begin{equation}\label{eq-cmpl-coh-conn-scc}
    \tilde{H}^i(\Sh_{\levcp, \bC}^{\circ, \an})_C \to \tilde{H}^i(\Sh_{\levcp^\scc, \bC}^{\circ, \an})_C
\end{equation}
is injective.  Since $\Grp{G}_\bC^\scc$ is simply-connected as the notion of simply-connectedness does not depend on the base field \Pth{see \cite[\aCor A.4.11]{Conrad/Gabber/Prasad:2010-PRG}}, the desired vanishing result for $\tilde{H}^i(\Sh_{\levcp^\scc, \bC}^{\circ, \an})_C^\la$ is known by Corollary \ref{cor-main-thm-cond-sc}.  Thus, in order to prove Theorem \ref{thm-main-conn}, it suffices to prove the following.

\begin{lemma}\label{lem-Z-U-m-der-equivar}
    The map $\tilde{H}^i(\Sh_{\levcp, \bC}^{\circ, \an})_C^\la \to \tilde{H}^i(\Sh_{\levcp^\scc, \bC}^{\circ, \an})_C^\la$ canonically induced by \Refeq{\ref{eq-cmpl-coh-conn-scc}} is $Z(U(\mathfrak{m} \cap \mathfrak{g}^\der))$-equivariant.
\end{lemma}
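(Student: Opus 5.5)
We will realize the map \Refeq{\ref{eq-cmpl-coh-conn-scc}} on locally analytic vectors geometrically, at the level of the sheaves on $\Fl$ whose cohomology computes both sides, and then check that the $Z(U(\mathfrak{m} \cap \mathfrak{g}^\der))$-actions on these sheaves are compatible. Write $\cO^{\la, \circ}$ for the object constructed from the neutral component of the tower for $(\Grp{G}, \Shdom)$, as in the proof of the proposition preceding Theorem \ref{thm-main-conn}. Write $\cO^{\la, \circ}_{\scc}$ for its analogue for $(\tilde{\Grp{G}}, \tilde{\Shdom})$.

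The first step is to compare flag varieties and period maps. Since $\Grp{G}^\scc \to \Grp{G}^\der$ is a central isogeny, Lemma \ref{lem-comp-ad} shows that $\Grp{G}$ and $\tilde{\Grp{G}}$ have the same adjoint quotient $\Grp{G}^\ad$ and the same $\Shdom^+$. Their partial flag varieties are therefore canonically the flag variety $\Fl$ of $\Grp{G}^\ad$, with the same parabolic $\Grp{P}^\ad$ and the same Levi $\Grp{M}^\ad$. By Remarks \ref{rem-HT-mor-comp} and \ref{rem-HT-mor-la-comp}, both locally analytic Hodge--Tate period maps factor through the one for the adjoint datum. Choose the toroidal compactifications compatibly as in Proposition \ref{prop-lsv-funct}. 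The finite maps $\Sh_{\levcp^\scc}^\circ \to \Sh_\levcp^\circ$ then extend to maps of towers of neutral components compatible with $\pi_\HT^\la$. This gives a morphism $\cO^{\la, \circ} \to \cO^{\la, \circ}_{\scc}$ in $D^b(\cO_\Fl)$ inducing the given map on $H^i$, via \Refeq{\ref{eq-RC-prim-comp}} and its connected analogue.

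The second step is equivariance for the group actions. The tower map intertwines the action of $\levcp_p^\der$ on the target with the action of $\homom_p(\levcp_p^\der)$ on the source. By Lemma \ref{lem-K-p-circ-vs-K-p-der} and the injectivity of $\homom_p$ on $\levcp_p^\der$, this is a local isomorphism. Differentiating, the morphism is $\mathfrak{g}^\der$-equivariant, using the identification $d\homom: \Lie \Grp{G}^\scc_C \Mi \mathfrak{g}^\der$. It is also $\cO_\Fl$-linear, because it lives over $\Fl$. Hence it is equivariant for the Lie algebroid $\cO_\Fl \otimes_C \mathfrak{g}^\der$, and in particular for $\mathfrak{p}^{0} \cap (\cO_\Fl \otimes \mathfrak{g}^\der)$.

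The third step is to pass to the center. The action of $Z(U(\mathfrak{m} \cap \mathfrak{g}^\der))$ on either sheaf is defined through $U(\mathfrak{p}^0 \cap \mathfrak{g}^{\der,0})$ modulo $\mathfrak{n}^0 U$. By Theorem \ref{thm-n-0-zero}, $\mathfrak{n}^0$ acts by zero on both sides, and $\mathfrak{n}^0$ is the same subbundle on both sides. The induced actions of $\mathfrak{m}^0 \cap \mathfrak{g}^{\der,0}$, and hence of its universal enveloping algebra and its global invariants $Z(U(\mathfrak{m} \cap \mathfrak{g}^\der))$, are therefore intertwined. Taking cohomology on $\Fl$ gives the lemma.

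The main obstacle is the first step: the morphism of sheaves of locally analytic functions must be well defined. The groups acting differ by a central isogeny and by the quotient $\widetilde{\levcp_p}$, so one has to check that $\levcp_p^\der$-locally analytic sections pull back to locally analytic ones. For this I would use that $\homom_p$ is a local isomorphism of $p$-adic Lie groups, so the two notions of local analyticity agree on small enough open subgroups. The compatibility of Hodge--Tate maps should follow from the functoriality of the logarithmic Riemann--Hilbert functor, as in Remark \ref{rem-HT-mor-comp}.
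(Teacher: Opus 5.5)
Your proposal is correct and follows essentially the same route as the paper. The paper also reduces everything to the compatibility $\pi_\HT^{\scc, \circ, \Tor} = \pi_\HT^{\circ, \Tor} \circ \phi$ for the $\mathfrak{g}^\der$-equivariant map $\phi$ of towers of neutral components, because the $Z(U(\mathfrak{m} \cap \mathfrak{g}^\der))$-action is extracted from that of $\cO_\Fl \otimes_C \mathfrak{g}^\der$ with $\cO_\Fl$ acting through the period morphism. It then proves this compatibility as you do: both period morphisms factor through the one for the common adjoint datum $(\Grp{G}^\ad, \Shdom^\ad)$ (Remark \ref{rem-HT-mor-la-comp}), and the composite of $\phi$ with the map to the adjoint tower is the canonical map induced by $(\tilde{\Grp{G}}, \tilde{\Shdom}) \to (\Grp{G}^\ad, \Shdom^\ad)$, which the paper states explicitly and you use implicitly.
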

\begin{proof}
    We will see that, in view of the construction of the $Z(U(\mathfrak{m} \cap \mathfrak{g}^\der))$-action, this is a question about the compatibility of Hodge--Tate period morphisms for $\Grp{G}$ and $\tilde{\Grp{G}}$.  More precisely, using the notation in Section \ref{sec-HT-mor}, with compatible choices of toroidal compactifications made there, consider the topological space
    \[
        \aSh_{\levcp^p}^{\circ, \Tor} := \varprojlim_{\levcp_p' \subset \levcp_p} |\aSh_{\levcp^p \levcp_p'}^{\circ, \Tor}| \subset \aSh_{\levcp^p}^\Tor
    \]
    where $\aSh_{\levcp^p \levcp_p'}^{\circ, \Tor}$ denotes the neutral component of $\aSh_{\levcp^p \levcp_p'}^\Tor$.  One can define a $\levcp_p^\circ$-equivariant sheaf of $C$-algebras $\cO_{\aSh^\circ}^\la$ as in Definition \ref{def-HT-mor-la} from the pro-Kummer-\'etale $\levcp_p^\circ$-torsor $\varprojlim_{\levcp_p' \subset \levcp_p} \aSh_{\levcp^p \levcp_p'}^{\circ, \Tor}$.  It inherits a Hodge--Tate period morphism
    \[
        \pi_\HT^{\circ, \Tor}: (\aSh_{\levcp^p}^{\circ, \Tor}, \cO_{\aSh^\circ}^\la) \to \Fl
    \]
    from $\aSh_{\levcp^p}^\Tor$.  With compatible choices of toroidal compactifications, by Proposition \ref{prop-lsv-funct}, we have a natural map
    \[
        \varprojlim_{\levcp_p^{\scc, \prime} \subset \levcp_p^\scc} \Sh_{\levcp^{\scc, p} \levcp_p^{\scc, \prime}}^{\circ, \Tor} \to \varprojlim_{\levcp_p' \subset \levcp_p} \Sh_{\levcp^p \levcp_p'}^{\circ, \Tor}
    \]
    equivariant for the open compact subgroup $\levcp_p^{\scc, \circ}$ of $\Grp{G}^\scc(\bQ_p)$.  We can similarly define the ringed site $(\aSh_{\levcp^{\scc, p}}^{\circ, \Tor}, \cO_{\aSh^{\scc, \circ}}^\la)$, with a $\mathfrak{g}^\der$-equivariant map of ringed sites
    \[
        \phi: (\aSh_{\levcp^{\scc, p}}^{\circ, \Tor}, \cO_{\aSh^{\scc, \circ}}^\la) \to (\aSh_{\levcp^p}^{\circ, \Tor}, \cO_{\aSh^\circ}^\la)
    \]
    induced by the above natural map, and with its Hodge--Tate period morphism
    \[
        \pi_\HT^{\scc, \circ, \Tor}: (\aSh_{\levcp^{\scc, p}}^{\circ, \Tor}, \cO_{\aSh^{\scc, \circ}}^\la) \to \Fl.
    \]
    Note that both $\pi_\HT^{\circ, \Tor}$ and $\pi_\HT^{\scc, \circ, \Tor}$ have the same target, because it only depends on the shared adjoint Shimura datum $(\Grp{G}^\ad, \Shdom^\ad) = (\tilde{\Grp{G}}^\ad, \tilde{\Shdom}^\ad)$, by Lemma \ref{lem-comp-ad-isog} and Remark \ref{rem-HT-mor-comp}.  Since the $Z(U(\mathfrak{m} \cap \mathfrak{g}^\der))$-action is extracted from the action of $\cO_\Fl \otimes_C \mathfrak{g}^\der$, where $\cO_\Fl$ acts via the Hodge--Tate period morphism \Pth{see Section \ref{sec-hor-act}}, it remains to prove the following:

    \begin{lemma}\label{lem-HT-mor-tor-conn-scc-compat}
        For sufficiently small $\levcp_p$, one can compatibly choose toroidal compactifications such that $\pi_\HT^{\scc, \circ, \Tor} = \pi_\HT^{\circ, \Tor} \circ \phi$.
    \end{lemma}
    \begin{proof}[Proof of Lemma \ref{lem-HT-mor-tor-conn-scc-compat}]
        Consider $(\Grp{G}^\ad, \Shdom^\ad) = (\tilde{\Grp{G}}^\ad, \tilde{\Shdom}^\ad)$, the adjoint Shimura datum associated with both $(\Grp{G}, \Shdom)$ and $(\tilde{G}, \tilde{\Shdom})$.  Up to shrinking $\levcp_p$ and hence $\levcp_p^\ad$ if necessary, choose a neat level $\levcp^\ad = \levcp^{\ad, p} \levcp_p^\ad$ such that the natural map $(\Grp{G}, \Shdom) \to (\Grp{G}^\ad, \Shdom^\ad)$ induces a morphism of Shimura varieties $\Sh_\levcp \to \Sh_{\levcp^\ad}$, and such that this morphism can be extended to some morphism of smooth toroidal compactifications with normal crossing boundary divisors $\aSh_\levcp^\Tor \to \aSh_{\levcp^\ad}^\Tor$, up to refinement of cone decompositions for $\Sh_\levcp$, as in Proposition \ref{prop-lsv-funct}.  One can define $(\aSh_{\levcp^{\ad, p}}^{\ad, \circ, \Tor}, \cO_{\aSh^{\ad, \circ}}^\la)$ and $\pi_\HT^{\ad, \circ, \Tor}: (\aSh_{\levcp^{\ad, p}}^{\circ, \Tor}, \cO_{\aSh^{\ad, \circ}}^\la) \to \Fl$, and similarly for $\aSh_{\levcp^\ad}^{\circ, \Tor}$.  There are natural morphisms
        \[
            (\aSh_{\levcp^{\scc, p}}^{\circ, \Tor}, \cO_{\aSh^{\scc, \circ}}^\la) \Mapn{\phi} (\aSh_{\levcp^p}^{\circ, \Tor}, \cO_{\aSh^\circ}^\la) \to (\aSh_{\levcp^{\ad, p}}^{\ad, \circ, \Tor}, \cO_{\aSh^{\ad, \circ}}^\la),
        \]
        whose composition is the canonical morphism induced by the canonical map of Shimura data $(\tilde{\Grp{G}}, \tilde{\Shdom}) \to (\tilde{\Grp{G}}^\ad, \tilde{\Shdom}^\ad) = (\Grp{G}^\ad, \Shdom^\ad)$.  By Remark \ref{rem-HT-mor-la-comp}, both $\pi_\HT^{\scc, \circ, \Tor}$ and $\pi_\HT^{\circ, \Tor}$ factor through $(\aSh_{\levcp^{\ad, p}}^{\ad, \circ, \Tor}, \cO_{\aSh^{\ad, \circ}}^\la)$, and Lemma \ref{lem-HT-mor-tor-conn-scc-compat} follows.
    \end{proof}

    The proofs of Lemma \ref{lem-Z-U-m-der-equivar} and hence of Theorem \ref{thm-main-conn} are now complete.
\end{proof}

\section*{Acknowledgements}

Firstly, we would like to thank Bhargav Bhatt for providing us with his powerful Kodaira-type almost vanishing result in mixed characteristics and for many useful discussions at various stages of this work.  It should be clear that this paper could not have existed without his help!

Secondly, we would like to thank George Boxer and Vincent Pilloni for helpful comments and for pointing out a mistake in an earlier version of our work.

K.-W.L.\@\xspace would like to thank the National Center for Theoretical Sciences (NCTS) in Taipei, Kyoto University, Princeton University, Institute for Advanced Study in Mathematics (IASM) at Zhejiang University, and Academia Sinica for their hospitality during the preparation of this work.  L.P.\@\xspace would like to thank Chenyang Xu and Ziquan Zhuang for helpful discussions on Kodaira-type vanishing results.  Both of us would like to thank the Simons Foundation for its support during various stages of this collaboration.  L.P.\@\xspace was partially supported by a Simons Junior Faculty Fellows award from the Simons Foundation and a Sloan Research Fellowship.

%\bibliographystyle{amsalpha}
%\bibliography{van-cmpl-coh}

\providecommand{\bysame}{\leavevmode\hbox to3em{\hrulefill}\thinspace}
\providecommand{\MR}{\relax\ifhmode\unskip\space\fi MR }
% \MRhref is called by the amsart/book/proc definition of \MR.
\providecommand{\MRhref}[2]{%
  \href{http://www.ams.org/mathscinet-getitem?mr=#1}{#2}
}
\providecommand{\href}[2]{#2}

\end{document}